\newcommand{\calF}{\mathcal{F}}
\newcommand{\fdd}{\stackrel{fdd}{=}}
\newcommand{\al}{\alpha}
\newcommand{\de}{\delta}
\newcommand{\la}{\lambda}
\newcommand{\si}{\sigma}
\newcommand{\ka}{\kappa}
\newcommand{\eps}{\varepsilon}
\newcommand{\Z}{\mathbb{Z}}
\newcommand{\R}{\mathbb{R}}
\newcommand{\Ex}{\mathbb{E}}
\newcommand{\Pb}{\mathbb{P}}
\newcommand{\Var}{\text{Var}}
\newcommand{\convergedist}{\stackrel{d}{\to}}
\newcommand{\equaldist}{\stackrel{d}{=}}
\newcommand{\Zbar}{\bar{Z}}
\newcommand{\Tbar}{\bar{T}}
\newcommand{\Wbar}{\bar{W}}
\newtheorem{thm}{Theorem}
\newtheorem{lem}[thm]{Lemma}
\newtheorem{cor}[thm]{Corollary}
\newtheorem{rem}[thm]{Remark}
\begin{document}
\sloppy

\title{LOOKING FOR CONTINUOUS LOCAL MARTINGALES \\ WITH THE CROSSING TREE (Working Paper)}

\date{November 27, 2009}

\author{Owen D. Jones}
\address{University of Melbourne\\ Parkville, VIC 3010\\ AUSTRALIA}
\email{O.D.Jones@ms.unimelb.edu.au}

\author{David A. Rolls}
\address{University of Melbourne\\ Parkville, VIC 3010\\ AUSTRALIA}
\email{drolls@unimelb.edu.au}


\begin{abstract}
We present statistical tests for the continuous martingale hypothesis. That is, whether an observed process is a continuous local martingale, or equivalently a continuous time-changed Brownian motion. Our technique is based on the concept of the crossing tree. Simulation experiments are used to assess the power of the tests, which is generally higher than recently proposed tests using the estimated quadratic variation (i.e., realised volatility). In particular, the crossing tree shows significantly more power with shorter datasets. We then show results from applying the methodology to high frequency currency exchange rate data. We show that in 2003, for the AUD-USD, GBP-USD, JPY-USD and EUR-USD rates, at small timescales (less than 15 minutes or so) the continuous martingale hypothesis is rejected, but not so at larger timescales. For 2003 EUR-GBP data, the hypothesis is rejected at small timescales and some moderate timescales, but not all.
\end{abstract}

\maketitle

\newpage
\tableofcontents

\newpage
\section{Introduction}
Time-changed Brownian motions have been proposed as models where so-called `volatility clustering' or `intermittency' is observed, in particular in finance but notably also in turbulence and telecommunications.
Models that incorporate time-changed Brownian motion (possibly after taking logs and removing drift) include, for example, stochastic volatility models \citep{HW87}; infinitely divisible cascading motion \citep{CRA03} and fractal activity time geometric Brownian motion  \citep{Hey99}.
In what follows we consider the question of testing whether or not a given process $X$ can be considered to be of the form $B \circ \theta$ where $B$ is Brownian motion and $\theta$ is a {\it continuous} non-decreasing process, possibly dependent on $B$.

From the theory of martingales one can obtain a hierarchy of time-changed Brownian motions.
In what follows we use the terminology {\it subordinator} for a non-decreasing process with stationary independent increments, which is thus a pure-jump L\'evy process, and {\it chronometer} for a general non-decreasing process.
We always take a `time-change' to be with respect to a non-decreasing process, possibly dependent on the past but not on the future.
Let $B$ stand for Brownian motion and $\theta$ for a chronometer, defined on the same filtration $\{ \calF_t \}$, then:
\begin{enumerate}
\item (\citet{monroe_78}) We can write $X \fdd B \circ \theta$ iff $X$ is a semimartingale.

\item  (\cite{dambis65}, \cite{DS65}, \citet{ks91}, \citet[Proposition V.1.5]{rny} We can write $X \fdd B \circ \theta +X_0$ with $\theta$ continuous iff $X$ is a continuous local martingale, in which case $\theta \fdd \langle X \rangle$ (the quadratic variation process).

\item (\citet{ocone_93}) We can write $X \fdd B \circ \theta$ with $\theta$ continuous and independent of $B$ iff $X$ is a continuous local martingale such that for any predictable $H$ with $|H| = 1$, $\int_0^\cdot H(s) dX(s) \fdd X$.

\item (\citet{CS02}) We can write $X \fdd B \circ \theta$ with $\theta$ a subordinator independent of $B$ iff $X$ is a L\'evy process with 0 drift and symmetric L\'evy measure $\nu(A) = \int_A q(x) dx$ where $q(\sqrt{x})$ is completely monotone.
\end{enumerate}
From the results of Dambis, Dubins \& Schwarz, and Revuz \& Yor, testing that $X = B \circ \theta$ for $\theta$ a continuous chronometer is equivalent to testing whether or not $X$ is a continuous local martingale.
Note that a time-changed Brownian motion can be continuous even though the chronometer is discontinuous.
This is the class of continuous semimartingales that are not local martingales, which includes for example Brownian motion with drift; the Ornstein-Uhlenbeck process (the Vasicek model) and Feller's square root process (the Cox, Ingersoll and Ross model).
When we write `continuous time-changed Brownian motion', $X = B \circ \theta$, we mean that $\theta$ (and thus $X$) is continuous.

There is a substantial literature about the problem of testing whether or not a {\em discrete time} process has stationary martingale differences, generally in the context of model verification.
A seminal paper in this area is that of Bierens \citeyearpar{Bie84} and a useful discussion can be found in Dominguez and Lobato \citeyearpar{DL01}.
It should be noted that many authors who claim to test if a discrete time series has stationary martingale differences actually test the less restrictive hypothesis that the differences are stationary and uncorrelated; most tests for zero correlation can be traced back to those of Box and Pierce \citeyearpar{BP70}, Durlauf \citeyearpar{Dur91} or Lo and MacKinlay \citeyearpar{LMacK88}.
The present paper takes a different approach to the discrete time literature, in part because we do not include stationary increments in our null hypothesis, but more fundamentally because there are differences between the behaviour of continuous and discontinuous martingales which clearly do not arise in the discrete time case.

In the econometrics literature the hypothesis that a discrete timeseries has stationary martingale differences is called the {\em martingale hypothesis}.
By analogy, we will call the hypothesis that a continuous process is a continuous local martingale the {\em continuous martingale hypothesis}.
We are careful to distinguish between the continuous martingale hypothesis and the less restrictive hypotheses that a process is a local martingale (possibly discontinuous) or a semimartingale, continuous or otherwise (Brownian motion with a possibly discontinuous time change).

Recently there has been a lot of interest in using high frequency financial data to estimate quadratic variation (realised volatility) as an estimator of `integrated variance'. The goal is to view financial returns (e.g. currency, metal and stock index spot and futures prices) in `financial time' rather than `calendar' time. (See both \citet{maas08} and \citet{mcaleer08} and the papers therein for a review). In the context of foreign exchange rates, \citet{Andersen2000a} showed daily exchange rate returns normalised by daily realised volatilities (formed by summing 30 minute volatilities) look approximately Gaussian. Sometimes formal statistical testing that data is from a time-changed Brownian motion has been included (e.g., Andersen et al., \citeyear{ABDL03}; Park \& Vasudev, \citeyear{PV05}; Peters \& de Vilder, \citeyear{peters06}; Andersen et al., \citeyear{andersen07}). To test the hypothesis that $X = B \circ \theta$, where $\theta$ is a continuous chronometer, one can estimate $\theta$ from the observed quadratic variation and then test whether the increments of $X \circ \hat\theta^{-1}$, look like the increments of Brownian motion. \citet{PV05} (and more recently \citet{andersen07}) also include empirical power estimates for simulated processes, which facilitate direct comparisons with other approaches. Park \& Vasudev (\citeyear{PV05}) is distinguished as the only systematic study of the power of the quadratic variation approach.  Non-quadratic variation tests have also been proposed using moments by An\'{e} \& Geman (\citeyear{ane00}) and using a combination of excursions from zero and local time at zero by Guasoni (\citeyear{Guasoni04}).
 
This quadratic variation/realised volatility approach faces several challenges. First, a good estimate of the quadratic variation is required. It has been shown that market microstructure noise can lead the sample quadratic variation to be a biased estimator, with bias that grows linearly as the sampling frequency increases \citep{mcaleer08}. Second, for statistical testing, one must make an arbitrary choice for the increment length in $X \circ \hat\theta^{-1}$. As we show, this choice can affect the results of the test.

Our test of the continuous martingale hypothesis is based on the recently introduced concept of the crossing tree \citep{JS04} and a characterisation of continuous local martingales in terms of it \citep{jonesrolls08a}. It is closely tied to the idea of first passage times and avoids the need to estimate the quadratic variation, and the problems inherent in that exercise. Using simulation it shows discriminatory power as good, or better, than the quadratic variation method for a range or processes. Unlike alternatives, our approach has the advantage of testing a range of timescales. Thus, our method picks out the scales at which the continuous martingale hypothesis is plausible. We also report results from testing foreign exchange rate tick data from 2003 for five rates (AUD-USD, GBP-USD, JPY-USD, EUR-USD and EUR-GBP). We show that for timescales tested on the order of about fifteen minutes or less the continuous martingale hypothesis is rejected, while it is not rejected at longer timescales.

In Section \ref{quadvar.sec} we describe our implementation of the quadratic variation approach, which provides a basis of comparison for our crossing tree approach. Further, our implementation sidesteps making an arbitrary choice of increment length. Notably, the choice used by Park \& Vasudev (\citeyear{PV05}) seems to cause the power of their test to suffer. In Section \ref{xingtree.sec} we describe the crossing tree and restate from \citet{jonesrolls08a} the characterisation of a continuous local martingale. We use this characterisation in combination with a suite of tests to test the continuous martingale hypothesis. In Section \ref{power.sec} we show results from a simulation study which shows our crossing tree approach is generally more powerful than the quadratic variation approach. We also show the choice of parameter inherent to the quadratic variation-based test can make a large difference to the power of the test. Finally in Section \ref{FX.sec} we give the results from applying the crossing tree test to various foreign exchange rates. We give conclusions in Section \ref{conc.sec}. Matlab code for creating the crossing tree for an observed process is available at \verb=www.ms.unimelb.edu.au/~odj=.

\clearpage
\section{Quadratic Variation-based Test} \label{quadvar.sec}

By Dambis \citeyearpar{dambis65} and Dubins and Schwarz \citeyearpar{DS65} (see also Karatzas and Shreve \citeyearpar[Theorem 4.6]{ks91}) we can write $X -X_0\fdd B \circ \theta$ with $\theta$ continuous iff $X$ is a continuous local martingale, in which case $\theta \fdd \langle X \rangle$ (the quadratic variation process). Thus, with the inverse
\[T_t=\inf \{s \geq0 : \langle X \rangle _s >t \},\]
one tests if $X$ is a continuous local martingale by testing if 
\begin{equation} \label{X_timeinvert.eqn}
Y_{t}=X_{T_t}
\end{equation} 
is standard Brownian motion.  Peters and De Vilder \citeyearpar{peters06} discuss the technique and apply it to S \& P 500 returns. Park and Vasudev \citeyearpar{PV05} go further, providing a number of results for simulated data, futures prices and currency spot prices. In this section we discuss our implementation of the quadratic variation test. There are two parts to the test, estimation of the quadratic variation, and testing for Brownian motion, and both will be discussed.

For ease of comparison our notation closely follows that of Park and Vasudev \citeyearpar{PV05}. Suppose $\{X_1,\ldots,X_n\}$ are $n$ consecutive values from a time interval $(0,n\de]$ with time interval $\de$ between observations.  The quadratic variation is estimated as
\[ \langle X \rangle_t ^\de  = \sum_{k \geq 2:k\de \le t} \left(X_{k\de} - X_{(k-1)\de}  \right)^2, \,\, t \ge 2\de.\]
Define its generalised inverse as $T_t^\de=\inf \{s \geq0 : \langle X\rangle _s^\de  >t \}$ and introduce the analogue of (\ref{X_timeinvert.eqn}) with $Y_t^\de = X_{T_t^\de}$. For intervals of length $\Delta$ form the sequence
\begin{align}
\label{quadvarincr.eqn}
 Z_j^\de =\frac{1}{\sqrt{\Delta}}\left(Y_{(j+1)\Delta} - Y_{j\Delta}  \right), \, j=1,2,\ldots N-1
\end{align}
where $N$ is such that $T^\delta_{N\Delta}=n\delta$. Under the null hypothesis that the observations are consecutive values from a continuous local martingale, $\{Z_j^\de\}$ is approximately an i.i.d sequence of $N(0,1)$ values.  What remains is to choose a value for $\Delta$ and then test that $\{Z_j^\de\}$ is an i.i.d sequence with a common $N(0,1)$ distribution. Note that $\Delta$ selection is not an issue for estimating the quadratic variation, but rather for using it to normalise and recover Brownian motion.

\subsection{$\Delta$ Selection}
There is freedom in the choice of $\Delta$. Clearly, smaller $\Delta$ allows larger $N$ and so more values $Z_j^\de$ to test. However, small $\Delta$ introduces interpolation error as $\Delta \rightarrow 0$. Since the quadratic variation is estimated with some granularity through $\de$, under the null hypothesis
\[\frac{Y_{(j+1)\Delta} - Y_{j\Delta}}{\sqrt{\Delta}}\sim  \frac{B_{(j+1)\Delta}-B_{j\Delta}+O(\sqrt{\de}) }{\sqrt{\Delta}}\]
and small $\Delta$ inflates the error term.
Park and Vasudev \citeyearpar{PV05} provide a lower bound for $\Delta$ as a function of $N$ and $\de$ but it is of limited use since it depends on unknown constants. Effectively they search for $\Delta$ so that the empirical distribution of $\{Z_j^\de - \bar{Z}_N^\de\}$ with $\bar{Z}_N^\de= \sum_{j=1}^{N-1}Z_j^\de /(N-1)$ is closest to the $N(0,1)$ distribution. We found this unreasonable in practice because it significantly reduces the power of the test technique. Rejecting the null hypothesis when the sequence $\{Z_j^\de,\, j=1,2,\ldots N-1\}$ is not sufficiently like $N(0,1)$ is at odds with choosing $\Delta$ to make the sequence most like a Gaussian sequence.

Peters and de Vilder \citeyearpar{peters06} use a slightly different technique for $\Delta$ for testing their S \& P 500 data. Effectively they find successive times $t_i$ at which to evaluate $X$, where the increments $\langle X \rangle ^\de_{t_i}- \langle X \rangle^\de_{t_{i-1}}$ of the quadratic variation are greater than some arbitrarily chosen lower bound, but are minimal. 

Our approach is to find the mean quadratic variation increment 
\[S=\frac{\langle X \rangle_{n\de} ^\de-\langle X \rangle_{2\de} ^\de}{n-2} \]
and choose $\Delta$ by $ \Delta=cS$ for some constant $c$. Instead of trying to decide on the `right' value for $c$, we show test results for a range of values and use the ones most favourable to the quadratic variation approach (e.g. use the highest rejection rates when the null hypothesis is not satisfied.) Simulation results (shown below) illustrate that a range of values for $c$ is generally acceptable. Since our goal is comparison with the crossing tree method, using a range of values is most forgiving to the quadratic variation-based method, but is a high standard for another technique to meet. We leave unanswered the question of how to select one value of $\Delta$ to perform the test but feel this is a drawback to the quadratic variation test.
  
\subsection{Statistical Tests}
Testing if $\{Z_j^\de\}$ is an i.i.d sequence with a common $N(0,1)$ distribution involves two questions. Are the values from an independent sequence? Do the values have a common  $N(0,1)$ distribution? Following Park and Vasudev \citeyearpar{PV05} we ignore the question of independence and test the distribution using the two-sided Kolmogorov-Smirnov (KS) statistic, the Cram\'{e}r-von Mises (CVM) statistic and a standardised mean (SM) statistic
\[T=\frac{1}{\sqrt{N-1}}\sum_{i=1}^{N-1}Z_i^\de =\frac{Y_{N\Delta}-Y_{\Delta}}{\sqrt{\Delta(N-1)}} \]
which is $N(0,1)$ under the null hypothesis. Park and Vasudev provide convergence results that show when these tests are applied to the $\{Z_j^\delta\}$ data, distributions of the test statistics converge to $N(0,1)$ data as $\de \rightarrow 0$, $N \rightarrow \infty$. Results from applying these tests are reported in Section \ref{power.sec} for simulated data and Section \ref{FX.sec} for tick-by-tick foreign exchange data.

Peters and de Vilder \citeyearpar{peters06} address the issue of testing distributions using the Kolmogorov-Smirnov and two other tests. Unlike Park and Vasudev, they also test for independence in the sequence. We haven't tried the additional tests they use, in part because they consider only one timeseries of S \& P 500 data and our goal is also a more systematic comparison with other's results for simulated data. And while the Kolmogorov-Smirnov statistic is known to be sensitive to differences in the mean, the Cram\'{e}r-von Mises test has been the more powerful test in our simulations and we think it provides a good basis for comparison.

\subsection{Type 1 Error}
Under the null hypothesis, using tests with a significance level of 5\% we expect about 5\% of  simulated sample paths to be rejected. To confirm the performance of our implementation we simulated a number of sample paths and counted the number of paths rejected using each of the three statistical tests. Figure \ref{PVBM.fig} (left) shows the results when 1,000,000 independent paths of Brownian motion were simulated. Each path is simulated as $X_k=\sum_{i=1}^k Z_i$, $k=1,\ldots,1250$ and $X_0=0$, where $\{Z_1,\ldots,Z_{1250}\}$ is an i.i.d. sequence of $N(0,1/250)$ values. This corresponds to simulating Brownian motion on $[0,5]$ at the times $k\delta$, $\de=1/250$. For a sense of the confidence intervals, using the Binomial($M$,0.05) distribution, a 95\% confidence interval would use $\pm 0.04$\% for $M=10^6$ paths and $\pm 0.4$\% for $M=10^4$ paths. Figure \ref{PVBM.fig} (right) shows similar results for paths of length $n=5000$. The standardized mean statistic appears extremely good here, rejecting almost exactly 5\%. The two other tests appear comparable. Note that values of $c$ very close to zero seem to give slightly higher rejection rates for Kolmogorov-Smirnov and Cram\'{e}r-von Mises tests. This is noteworthy because for virtually all other processes considered, the SM test will have little or no power and these other two tests will be the main ones to consider. Thus, results from those tests in which $c$ is small must be regarded with some caution. For $c=20$, $\{Z_j^\de\}$ has length 58, on average across all the sample paths. So the statistical tests are using that many values. For $c=140$, the tests are using 7 values on average. Ordinarily one would expect smaller error when testing more data,  but small $c$ is equivalent to small $\Delta$ (in relation to $\de$) in forming the increments (\ref{quadvarincr.eqn}) and granularity in the sample quadratic variation becomes relevant.

\begin{figure}[ht!]
\resizebox{!}{3.0in}{\includegraphics{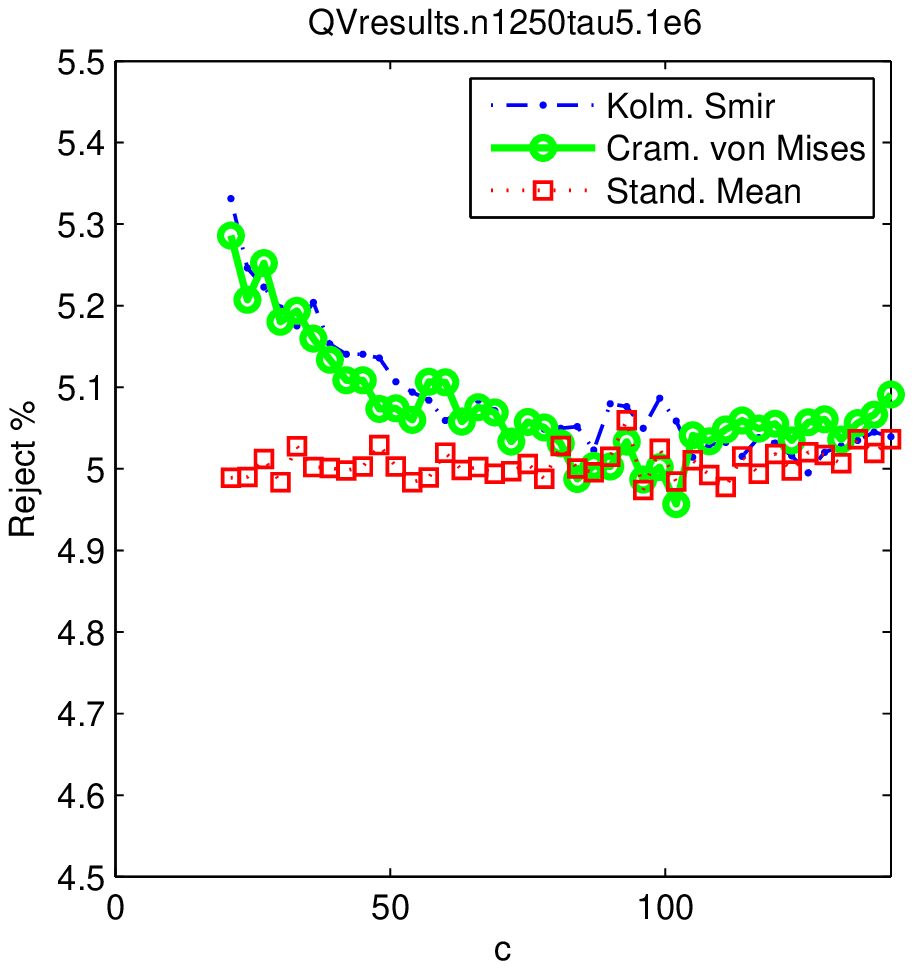}}\resizebox{!}{3.0in}{\includegraphics{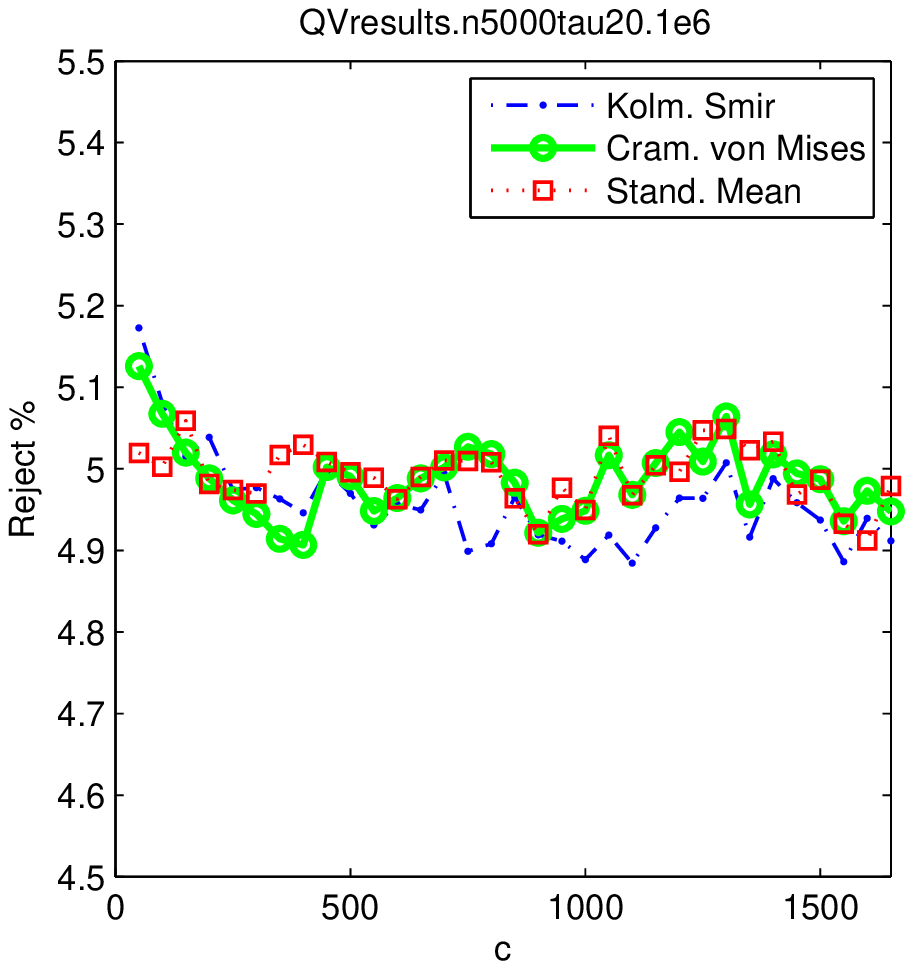}}
\caption{Quadratic variation empirical type 1 error rates from 1,000,000 simulated sample paths of Brownian motion with length 1250 (left) or 5000 (right) using tests at level 5\%.
Here 95\% confidence intervals are of the order of $\pm 0.04$\%.\label{PVBM.fig}}
\end{figure}

\begin{figure}[h!]
\resizebox{3in}{!}{\includegraphics{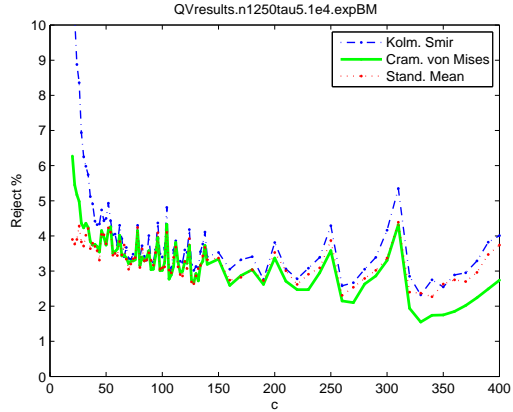}}
\caption{Quadratic variation empirical type 1 error rates from 10,000 simulated sample paths of exponential Brownian motion with length 1250 using tests at level 5\%.
Here 95\% confidence intervals are of the order of $\pm 0.4$\%.}
\label{BMlonghi.fig}
\end{figure}

Figure \ref{BMlonghi.fig} shows results corresponding to an exponential martingale $X_t=\exp(B_t-t/2)$, where $B_t$ is standard Brownian motion, simulated on $[0,1250]$ at equally spaced times with interval length $\de=1/250$. Here, 10,000 independent paths were used. At the left, $c=20$ corresponds to testing 61 values on average, while $c=400$ at the right corresponds to testing two values. The repeating peaks arise from coarseness in the number of values $\{Z_j^\de\}$ tested. The peaks at $c=$ 200, 250, and 310 correspond to the largest $c$ for which $\{Z_j^\de\}$ has length 5, 4 and 3, averaged across all sample paths, respectively. (Aside: there is very little variability in the number tested across the paths for fixed $c$.)  The cumulative sample quadratic variation typically shows a steep upward portion for small $t$ and a flatter portion for larger $t$, connected by a `knee'. This has the effect that larger $\Delta$ makes $N\Delta$ larger for the same $N$. The presence of the knee magnifies the change in $T_{N\Delta}^\de$ for different $\Delta$ to such an extent that the sawtooth behavior disappears if the same tests are applied to $\{Z_j^\de,\,j=1,N-2\}$, thus dropping $Z_{N-1}^\de$. Since a 5\% rejection rate is expected under the null hypothesis, these results further emphasize that while values of $c$ too small should be avoided, values of $c$ too large should also be avoided because of a lack of data. Values of $c$ between 20 and 140 seem acceptable for length 1250. (For length 5000, values of $c$ between 20 and 400 seems acceptable.) Within this range, the rejection rates are between 3.5\% and 5\% for all three tests. Park and Vasudev report rejection rates of 4\% (KS), 5\% (CVM) and 4.4\% (SM) so the results are comparable. 

\section{Crossing Tree-based Test} \label{xingtree.sec}

\subsection{Characterizations of BM and CLM using the Crossing Tree}
The crossing tree was introduced by Jones and Shen \citeyearpar{JS04} in the context of self-similar processes.
In this section we describe the crossing tree then show that it can be used to give a characterization of Brownian motion time-changed using a continuous chronometer.
Using this characterization we give various tests for the continuous martingale hypothesis.

Fix $\de > 0$.
Our definitions depend inherently on $\delta$, but as it remains fixed throughout we will not include it in our notation.
Let $X$ be a continuous process, $X(0) = 0$, then for all $l \in \Z$ we define crossing times (more precisely first passage times) by putting $T^l_0  :=  0$ and
\[
T^l_j := \inf \{ t >  T^l_{j-1} \,:\, |X(t)-X(T_{j-1}^l)|= 2^l \de \}.
\]
By a level $l$ crossing (equivalently size $\de 2^l$ crossing) of the process $X$ we mean a section of the sample path between two successive crossing times $T^l_{j-1}$ and $T^l_{j}$ plus the starting time and position of the crossing, $T^l_{j-1}$ and $X(T^l_{j-1})$.
Let $C^l_j$ be the $j$-th crossing of size $\de 2^l$.
There is a natural tree structure to the crossings, as each crossing of size $\de 2^l$ can be decomposed into a sequence of `subcrossings' of size $\de 2^{l-1}$.
We identify vertices of the tree with crossings and link each level $l$ crossing with its level $l-1$ subcrossings.
This is illustrated in Figure \ref{xtree.fig}.
Define the crossing length $W^l_k := T^l_{k}-T^l_{k-1}$;
orientation $\al^l_k := \text{sgn}(X(T^l_{k+1})-X(T^l_k))$; and the
number of subcrossings $Z^l_k$.

Subcrossing orientations come in pairs, either $+-$, $-+$, $++$ or $--$, corresponding respectively to excursions up and down and direct crossings up and down.
The subcrossings of a crossing can be broken down into some variable number of excursions, followed by a single direct crossing, where the orientation of the direct crossing is the same as the orientation of the crossing.
Let $V^l_j = 0$ if the $j$-th level $l$ excursion is up ($+-$) and $V^l_j = 1$ if it is down ($-+$).

\begin{figure}[htb!]
\begin{center}
\includegraphics[width=6cm]{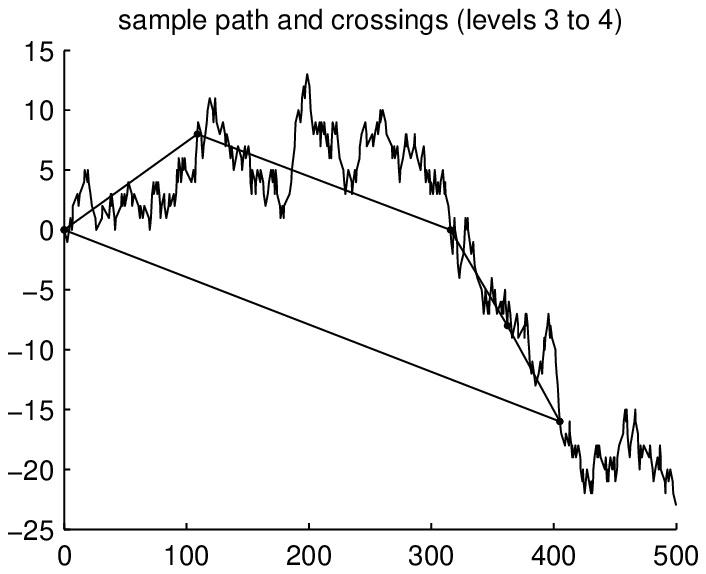}%
\includegraphics[width=6cm]{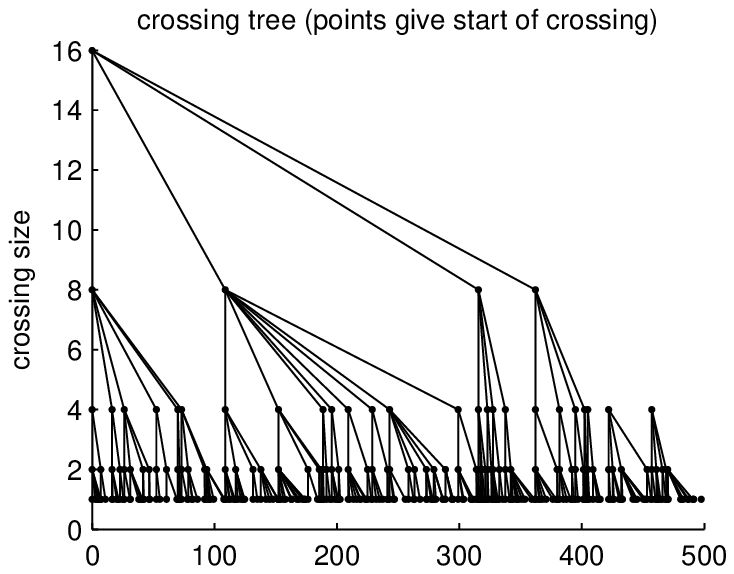}
\caption{The crossing tree associated with a continuous sample path.
Here $\de = 1$. In the left frame, for $l = 3$ and $4$, we have joined the points $(T^l_j, X(T^l_j))$;
we see that the single level 4 crossing can be decomposed into a sequence of four level 3 crossings.
In the right frame we have plotted the points $(T^l_j, \de 2^l)$ for all $l, j \geq 0$, then,
identifying crossing $C^l_{j}$ with its starting time $T^l_{j-1}$,
we joined each point to the points corresponding to its subcrossings.} \label{xtree.fig}
\end{center}
\end{figure}

\begin{thm} \label{bmchar}
Brownian motion is the unique continuous process $B$ for which:
\begin{enumerate}
\item[{BM0}] $B(0) = 0$, $\Var [B(1)] = 1$;
\item[{BM1}] For each $l$ the $W^l_k$, $k = 1, 2, \ldots$, are i.i.d.\ with Laplace transform $\Ex \exp(-\la W^l_k) = 1/\cosh(2^{-l+1/2}\la^{1/2})$;
\item[{BM2}] The $Z^l_k$ are i.i.d.\ for all $l$ and $k$, with $\Pb( Z^l_k = 2 i) = 2^{-i}$, $i = 1, 2, \ldots$;
\item[{BM3}] The $V^l_j$ are i.i.d.\ for all $l$ and $j$, with $\Pb( V^l_j = 0) = \Pb( V^l_j = 1) = 1/2$.
\end{enumerate}
\end{thm}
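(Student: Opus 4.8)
The plan is to prove the two directions separately --- that Brownian motion does satisfy BM0--BM3, and that these properties pin it down uniquely --- with the uniqueness half being the real work. For the forward direction, BM0 is immediate, and BM1--BM3 all follow from the strong Markov property together with the spatial homogeneity, sign-symmetry and scaling of $B$. Applying the strong Markov property at the crossing times $T^l_{j-1}$, each crossing length $W^l_k$ is distributed as the first exit time of $B$ from a symmetric interval whose half-width is the crossing size, and these are i.i.d.\ in $k$; the classical Laplace transform of this exit time, rescaled by the normalisation $\Var[B(1)]=1$, gives BM1. For BM2 I would pass to the embedded walk: the endpoints of the level $l-1$ subcrossings inside a fixed level $l$ crossing form a simple symmetric random walk with steps $\pm 2^{l-1}\de$, started at the parent's starting point and absorbed when its net displacement first reaches $\pm 2^l\de$, i.e.\ at $\pm 2$ lattice sites. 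A one-step renewal argument (from $\pm 1$ the walk is absorbed or returns to $0$, each with probability $1/2$, and on return the count regenerates) shows the number of steps to absorption is $2i$ with probability $2^{-i}$; the joint independence in $l$ and $k$ comes from the strong Markov property and the branching structure of the tree. Finally BM3 is the sign-symmetry of $B$: conditional on an excursion occurring, it went up or down with equal probability.

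For uniqueness, the key observation is that the law of a continuous process with $B(0)=0$ is determined by the joint law of its crossing skeleton $\{(T^l_j, X(T^l_j)) : l \in \Z,\ j \ge 0\}$. Indeed, BM1 makes each $W^l_k$ almost surely finite with finite mean, so by the strong law $T^l_n \to \infty$ and crossings recur; and as $l \to -\infty$ the crossing size $2^l\de \to 0$ forces the crossing times to become dense in $[0,\infty)$, so by continuity of $B$ the skeleton values determine the whole path, and hence every finite-dimensional distribution. It therefore suffices to show that BM0--BM3 determine the joint law of the skeleton, and that this law coincides with the one computed for Brownian motion in the forward direction.

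I would establish this by a recursive reconstruction that exploits the self-similarity visible in the hypotheses --- the $2^{-l}$ scaling in BM1 and the level-independence of BM2 and BM3. Conditioning on a level $l$ crossing, BM2 gives the number $Z^l_k = 2i$ of level $l-1$ subcrossings, which decompose into $i-1$ excursions followed by one direct crossing; BM3 fixes the orientations of the excursions, and the orientation of the terminal direct crossing (equivalently $\al^l_k$) must be symmetric, so that the level $l-1$ endpoints perform an unbiased walk with steps $\pm 2^{l-1}\de$. Together with the lengths supplied by BM1, and using the independence asserted within and across the families, this determines the conditional law of the level $l-1$ skeleton given the level $l$ skeleton; iterating over all $l$ fixes the joint law uniquely, it matches Brownian motion by the forward computation, so $B$ has the finite-dimensional distributions of Brownian motion and, being continuous, is Brownian motion.

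The main obstacle is the uniqueness half, and within it two points deserve care. First, the reduction to the skeleton needs a genuine limiting argument: one must show the crossing times are dense and that the finite-dimensional distributions are recovered as limits of skeleton values via continuity. Second, and more substantively, assembling the full joint law of the skeleton from the within-family independence of BM1--BM3 requires the mutual independence of the length, subcrossing-count and orientation families across all levels, and crucially the symmetry of the terminal direct-crossing orientations $\al^l_k$ --- it is precisely this symmetry that rules out a drifting process and forces the embedded walk, and hence $B$, to be unbiased. For Brownian motion it holds by sign-symmetry, and it is the natural orientation hypothesis accompanying BM2--BM3.
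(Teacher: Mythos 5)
Your forward direction and your overall strategy for uniqueness (reduce to the crossing skeleton, reconstruct the nested embedded walks level by level, pass to the limit as $l\to-\infty$) match the paper's proof. But there is a genuine gap at exactly the point you flag at the end: the symmetry of the direct-crossing orientations $\al^l_k$. You write that this symmetry ``is the natural orientation hypothesis accompanying BM2--BM3,'' i.e.\ you would adjoin it as an extra assumption. The theorem as stated does not assume it, and adding it would prove a strictly weaker statement. The point of the paper's argument is that this symmetry is \emph{derived} from BM2 and BM3 alone, using the fact that the tree extends upward through all levels $l\in\Z$. Concretely, writing $p_m = \Pb(X^m(1)=\de 2^m \mid X^m(0)=0)$ for the probability that the first level-$m$ crossing is up, the first level-$m$ crossing is the first subcrossing of the first level-$(m+1)$ crossing; by BM2 it is, with probability $\Pb(Z=2)=1/2$, the first leg of the terminal direct crossing (hence oriented like its parent) and otherwise the first leg of an excursion (up with probability $1/2$ by BM3). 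This gives $p_m = \tfrac14 + \tfrac12 p_{m+1}$, and iterating $n$ times yields $p_m = \sum_{i=1}^n 2^{-(1+i)} + 2^{-n}p_{m+n} \to \tfrac12$ since $p_{m+n}$ is bounded. Without this step your embedded walks are only identified \emph{conditionally} on the top-level orientation, and a drifting process is not excluded by your argument as written.

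Two smaller remarks. First, your reduction to the skeleton (denseness of crossing times as $l\to-\infty$) does need control on $\max_k W^l_k$; the paper handles the analogous issue in its Lemma on extreme order statistics of Galton--Watson trees, so it is worth being explicit that BM1 supplies what you need there. Second, once the $X^l$ are shown to be unconditioned nested simple symmetric random walks, the paper finishes by citing the a.s.\ convergence $4^l X^l \to X$ to a Brownian motion and Hambly's identification of the limiting crossing-time law with BM1, rather than by directly assembling joint finite-dimensional distributions of lengths across levels; your sketch of ``the lengths supplied by BM1'' determining the skeleton law glosses over the fact that BM1 only constrains within-level marginals and i.i.d.-ness, not the joint dependence of crossing durations across levels.
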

\begin{proof}
This characterisation of Brownian motion, in terms of its crossings, is based on ideas used in the construction of Brownian motion on a nested fractal.
To see these ideas in their original context, the reader is referred to Barlow \& Perkins \citeyearpar{bp_88} and Barlow \citeyearpar{Bar93}.

Given a Brownian motion $B$, it follows from the strong Markov property that the $Z^l_k$  are all independent.
Moreover since Brownian motion is statistically self-similar, they are identically distributed.
The distribution of $Z^l_k$ is just that of the time taken for a simple random walk on $\Z$ to hit $\pm 2$, starting at 0, whence BM2 follows.

From the strong Markov property we also have that for each $l$, the $W^l_k$ are i.i.d., and self-similarity shows that $W^l_k \equaldist 4^l W^0_1$.
The distribution of $W^0_1$ can be found using martingale techniques (see for example Burq \& Jones \citeyear{BJ08}), which gives BM1.

To see that BM3 holds, consider an up-crossing: the orientations of its subcrossings are the same as the steps taken by a simple random walk on $\Z$, starting at 0 and conditioned to hit 2 before $-2$.

Condition BM0 just specifies the scaling of the process.

Now suppose that we are given a continuous process $B$ satisfying conditions BM0--BM3.
Put $X^l(k) = B(T^l_k)$, and for $l < m$ let $N^{l,m}$ be the first time $X^l$ hits $X^m(1)$ (so $N^{l,l+1} = Z^{l+1}_1$).
Conditions BM2 and BM3 specify the distribution of $\{ X^l(0), \ldots, X^l(N^{l,l+1}) \,|\, X^{l+1}(0), X^{l+1}(1) \}$, and thus by induction the distribution of $\{ X^l(0), \ldots, X^l(N^{l,m}) \,|\, X^m(0), X^{m}(1) \}$, for any $l < m$.
(In the terminology of \citet{bp_88}, the random walks $X^l$, $l \in \Z$, are {\em nested}.)
It is straightforward to show that we get precisely the same laws for the subcrossing numbers and orientations if we take the simple symmetric random walk on $\de 2^l \Z$, started at 0 and run it until it hits $\pm \de 2^m$.
(See \citet{Jon96} for the explicit calculations.)
That is, $\{ X^l(0), \ldots, X^l(N^{l,m}) \,|\, X^m(0), X^{m}(1) \}$ is a simple symmetric random walk on $\de 2^l \Z$, started at 0 and conditioned to hit $X^m(1)$ before $-X^m(1)$.

From BM2 and BM3 we have that for any $m \in \Z$ and $n \in \Z_+$,
\begin{eqnarray*}
\Pb(X^m(1) = \de 2^m \,|\, X^m(0) = 0)
&=& \frac 12 \Pb(X^{m+1}(1) = \de 2^m \,|\, X^{m+1}(0) = 0) + \frac 14 \\
&=& \frac 1{2^n} \Pb(X^{m+n}(1) = \de 2^{m+n} \,|\, X^{m+n}(0) = 0) + \sum_{i=1}^n \frac 1{2^{1+i}}
\;=\; \frac 12.
\end{eqnarray*}
Thus, removing the conditioning on $X^m(1)$, $X^l(k)$ is indistinguishable from a simple symmetric random walk for $k = 0, \ldots, N^{l,m}$.
But $N^{l,m} \geq 2^{m-l}$, so sending $m \to\infty$ we see that $X^l$ is just a simple random walk on $\de 2^l \Z$.

It is well known that as $l \to -\infty$, $4^l X^l$ converges a.s.\ to a Brownian motion, $X$ say.
Moreover \citet{Ham95} shows that, up to a constant scaling, the crossing times of $X$ have the distribution given by BM1, so $X$ is just a scaling of $B$.
Thus from BM0, $B$ is a standard Brownian motion.
\end{proof}

\begin{rem}
\label{char.rem}
Our definition of the crossing tree considers the process when it hits new points on the lattice $X_0 + \de 2^l \Z$, for all levels $l \in \Z$.
We can just as easily consider lattices $a + \de 2^l \Z$, by the simple modification of putting $T^l_0 = \inf \{ t \geq 0 \,:\, X(t) \in a + \de 2^l \Z \}$.
Similarly, our characterisation of Brownian motion can be generalised to allow for lattices centred at an arbitrary point $a$.
The proof is essentially the same, but does require more care with the nested random walks $X^l$, as per \citet{bp_88} Theorem 2.14.
\end{rem}

Clearly the $V^l_j$ and $Z^l_k$ are invariant under a continuous time-change, so a continuous local martingale must satisfy BM2 and BM3.
We show below that these properties characterise a continuous local martingale, up to a shift at time 0.
To do so we need to know a little more about the crossing times of Brownian motion, which the following lemma provides.

\begin{lem} \label{extremeW.thm}
Let $\{P(n)\}_{n=0}^\infty$ be a supercritical Galton-Watson branching process, with $P(0) = 1$.
That is, $P(n)$ is the population size at generation $n$ and $\mu := \Ex P(1) > 1$.
Suppose that $\si^2 := \Var P(1) < \infty$ and let $L^n_k$ be the normed limit of the tree branching from the $k$-th individual in generation $n$.
(So $\Ex L^n_k = 1$, $\Var L^n_k = \si^2/(\mu^2-\mu)$ and $L^0_1 = \lim_{n\to\infty} \mu^{-n} P(n)$ a.s.\ and in mean square.)
Then
\[
\lim_{n\to\infty} \max_{0 \leq k \leq P(n)} \mu^{-n} L^n_k = 0 \mbox{ a.s.}
\]
\end{lem}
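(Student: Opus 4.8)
The plan is to combine a first-moment union bound with the Borel--Cantelli lemma, exploiting the fact that the finite-variance hypothesis forces the normed limit $W := L^0_1$ to have a polynomially decaying upper tail. Write $\calF_n$ for the $\si$-field generated by the first $n$ generations. By the branching property, conditional on $\calF_n$ the subtrees hanging off the $P(n)$ individuals of generation $n$ are independent copies of the whole tree; hence $L^n_1, \ldots, L^n_{P(n)}$ are, conditional on $\calF_n$, i.i.d.\ copies of $W$ and independent of $\calF_n$, each with $\Ex W = 1$ and $\Ex[W^2] = 1 + \si^2/(\mu^2-\mu) < \infty$. Note that the identity $W = \mu^{-n}\sum_{k} L^n_k$ together with $L^n_k \geq 0$ gives only the trivial bound $\max_k \mu^{-n} L^n_k \leq W$, which shows boundedness but not decay, so something finer is needed.

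Next I would control the tail of $W$. Since $W \geq 0$ and $\Ex[W^2] < \infty$, Markov's inequality applied to $W^2$ gives $\Pb(W > t) \leq \Ex[W^2]/t^2$ for all $t > 0$. Fix $\ep > 0$. Conditioning on $\calF_n$ and using the elementary bound $1 - (1-p)^m \leq mp$ for the probability that at least one of $m$ i.i.d.\ variables exceeds a threshold,
\[
\Pb\Big( \max_{1 \le k \le P(n)} L^n_k > \ep \mu^n \,\Big|\, \calF_n \Big) \le P(n)\, \Pb(W > \ep\mu^n).
\]
Taking expectations, using $\Ex P(n) = \mu^n$ and the tail bound above,
\[
\Pb\Big( \max_{1 \le k \le P(n)} \mu^{-n} L^n_k > \ep \Big) \le \mu^n\, \Pb(W > \ep\mu^n) \le \mu^n \cdot \frac{\Ex[W^2]}{\ep^2 \mu^{2n}} = \frac{\Ex[W^2]}{\ep^2\, \mu^n}.
\]

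Because $\mu > 1$, the right-hand side is summable in $n$, so Borel--Cantelli shows that a.s.\ the event $\{ \max_k \mu^{-n} L^n_k > \ep \}$ occurs for only finitely many $n$, whence $\limsup_{n\to\infty} \max_k \mu^{-n} L^n_k \le \ep$ a.s. Taking the intersection over $\ep = 1/m$, $m \in \Z$, $m \geq 1$, yields $\limsup_n \max_k \mu^{-n} L^n_k \leq 0$ a.s., and since the quantities are nonnegative the limit is $0$. (The index $k=0$ in the statement contributes at most one further i.i.d.\ copy of $W$ and is absorbed by replacing $P(n)$ with $P(n)+1$ throughout.)

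The argument is short once the set-up is in place, and I expect the only genuine subtlety to be the verification that the $L^n_k$ really are conditionally i.i.d.\ copies of $W$ with a finite second moment. This is precisely where the hypothesis $\si^2 < \infty$ does the essential work: it is exactly what delivers $\Ex[W^2] < \infty$ and hence the $t^{-2}$ tail bound, and the surviving factor $\mu^{-n}$ is summable because the $\mu^n$ expected individuals are beaten by the $\mu^{2n}$ tail decay. A weaker moment assumption such as $\Ex[P(1)\log^+ P(1)] < \infty$ would still give a nondegenerate $W$ with $\Ex W = 1$ but not obviously a summable tail bound, so the finite-variance hypothesis is used in an essential way.
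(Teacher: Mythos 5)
Your proof is correct, but it takes a genuinely different route from the paper's. You use the same starting point --- the second moment $\Ex[W^2]=1+\si^2/(\mu^2-\mu)<\infty$ of the normed limit and the resulting Chebyshev/Markov tail bound $\Pb(W>t)\le \Ex[W^2]/t^2$ --- but then you finish with a first-moment union bound over the $P(n)$ conditionally i.i.d.\ copies of $W$ in generation $n$, giving $\Pb(\max_k \mu^{-n}L^n_k>\eps)\le \Ex[W^2]/(\eps^2\mu^n)$, which is summable, so Borel--Cantelli closes the argument. The paper instead dominates the $L^n_k$ by i.i.d.\ variables $M^n_k$ with tail $\sim c_0x^{-2}$, invokes Pakes's extreme-value theorem for Galton--Watson trees to obtain the limiting distribution of $\max_k \mu^{-n/2}M^n_k$, and then upgrades this distributional statement to an almost sure one by observing that $n\mapsto\max_k\mu^{-n}L^n_k$ is monotone decreasing along the tree. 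Your argument is shorter, self-contained, and avoids both the external citation and the monotonicity device; the paper's approach yields strictly more information, namely that the maximum in fact decays at the rate $\mu^{-n/2}$ with an identified extremal limit law, which is more than the lemma requires. Your handling of the conditional i.i.d.\ structure via the branching property and your absorption of the stray index $k=0$ are both fine.
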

\begin{proof}
Our proof uses extreme order statistics for Galton-Watson trees (Pakes \citeyear{Pak98}).
From Chebychev's inequality we have $\Pb( L^n_k > x) \leq x^{-2} \Ex (L^n_k)^2 = x^{-2} (1 + \si^2/(\mu^2-\mu))$.
Thus we can find i.i.d.\ random variables $M^n_k$ such that for all $x$, $\Pb( L^n_k > x) \leq \Pb( M^n_k > x) \sim c_0 x^{-2}$.
The law of $M^n_k$ is in the domain of attraction of the extremal distribution $\exp(-x^{-2})$, that is, writing $M$ for a generic $M^n_k$,
\[
\lim_{z\to\infty} z \Pb( M > x\sqrt{c_0z}) = x^{-2}.
\]
Thus from Pakes \citeyearpar{Pak98} Theorem~4.1 we have, for $H_1(x) = \Ex \exp(-x^{-2}P(1))$,
\[
\lim_{n\to\infty} \Pb\left( \max_{0 \leq k \leq P(n)} \mu^{-n/2} M^n_k \leq x  \sqrt{c} \right) = H_1(x).
\]

Finally, $\max_{0 \leq k \leq P(n)} \mu^{-n} L^n_k$ is a strictly decreasing function of $n$ (since each $L^n_k$ can be written as a sum of terms $\mu^{-1} L^{n+1}_j$ for various $j$), so for any $\eps > 0$
\begin{eqnarray*}
\Pb\left( \lim_{n\to\infty} \max_{0 \leq k \leq P(n)} \mu^{-n} L^n_k \leq \eps \right)
&=& \lim_{n\to\infty} \Pb\left( \max_{0 \leq k \leq P(n)} \mu^{-n} L^n_k \leq \eps \right) \\
&\geq& \lim_{n\to\infty} \Pb\left( \max_{0 \leq k \leq P(n)} \mu^{-n} M^n_k \leq \eps \right) \\
&=& \lim_{n\to\infty} \Pb\left( \max_{0 \leq k \leq P(n)} \mu^{-n/2} M^n_k \leq \mu^{n/2} \eps \right) \\
&=& \lim_{x \to \infty} H_1(x)
\;=\; 1.
\end{eqnarray*}
Sending $\eps \to 0$ gives the result.
\end{proof}

\begin{cor} \label{clmchar}
A continuous process $X$ is a continuous time-change of Brownian motion, equivalently a continuous local martingale, if and only if
\begin{enumerate}
\item[{CLM0}] $X(0) = 0$;
\item[{CLM1}] The $Z^l_k$ are i.i.d.\ for all $l$ and $k$, with $\Pb( Z^l_k = 2 i) = 2^{-i}$, $i = 1, 2, \ldots$;
\item[{CLM2}] The $V^l_k$ are i.i.d.\ for all $l$ and $k$, with $\Pb( V^l_k =0) = \Pb( V^l_k =1)=1/2.$ 
\end{enumerate}
\end{cor}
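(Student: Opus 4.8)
The plan is to prove the two implications separately, treating the forward direction as essentially a consequence of the invariance remarks already made and concentrating the work on the reconstruction in the reverse direction.

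For the \emph{only if} direction, suppose $X$ is a continuous local martingale with $X(0)=0$. By the Dambis--Dubins--Schwarz theorem (item~2 of the hierarchy) we may write $X = B\circ\theta$ with $B$ a Brownian motion and $\theta = \langle X\rangle$ a continuous chronometer. As noted just before Lemma~\ref{extremeW.thm}, the subcrossing counts $Z^l_k$ and the excursion orientations $V^l_k$ depend only on the sequence of lattice points that the path visits and the order in which it visits them, both of which are unchanged by a continuous nondecreasing reparametrisation; hence they agree with those of $B$. Since $B$ satisfies BM2 and BM3 by Theorem~\ref{bmchar} (and these combinatorial laws are invariant under spatial scaling, so the scaling of $B$ is irrelevant), CLM1 and CLM2 follow, and CLM0 is immediate.

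For the \emph{if} direction I would argue by explicitly exhibiting $X$ as a continuous time change of a Brownian motion. First I would couple $X$ with a standard Brownian motion $B$ whose crossing tree has \emph{identical} combinatorics: using the hypotheses CLM1 and CLM2, the data $\{Z^l_k, V^l_k\}$ of $X$ have exactly the laws BM2 and BM3, so by the reconstruction in Theorem~\ref{bmchar} one can build $B$ on the same space driven by these same counts and orientations together with independently generated crossing durations obeying BM1. Because the sequence of lattice points at each level is a deterministic function of the counts and orientations, $B$ and $X$ visit the same points in the same order, giving $B(S^l_j) = X(T^l_j)$ for all $l,j$, where $S^l_j$ and $T^l_j$ denote the level-$l$ crossing times of $B$ and $X$. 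Next I would define the time change $\varphi$ on $D_X := \bigcup_l\{T^l_j\}$ by $\varphi(T^l_j) = S^l_j$. Since finer crossing times refine coarser ones consistently for both processes, $\varphi$ is well defined and nondecreasing, and on $D_X$ one has $X = B\circ\varphi$.

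The main obstacle is to show that $\varphi$ extends to a \emph{continuous} chronometer on $[0,\infty)$. Between consecutive level-$l$ crossing times each of $X$ and $B$ stays within $\de 2^l$ of the corresponding crossing value, so $|X(t) - B(\varphi(t))| \le 2\,\de 2^l$ whenever $T^l_j \le t < T^l_{j+1}$; letting $l\to-\infty$ transfers the identity $X=B\circ\varphi$ from $D_X$ to all of $[0,\infty)$, provided the level-$l$ crossing times densely fill the time axis with vanishing mesh. For the constructed $B$ this density and vanishing-mesh property is precisely what Lemma~\ref{extremeW.thm} supplies: the branching structure of the crossing tree is a supercritical Galton--Watson process with mean $\mu=\Ex Z^0_1 = 4$ and finite variance, and for $B$ the rescaled level-$l$ crossing durations converge to the normed limits $L^n_k$, so $\max_k \mu^{-n}L^n_k\to 0$ a.s.\ forces the largest subcrossing duration inside any fixed coarse crossing to be a vanishing fraction of its total. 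A nondecreasing function whose range is dense has no jumps, so the density of the Brownian crossing times makes $\varphi$ continuous, while intervals on which $X$ is constant merely produce intervals of constancy of $\varphi$, consistent with a continuous chronometer. Having exhibited $X = B\circ\varphi$ with $\varphi$ a continuous chronometer, the Dambis--Dubins--Schwarz equivalence identifies $X$ as a continuous local martingale, completing the argument.
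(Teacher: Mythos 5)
Your proposal is correct and follows essentially the same route as the paper: one direction by invariance of $Z^l_k$ and $V^l_k$ under a continuous time change together with Theorem~\ref{bmchar}, and the other by building a Brownian motion with identical crossing-tree combinatorics, transporting the crossing times to define the chronometer, and invoking Lemma~\ref{extremeW.thm} to get continuity. The only loose phrase is ``independently generated crossing durations obeying BM1'' --- the durations must be consistent across levels, which is why the paper takes them as normed limits of the Galton--Watson processes defined by the $Z^l_k$ (Hambly), a construction you implicitly rely on anyway when you apply the lemma.
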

\begin{proof}
The `if' part is clear, since $Z^l_k$ and $V^l_j$ are unaffected by a continuous time-change.

We show the `only if' part as follows.
Let $W^l_k$ be the crossing times of $X$.
Properties CLM0--CLM2 are enough for us to construct a Brownian motion $B$, with subcrossing family sizes $Z^l_k$, whose crossing times $\Wbar^l_k$ are obtained as normed limits of the Galton-Watson processes defined by the $Z^l_k$ (Hambly \citeyear{Ham95}).
By construction we have $B(\Tbar^l_k) = X(T^l_k)$, where $T^l_k = \sum_{j \leq k} W^l_j$ and $\Tbar^l_k = \sum_{j \leq k} \Wbar^l_j$.
Defining $\theta(T^l_k) = \Tbar^l_k$ we get, for $t = T^l_k$, $B(\theta(t)) = X(t)$.

Since $X$ is continuous, for any $t$ we can find a sequence $\{ k(t,l) \}_{l=-\infty}^{\infty}$ such that for all $l$, $t \in [T^l_{k(t,l)}, T^l_{k(t,l)+1})$.
We use this to extend $\theta$ to $[0, \infty)$: for $t \in [0, \infty)$ put $\theta(t) = \lim_{l\to -\infty} \Tbar^l_{k(t,l)}$.
The result now follows provided that $\theta$ is continuous, that is provided $\lim_{l\to -\infty} \Wbar^l_{k(t,l)} = 0$ for all $t$.
From Lemma \ref{extremeW.thm} we see that this holds for almost all sample paths of $B$.

Finally, by construction we have that $B(t)$ and $\theta(t)$ are $\calF_t$ measurable, where $\{ \calF_t \}$ is the filtration generated by $X$.
\end{proof}
\noindent Moreover, by Remark \ref{char.rem} this characterisation generalises to $X(0) = a$ for constant $a \in \R$. Also notice we do not need to assume the time change is independent of the Brownian motion.

\subsection{Small scale diffusive behaviour}
We show that for regular diffusions, at sufficiently small scales, the diffusion dominates the drift and these processes look like (continuous time-changed) Brownian motions. A key idea for these results is the \emph{scale} function (see Appendix \ref{simtech.sec}) which allows computing the probability $p_{\de}(x)$ that a size $\de$ crossing starting at $x$ is an up-crossing.

\begin{lem}
For a continuous strong Markov process $X$, if $p_\de(x)$ is a constant $\neq 0$ or 1 then the $\{ V^0_k \}$  are i.i.d.\ Bernoulli($1/2$).
\end{lem}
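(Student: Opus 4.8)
The plan is to show that the hypothesis forces the process, observed only at its successive hits of the lattice $\de\Z$, to be a spatially homogeneous random walk, and then simply to read off the excursion directions from that walk. First I would note that the successive hitting times of $\de\Z$ are stopping times, so by the strong Markov property the positions at these times form a Markov chain on $\de\Z$; since $X$ is continuous this chain moves by $\pm\de$ at each step, and the hypothesis $p_\de(x)\equiv p$ says its up-step probability is $p$ at every site. A spatially homogeneous Markov chain on a lattice has i.i.d.\ increments, so the embedded walk has increments $\xi_1,\xi_2,\ldots$ that are i.i.d.\ with $\Pb(\xi_i=+\de)=p$ and $\Pb(\xi_i=-\de)=1-p$. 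Every $V^0_k$ depends only on this embedded walk.

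Next I would line the crossing decomposition up with this increment sequence. Starting from the base of an enclosing crossing, the walk takes exactly two steps before it either returns to the base (an excursion, if the two steps have opposite sign) or reaches distance $2\de$ and completes a direct crossing (if the two steps agree); in particular every round trip consumes exactly two increments, and every enclosing crossing consumes an even number of them. Consequently the excursion/direct-crossing decomposition merely pairs the increments into consecutive, disjoint blocks $(\xi_{2i-1},\xi_{2i})$, the opposite-sign blocks being the excursions and the equal-sign blocks the direct crossings. Here the condition $p\neq 0,1$ is exactly what guarantees opposite-sign blocks occur with positive probability, so that the $V^0_k$ are defined at all.

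The conclusion is then immediate. An excursion block is $(+,-)$ with probability $p(1-p)$ and $(-,+)$ with probability $(1-p)p$; these are equal, so conditioning a block on being an excursion makes it up ($V^0=0$) or down ($V^0=1$) with probability $\tfrac12$ each, whatever the value of $p$. Because the blocks are disjoint and the increments are i.i.d., distinct excursions are independent, and since the block law does not depend on position the distribution is identical for every excursion. Re-indexing the opposite-sign blocks in order therefore exhibits $\{V^0_k\}$ as an i.i.d.\ Bernoulli($\tfrac12$) sequence.

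The only genuinely substantive step, and the one I would treat most carefully, is the reduction in the first paragraph: turning ``$p_\de$ constant in $x$'' into ``i.i.d.\ embedded increments'' uses both the strong Markov property and spatial homogeneity, and one must verify that the crossing decomposition aligns with the fixed pairing of increments — each round trip being exactly two steps, and each crossing an even number of steps, so that no block straddles a crossing boundary. Once that alignment is in place, the symmetry $p(1-p)=(1-p)p$ does all the remaining work, and it is worth emphasising that the Bernoulli parameter comes out to be $\tfrac12$ irrespective of the actual bias $p$ of the walk.
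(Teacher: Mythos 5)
Your proof is correct and follows essentially the same route as the paper's: both reduce to the embedded $\pm\de$ walk via the strong Markov property and spatial homogeneity of $p_\de$, and both derive the Bernoulli($1/2$) law from the symmetry $p(1-p)=(1-p)p$ in the conditional probability of an up-excursion given a return, with independence coming from the i.i.d.\ structure of the embedded increments. Your explicit verification that the excursion pairing aligns with consecutive disjoint increment pairs is a detail the paper leaves implicit, but it is the same argument.
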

\begin{proof}
Excursions are equiprobable iff for all $x \in \de \Z$
\[
\Pb\left( X(T^0_{k+1}) = x + \de, \, X(T^0_{k+2}) = x \,|\, X(T^0_k) = x, \, X(T^0_{k+2}) = x \right) = \frac{1}{2}.
\]
That is,
\[
\frac{p_\de(x)(1-p_\de(x+\de))}{p_\de(x)(1-p_\de(x+\de))+(1-p_\de(x))p_\de(x-\de)} = \frac{1}{2},
\]
which clearly holds if $p_\de(x)$ is constant and non-degenerate.
If $p_\de(x)$ does not depend on $x$, then from the strong Markov property the crossing orientations $\{ \al^0_k \}_k$ and thus the excursions must be independent.
\end{proof}

An immediate consequence of this result is that CLM2 holds for any continuous time-change of Brownian motion with drift.
The next lemma shows that for a large class of diffusions, CLM1 and CLM2 hold approximately at small scales.
That is, at small scales, these diffusions looks like continuous local martingales.

\begin{lem}
Suppose $X$ is a continuous regular diffusion on some interval, with differentiable scale function $s$ and non-absorbing boundaries, then as $\de \to 0$, $\{ Z^1_k \}_{k=1}^{N(l)}$ and $\{ V^0_k \}_{k=1}^{N_V(l)}$ converge in finite dimensional distribution to i.i.d.\ sequences, with distributions 2$\cdot$Geometric$_1(1/2)$ and Bernoulli$(1/2)$ respectively.
\end{lem}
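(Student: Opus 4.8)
The plan is to turn the size-$\de$ crossings into a position-dependent random walk via the scale function, show its step probabilities converge to $1/2$ as $\de\to 0$, and then read off the limiting laws of $Z^1_k$ and $V^0_k$ from the symmetric case, which is exactly the Brownian/CLM structure of Theorem~\ref{bmchar} and Corollary~\ref{clmchar}. First I would record the embedded chain. By the strong Markov property of $X$, the positions $X(T^0_j)$ at the size-$\de$ crossing times form a Markov chain on $X(0)+\de\Z$ whose only randomness is the up/down choice, with up-probability $p_\de(y)$ from state $y$. Since $s(X)$ is a local martingale, the gambler's-ruin formula (Appendix~\ref{simtech.sec}) gives
\[
p_\de(y) = \frac{s(y)-s(y-\de)}{s(y+\de)-s(y-\de)} .
\]
Because excursions return to the start of the crossing, a size-$2\de$ (level $1$) crossing from $y$ is built from i.i.d.\ trials of pairs of size-$\de$ subcrossings, each pair being a \emph{direct} crossing (which terminates it) or an \emph{excursion} (which returns to $y$); thus $Z^1_k$ is twice a geometric random variable, and each $V^0_k$ is the orientation of such an excursion, whose conditional up-probability was computed in the preceding lemma.

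Next I would establish the key limit $p_\de(y)\to 1/2$. Dividing top and bottom by $\de$, both $(s(y)-s(y-\de))/\de$ and $(s(y+\de)-s(y))/\de$ tend to $s'(y)$ by differentiability of the (strictly increasing) scale function, so at a point where $s'\neq 0$ we get $p_\de(y)\to s'(y)/(2s'(y)) = 1/2$; by the mean value theorem and uniform continuity of $s'$ this convergence is uniform for $y$ in any compact subinterval on which $s'$ is bounded away from $0$. Feeding $p_\de\to 1/2$ into the trial probabilities $p_\de(y)p_\de(y+\de)+(1-p_\de(y))(1-p_\de(y-\de))$ for a direct crossing, and into the ratio from the preceding lemma for an up-excursion, shows each tends to $1/2$. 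Hence for fixed $k$ the success probability of the geometric count tends to $1/2$, so $Z^1_k\convergedist 2\cdot\mathrm{Geometric}_1(1/2)$ with $\Pb(Z=2i)=2^{-i}$, and $V^0_k\convergedist\mathrm{Bernoulli}(1/2)$.

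Finally, to upgrade these marginals to finite-dimensional convergence of \emph{i.i.d.} sequences, I would fix a finite index set and condition on the starting positions of the relevant crossings (and hence excursions). By the strong Markov property the crossings are conditionally independent given these positions, so the corresponding $Z^1_k$, and likewise the $V^0_k$, are conditionally independent. For any fixed number $m$ of crossings the starting positions all lie within $2m\de$ of $X(0)$, a neighbourhood shrinking to a point as $\de\to 0$; on it $p_\de\to 1/2$ uniformly, so every conditional law converges to the same position-free symmetric-walk limit. The joint conditional law therefore converges to a product of identical limits, which simultaneously removes the conditioning and the dependence and yields the claimed finite-dimensional convergence.

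The main obstacle is precisely this decoupling step. For fixed $\de$ the $Z^1_k$ (resp.\ $V^0_k$) are neither independent nor identically distributed, because $p_\de$ varies with position and the starting points are themselves random; the argument succeeds only because the limiting law is \emph{independent of position} and the starting points concentrate at $X(0)$. Some care is also needed that, with non-absorbing boundaries and $\de\to 0$ about an interior starting point, every crossing in the finite collection completes with probability tending to one, so that the counts and orientations are well defined in the limit.
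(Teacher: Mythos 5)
Your proposal is correct and follows essentially the same route as the paper: both arguments reduce everything to the limit $p_\de(x)\to s'(x)/(2s'(x))=1/2$ from equation (\ref{hitprob.eq}) and then read off the geometric law of $Z^1_k$ and the Bernoulli law of $V^0_k$ from the resulting symmetric random walk structure, exactly as in the proof of Theorem \ref{bmchar}. Your version is in fact tighter on the one point the paper glosses over -- for fixed $\de$ the up-probabilities do depend on position, and your observation that finitely many crossings stay within $O(\de)$ of $X(0)$, where $p_\de\to 1/2$ uniformly, is the missing detail that justifies the finite-dimensional convergence to an i.i.d.\ limit.
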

\begin{proof}
For any $x$ in the interior of the range of $X$ we have from (\ref{hitprob.eq}) that
\[
\lim_{\de \to 0} p_\de(x) = \frac{s'(x)}{2s'(x)} = 1/2.
\]
If $p_\de(x)$ does not depend on $x$, then from the strong Markov property the crossing orientations $\{ \al^0_k \}_k$ must be independent.
Thus we see that as $\de \to 0$, $\{ \al^0_k \}$ converges in finite dimensional distribution to an i.i.d.\ Bernoulli($1/2$) sequence.
That is, $\{ X(T^0_k) \}_k$ converges to a simple symmetric random walk.
The result now follows as per the proof of Theorem \ref{bmchar}.
\end{proof}

\subsection{Test statistics} \label{tests.subsec}
We test the continuous martingale hypothesis by testing properties CLM1 and CLM2 of Corollary \ref{clmchar}.
Formally, property CLM1 characterises the sample-path variation while CLM2 characterises the symmetry of the sample paths. To test CLM1 we need to test the distribution of the $Z^l_k$ and their independence. To test CLM2 we will test that for each level, amongst the up-down and down-up pairs of subcrossings, each has probability $1/2$ and they are independent. The last pair for any subcrossing is up-up or down-down, and is not included in the testing.

We suppose that we have points $\{(T_i,X_i),\,i=0,\ldots,M\}$, with  $X_i=X(T_i)$ as observations of the continuous process $X(\cdot)$ from time interval $[0, T]$. With this we can construct a continuous process on $[0, T]$ by connecting consecutive points by linear interpolation to yield
\[Y(0)=X(0),\, Y(t)=\frac{X_{k}-X_{k-1}}{T_k-T_{k-1}}(t-T_{k-1})+X_{k-1},\, t \in (T_{k-1},T_k]. \]
Then for a given  $\delta>0$ and $\de_0 \in \R$ we can construct at level 0 (i.e., size $\de$) crossing times $T^0_k \in [0, T]$ and corresponding crossing types $\al^0_k$. In practise we use $\delta=\text{median}_{k=1,\ldots,M}\{|X_k-X_{k-1}|\}$ for the level 0 crossing size, but this is arbitrary and not crucial since we test across multiple levels $2^l\de$. Since we also simulate crossing data it is helpful to use a definition that recovers exactly the crossing size in the data when it has that form.

For the choice of $\de_0$, although $\de_0=0$ seems natural, there are advantages to other choices.  Under the null hypothesis, if $\de_0$ is independent of $\{X(T_k^0),\,k>0\}$  the distribution and i.i.d. properties of  $\{V^l_k\}$ and $\{Z^l_k\}$ remain unchanged. For example, one could take $\de_0=X(0)$, since under the null hypothesis $X(t)-X(0)$ is a continuous local martingale and $X(0)$ is independent of the Brownian motion $\{X(\theta^{-1}(t)),\,t>0\}$ \citep[p. 178]{ks91}.

To improve the power of our test against stationary alternatives (e.g., Ornstein-Uhlenbeck processes) we use something slightly more complicated. We imagine our data is $\{(T_k,X_k),\,k=-30,\ldots,M\}$, and use $\{(T_k,X_k),\,k=-30,\ldots,-1\}$ to form the first 30 crossings $\{(T_{k}^0,Y(T_{k}^0),k=-30\ldots,-1\}$. We use those to form the `latticised mean' $\de_0=\sum_{k=-30}^{-1}Y(T^0_k)/30$, and then use $\{(T_k,X_k),\,k=0,\ldots,M\}$ to form $Y(t)$,  $\{V^0_k\}$, and $\{Z^0_k\}$. We estimate the mean using crossings, not the data directly, to ensure $\de_0$ is independent of the tree, since dependence could enter though $\theta(\cdot)$.
All of the simulation results reported below were tested three ways, using $\de_0$ as 0, $X_0$ (the first data point), or the `latticised mean' just described. The latticised mean shows discriminatory power as good or better than the other two possibilities. Substantially larger power was seen for mean reverting alternatives where the mean is non-zero. Since the type 1 error is unaffected we feel using this `latticised mean' is the best for building the crossing tree. We leave for future work determining how much data to dedicate to estimating the mean.

From these definitions we can derive the crossing times $T^l_k$, excursion types $V^l_k$ and subcrossing family sizes $Z^l_k$ for level 1 up to some maximally observed level $m$.
Throughout, let $N(l)$ and $N_V(l)$ be the number of observed crossings and excursions, respectively, at level $l$ for $l = 0, 1, \ldots, m$ . Note that we do not observe $Z^l_k$ for $l = 0$.  Also note that since the tests on $\{Z^l_k\}$ and $\{V^l_k\}$ are invariant to changes in the time co-ordinate of the data, it is enough to have timeseries of the form $\{(i,X_i),\,i=0,\ldots,M\}$.

With the crossing tree for the data thus defined, we can state more precisely the basic question our test addresses. When viewed crossing the lattice $\de_0+2^l\de \Z$, does a linear interpolation of the data process look like a continuous time-changed Brownian motion?

For $\de$ very small, the linear interpolation will provide numerous crossings between data points  and $Z^l_k=2$ will be frequently observed at small levels $l$. The test results will be an artifact of the small $\de$. In these cases the null hypothesis will be rejected simply as a consequence of small $\de$.  Practically, this is a familiar issue when using discrete data where continuous paths are assumed. At resolutions that are too fine, the data doesn't ``look'' continuous, but this isn't the main point of the test which concerns whether the data is from a continuous time-changed Brownian motion. Practically we just test at larger $\delta$ too.

It is common to have observations of a process at regular (e.g., hourly, daily) points in time. For building the crossing tree, this will generally mean some crossings are unseen. This effect is mitigated if the time scale of the observations is small compared to the time scale of the crossings. Alternatively, if we have observations of a process when it changes (e.g., financial tick data) then we don't lose crossings and are just testing continuity as part of the test.

All of our tests using subcrossings are be based on $\{ Z^l_1, Z^l_2, \ldots, Z^l_{N(l)} \}$ and $\{ V^l_1, V^l_2, \ldots, V^l_{N_V(l)} \}$  for each fixed $l  \in \{1, 2, \ldots, m \}$.
This allows us to examine the process $X$ {\em at different scales}.
This is very important from a practical perspective for two reasons.
Firstly, observed processes invariably have a limiting resolution below which they are discrete rather than continuous, so continuity is a modelling assumption that can only apply above a certain scale.
Secondly, one often sees different mechanisms at work at different scales and we should not expect a single model, such as continuous time-changed Brownian motion, to be a good approximation at all scales.
For finance in particular, it is generally accepted that markets exhibit microstructure at small scales that is not seen at larger scales.

To relate the physical scale of crossings to a temporal scale we use average crossing times.
An observation of $Z^l_k$ is based on level $l-1$ crossings (of size $\de 2^{l-1})$.
Accordingly we use $\Ex W^{l-1}_k$ for the temporal scale corresponding to the physical scale $\de 2^{l-1}$, which we test using the $Z^l_k$. We remark that in general, for fixed $l$, the $W^l_k$ are not stationary, so care needs to be taken when estimating and interpreting $\Ex W^l_k$.
In particular high-frequency financial data typically exhibits daily and weekly periodicity in volatility, which is expressed as daily and weekly periodicity of the $W^l_k$ \citep{AJ06}.

\subsection{Distribution Tests}

We consider several tests of the null hypothesis that, for each level $l$, the subcrossing sizes $\{ Z^l_1, Z^l_2, \ldots, Z^l_{N(l)} \}$ form a random sample of values from a 2$\cdot$Geometric$_1$(1/2) distribution. 

\subsubsection{Twos Test}
Under the null hypothesis, $\{ Z^l_1, Z^l_2, \ldots, Z^l_{N(l)} \}$ is a random sample of values from 2$\cdot$Geometric$_1$(1/2), and so `2' is expected to be the most frequent number. Let $T$ be the number of twos in $\{ Z^l_1, Z^l_2, \ldots, Z^n_{N(l)} \}$. Then, conditional on $N(l)=M$, $T$ has a Binomial($M$,$1/2$) distribution under the null hypothesis. This is exact even for short datasets where tests based on asymptotic methods may have problems. We reject the null hypothesis when the value of $T$ is too close to 0 or $M$.

\subsubsection{$\chi^2$-Distribution test}
We use a $\chi^2$-test to compare the empirical distribution of the $\{ Z^l_k \}_{k=1}^{N(l)}$ against the distribution given in CLM1, that is $2\cdot\mbox{Geometric}_1(1/2)$.

For $i \geq 1$ put $O_i = \#\{ k \,:\, Z^l_k = 2i \}$ and $E_i = N(l) 2^{-i}$ (observed and expected frequencies).
Let $O_{i+} = \sum_{j \geq i} O_j$ and $E_{i+} = \sum_{j \geq i} E_j$, then our test statistic is
\begin{equation} \label{testD}
D(l,d) = \sum_{i=1}^{d-1} \frac {(O_i - E_i)^2}{E_i} + \frac{(O_{d+} - E_{d+})^2}{E_{d+}}.
\end{equation}
Under the continuous martingale hypothesis, for fixed $l$ and $d$, $D(l,d) \convergedist \chi^2_{d-1}$ as $N(l) \to \infty$.
We reject the hypothesis (at level $l$) if $D(l,d)$ is large.

It is usually recommended that the approximate $\chi^2$ distribution is only used if the expected frequencies are at least 5.
In our case the smallest expected frequencies are $E_{d-1} = E_{d+} = N(l) 2^{-(d-1)}$. Since the smallest sensible value for $d$ is 3 (2 degrees of freedom), this would suggest that:  1) don't apply the test for  $N(l) < 20$, 2) use $d=3$ for $ 20 \le N(l) <40$, 3) use $d=4$ for $ 40 \le N(l) <80$ and $d_{N(l)}=\lfloor \log_2(N(l)/5)+1 \rfloor$, $N(l) \ge 20$ in general. 

Since under the null hypothesis we know we have a $2 \cdot$ Geometric$_1$(1/2) distribution, we have been able to improve on the rule of thumb. For $N(l) \le 13$ we still do not apply the test. For each of the 26 values for $N(l)$ in  $\{14, 15, \ldots,39\}$ we used Monte Carlo simulations to obtain empirical critical values. For each $M$ in  $\{14, 15, \ldots,39\}$,  10,000,000 datasets were used to create empirical critical values $C_{M,\,0.95}$ corresponding to the 0.95 quantile of the observed test statistic with $d=3$. We then reject the null hypothesis if $D(l,3) \ge C_{N(l),\,0.95}$. 

Following numerous simulations, for $N(l) \ge 40$ we have found using two extra bins, so $d_{N(l)}+2$, is preferable to $d_{N(l)}$. Simulations for various values in $ 40 \le N(l) \le 300$ show that for a test at 5\% significance level, the type 1 error is usually not much larger (e.g. 5.2\% vs. 4.8\%), and is far outweighed by the substantial power the test gains.

We note here that stationarity of the $Z^l_k$ can also be examined using a $\chi^2$-test.
If we split the sample into $m$ parts, calculate $D(l,d)$ over each part then sum, the resulting statistic will be asymptotically $\chi^2_{m(d-1)}$.

\subsubsection{G Test}
Besides the $\chi^2$ tests described above we implemented several other tests. The so-called `G-test' based on the log-likelihood ratio forms the test statistic
\[G_d = 2 \left(\sum_{i=1}^{d-1} O_i\ln(O_i/E_i) + O_{d_+}\ln\left(O_{d_+}/E_{d_+}\right)\right)\]
and has the same limiting $\chi^2_{d-1}$ distribution as $N(l) \to \infty$. We used the basic rule $d_{N(l)}=\lfloor \log_2(N(l)/5)+1 \rfloor$.

\subsubsection{Discrete Kolmogorov-Smirnov}
For continuous data, the Kolmogorov-Smirnov goodness-of-fit test is thought to be more powerful than a ``binned'' test like the $\chi^2$ because the former makes use of the natural order to the data which is lost through binning \citep{horn77}. As such, a Kolmogorov-Smirnov test for discrete data should be preferable. Unlike with copntinuous data, the distribution of the test statistic is not distribution free. \citet{conover72} provides a way to approximate critical values for short datasets. \citet{wood78} suggest estimating the critical value of the test statistic using a Monte Carlo simulation to generate multivariate Normal vectors with calculable covariance matrix, with dimension one less than the data length. Our approach is more straightforward. Using standard definitions for datasets of length $M$, $F_M(x)= \# \{X_i \le x\}/M$ and $H(x)$ are the empirical c.d.f. and the c.d.f. under the null hypothesis. The two-sided test statistic is 
\[D_M=\max_J M^{1/2}|H(x)-F_M(x)| \]
where the maximum is over the set of jump points $J$. For each data length $M$ from 2 to 1000, 1,000,000 datasets were simulated under the null hypothesis to form as many test statistics. The 0.95 quantile of those provided the critical values $C_{M,\,0.95}$. If $N(l) \le 1000$ we reject the null hypothesis if $D_{N(l)} > C_{N(l),\,0.95}$. If $N(l) > 1000$ we reject if $D_{N(l)} > C_{1000,\,0.95}$, using $C_{1000,\,0.95}$ as our estimate of the asymptotic critical value, which the data suggests is reasonable.

\subsubsection{KLP 1998}
Finally, a test based on the first two moments is described by \citet{klp98}. In particular, we use their results to test if the data is from the particular negative Binomial distribution that is the geometric distribution. Their test statistic $T_{NB}$ is built around a standardized version of
\[ \hat{c}= \frac{\frac{1}{n}\sum_{i=1}^n X_i(X_{i}-1)}{\left(\frac{1}{n}\sum_{i=1}^n X_i\right)^2}, \]
and is asymptotically $N(0,1)$. Thus we reject if $|T_{NB}|>C_{0.975}$ where $C_{0.975}$ is the 0.975 quantile of the $N(0,1)$ distribution, which asymptotically would provide a test at 5\% significance.

\subsection{Independence tests}
Testing for independence in a sequence of (stationary) random variables is not straight-forward. In the absence of a sufficient omnibus test we have tried a variety of tests, and report on the use of several key ones here.

Linear dependence can be measured using the autocorrelation, and we include such a test. One can consider the autocorrelation at many lags, however we restrict ourselves to lag 1 autocorrelation because in practice a short term dependence in the increments of $X$, and thus a lag 1 dependence in $\{ Z^l_k \}_{k=1}^{N(l)}$, is the most plausible departure from independence.

The most fundamental approach to testing the independence of a sequence $A_1, A_2, \ldots$ is, for $r = 2, 3, \ldots$, to estimate the joint distribution of $A_{k}, A_{k+1}, \ldots, A_{k+r-1}$ and compare this with the $r$-th power of the estimated marginal distribution. For continuous distributions, estimation of joint distributions can be done in a distribution free manner, and there are general approaches to testing independence using estimates of joint distributions. See for example \citet{BKR61} and \citet{Rob91}. For estimating joint distributions of discrete random variables, ad-hoc approaches are required.

Other existing tests are built around runs and clustering in the data. Fix $l$ and let $I_k = 1$ if $Z^l_k = 2$ and $I_k = 0$ otherwise. Let $U_i$ and  $V_i$ be the length of the $i$-th run formed by consecutive zeros and ones, respectively. Let $n_0$ and $n_1$ be the number of zeros and ones in $\{I_1,\ldots,I_{N(l)}\}$.  Under the continuous martingale hypothesis $I_1, \ldots, I_{N(n)}$ are an i.i.d.\ Bernoulli($1/2$) sequence, and $\{U_i,\,i=1,\ldots,N_0\}$ and $\{V_i,\,i=1,\ldots,N_1\}$ are two i.i.d sequences, where $N_0$ and $N_1$ are the number of runs of zeros and ones respectively.  Tests of independence can be built around the binary sequence $\{I_1,\ldots,I_{N(l)}\}$ and its related quantities.

There are a variety of other tests that could be used, but it is impossible to implement and report on all of them. One such test, designed to detect non-linear dependence using the so-called correlation dimension, is that of \citet{BDSLeB96}. We feel the tests we have included are a fair cross-section of the known tests for independence for finite distributions.

\subsubsection{Autocorrelation test}
There are many approaches to testing for linear dependence in a sequence of random variables, based on the autocovariance.
In particular those of \citet{DW50}, \citet{BP70}, \citet{LMacK88} (variance ratio) and \citep{Dur91} (spectral approach).
However for simplicity we restrict ourselves to the lag 1 autocorrelation.  

Let $\Zbar^l$ be the average of $Z^n_1, \ldots, Z^l_{N(l)}$, then our test statistic is
\begin{equation} \label{testA}
I_1(l) = \frac{ \sum_{k=1}^{N(l)-1} (Z^l_{k+1} - 4)(Z^l_k - 4)} {\sum_{k=1}^{N(l)} (Z^l_k - \Zbar^l)^2}.
\end{equation}
Note that we use the known mean of $Z^l_k$ in the numerator but not the denominator, as this helps inflate $I_1(l)$ when the null hypothesis is false. Using {\it sample means} in the numerator, we know from Kendall, Stuart and Ord \citeyearpar{KSO83} that $I_1(l) \approx N(-1/N(l), 1/N(l))$ for $N(l)$ large. However, using the known means under the null hypothesis, we have the following result.

\begin{lem}
\[ \sqrt{N(l)}I_1(l) \convergedist N(0,1) \text{ as } N(l) \rightarrow \infty. \]
\end{lem}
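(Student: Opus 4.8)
The plan is to exploit that, under the null hypothesis, the numerator of $I_1(l)$ is a martingale, and to combine a martingale central limit theorem for the numerator with a law of large numbers for the denominator through Slutsky's theorem. Write $N = N(l)$. Under CLM1 the $Z^l_k$ are i.i.d.; from $\Pb(Z^l_k = 2i) = 2^{-i}$ one computes $\mu := \Ex Z^l_k = 4$ (which is why $4$ appears in the numerator), $\sigma^2 := \Var Z^l_k = 8$, and all higher moments are finite since the geometric law has exponential tails.

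First I would center, setting $Y_k = Z^l_k - 4$, so the $Y_k$ are i.i.d.\ with mean $0$ and variance $\sigma^2$ and the numerator is $S_N := \sum_{k=1}^{N-1} Y_{k+1}Y_k$. With $\mathcal{G}_k = \sigma(Y_1, \ldots, Y_k)$ the increments $D_k := Y_k Y_{k-1}$ satisfy $\Ex[D_k \mid \mathcal{G}_{k-1}] = Y_{k-1}\Ex[Y_k] = 0$, so $(S_n)$ is a mean-zero martingale. Its predictable quadratic variation is $\sum_{k=2}^N \Ex[D_k^2 \mid \mathcal{G}_{k-1}] = \sigma^2 \sum_{j=1}^{N-1} Y_j^2$, and since the $Y_j^2$ are i.i.d.\ with mean $\sigma^2$ the strong law gives $N^{-1}\sum_{j=1}^{N-1} Y_j^2 \to \sigma^2$ a.s., so the normalized predictable quadratic variation converges to the constant $\sigma^4$.

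Next I would apply a martingale central limit theorem (see, e.g., Hall \& Heyde). Its hypotheses are the convergence of the normalized predictable quadratic variation just established and a conditional Lindeberg condition, the latter following routinely from $\Ex D_k^2 = \sigma^4 < \infty$ (in fact from $\Ex Y_k^4 < \infty$) together with stationarity; this yields $S_N/(\sigma^2\sqrt N) \convergedist N(0,1)$, equivalently $S_N/\sqrt N \convergedist N(0,\sigma^4)$. For the denominator, the law of large numbers gives $N^{-1}\sum_{k=1}^N (Z^l_k - \Zbar^l)^2 \to \sigma^2$ a.s. Writing
\[
\sqrt N\, I_1(l) = \frac{S_N/\sqrt N}{N^{-1}\sum_{k=1}^N (Z^l_k - \Zbar^l)^2},
\]
Slutsky's theorem then gives $\sqrt N\, I_1(l) \convergedist N(0,\sigma^4)/\sigma^2 = N(0,1)$. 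The cancellation of $\sigma^2$ shows why no nuisance scale parameter survives, and also why using the known mean in the numerator --- rather than the sample mean, which carries the $-1/N$ bias noted above --- yields this clean centered limit.

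The step needing the most care is verifying the martingale CLT hypotheses rather than merely quoting them: specifically, confirming the conditional Lindeberg (or a Lyapunov) condition for the array $\{D_k/\sqrt N\}$ and, crucially, that the predictable quadratic variation converges to a \emph{deterministic} constant, since it is convergence to a constant --- not merely to a random limit --- that licenses the unconditional normal conclusion. A secondary subtlety is that $N = N(l)$ is itself random; the statement is to be read as a limit along the deterministic sample size, the i.i.d.\ structure of $\{Z^l_k\}$ under the null rendering the conditioning on $N(l)$ harmless.
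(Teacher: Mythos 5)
Your proof is correct, and it reaches the conclusion by a genuinely different route from the paper's for the key step. Both arguments agree on the setup (computing $\Ex Z^l_k = 4$, $\Var Z^l_k = 8$, hence $\Var U^l_k = 64$) and on the endgame (law of large numbers for the denominator plus Slutsky), but they differ in how the central limit theorem for the numerator $\sum_k (Z^l_{k+1}-4)(Z^l_k-4)$ is obtained. The paper observes that the summands form a stationary $1$-dependent sequence with uncorrelated terms and cites the Ibragimov--Linnik CLT for $m$-dependent sequences, which disposes of the numerator in one line. You instead exploit the martingale-difference structure $\Ex[Y_kY_{k-1}\mid\mathcal{G}_{k-1}]=0$, compute the predictable quadratic variation $\si^2\sum_j Y_j^2$, show its normalized version converges a.s.\ to the deterministic constant $\si^4$, and invoke a martingale CLT with a conditional Lindeberg condition. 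Your route costs a little more verification (the Lindeberg condition, and the conditional rather than unconditional second-moment computation) but is more transparent about \emph{why} the limit is Gaussian with the stated variance, and it would generalize to settings where the summands are martingale differences without being $m$-dependent; the paper's route is shorter because $1$-dependence is immediate here. Your correct identification of the limit as $N(0,\si^4)/\si^2 = N(0,1)$, and your remark on why the known mean appears in the numerator but the sample mean in the denominator, match the paper's intent exactly. The one point I would tighten is the closing aside about $N(l)$ being random: the paper, like you, treats the limit as being along a deterministic sample size (the statistic is effectively analysed conditionally on $N(l)$), so this is consistent with the intended reading, but as written your justification (``the i.i.d.\ structure renders the conditioning harmless'') is an assertion rather than an argument; it is harmless here but worth flagging as a modelling convention rather than a proved step.
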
 

\begin{proof}
For fixed $l$, under the null hypothesis, $\{ Z^l_k \}_{k=1}^{N(l)}$ is an i.i.d. sequence with common distribution $2\cdot \text{Geometric}_1(1/2)$. So, for all $k$ we have $\Ex[Z^l_k]=4$ and Var$[Z^l_k]=8$. Now, let $U^l_k=(Z^l_{k+1} - 4)(Z^l_k - 4)$ and 
\[T_l=\frac{\sum_{k=1}^{N(l)-1} U^l_k}{8\sqrt{N(l)-1}}.\] We have $\Ex[U^l_k]=0$, $\Var[U^l_k]=64$, $\text{Cov}(U^l_i,U^l_j)= 0$ for $i \ne j$ and $\Var[T_l]=1$. Since  $\{U^l_k,\, k=1,\ldots,N(l)-1\}$ is a stationary $m$-dependent sequence with $m=1$, by Ibragimov and Linnik \cite[Theorem 19.21]{IbragLin}, $T_l \convergedist N(0,1)$ as $N(l) \rightarrow \infty$. By consistency of the sample variance estimator,
\[\frac{8(N(l)-1)}{\sum_{k=1}^{N(l)}(Z^l_k - \Zbar^l)^2} \convergedist 1 \text{ as } N(l) \rightarrow \infty, \text{ and } \]
\[\sqrt{N(l)}I_1(l)=\frac{\sqrt{N(l)}}{\sqrt{N(l)-1}} \frac{8(N(l)-1)}{\sum_{k=1}^{N(l)}(Z^l_k - \Zbar^l)^2}T_l \convergedist N(0,1) \text{ as } N(l) \rightarrow \infty.\]

\end{proof}

Thus $I_1(l) \approx N(0, 1/N(l))$ for $N(l)$ large and we reject the continuous martingale hypothesis (at level $l$) if $|I_1(l)|$ is large. In practice we found this asymptotic result has acceptable error under the null hypothesis for $N(l) >100$.  For $N(l) \le 100$ we used empirical critical values again. In particular, for $M$ in $\{5,6,\ldots 100\}$ we simulated 10,000,000 datasets of length $M$ to create estimated critical values $C_{M,\,0.025}$  and $C_{M,\,0.975}$ using the 0.025 and 0.975 quantiles of the generated test statistics. For $5 \le N(l) \le 100$ we reject the continuous martingale hypothesis (at level $l$) if $I_1(l) \le C_{N(l),0.025}$ or $I_1(l) \ge C_{N(l),0.975}$. We found using both critical values more robust than using one and assuming $|I_1(l)|$ is symmetric. For $N(l)<5$ there just isn't enough data to reject the null hypothesis.

\subsubsection{Joint Distribution test}

Because estimating joint distributions requires a lot of data, we restrict ourselves to the bivariate case. In our case the joint distribution of $Z^l_k$ and $Z^l_{k+1}$ is known under the null hypothesis, so we can compare the empirical joint distribution against the known distribution using a $\chi^2$-test. Fix $l$ and split the observations $Z^l_1, \ldots, Z^l_{N(l)}$ into pairs, $(Z^l_{2k-1}, Z^l_{2k})$ for $k = 1, \ldots, \lfloor N(l)/2 \rfloor$.
For $i, j \geq 1$ put $O_{i,j} = \#\{ k \,:\, (Z^l_{2k-1}, Z^l_{2k}) = (2i, 2j) \}$ and $E_{i,j} = \lfloor N(l)/2 \rfloor 2^{-(i + j)}$ (observed and expected frequencies), make the definitions
\[O_{i+,j} = \sum_{h \geq i} O_{h,j},\, O_{i,j+} = \sum_{k \geq j} O_{i,k},\, O_{i+,j+} = \sum_{h \geq i, k \geq j} O_{h,k}\]
and similarly for $E_{i+,j}$, $E_{i,j+}$ and $E_{i+,j+}$.
Our test statistic is
\begin{align} \label{testJ}
\begin{split}I_2(l,d) =& \sum_{i,j=1}^{d-1} \frac {(O_{i,j} - E_{i,j})^2}{E_{i,j}}
+ \sum_{i=1}^{d-1} \frac {(O_{i,d+} - E_{i,d+})^2}{E_{i,d+}}\\
+&\sum_{j=1}^{d-1} \frac {(O_{d+,j} - E_{d+,j})^2}{E_{d+,j}}
+ \frac{(O_{d+,d+} - E_{d+,d+})^2}{E_{d+,d+}}.
\end{split}
\end{align}
Under the continuous martingale hypothesis, for fixed $l$ and $d$, $I_2(l,d) \convergedist \chi^2_{d^2-1}$ as $N(l) \to \infty$.
We reject the hypothesis (at level $l$) if $I_2(l,d)$ is large.

It is usually recommended that the approximate $\chi^2$ distribution is only used if the expected frequencies are at least 5.
In our case the smallest expected frequency is $E_{d+,d+} = \lfloor N(l)/2 \rfloor 2^{-2(d-1)}$. Since the smallest sensible value for $d$ is 3 this would suggest we need $N(l) \geq 160$. Following extensive simulations we have found we can use $d=3$ for $N(l) \ge 10$, significantly smaller than 160. Figure \ref{jointdist_perf.fig} shows the rejection rate and 95\% confidence intervals from simulations of 100,000 datasets of length $M$ between 10 and 100 corresponding to the null hypothesis. These show a rejection rate of at most 5.7\% from a test of significance level 5\%, which we believe is acceptable considering the benefit from applying the test to much shorter datasets. We do not apply the test to datsets with length below 10.

This test could reject either from bi-variate dependence or a departure from the hypothesized marginal geometric distribution. Labelling this test as an independence test is very deliberate on our part. Notice that if the sequence of subcrossings $\{Z_k^l\}$ is randomly permuted, the same test could also be applied, but the cause of any departure from the null hypothesis is confined to a departure from the geometric distribution. In fact we have done this with our simulation data and found, consistently, that a substantially smaller percentage of paths is rejected after permuting the subcrossings. This tells us that dependence, not distribution, is the main reason for rejection with the method.

\begin{figure}[ht]
\resizebox{3in}{!}{\includegraphics{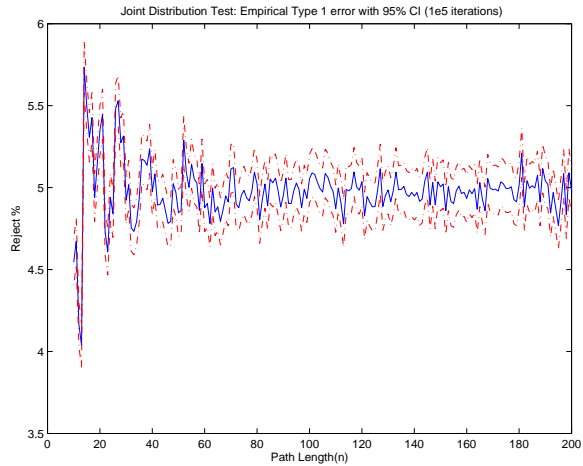}}
\caption{Empirical type 1 error with 95\% confidence interval for our joint distribution test for datasets of length $n\geq10$ using $10^5$ independent iterations for each $n$. \label{jointdist_perf.fig}}
\end{figure}

\subsubsection{Wald and Wolfowitz}
The test of Wald and Wolfowitz \citeyearpar{wald40} is based on the \textit{number} of runs in $\{I_1,\ldots,I_{N(l)}\}$. Too few indicates a tendency to cluster, too many indicates a tendency for values to alternate. Asymptotically the test statistic is $N(0,1)$. It is implemented in Matlab as `runstest' which allows computing a $p$-value using the exact distribution for all lengths instead of the asymptotic $N(0,1)$ limiting distribution.

\subsubsection{O'Brien 1976}
The test of O'Brien \citeyearpar{obrien76} applies the results of Dixon \citeyearpar{dixon40} for two independent samples to the situation of the binary sequence, and is sensitive to multiple clustering. Without loss assume $n_1>n_0$, so ones are more numerous. This test uses the biased version of the sample variance $s^2$ of the sequence $\{V_1,\ldots,V_{N_1}\}$, conditional on $n_0$ and $n_1$. Too much or too little variability indicates a tendency to cluster. O'Brien suggests that for $c s^2$ where $c$ is known the distribution is asymptotically $\chi^2_{\,\nu}$ where 
$\nu$ is a function of $n_0$ and $n_1$, and a table of critical values can be used when $n_0$ and $n_1$ are less than or equal to 10. Unfortunately $\nu$ can be a non-integer, which leads to some approximation error rounding $\nu$  up or down. Using an idea alluded to in Dixon, instead of a $\chi^2$ distribution we use a Gamma distribution for the test statistic, which allows the shape parameter to be fractional and for less approximation.  For $n_0$ and $n_1$ less than or equal to 10 we use empirically generated critical values for a test at 5\% significance.

\subsubsection{O'Brien  \& Dyck 1985}
O'Brien and Dyck \citeyearpar{obrien85} extend the sample variance-based method to using a weighted combination of the sample variances for the zeros and ones: $T=c_0 s_0^2+c_1 s_1^2$ which is approximately $\chi^2_{\nu_0+\nu_1}$ where the degrees of freedom can be calculated. The weights $c_1$ and $c_2$ are chosen so that $c_0 s_0^2$ and $c_1 s_1^2$, which are two asymptotically independent $\chi^2$ statistics on their own, have equal scale parameter. This approach again requires rounding up or down to create an integer degrees of freedom, or using a Gamma distribution. Potthoff \citeyearpar{potthoff} raises several criticisms of the choice of the weightings, and discusses alternatives, but goes no better in establishing which combination might be better. He also suggests that since the contributions of $s_0^2$ and $s_1^2$ are asymptotically independent Normal, the sum is asymptotically Normal, with known mean and variance. We have tried four test statistics with related asymptotic distribution, three using the weighted sum as either a  $\chi^2$ distribution with degrees of freedom rounded up or down, or a gamma distribution. Our fourth test statistic is Potthoff's sum which, when standardized, is asymptotically $N(0,1)$ and does not involve any weights. We found the Normal approximation lacked power, and rounding the degrees of freedom down consistently rejected too much. As such, we will only report on results using the gamma distribution approximation of the weighted sum.

\subsubsection{Larsen}
The test of Larsen et al. \citeyearpar{larsen73} is a  test for unimodal clustering. For $n_1$ successes let the \textit{locations} of those successes be $1 \le R_1 < R_2 < \cdots <R_{n_1} \le N(n)$. Let 
\[K_1=\sum_{i=1}^{n_1} |R_i- \tilde{R}|\] 
where $\tilde{R}$ is the sample median of $\{R_1,\ldots, R_{n_1}\}$. The test statistic is the standardized version of $K_1$ which is asymptotically Normal. For datasets of length $M$ in $\{3,4,\ldots 80\}$ we used empirical critical values again. In particular, we simulated 10,000,000 datasets of length $M$ to create estimated critical values $C_{M,\,0.025}$  and $C_{M,\,0.975}$ using the 0.025 and 0.975 quantiles of the generated test statistics. For $M>80$ we use the asymptotic $N(0,1)$ approximation to the test statistic.

\subsection{Empirical Type 1 Error}

First, we demonstrate the performance of the crossing tree technique when the null hypothesis is satisfied. The first 1250 crossings of standard Brownian motion were simulated using the technique described in Appendix \ref{simtech.sec}.  To obtain 1250 crossings corresponding to standard Brownian motion on $[0,5]$, we used $\de=1/(5\sqrt{10})$ as the basic crossing size. Figure \ref{StdBM3.fig} (left) shows the results for the distribution tests applied to the sequence of subcrossing lengths $\{Z_k^l,\, k=1,\ldots,N(l)\}$. At the smallest levels all five tests reject close to 5\%, with the Twos tests rejecting the fewest paths, at about 4.5\% at level 1. The $\chi^2$ test with two extra degrees of freedom and the G test reject close the level of the test.  These are both ``binned'' tests and similar performance is reasonable. As the levels, $l$, increase the length of subcrossing data $\{Z_k^l,\, k=1,\ldots,N(l)\}$ decreases. For Brownian motion, each increase in level decreases the length $N(l)$ by a factor of four on average: at level one $N(l)$ is about 305 on average, at level 4 it is about 5 on average. Thus the percentage of 10,000 paths rejected falls, since $N(l)$ becomes too small to allow a test. The smallest length we allow varies with the test. Table \ref{StdBM3.table} shows the data from the graph in boldface, and also the percentage of paths rejected out of the number tested, and the number tested. At level 4, all of the paths were shorter than 14, which is our cutoff to apply the $\chi^2$ test with two extra degrees of freedom, so the test was never applied. On the other hand, the Twos test is an exact Binomial test which we can apply for all values of $N(l)$. Our implementation of the discrete Kolmogorov-Smirnov test uses empirically generated critical values, so it can also be applied for all lengths, and performs well with shorter datasets.

Figure \ref{StdBM3.fig} (right) shows the results from applying tests of independence to the subcrossing data $\{Z_k^l,\, k=1,\ldots,N(l)\}$.  The joint distribution-based test shows rejection at about 5\% across levels one to three, with little reduction as level increases, until a large drop at level 4. The autocorrelation test starts  below 5\% but increases to a value closer to the 5\% value expected. The four ``classical'' tests of independence we have chosen to include appear qualitatively similar, although the Runs test rejects fewer paths than the other three. The Matlab implemenation of the Runs test is exact and can be applied to datasets of all lengths, so its performance is disappointing here.

Figure \ref{StdBM3.fig} (right) also shows results from applying our four ``classical'' tests of independence to the binary sequences $\{V_k^l \}_{k=1} ^{N_V(l)}$ representing the the excursions at level $l$. (These tests are indicated with the suffix `UD' for Up-Down in the legend.) The results mirror those from the same tests applied to the subcrossing data, with the Runs test rejecting a noticeably smaller percentage of paths than the other three. From the totality of the tests, saying 5\% rejection is reasonable (perhaps somewhere between 4.7\% and 5.7\% on closer inspection). This compares favourably with the quadratic variation method. Park and Vasudev \citeyearpar{PV05} found 6.4\% rejection using a two-sided KS test, 5.6 \% rejection using a CVM test, and  4.4\% rejection using their SM test. Our implemention shows 5\%--5.5\% rejection using the same three tests, as was shown in Figure \ref{PVBM.fig} (left).

\begin{figure}[h]
\resizebox{3in}{!}{\includegraphics{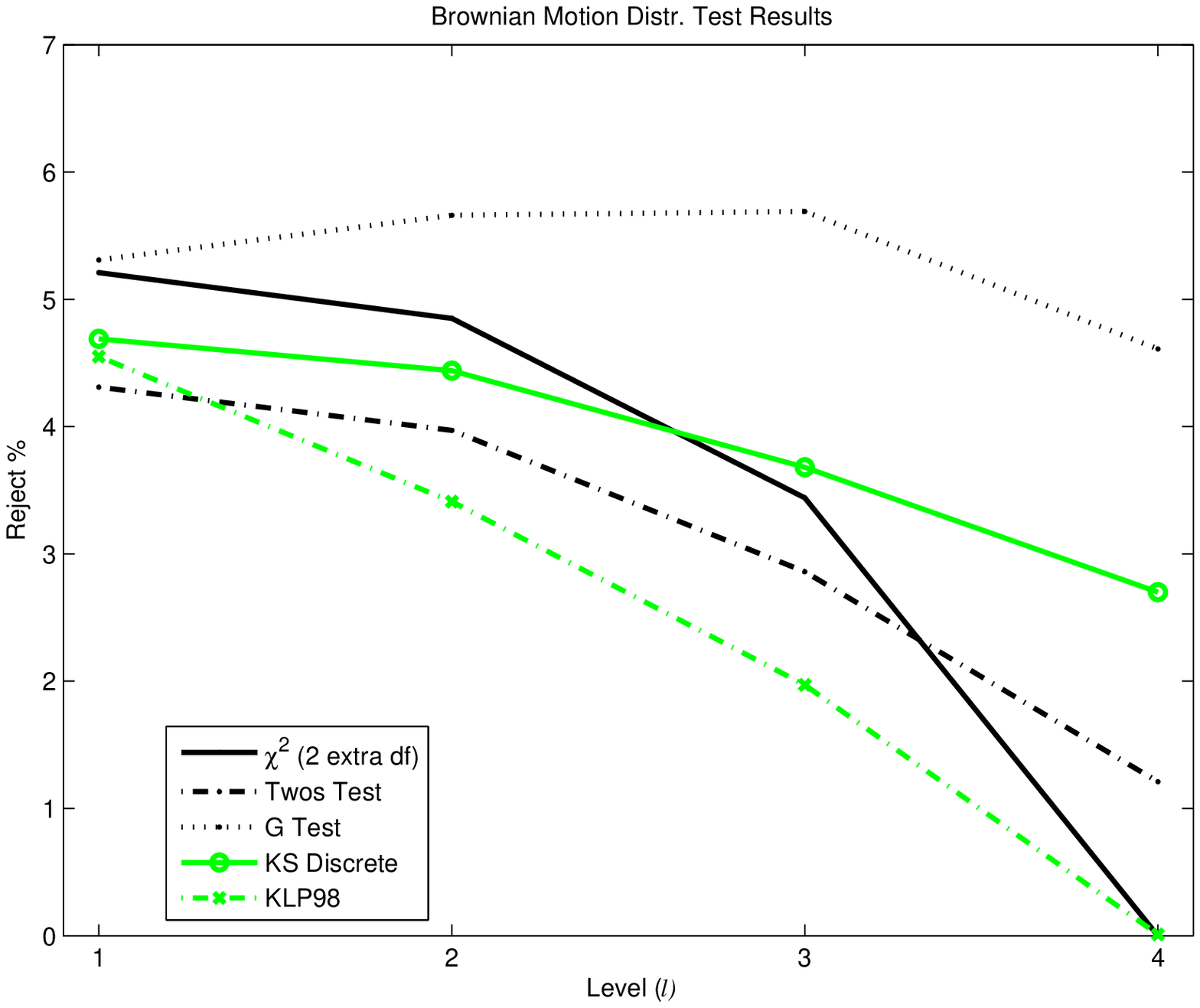}}\resizebox{3in}{!}{\includegraphics{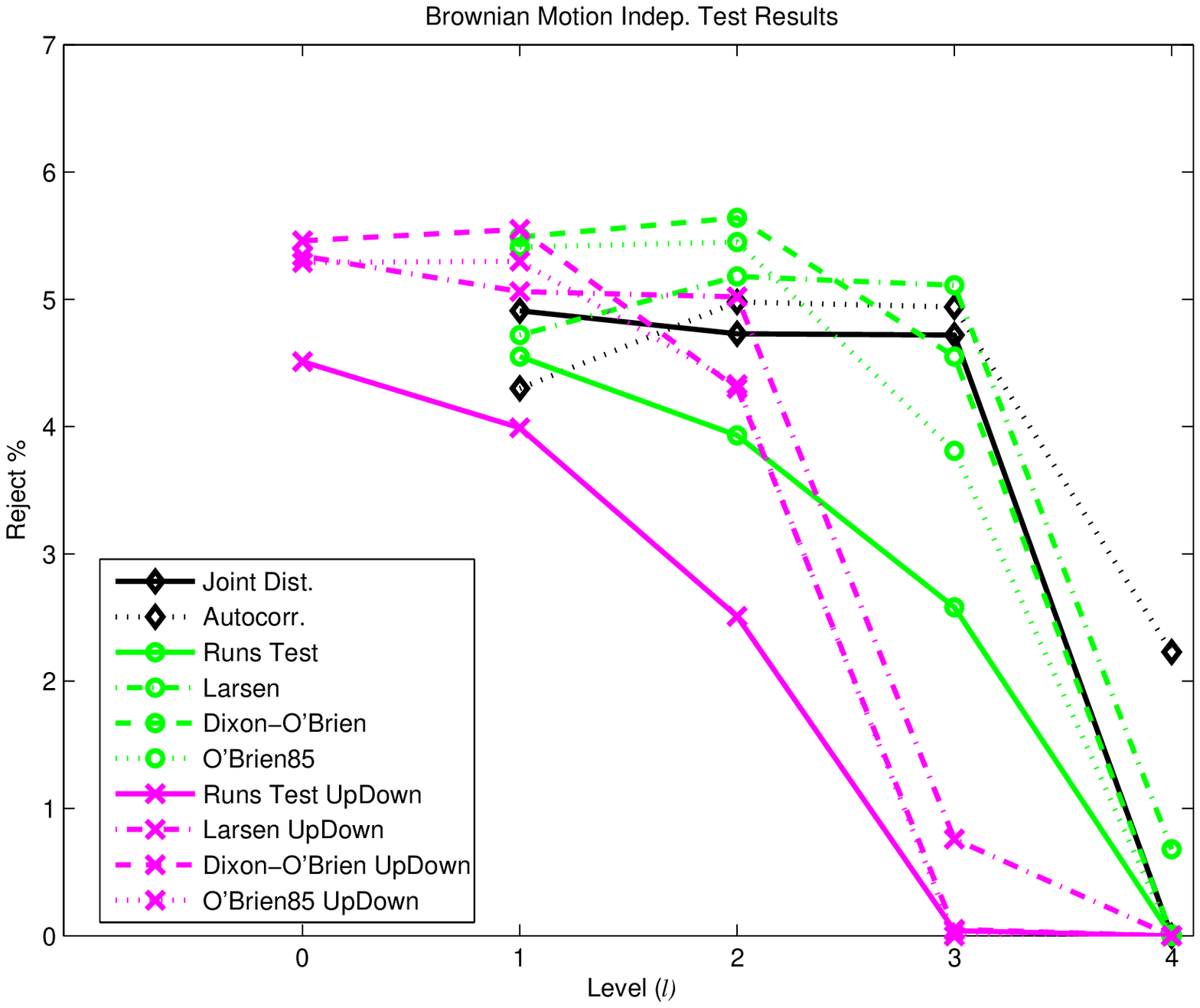}}
\caption{Crossing tree results for standard Brownian Motion. Percentage of 10,000 sample paths rejected, with an average of 1250 crossings by time 5. Rejection using distribution tests (left) and independence tests (right).  Since the null hypothesis is satisfied, 5\% rejection is expected for all tests. Here, 95\% confidence intervals are of the order $\pm0.4\%$.}
\label{StdBM3.fig}
\end{figure}

\begin{table}[th!]
 {\small 
 \begin{center} 
 \begin{tabular}{ |c || *{5}{c|}} 
\hline \multicolumn{6}{|c|}{$\alpha=0$, $n = 1250$, $\de = 1/(5 \sqrt{10})$ }  \\ 
 \hline \hline 
& \multicolumn{5}{c|}{\raisebox{0ex}[12pt]{} {\bf \% of all} (\% of tested; \# tested)} \\ \hline {\bf levels }& 0 & 1 & 2 & 3 & 4 \\ \cline{2-6} 
 \hline \hline\raisebox{0ex}[12pt]{{\bf $\chi^2$ (+2 df)}}&  & {\bf   5.2} & {\bf   4.9} & {\bf   3.4} (  3.7;   9395) & {\bf   0.0} (  NaN;      0)\\ \hline 
{\bf Twos Test} &  & {\bf   4.3} & {\bf   4.0} & {\bf   2.9} & {\bf   1.2} (  1.2;   9969)\\ \hline 
{\bf G Test} &  & {\bf   5.3} & {\bf   5.7} & {\bf   5.7} & {\bf   4.6} (  4.6;   9969)\\ \hline 
{\bf KS Discrete} &  & {\bf   4.7} & {\bf   4.4} & {\bf   3.7} & {\bf   2.7}\\ \hline 
{\bf KLP98 Test} &  & {\bf   4.5} & {\bf   3.4} & {\bf   2.0} & {\bf   0.0}\\ \hline 
\hline 
{\bf Joint Dist.}&  & {\bf   4.9} & {\bf   4.7} & {\bf   4.7} (  4.7;   9963) & {\bf   0.0} (  0.0;     36)\\ \hline 
{\bf Autocorr.} &  & {\bf   4.3} & {\bf   5.0} & {\bf   4.9} & {\bf   2.2} (  4.8;   4637)\\ \hline 
{\bf Runs Test} &  & {\bf   4.5} & {\bf   3.9} & {\bf   2.6} & {\bf   0.0}\\ \hline 
{\bf Larsen Test} &  & {\bf   4.7} & {\bf   5.2} & {\bf   5.1} (  5.1;   9995) & {\bf   0.7} (  0.8;   8782)\\ \hline 
{\bf  Dix.-OBri.} &  & {\bf   5.5} & {\bf   5.6} & {\bf   4.5} & {\bf   0.0} (  0.0;   9969)\\ \hline 
{\bf OBri85} &  & {\bf   5.4} & {\bf   5.5} & {\bf   3.8} (  4.8;   7970) & {\bf   0.0} (  0.0;     10)\\ \hline \hline 
{\bf Runs UD} & {\bf   4.5}  & {\bf   4.0} & {\bf   2.5} & {\bf   0.0} & {\bf   0.0}\\ \hline 
{\bf Larsen UD} & {\bf   5.3}  & {\bf   5.1} & {\bf   5.0} (  5.0;   9999) & {\bf   0.8} (  0.8;   9185) & {\bf   0.0} (  0.0;   4208)\\ \hline 
{\bf  Dix.-OBri. UD} & {\bf   5.5} & {\bf   5.5} & {\bf   4.3} & {\bf   0.1} (  0.1;   9963) & {\bf   0.0} (  0.0;   7021)\\ \hline 
{\bf OBri85 UD} & {\bf   5.3}  & {\bf   5.3} & {\bf   4.3} (  5.5;   7894) & {\bf   0.0} (  0.0;     77) & {\bf   0.0} (  NaN;      0)\\ \hline 
\end{tabular} 
 \end{center}
 } 
\caption{Crossing tree results for standard Brownian Motion. Percentage of 10,000 sample paths rejected, with an average of 1250 crossings by time 5. Since the null hypothesis is satisfied, 5\% rejection is expected for all tests. At higher levels, insufficient data length means some datasets are not tested.} \label{StdBM3.table}
\end{table}

\section{Power estimates}\label{power.sec}
To measure the performance of various distribution and independence tests, we applied them to some well-known processes, including diffusion processes defined by stochastic differential equations of the form:
\begin{equation} \label{diffusion}
dX_t = A(X_t) dt + B(X_t) dW_t.
\end{equation}
In particular we consider
\begin{enumerate}
\item Brownian motion with drift, $A(X_t) = \alpha$, $B(X_t) = 1$;
\item Ornstein-Uhlenbeck process, $A(X_t) = -\al X_t$, $B(X_t) = 1$;
\item Feller's square-root diffusion process, $A(X_t) = \kappa(\mu - X_t)$, $B(X_t) = \sqrt{X_t}$;
\item Fractional Brownian motion with Hurst parameter $H \in (0,1)$.
\end{enumerate}
In each case we simulated 10,000 sample paths of the process in question, applied the tests to either the subcrossing data or up/down symmetry data, and report on our results in several ways. Graphically, we show the percentage out of 10,000 that was significant at the 5\% level with our tests of distribution and our tests of independence. We also report  results in tables. At higher levels, the length of the data drops below the length we feel it appropriate to apply a test. In such cases we report the percentage that was significant, both out of 10,000 and out of the number tested, along with the number of datasets tested. We do this to distinguish between, say, 0\% rejected out of 13 tested and 0\% rejected out of 9727 tested. The latter seems deserving of more weight in our opinion. Park and Vasudev used simulation to test their quadratic variation method on processes 1--3 above, using 10,000 sample paths each. So we can do direct comparisons with their results, and with our implementation of the quadratic variation method. \citet{peters06} do not provide power results.

Since the quadratic variation method uses points regularly spaced in time, and the crossing tree uses crossings, there is a question of what is a `direct' comparison. For example, Park and Vasudev \citeyearpar{PV05} simulate processes with regularly spaced times of interval length 1/250 up to time 5, so 1250 points, which they argue represents daily information for 5 years.  We are free to set the scale $\de$ of the crossing tree, and we determined a value so that, \textit{on average}, the number of crossings in a time interval is the same as the number of points in the quadratic variation method in a time interval of the same length. So, we would compare 1250 points using quadratic variation with 1250 crossings using the crossing tree, with all parameters of the process kept constant. How we determine $\de$ to do this for various processes is described in Appendix \ref{detdelta.sec}.

\subsection{Brownian motion with drift} 
Brownian Motion with drift is an example of a process that does not satisfy property CLM1 of Corollary \ref{clmchar}. For drift 1 and 1.5, 1250 crossings were simulated 10,000 times each. Figure \ref{BMDrift1.fig} shows the results for drift 1, for which $\de=0.06328774784$ was used.

Of the distribution tests (left panel), only the G test rejects at a noteworthy level relative to the 5\% level of the tests. The test of Kyriakoussis et al. shows quickly dimininshing power as level increases. This is likely because the test statistic is based on an asymptotic Gaussian distribution. This is an increasingly poor approximation as the level increases, and so the number of subcrossings decreases.

Amongst the non-up-down independence tests (right panel), rejection between 4\% and 6\% is most common with the autocorrelation test showing rejection around 7.3\% at levels 3 and 4. So the autocorrelation test demonstrates the most power of these non-up-down independence tests.  For the up-down symmetry tests, we expect up-down as often as down-up, and each pair to be independent of all others because of the independent increments of the process. Thus 5\% rejection, as with standard Brownian motion, would be expected.  At higher levels the drift dominates the variability in the path.

Figure \ref{BMdrift.fig} (left) shows results from our implementation of the quadratic variation method, applied to Brownian motion with drift 1. Clearly the rejection rates, in the range 30-60\% are larger for this method, but an exact number is difficult to identify. The results for this type of process are also unique among all those we've seen in that the standardized mean test shows higher power than the Cram\'{e}r-von Mises test. This is not surprising since the increments being tested are being tested as mean 0, although they are not. Park and Vasudev report rejection rates of 47\% (KS), 56\% (CV), and 60\% (SM), which are similar to ours, and have the same ordering from smallest to largest. Clearly, the quadratic variation method shows higher power than the crossing tree at this length.

Figure \ref{BMDrift15.fig} shows results for Brownian motion with drift $\al=1.5$. With larger drift, the smaller value $\de=0.06334057822$ gives an average of 1250 crossings by time 5. The larger drift term provides some clear differences. Rejection rates from the distribution tests are generally higher. In particular, the discrete Kolmogorov-Smirnov and Twos test show rejection around 15.5\% at level 4, about 10\% higher than for drift 1. With the Twos test in particular, the result is easily explained. With a stong drift component there will be many crossings with only two subcrossings and no  excursions to give additional crossings. Thus, 2 subcrossings will occur more than 50\% of the time (the expected rate under the null hypothesis). As with drift 1, the power of the test by Kyriakoussis et al. isn't really comparable at higher levels. 

Amongst the independence tests (right panel), the autocorrelation test is striking that it rejects 20\% at level 4. But, as with drift 1, these tests still show low power compared to the quadratic variation method. With our implementation, 89-91\% of paths are rejected. Park and Vasudev reported identical rejection rates:  82\% (KS), 89\% (CV) and 91\% (SM). So, the quadratic variation method again shows higher power to reject Brownian motion with drift.

Of all the processes we consider here, the higher power of the quadratic variation method over the crossing tree is particular to Brownian motion with drift. We did additional simulations (not shown) to explore the crossing tree further for this process. Again using drift 1, and the same corresponding value of $\de$ for the crossing tree, datasets with more crossings were simulated. Especially for these two explorations, only 1000 datasets were used, providing lengths of either 10,000 crossings or 20,000 crossings. So these are 8 and 16 times longer, respectively, than for the results previously described. The additional length provides more data at lower levels, and additional levels at which to apply tests. For length 10,000, the autocorrelation test rejected 83\% at level 5 testing 998 paths. The Twos test, the $\chi^2$ test and the KS discrete test rejected about 67\%. With 20,000 crossings those tests rejected over 95\% at level 5. While higher levels were not tested, it is likely that there was sufficient length to test level 6, with higher rejection rates again.

\begin{figure}[htb!]
\resizebox{3in}{!}{\includegraphics{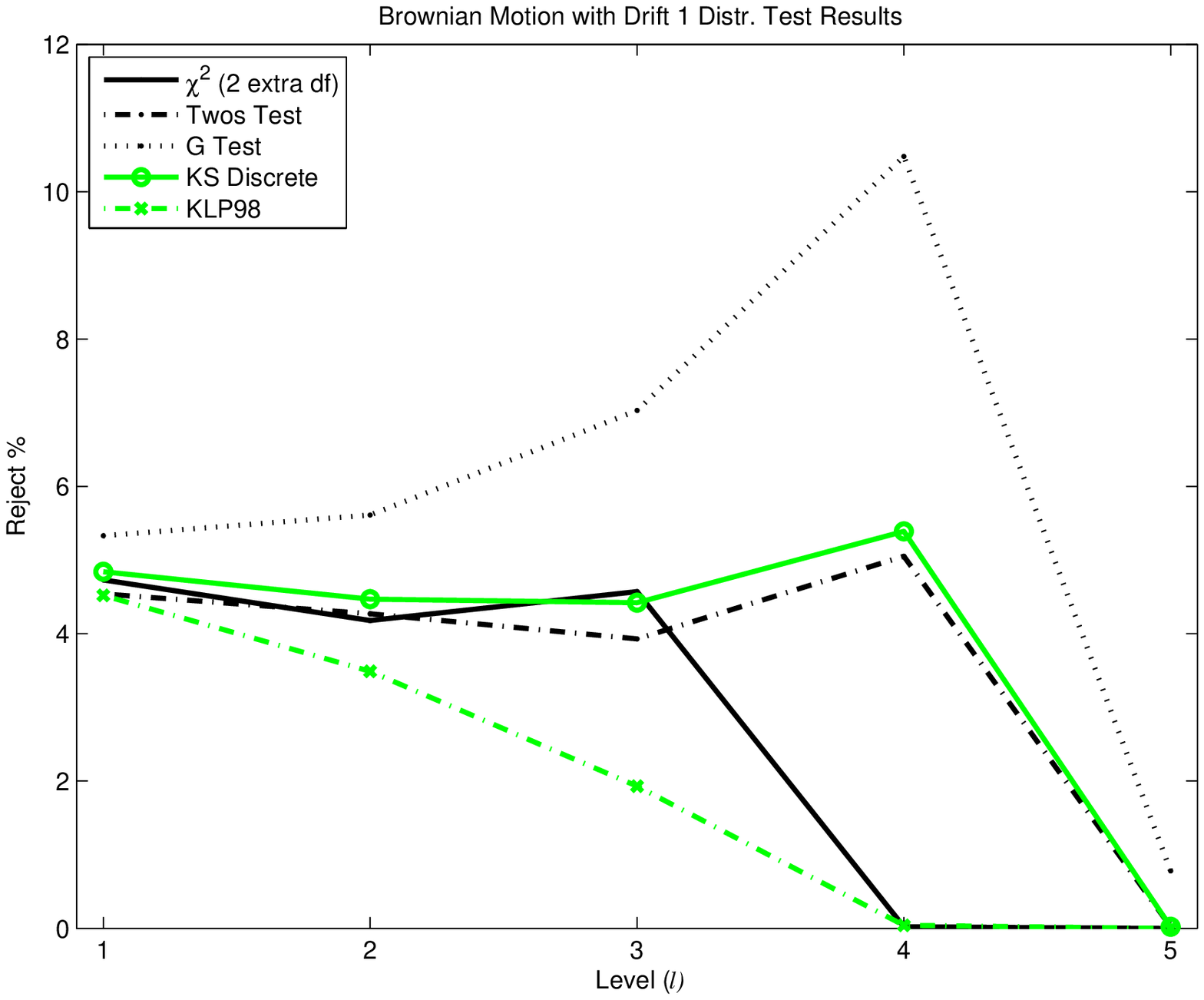}}\resizebox{3in}{!}{\includegraphics{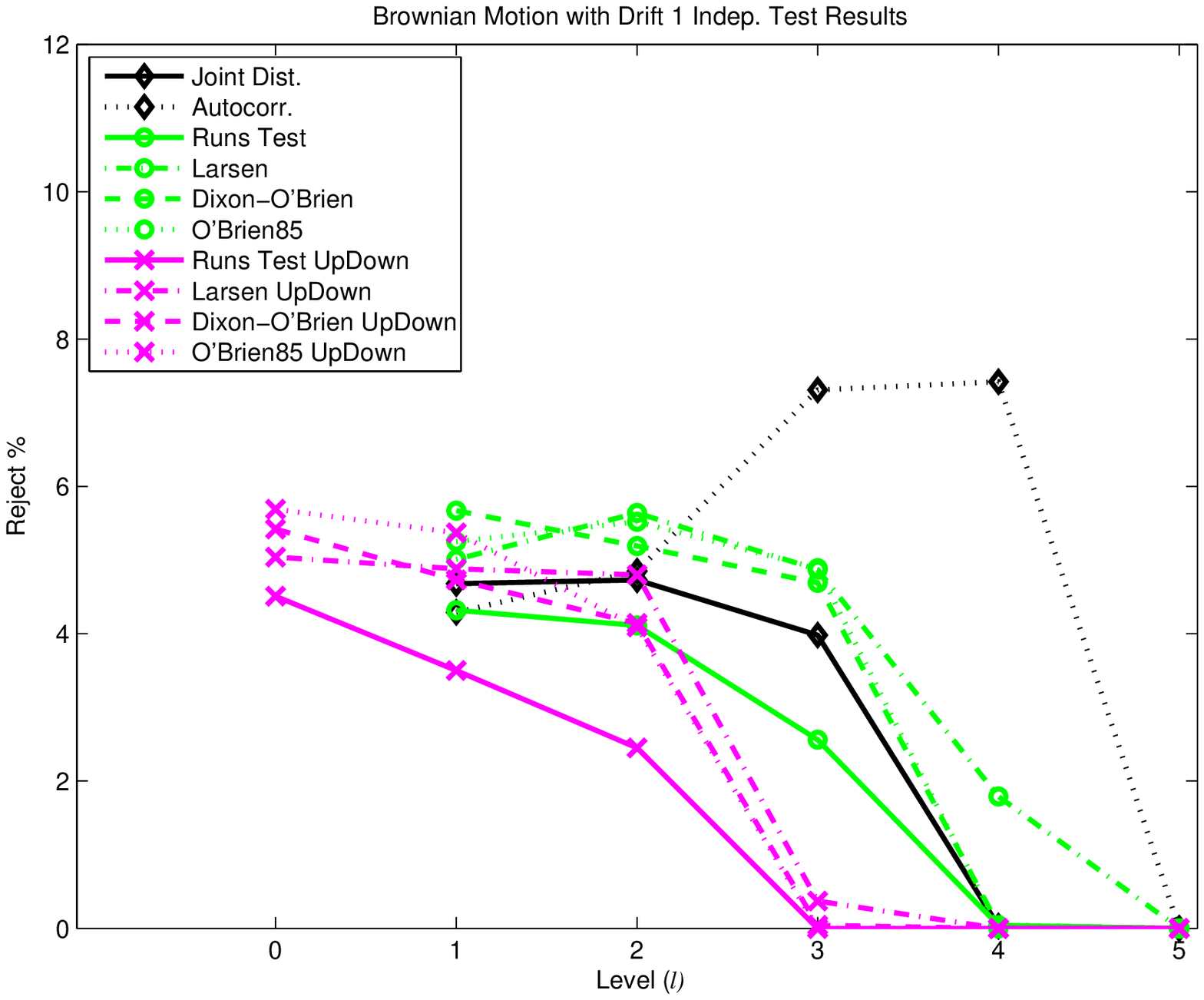}}
\caption{Crossing tree results for Brownian Motion with drift $\al=1$. Percentage of 10,000 sample paths rejected, with an average of 1250 crossings by time 5. Rejection using distribution tests (left) and independence tests (right).  Here, 95\% confidence intervals are 1\% or smaller.\label{BMDrift1.fig}}
\end{figure}

\begin{table}[thb!]

 {\footnotesize
 \begin{center} 
 \begin{tabular}{ |c || *{6}{c|}} 
\hline \multicolumn{7}{|c|}{$\alpha=1$, $n = 1250$, $\de = 0.06328774784$ }  \\ 
 \hline \hline 
& \multicolumn{6}{c|}{\raisebox{0ex}[12pt]{} {\bf \% of all} (\% of tested; \# tested)} \\ \hline {\bf levels }& 0 & 1 & 2 & 3 & 4 & 5 \\ \cline{2-7} 
 \hline \hline\raisebox{0ex}[12pt]{{\bf $\chi^2$ (+2 df)}}&  & {\bf   4.7} & {\bf   4.2} & {\bf   4.6} (  4.7;   9721) & {\bf   0.0} (100.0;      2) & {\bf   0.0} (  NaN;      0)\\ \hline 
{\bf Twos Test} &  & {\bf   4.5} & {\bf   4.3} & {\bf   3.9} & {\bf   5.1} (  5.1;   9998) & {\bf   0.0} (  0.0;   9397)\\ \hline 
{\bf G Test} &  & {\bf   5.3} & {\bf   5.6} & {\bf   7.0} & {\bf  10.5} ( 10.5;   9998) & {\bf   0.8} (  0.8;   9397)\\ \hline 
{\bf KS Discrete} &  & {\bf   4.8} & {\bf   4.5} & {\bf   4.4} & {\bf   5.4} & {\bf   0.0}\\ \hline 
{\bf KLP98 Test} &  & {\bf   4.5} & {\bf   3.5} & {\bf   1.9} & {\bf   0.0} & {\bf   0.0}\\ \hline 
\hline 
{\bf Joint Dist.}&  & {\bf   4.7} & {\bf   4.7} & {\bf   4.0} (  4.0;   9998) & {\bf   0.0} (  1.0;    289) & {\bf   0.0} (  NaN;      0)\\ \hline 
{\bf Autocorr.} &  & {\bf   4.3} & {\bf   4.9} & {\bf   7.3} & {\bf   7.4} ( 10.0;   7384) & {\bf   0.0} (  0.0;      7)\\ \hline 
{\bf Runs Test} &  & {\bf   4.3} & {\bf   4.1} & {\bf   2.6} & {\bf   0.0} & {\bf   0.0}\\ \hline 
{\bf Larsen Test} &  & {\bf   5.0} & {\bf   5.6} & {\bf   4.9} (  4.9;   9999) & {\bf   1.8} (  1.9;   9660) & {\bf   0.0} (  0.0;   8507)\\ \hline 
{\bf  Dix.-OBri.} &  & {\bf   5.7} & {\bf   5.2} & {\bf   4.7} & {\bf   0.0} (  0.0;   9998) & {\bf   0.0} (  0.0;   9397)\\ \hline 
{\bf OBri85} &  & {\bf   5.3} & {\bf   5.5} & {\bf   4.9} (  5.9;   8358) & {\bf   0.0} (  0.0;     41) & {\bf   0.0} (  NaN;      0)\\ \hline \hline 
{\bf Runs UD} & {\bf   4.5}  & {\bf   3.5} & {\bf   2.5} & {\bf   0.0} & {\bf   0.0} & {\bf   0.0}\\ \hline 
{\bf Larsen UD} & {\bf   5.0}  & {\bf   4.9} & {\bf   4.8} & {\bf   0.4} (  0.4;   8683) & {\bf   0.0} (  0.0;   2410) & {\bf   0.0} (  0.0;    135)\\ \hline 
{\bf  Dix.-OBri. UD} & {\bf   5.4} & {\bf   4.7} & {\bf   4.1} & {\bf   0.0} (  0.0;   9903) & {\bf   0.0} (  0.0;   4416) & {\bf   0.0} (  0.0;    256)\\ \hline 
{\bf OBri85 UD} & {\bf   5.7}  & {\bf   5.4} & {\bf   4.1} (  5.3;   7675) & {\bf   0.0} (  0.0;     19) & {\bf   0.0} (  NaN;      0) & {\bf   0.0} (  NaN;      0)\\ \hline 
\end{tabular} 
 \end{center}
 } 
\caption{Crossing tree results for Brownian Motion with drift $\al=1$. Percentage of 10,000 sample paths rejected, with an average of 1250 crossings by time 5.  At higher levels, insufficient data length means some datasets are not tested.} \label{BMDrift1.table}
\end{table}

\begin{figure}[htb!]
\resizebox{3in}{!}{\includegraphics{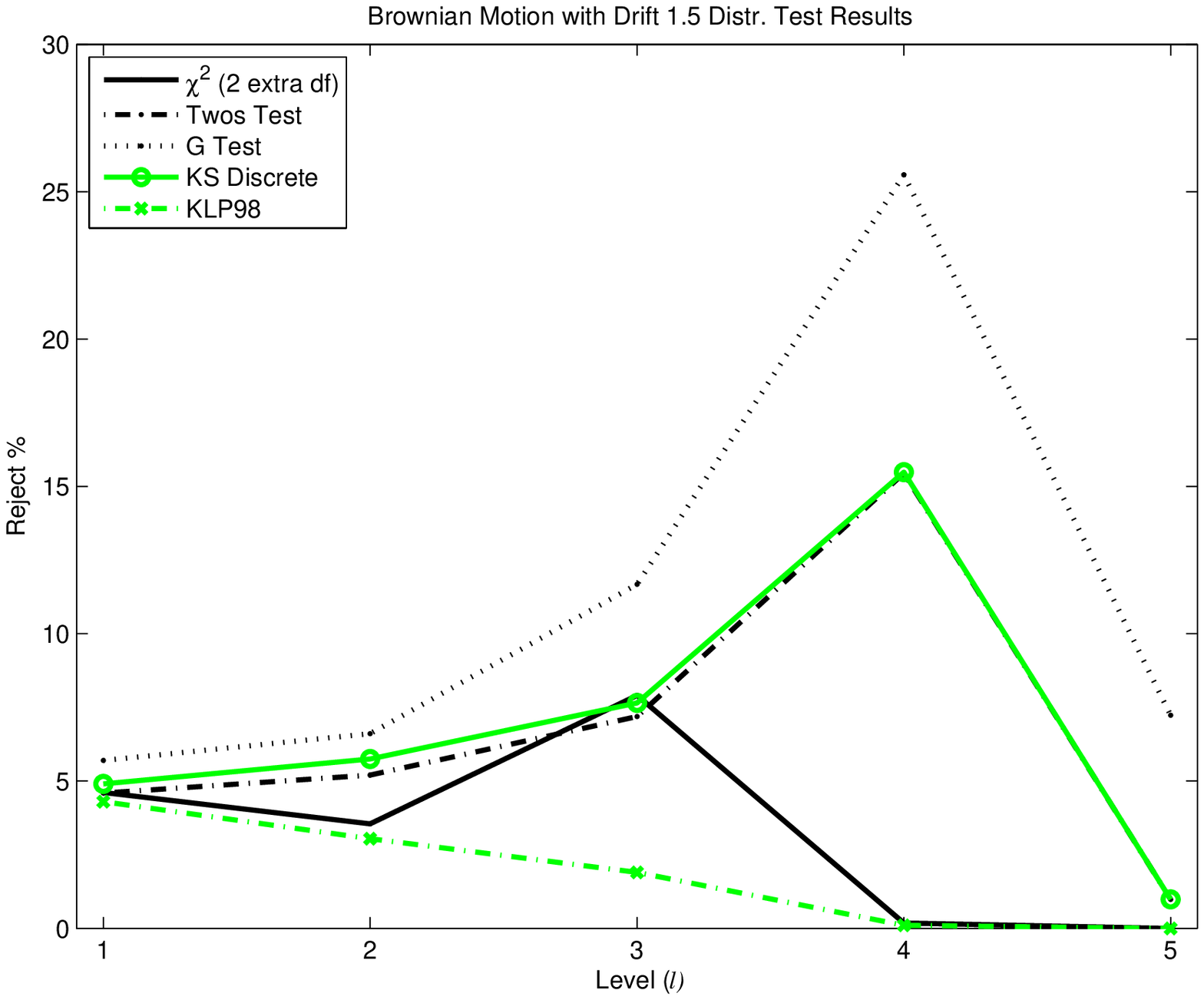}}\resizebox{3in}{!}{\includegraphics{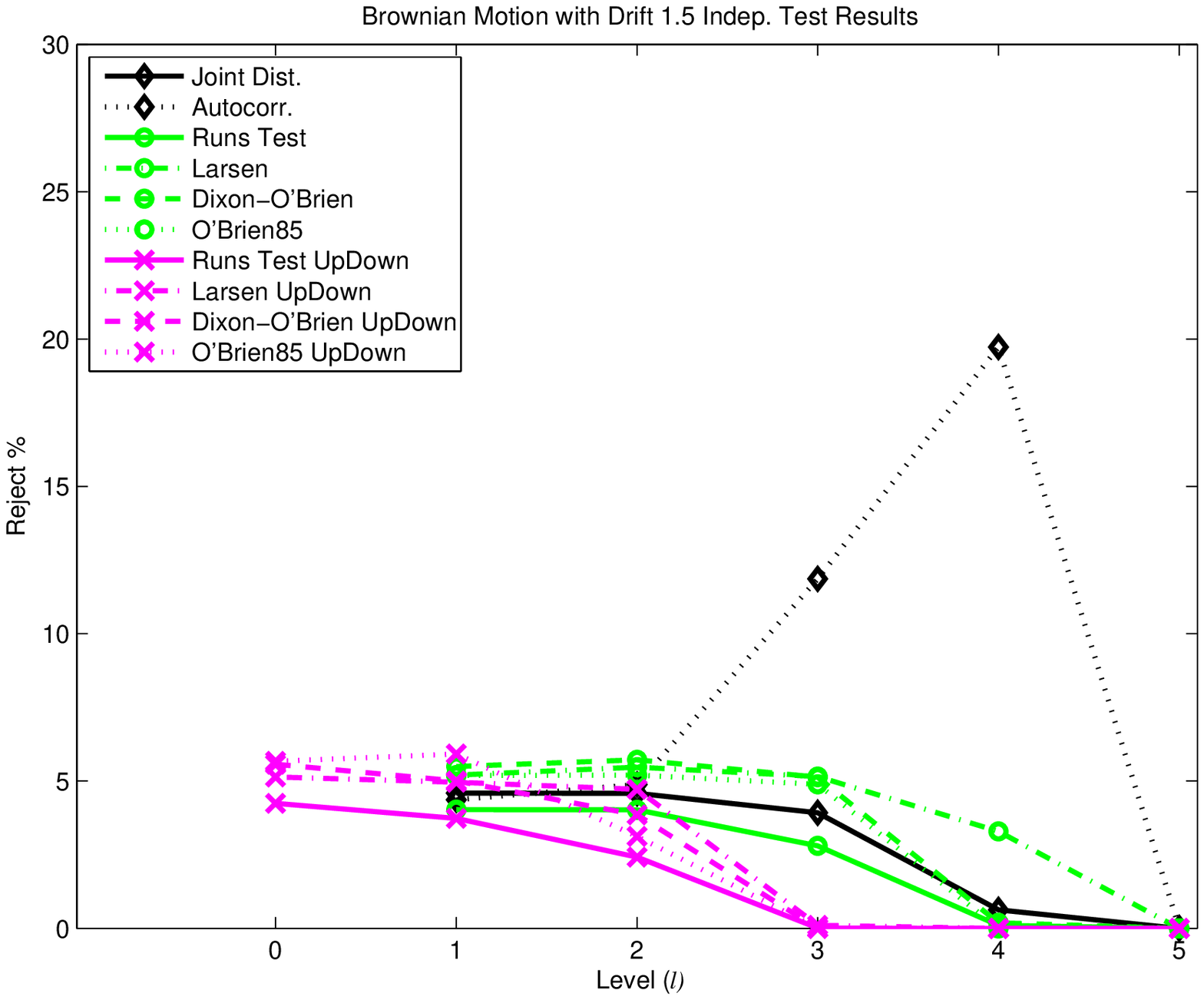}}
\caption{Crossing tree results for Brownian Motion with drift $\al=1.5$. Percentage of 10,000 sample paths rejected, with an average of 1250 crossings by time 5. Rejection using distribution tests (left) and independence tests (right).  Here, 95\% confidence intervals are 1\% or smaller.\label{BMDrift15.fig}}
\end{figure}

\begin{table}[thb!]

 {\footnotesize 
 \begin{center} 
 \begin{tabular}{ |c || *{6}{c|}} 
\hline \multicolumn{7}{|c|}{$\alpha=1.5$, $n = 1250$, $\de =0.06334057822$ }  \\ 
 \hline \hline 
& \multicolumn{6}{c|}{\raisebox{0ex}[12pt]{} {\bf \% of all} (\% of tested; \# tested)} \\ \hline {\bf levels }& 0 & 1 & 2 & 3 & 4 & 5 \\ \cline{2-7} 
 \hline \hline\raisebox{0ex}[12pt]{{\bf $\chi^2$ (+2 df)}}&  & {\bf   4.6} & {\bf   3.5} & {\bf   7.9} (  8.0;   9924) & {\bf   0.2} (100.0;     18) & {\bf   0.0} (  NaN;      0)\\ \hline 
{\bf Twos Test} &  & {\bf   4.6} & {\bf   5.2} & {\bf   7.2} & {\bf  15.4} & {\bf   1.0} (  1.0;   9923)\\ \hline 
{\bf G Test} &  & {\bf   5.7} & {\bf   6.6} & {\bf  11.7} & {\bf  25.6} & {\bf   7.2} (  7.3;   9923)\\ \hline 
{\bf KS Discrete} &  & {\bf   4.9} & {\bf   5.8} & {\bf   7.6} & {\bf  15.5} & {\bf   1.0}\\ \hline 
{\bf KLP98 Test} &  & {\bf   4.3} & {\bf   3.0} & {\bf   1.9} & {\bf   0.1} & {\bf   0.0}\\ \hline 
\hline 
{\bf Joint Dist.}&  & {\bf   4.6} & {\bf   4.6} & {\bf   3.9} (  3.9;   9999) & {\bf   0.6} (  4.3;   1464) & {\bf   0.0} (  NaN;      0)\\ \hline 
{\bf Autocorr.} &  & {\bf   4.4} & {\bf   4.9} & {\bf  11.9} & {\bf  19.7} ( 22.3;   8853) & {\bf   0.0} (  1.7;     58)\\ \hline 
{\bf Runs Test} &  & {\bf   4.0} & {\bf   4.0} & {\bf   2.8} & {\bf   0.1} & {\bf   0.0}\\ \hline 
{\bf Larsen Test} &  & {\bf   5.2} & {\bf   5.5} & {\bf   5.1} & {\bf   3.3} (  3.3;   9921) & {\bf   0.0} (  0.0;   9708)\\ \hline 
{\bf  Dix.-OBri.} &  & {\bf   5.5} & {\bf   5.7} & {\bf   5.1} & {\bf   0.2} & {\bf   0.0} (  0.0;   9923)\\ \hline 
{\bf OBri85} &  & {\bf   5.2} & {\bf   5.2} & {\bf   4.9} (  5.7;   8550) & {\bf   0.0} (  0.0;     69) & {\bf   0.0} (  NaN;      0)\\ \hline \hline 
{\bf Runs UD} & {\bf   4.2}  & {\bf   3.7} & {\bf   2.4} & {\bf   0.0} & {\bf   0.0} & {\bf   0.0}\\ \hline 
{\bf Larsen UD} & {\bf   5.1}  & {\bf   5.0} & {\bf   4.7} (  4.7;   9998) & {\bf   0.1} (  0.1;   8179) & {\bf   0.0} (  0.0;   1416) & {\bf   0.0} (  0.0;     37)\\ \hline 
{\bf  Dix.-OBri. UD} & {\bf   5.6} & {\bf   5.0} & {\bf   3.9} & {\bf   0.0} (  0.0;   9722) & {\bf   0.0} (  0.0;   2666) & {\bf   0.0} (  0.0;     58)\\ \hline 
{\bf OBri85 UD} & {\bf   5.7}  & {\bf   5.9} & {\bf   3.1} (  4.3;   7210) & {\bf   0.0} (  0.0;      5) & {\bf   0.0} (  NaN;      0) & {\bf   0.0} (  NaN;      0)\\ \hline 
\end{tabular} 
 \end{center}
 } 
\caption{Crossing tree results for Brownian Motion with drift $\al=1.5$. Percentage of 10,000 sample paths rejected, with an average of 1250 crossings by time 5. At higher levels, insufficient data length means some datasets are not tested. \label{BMDrift15.table}}
\end{table}

\begin{figure}[htb!]
\resizebox{3in}{!}{\includegraphics{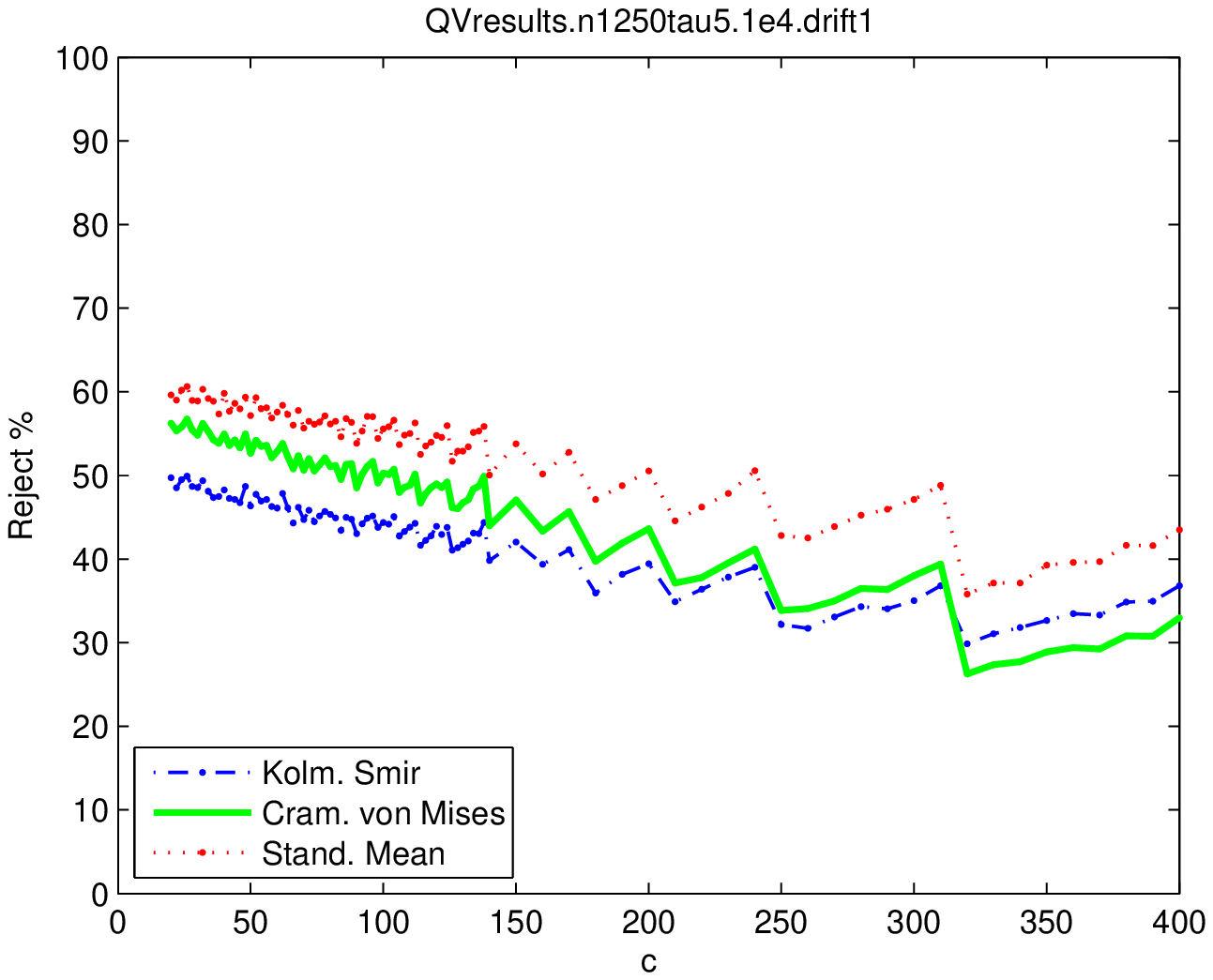}}\resizebox{3in}{!}{\includegraphics{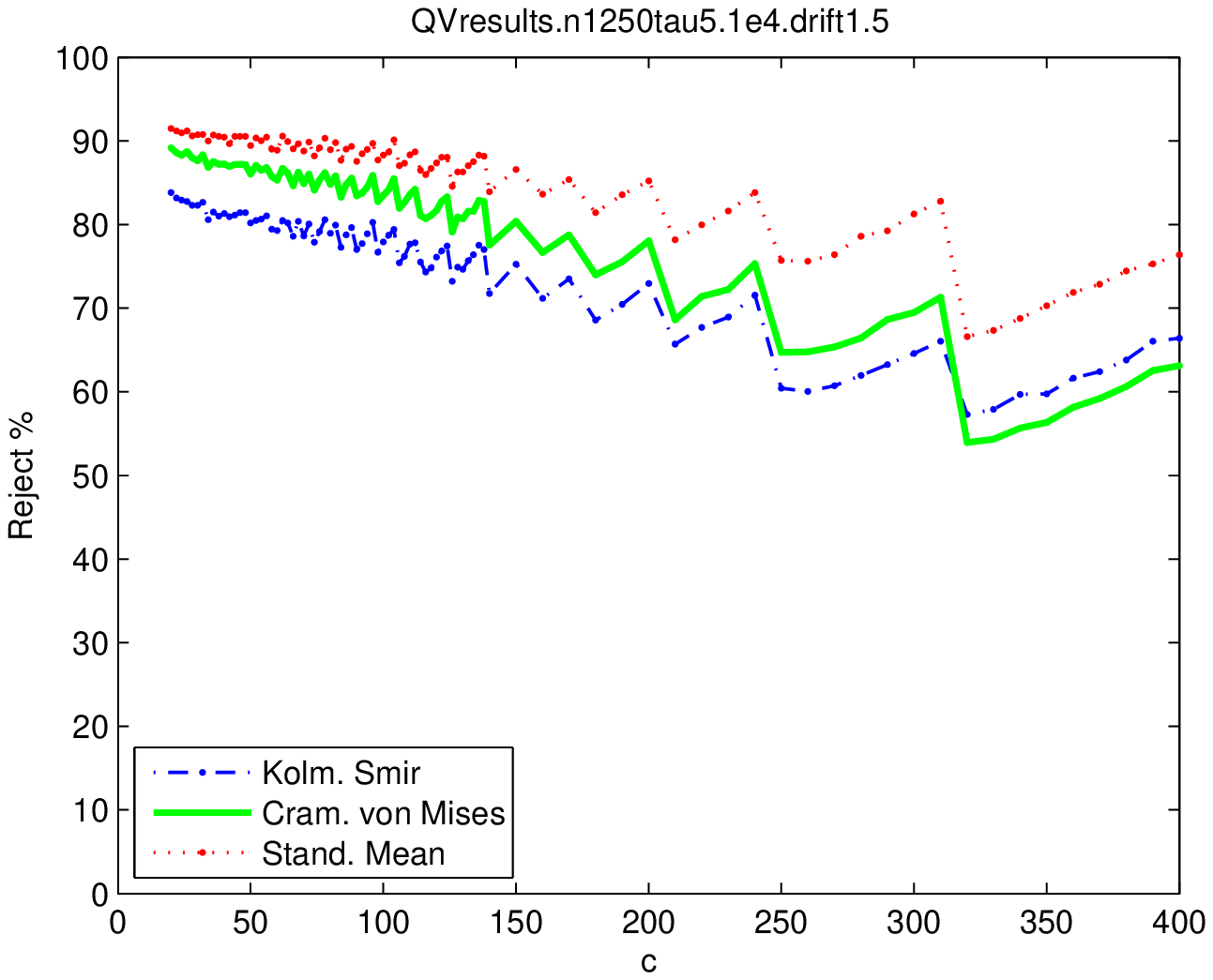}}
\caption{Rejection rates using sample quadratic variation for 10,000 sample paths of length 1250 for standard Brownian motion with  drift 1 (left) and 1.5 (right), at times spaced every 1/250. Here, 95\% confidence intervals are 1\% or smaller. \label{BMdrift.fig}}
\end{figure}

\subsection{Ornstein-Uhlenbeck process} 
For the Ornstein-Uhlenbeck process 
\[dX(t) = -\al X(t) dt + \si dW(t)\] 
with $\al, \si > 0$ we considered $\si=1$ and both $\al=8$ and $\al=10$. Park and Vasudev \citeyearpar{PV05} argue that these values of $\al$ (relative to $\si$) are reasonable for short term interest rate data. This process has a stationary $N(0,\sigma^2/(2\alpha))$ distribution.  In all cases it was the stationary process we worked with.

First we simulated 1250 crossings of the process with $\al=8$, $\si=1$. The value $\de=0.063015$ was determined to give an average of 1250 crossings in time 5, as desired. Figure \ref{OU1.fig} (left) shows the results from the distribution tests. The $\chi^2$ test with two extra degrees of freedom is most powerful, rejecting over 11\% at level 2. The lack of power at level 3 appears to be an artifact of data length relative to our cutoffs. We do not apply the $\chi^2$ test to datasets shorter than length 14, even with our use of empirical critical values. The KLP98 test does much better with this process, rejecting almost 9\% by level 3. Interestingly,  the discrete KS test rejects at about 8\%, but at level 2, suggesting it is truly sensitive to different features than KLP98.

Figure \ref{OU1.fig} (right) shows the results from the independence tests. Here the joint distribution test dominates, rejecting over 14\% at level 3 while the other tests exceed their significance level of 5\% barely at all. The Runs test is also noteworthy since it's rejection rate doesn't fall quickly, as it does with Brownian motion and Brownian motion with drift. In fact, it is superior to the other three ``classical'' tests at level 3.

An additional comment on the joint distribution test is in order. That test makes an assumption about independence and an assumption about the marginal distribution of the number of subcrossings. Considering the large rejection rates from the distribution tests, it could be a violation of the distribution assumption that causes the joint distribution test to reject so many paths. However, when we permute the subcrossing data $\{Z_k^l\}$ we find the joint distribution test then rejects about 4\% of all paths at level 3. So 10\% of the paths are rejected by the joint distribution test due to bivariate dependence in the data. Phrased another way, of the paths rejected by the joint distribution test, almost 70\% are rejected due to bivariate dependence.  At levels one and two, the results after permuting are virtually indistiguishable from those without permuting, and reject about 5\%, which is the level of the test. These results were consistent for all the OU processes we studied.

These results justify rejecting 11--15\% of the sample paths. By comparison, results from our implementation of the quadratic variation method are shown in Figure \ref{OUshort.fig}. Rejection rates are around 0.7\% (KS) and 1\% (CVM). The standardized mean (SM) test consistently has no power. So the crossing tree method appears to reject 10--15 times as many sample paths. Park and Vasudev \citeyearpar{PV05} reported no results for length 1250, except to say the test had ``low power''. This is not surprising since their implementation compromised the power by searching for the time interval length that makes the transformed data most like $N(0,1)$.

Figure \ref{OU2.fig} shows results for the same Ornstein-Uhlenbeck process, but with 5000 crossings. The additional length provides longer datasets for our tests, and the result is most noticeable at level three. As with the shorter datasets, the $\chi^2$ test rejects the most paths, followed by the KLP98 test. But with the longer datasets, considerably more paths are rejected. With our $\chi^2$ test, 73\% are rejected, compared to just 11\% with 1250 crossings. From the independence tests, the joint distribution test again dominates, even more so now, rejecting almost 87\%. The Runs test again shows the most power of the classical ``independence'' tests, although at 23\% all of the distribution tests except for the Twos test are more powerful.

The results for 5000 crossings suggest the crossing tree rejects around 73-87\% of the sample paths. Again, this is better than the quadratic variation method. Figure \ref{OUlong.fig} (left) shows  rejection rates of about  47\% (KS) and 67\% (CVM) with our implemention. Park and Vasudev \citeyearpar{PV05} reported even lower results: 13\% (KS) and 24\% (CVM).

For comparison we also simulated crossings from the Ornstein-Uhlenbeck process with $\al=10$, hence a stronger mean reversion effect. The scale parameter for the crossing tree was found to be 0.062945. Figure \ref{OU3.fig} shows the results are generally comparable, with slightly higher rejection rates, to those for $\al=8$. Again, amongst the distribution tests, our $\chi^2$ test rejects the most while the KLP98 test rejects the secondmost, again at levels two and three respectively. Amongst the independence tests the joint distribution test again rejects considerably more than the others. One noteworthy difference from the shorter dataset is that now the Runs test distinguishes itself with noticeably more power than the other ``classical'' tests at level 3. Primarily from the $\chi^2$ and joint distribution tests, the crossing tree method rejects about 15\% of the sample paths. Figure \ref{OUshort.fig} (right) shows with our implemenation of the quadratic variation method rejection rates of only 1.2\% (KS) and 1.8\%(CVM). Again, Park and Vasudev reported no results for this length.

Figure \ref{OU4.fig} shows results for 5000 crossings and $\al=10$ are comparable to those for 1250 crossings. From the distribution tests, our $\chi^2$ test rejects the most, and the KLP98 test rejects the second most. Amongst the independence tests, the joint distribution test rejects the most while the Runs test rejects the second most. Again, with additional crossings the rejection rates are much higher, at around 97\% for the joint distribution test and 78\% for our $\chi^2$ test. Again these rates are higher (but generally comparable) to those from our implementation of the quadratic variation method. Figure \ref{OUlong.fig} (right) shows rejection rates of 69\% (KS) and 87\% (CVM). Again, our quadratic variation rates are higher than those of Park and Vasudev, who reported rates of 31\% (KS) and 52\% (CVM).

Finally, we simulated the process with $\al=1$, $\si=1$, $n=5000$ to investigate a weaker mean reversion effect. Results are shown in Figure \ref{OU5.fig} and Table \ref{OU5.table}. Because the mean reversion is weaker, it is much harder to distinguish this alternative from Brownian motionn and the rejection rates are much closer to their signficance level of 5\%.. At level 3 our distribution and joint distribution tests reject about 7\%.  Figure \ref{OUk1.fig} shows results for our implementation of the quadratic variation-based method. Neither test performs well, with rejections rates 0.17\% (KS) and 0.12\%(CVM) well below 1\%. Interestingly, for these parameters  rejection rates are higher with shorter ($n=1250$) datasets, unlike our other results using -sample variation.

In summary, for the Ornstein-Uhlenbeck processes considered here, the crossing tree method shows higher power. With strong mean reversion ($\al=8$ or 10, $\si=1$), for length 1250 the crossing tree method rejects between 8 and 15 times more paths. For length 5000 the difference is less dramatic, but an additional 10\% are rejected. With stronger mean reversion ($\al=1$, $\si=1$) the difference is more dramatic, with the crossing tree method rejection around 40 times as many paths.
  
\begin{figure}[p]
\resizebox{3in}{!}{\includegraphics{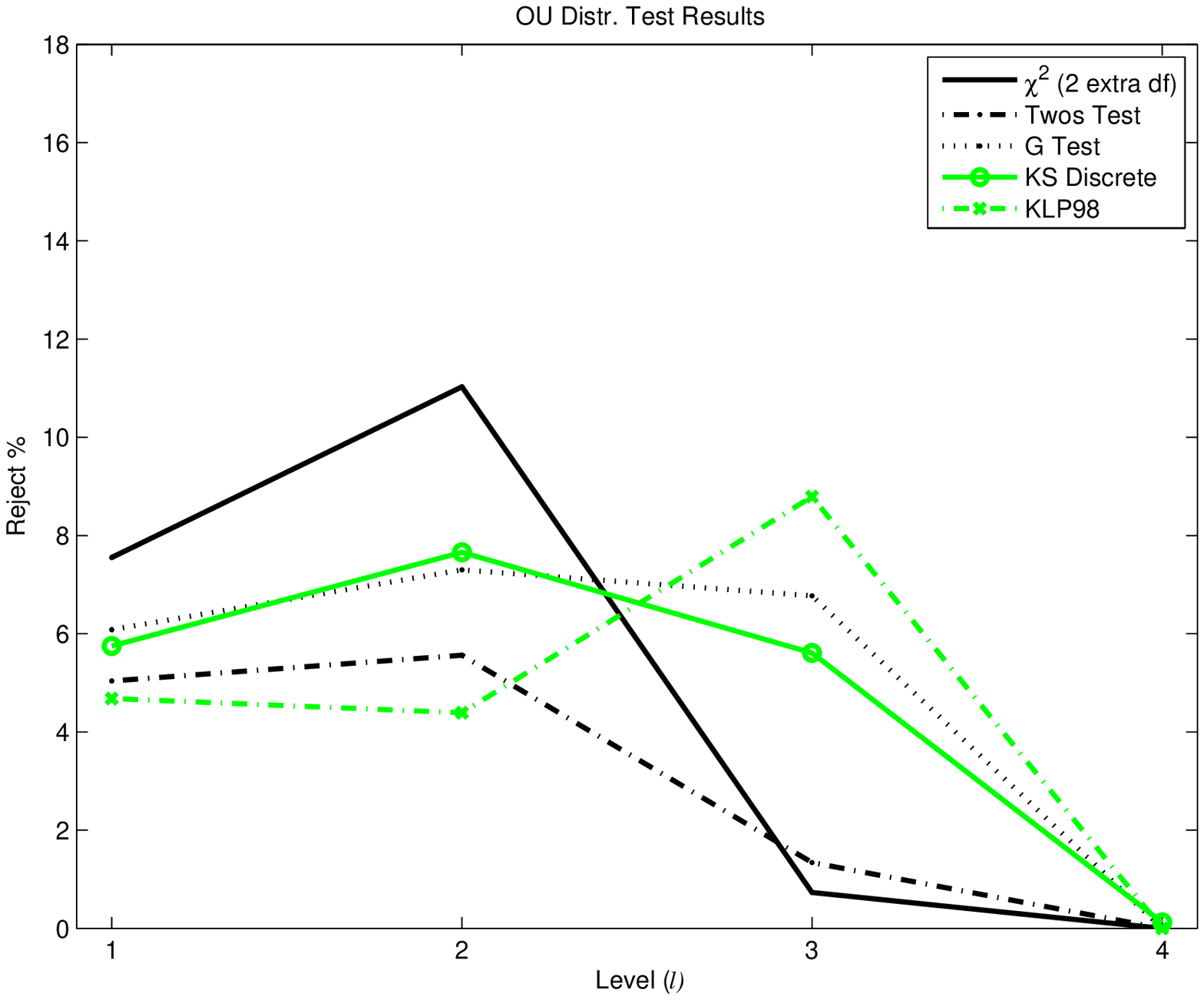}}\resizebox{3in}{!}{\includegraphics{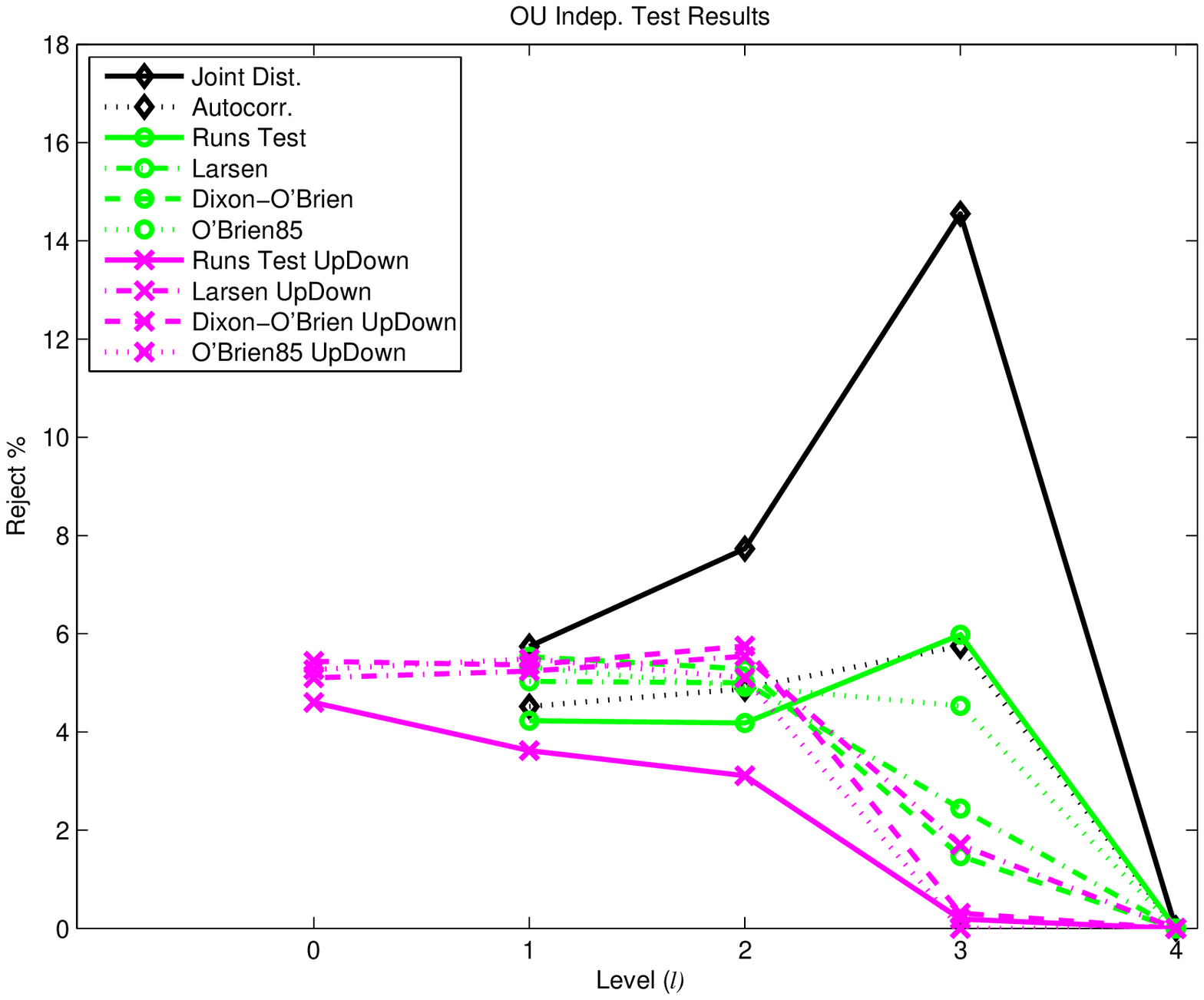}}
\caption{Crossing tree results for the Ornstein-Uhlenbeck process with $\al=8$, $\sigma=1$, $n = 1250$, $\de = 0.063015$. Percentage of 10,000 sample paths rejected using distribution tests (left) and independence tests (right). Here, 95\% confidence intervals are 1\% or smaller. \label{OU1.fig}}
\end{figure}

\begin{table}[p]

 {\small 
 \begin{center} 
 \begin{tabular}{ |c || *{5}{c|}} 
\hline \multicolumn{6}{|c|}{$\al=8$, $\sigma=1$, $n = 1250$, $\de = 0.063015$ }  \\ 
 \hline \hline 
& \multicolumn{5}{c|}{\raisebox{0ex}[12pt]{} {\bf \% of all} (\% of tested; \# tested)} \\ \hline {\bf levels }& 0 & 1 & 2 & 3 & 4 \\ \cline{2-6} 
 \hline \hline\raisebox{0ex}[12pt]{{\bf $\chi^2$ (+2 df)}}&  & {\bf   7.5} & {\bf  11.0} & {\bf   0.7} (  1.8;   4110) & {\bf   0.0} (  NaN;      0)\\ \hline 
{\bf Twos Test} &  & {\bf   5.0} & {\bf   5.6} & {\bf   1.3} & {\bf   0.0} (  0.0;    325)\\ \hline 
{\bf G Test} &  & {\bf   6.1} & {\bf   7.3} & {\bf   6.8} & {\bf   0.0} (  0.0;    325)\\ \hline 
{\bf KS Discrete} &  & {\bf   5.8} & {\bf   7.7} & {\bf   5.6} & {\bf   0.1}\\ \hline 
{\bf KLP98 Test} &  & {\bf   4.7} & {\bf   4.4} & {\bf   8.8} & {\bf   0.0}\\ \hline 
\hline 
{\bf Joint Dist.}&  & {\bf   5.7} & {\bf   7.7} & {\bf  14.5} ( 17.7;   8210) & {\bf   0.0} (  NaN;      0)\\ \hline 
{\bf Autocorr.} &  & {\bf   4.5} & {\bf   4.9} & {\bf   5.8} (  5.8;   9908) & {\bf   0.0} (  NaN;      0)\\ \hline 
{\bf Runs Test} &  & {\bf   4.2} & {\bf   4.2} & {\bf   6.0} & {\bf   0.0}\\ \hline 
{\bf Larsen Test} &  & {\bf   5.0} & {\bf   5.0} & {\bf   2.4} (  2.4;   9965) & {\bf   0.0} (  0.0;    308)\\ \hline 
{\bf  Dix.-OBri.} &  & {\bf   5.5} & {\bf   5.3} & {\bf   1.5} & {\bf   0.0} (  0.0;    325)\\ \hline 
{\bf OBri85} &  & {\bf   5.3} & {\bf   4.9} (  4.9;   9999) & {\bf   4.5} ( 11.9;   3820) & {\bf   0.0} (  NaN;      0)\\ \hline \hline 
{\bf Runs UD} & {\bf   4.6}  & {\bf   3.6} & {\bf   3.1} & {\bf   0.2} & {\bf   0.0}\\ \hline 
{\bf Larsen UD} & {\bf   5.1}  & {\bf   5.2} & {\bf   5.5} & {\bf   1.7} (  1.9;   9144) & {\bf   0.0} (  0.0;    142)\\ \hline 
{\bf  Dix.-OBri. UD} & {\bf   5.4} & {\bf   5.4} & {\bf   5.8} & {\bf   0.3} (  0.3;   9941) & {\bf   0.0} (  0.0;    290)\\ \hline 
{\bf OBri85 UD} & {\bf   5.3}  & {\bf   5.5} & {\bf   5.1} (  6.0;   8538) & {\bf   0.0} (  0.0;    194) & {\bf   0.0} (  NaN;      0)\\ \hline 
\end{tabular} 
 \end{center}
 } 
\caption{Crossing tree results for the Ornstein-Uhlenbeck process with $\al=8$, $\sigma=1$, $n = 1250$, $\de = 0.063015$. Percentage of 10,000 sample paths rejected, with an average of 1250 crossings by time 5. At higher levels, insufficient data length means some datasets are not tested.} \label{OU1.table}
\end{table}

\begin{figure}[htb!]
\resizebox{3in}{!}{\includegraphics{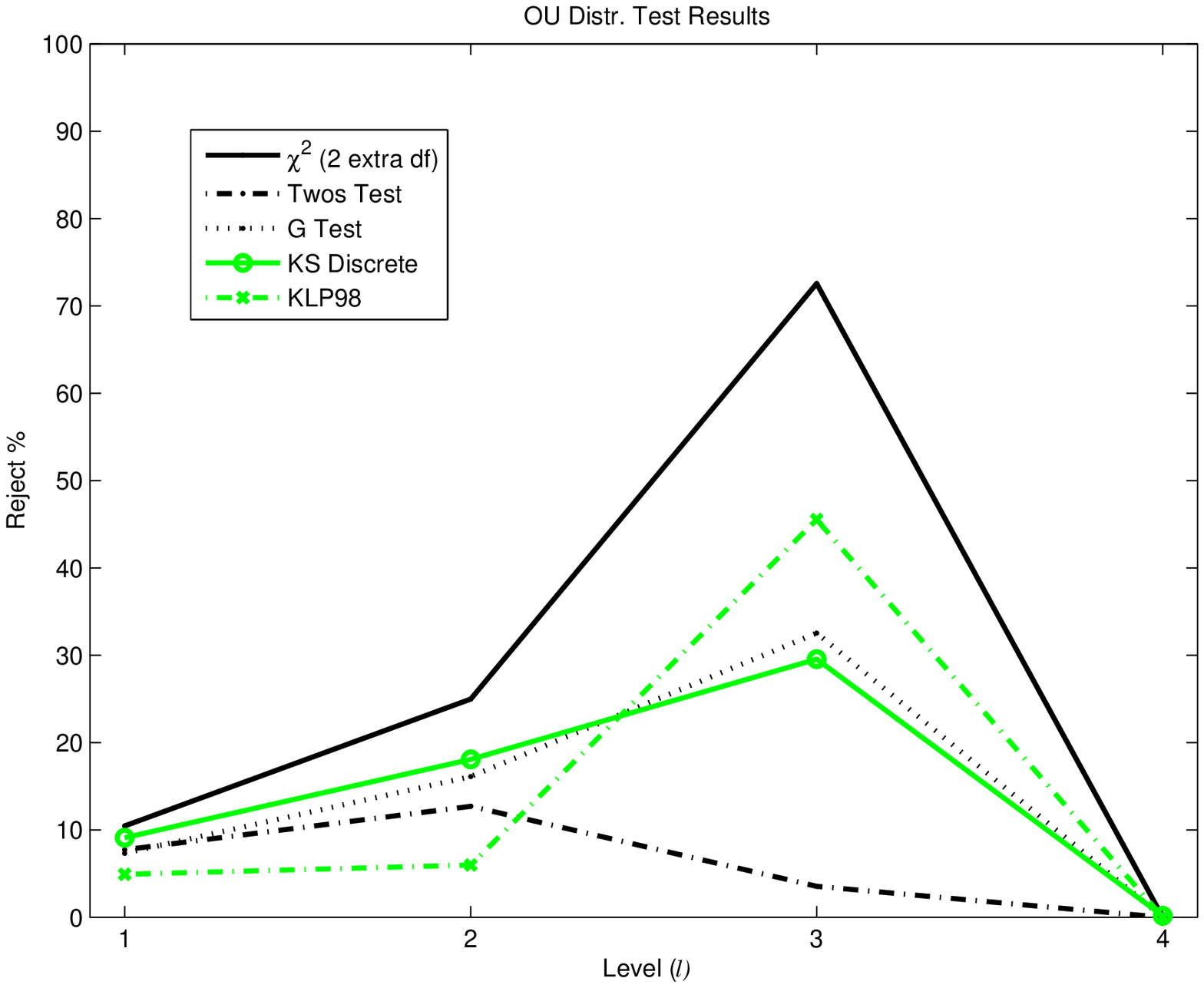}}\resizebox{3in}{!}{\includegraphics{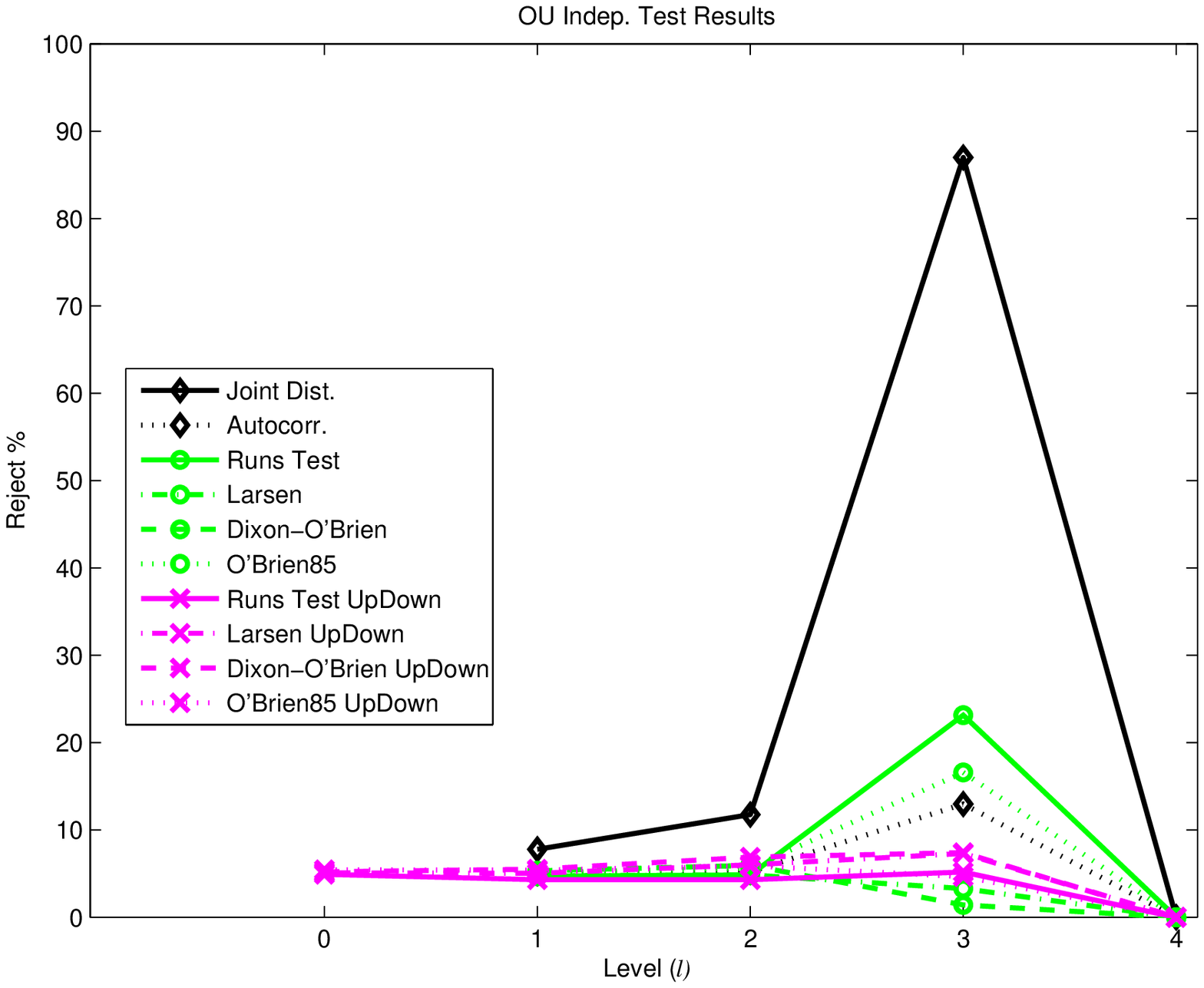}}
\caption{Crossing tree results for the Ornstein-Uhlenbeck process with $\al=8$, $\sigma=1$, $n = 5000$, $\de =  0.063015$. Percentage of 10,000 sample paths rejected using distribution tests (left) and independence tests (right). Here, 95\% confidence intervals are 1\% or smaller. \label{OU3.fig}}
\end{figure}

\begin{table}[thb!]

 {\small 
 \begin{center} 
 \begin{tabular}{ |c || *{5}{c|}} 
\hline \multicolumn{6}{|c|}{$\al=8$, $\sigma=1$, $n = 5000$, $\de = 0.063015$ }  \\ 
 \hline \hline 
& \multicolumn{5}{c|}{\raisebox{0ex}[12pt]{} {\bf \% of all} (\% of tested; \# tested)} \\ \hline {\bf levels }& 0 & 1 & 2 & 3 & 4 \\ \cline{2-6} 
 \hline \hline\raisebox{0ex}[12pt]{{\bf $\chi^2$ (+2 df)}}&  & {\bf  10.5} & {\bf  25.0} & {\bf  72.6} & {\bf   0.0} (  NaN;      0)\\ \hline 
{\bf Twos Test} &  & {\bf   7.7} & {\bf  12.7} & {\bf   3.5} & {\bf   0.0} (  0.0;   1340)\\ \hline 
{\bf G Test} &  & {\bf   7.3} & {\bf  16.1} & {\bf  32.5} & {\bf   0.0} (  0.0;   1340)\\ \hline 
{\bf KS Discrete} &  & {\bf   9.1} & {\bf  18.1} & {\bf  29.5} & {\bf   0.2}\\ \hline 
{\bf KLP98 Test} &  & {\bf   4.9} & {\bf   6.0} & {\bf  45.5} & {\bf   0.0}\\ \hline 
\hline 
{\bf Joint Dist.}&  & {\bf   7.8} & {\bf  11.8} & {\bf  87.0} & {\bf   0.0} (  NaN;      0)\\ \hline 
{\bf Autocorr.} &  & {\bf   5.1} & {\bf   5.2} & {\bf  13.0} & {\bf   0.0} (  0.0;      2)\\ \hline 
{\bf Runs Test} &  & {\bf   4.7} & {\bf   4.8} & {\bf  23.1} & {\bf   0.0}\\ \hline 
{\bf Larsen Test} &  & {\bf   4.7} & {\bf   4.9} & {\bf   3.3} & {\bf   0.0} (  0.0;   1314)\\ \hline 
{\bf  Dix.-OBri.} &  & {\bf   5.3} & {\bf   5.9} & {\bf   1.4} & {\bf   0.0} (  0.0;   1340)\\ \hline 
{\bf OBri85} &  & {\bf   5.1} & {\bf   5.6} & {\bf  16.6} ( 16.8;   9851) & {\bf   0.0} (  NaN;      0)\\ \hline \hline 
{\bf Runs UD} & {\bf   4.9}  & {\bf   4.3} & {\bf   4.3} & {\bf   5.2} & {\bf   0.0}\\ \hline 
{\bf Larsen UD} & {\bf   5.1}  & {\bf   5.0} & {\bf   6.0} & {\bf   7.3} (  7.3;   9988) & {\bf   0.0} (  0.0;    662)\\ \hline 
{\bf  Dix.-OBri. UD} & {\bf   5.2} & {\bf   5.5} & {\bf   6.9} & {\bf   7.4} & {\bf   0.0} (  0.0;   1303)\\ \hline 
{\bf OBri85 UD} & {\bf   5.4}  & {\bf   5.4} & {\bf   5.9} & {\bf   4.7} (  5.6;   8298) & {\bf   0.0} (  NaN;      0)\\ \hline 
\end{tabular} 
 \end{center}
 } 
\caption{Crossing tree results for the Ornstein-Uhlenbeck process with $\al=8$, $\sigma=1$, $n = 5000$, $\de =  0.063015$. Percentage of 10,000 sample paths rejected, with an average of 5000 crossings by time 20. At higher levels, insufficient data length means some datasets are not tested. } \label{OU3.table}
\end{table}

\begin{figure}[htb!]
\resizebox{3in}{!}{\includegraphics{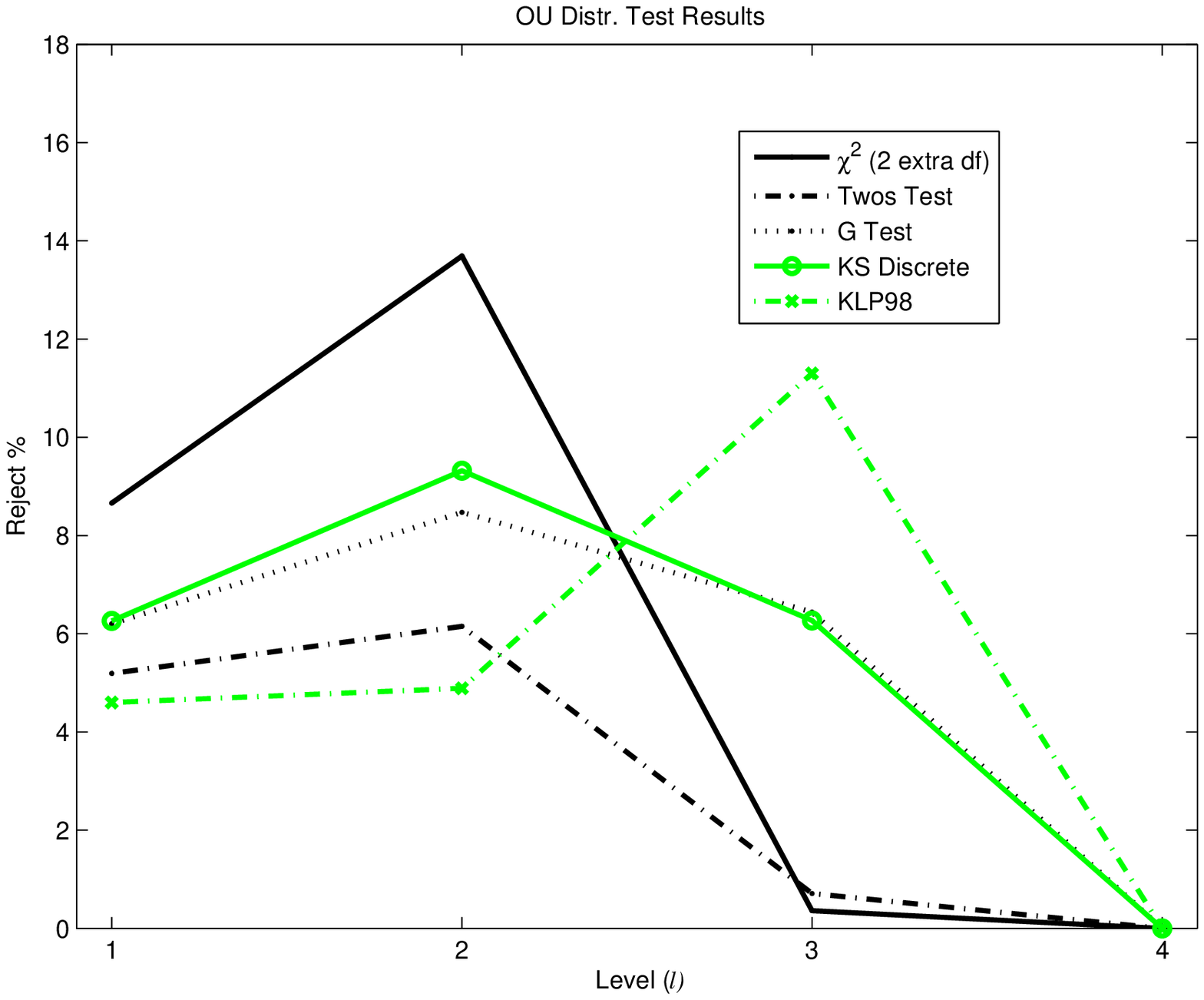}}\resizebox{3in}{!}{\includegraphics{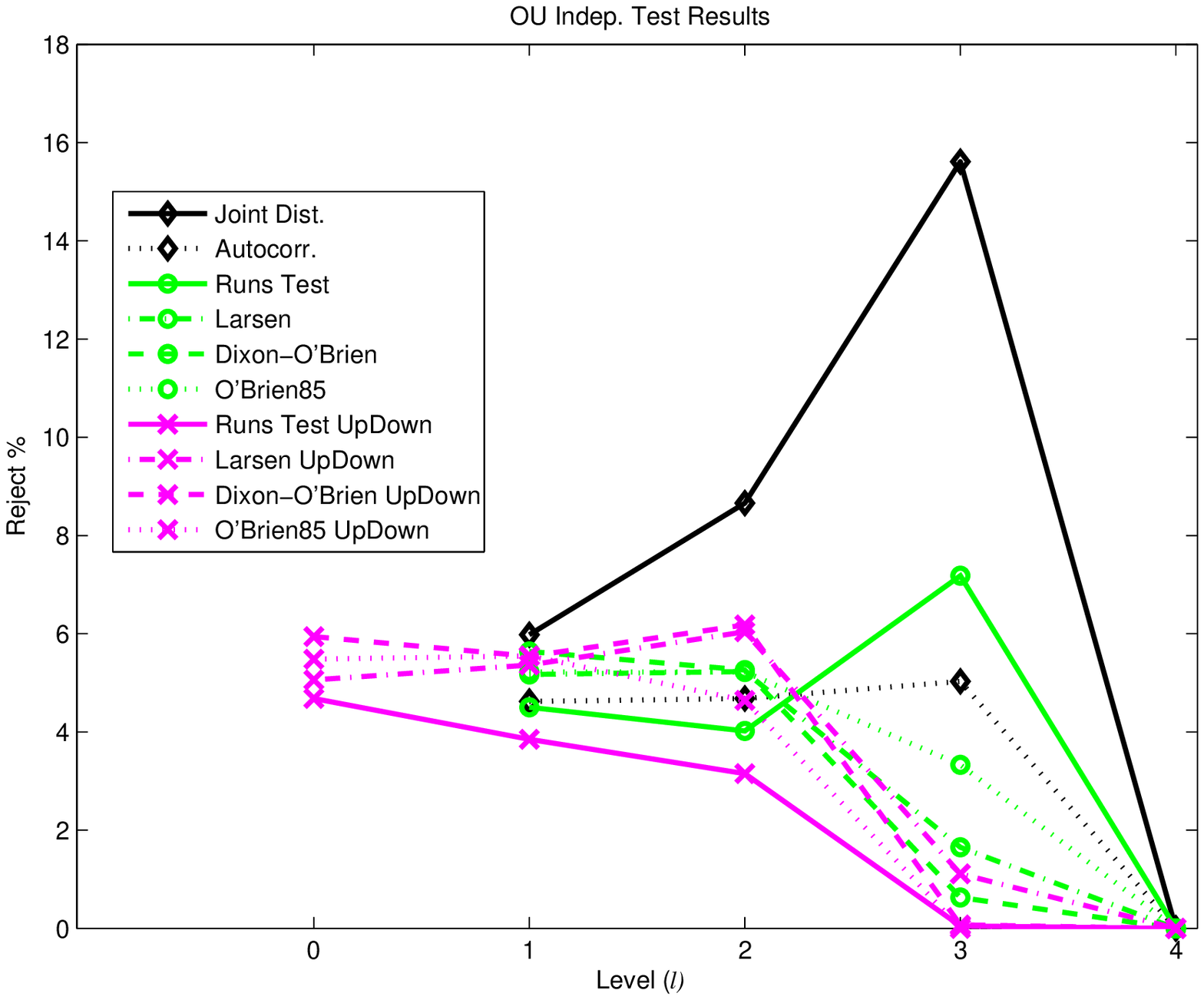}}
\caption{Crossing tree results for the Ornstein-Uhlenbeck process with $\al=10$, $\sigma=1$, $n = 1250$, $\de = 0.062945$. Percentage of 10,000 sample paths rejected using distribution tests (left) and independence tests (right). Here, 95\% confidence intervals are 1\% or smaller. \label{OU2.fig}}
\end{figure}

\begin{table}[thb!]

 {\small 
 \begin{center} 
 \begin{tabular}{ |c || *{5}{c|}} 
\hline \multicolumn{6}{|c|}{$\al=10$, $\sigma=1$, $n = 1250$, $\de = 0.062945$ }  \\ 
 \hline \hline 
& \multicolumn{5}{c|}{\raisebox{0ex}[12pt]{} {\bf \% of all} (\% of tested; \# tested)} \\ \hline {\bf levels }& 0 & 1 & 2 & 3 & 4 \\ \cline{2-6} 
 \hline \hline\raisebox{0ex}[12pt]{{\bf $\chi^2$ (+2 df)}}&  & {\bf   8.7} & {\bf  13.7} & {\bf   0.4} (  1.6;   2275) & {\bf   0.0} (  NaN;      0)\\ \hline 
{\bf Twos Test} &  & {\bf   5.2} & {\bf   6.2} & {\bf   0.7} (  0.7;   9988) & {\bf   0.0} (  0.0;     58)\\ \hline 
{\bf G Test} &  & {\bf   6.2} & {\bf   8.5} & {\bf   6.4} (  6.4;   9988) & {\bf   0.0} (  0.0;     58)\\ \hline 
{\bf KS Discrete} &  & {\bf   6.3} & {\bf   9.3} & {\bf   6.3} & {\bf   0.0}\\ \hline 
{\bf KLP98 Test} &  & {\bf   4.6} & {\bf   4.9} & {\bf  11.3} & {\bf   0.0}\\ \hline 
\hline 
{\bf Joint Dist.}&  & {\bf   6.0} & {\bf   8.7} & {\bf  15.6} ( 24.4;   6409) & {\bf   0.0} (  NaN;      0)\\ \hline 
{\bf Autocorr.} &  & {\bf   4.6} & {\bf   4.7} & {\bf   5.0} (  5.3;   9577) & {\bf   0.0} (  NaN;      0)\\ \hline 
{\bf Runs Test} &  & {\bf   4.5} & {\bf   4.0} & {\bf   7.2} & {\bf   0.0}\\ \hline 
{\bf Larsen Test} &  & {\bf   5.2} & {\bf   5.2} & {\bf   1.7} (  1.7;   9923) & {\bf   0.0} (  0.0;     56)\\ \hline 
{\bf  Dix.-OBri.} &  & {\bf   5.6} & {\bf   5.3} & {\bf   0.6} (  0.6;   9988) & {\bf   0.0} (  0.0;     58)\\ \hline 
{\bf OBri85} &  & {\bf   5.2} & {\bf   5.2} (  5.2;   9999) & {\bf   3.3} ( 14.6;   2274) & {\bf   0.0} (  NaN;      0)\\ \hline \hline 
{\bf Runs UD} & {\bf   4.7}  & {\bf   3.9} & {\bf   3.1} & {\bf   0.1} & {\bf   0.0}\\ \hline 
{\bf Larsen UD} & {\bf   5.1}  & {\bf   5.4} & {\bf   6.0} & {\bf   1.1} (  1.2;   9004) & {\bf   0.0} (  0.0;     27)\\ \hline 
{\bf  Dix.-OBri. UD} & {\bf   5.9} & {\bf   5.5} & {\bf   6.2} & {\bf   0.1} (  0.1;   9906) & {\bf   0.0} (  0.0;     54)\\ \hline 
{\bf OBri85 UD} & {\bf   5.5}  & {\bf   5.5} & {\bf   4.6} (  5.4;   8617) & {\bf   0.0} (  0.0;     80) & {\bf   0.0} (  NaN;      0)\\ \hline 
\end{tabular} 
 \end{center}
 } 
\caption{Crossing tree results for the Ornstein-Uhlenbeck process with $\al=10$, $\sigma=1$, $n = 1250$, $\de = 0.062945$. Percentage of 10,000 sample paths rejected, with an average of 1250 crossings by time 5. At higher levels, insufficient data length means some datasets are not tested. } \label{OU2.table}
\end{table}

\begin{figure}[htb!]
\resizebox{3in}{!}{\includegraphics{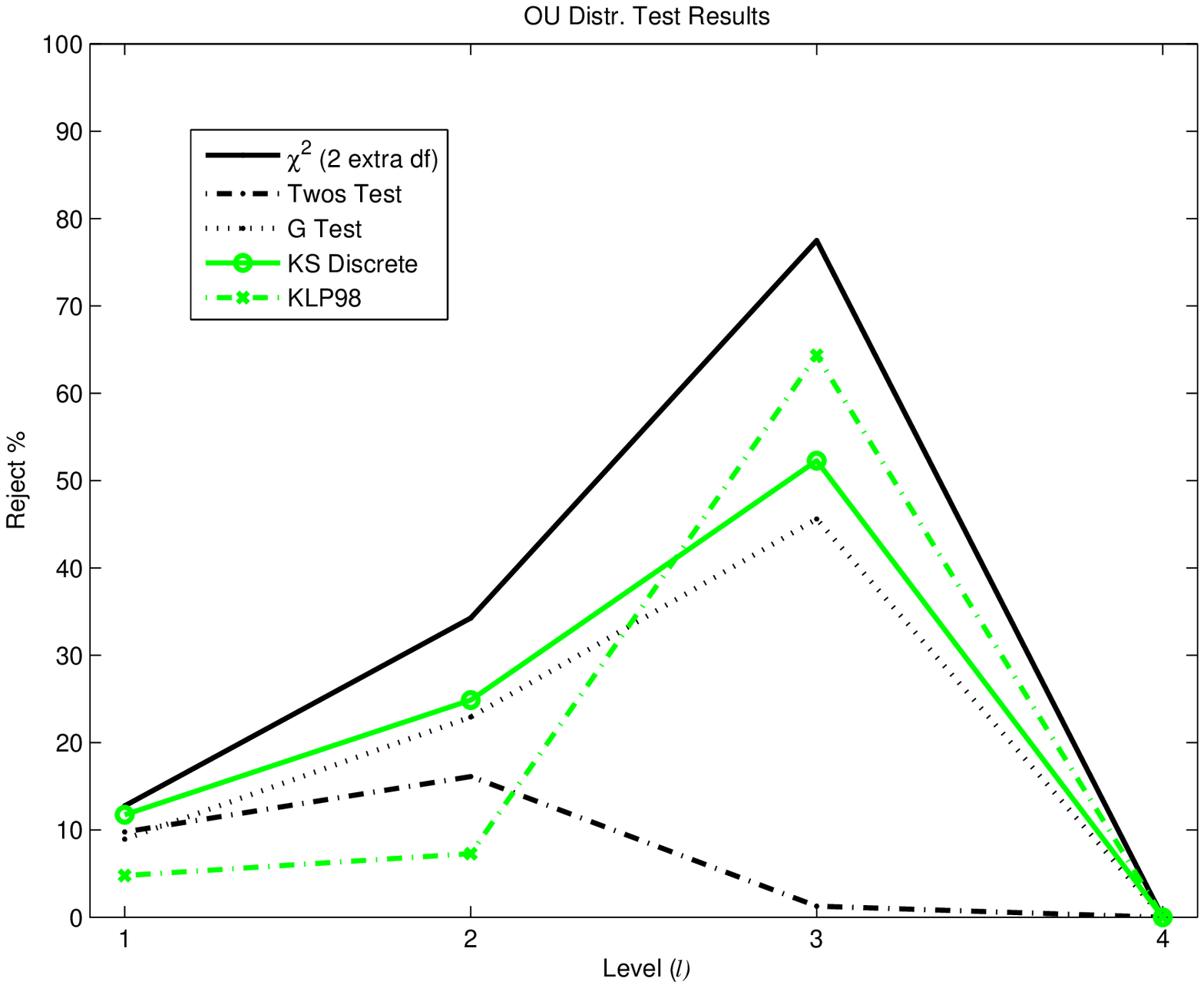}}\resizebox{3in}{!}{\includegraphics{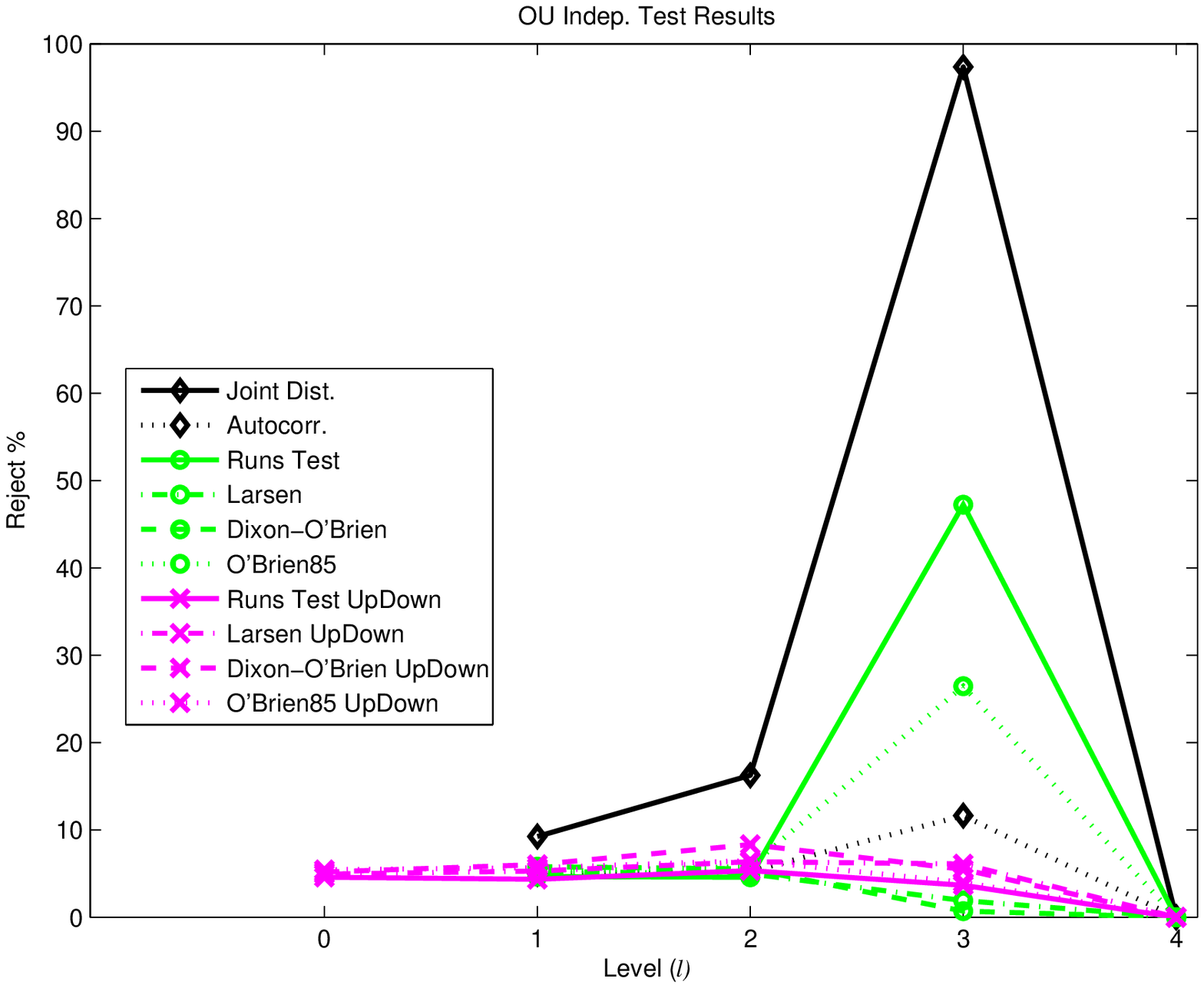}}
\caption{Crossing tree results for the Ornstein-Uhlenbeck process with $\al=10$, $\sigma=1$, $n = 5000$, $\de = 0.062945$. Percentage of 10,000 sample paths rejected using distribution tests (left) and independence tests (right). Here, 95\% confidence intervals are 1\% or smaller. \label{OU4.fig}}
\end{figure}

\begin{table}[thb!]

 {\small 
 \begin{center} 
 \begin{tabular}{ |c || *{5}{c|}} 
\hline \multicolumn{6}{|c|}{$\al=10$, $\sigma=1$, $n = 5000$, $\de = 0.062945$ }  \\ 
 \hline \hline 
& \multicolumn{5}{c|}{\raisebox{0ex}[12pt]{} {\bf \% of all} (\% of tested; \# tested)} \\ \hline {\bf levels }& 0 & 1 & 2 & 3 & 4 \\ \cline{2-6} 
 \hline \hline\raisebox{0ex}[12pt]{{\bf $\chi^2$ (+2 df)}}&  & {\bf  12.7} & {\bf  34.3} & {\bf  77.5} & {\bf   0.0} (  NaN;      0)\\ \hline 
{\bf Twos Test} &  & {\bf   9.8} & {\bf  16.1} & {\bf   1.3} & {\bf   0.0} (  0.0;    250)\\ \hline 
{\bf G Test} &  & {\bf   8.9} & {\bf  22.9} & {\bf  45.6} & {\bf   0.0} (  0.0;    250)\\ \hline 
{\bf KS Discrete} &  & {\bf  11.7} & {\bf  24.9} & {\bf  52.3} & {\bf   0.0}\\ \hline 
{\bf KLP98 Test} &  & {\bf   4.8} & {\bf   7.3} & {\bf  64.3} & {\bf   0.0}\\ \hline 
\hline 
{\bf Joint Dist.}&  & {\bf   9.3} & {\bf  16.3} & {\bf  97.4} & {\bf   0.0} (  NaN;      0)\\ \hline 
{\bf Autocorr.} &  & {\bf   5.0} & {\bf   5.2} & {\bf  11.6} & {\bf   0.0} (  0.0;      1)\\ \hline 
{\bf Runs Test} &  & {\bf   4.7} & {\bf   4.6} & {\bf  47.2} & {\bf   0.0}\\ \hline 
{\bf Larsen Test} &  & {\bf   5.2} & {\bf   5.0} & {\bf   1.9} & {\bf   0.0} (  0.0;    250)\\ \hline 
{\bf  Dix.-OBri.} &  & {\bf   5.8} & {\bf   5.5} & {\bf   0.7} & {\bf   0.0} (  0.0;    250)\\ \hline 
{\bf OBri85} &  & {\bf   5.2} & {\bf   5.8} & {\bf  26.4} ( 28.2;   9374) & {\bf   0.0} (  NaN;      0)\\ \hline \hline 
{\bf Runs UD} & {\bf   4.6}  & {\bf   4.3} & {\bf   5.3} & {\bf   3.6} & {\bf   0.0}\\ \hline 
{\bf Larsen UD} & {\bf   4.9}  & {\bf   5.3} & {\bf   6.3} & {\bf   6.1} (  6.1;   9970) & {\bf   0.0} (  0.0;    117)\\ \hline 
{\bf  Dix.-OBri. UD} & {\bf   5.2} & {\bf   6.0} & {\bf   8.3} & {\bf   5.5} & {\bf   0.0} (  0.0;    246)\\ \hline 
{\bf OBri85 UD} & {\bf   5.4}  & {\bf   5.7} & {\bf   6.5} & {\bf   4.0} (  5.1;   7809) & {\bf   0.0} (  NaN;      0)\\ \hline 
\end{tabular} 
 \end{center}
 } 
\caption{Crossing tree results for the Ornstein-Uhlenbeck process with $\al=10$, $\sigma=1$, $n = 5000$, $\de = 0.062945$. Percentage of 10,000 sample paths rejected, with an average of 5000 crossings by time 20. At higher levels, insufficient data length means some datasets are not tested. } \label{OU4.table}
\end{table}

\begin{figure}[htb!]
\resizebox{3in}{!}{\includegraphics{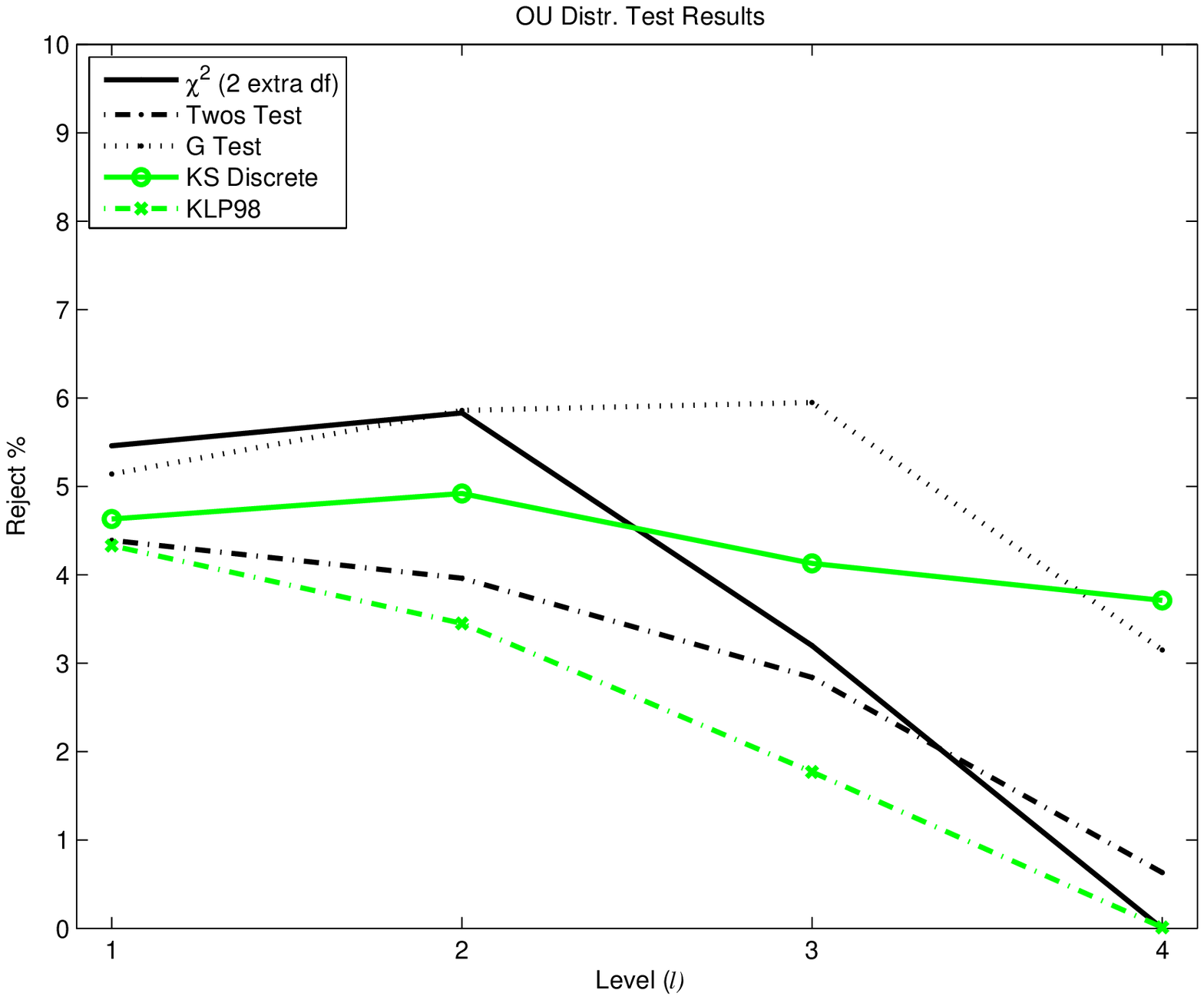}}\resizebox{3in}{!}{\includegraphics{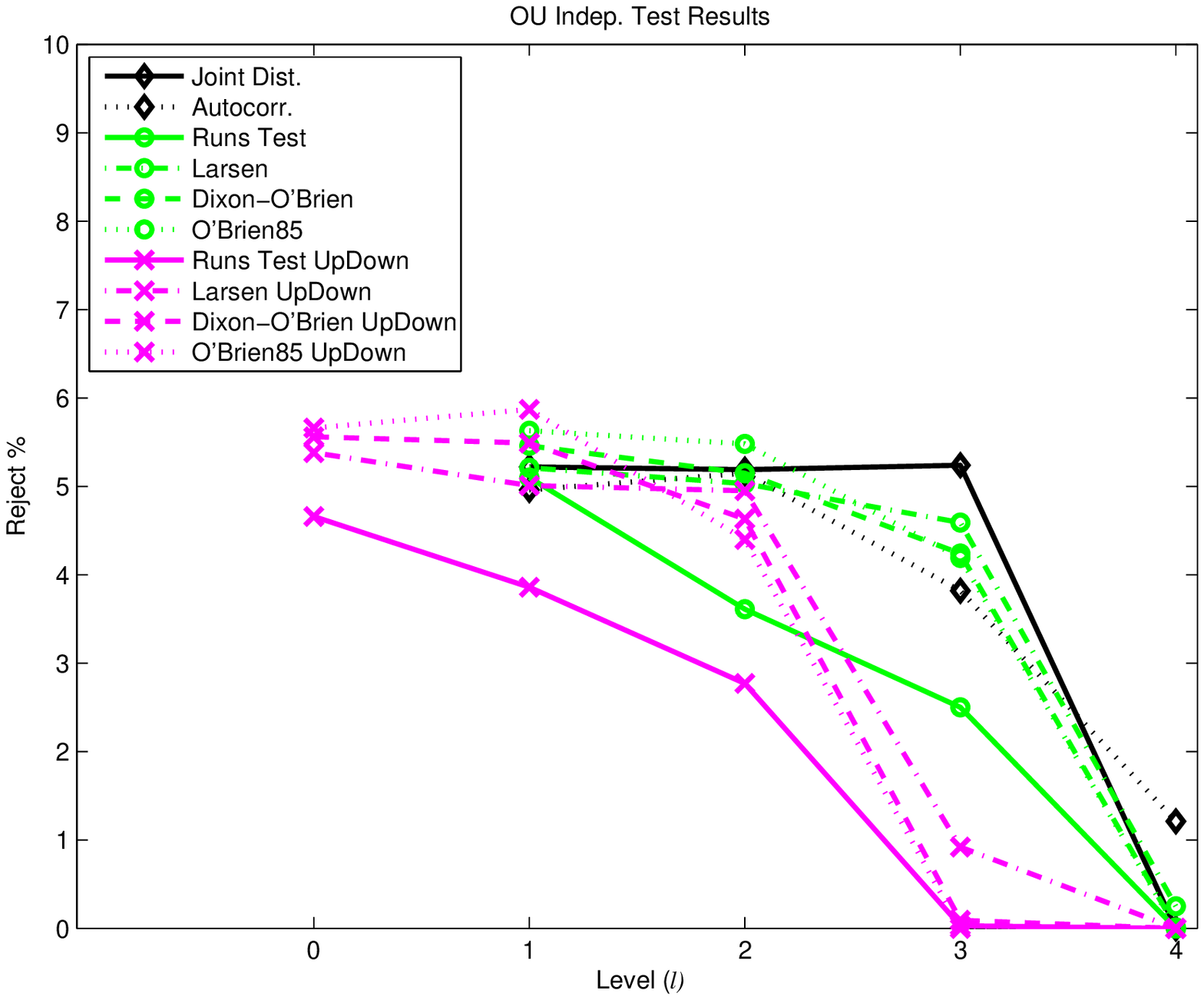}}
\caption{Crossing tree results for the Ornstein-Uhlenbeck process with $\al=1$, $\sigma=1$, $n = 1250$, $\de = 0.063220$. Percentage of 10,000 sample paths rejected using distribution tests (left) and independence tests (right). Here, 95\% confidence intervals are 1\% or smaller. \label{OU6.fig}}
\end{figure}

\begin{table}[thb!]

 {\small 
 \begin{center} 
 \begin{tabular}{ |c || *{5}{c|}} 
\hline \multicolumn{6}{|c|}{$\al=1$, $\sigma=1$, $n = 1250$, $\de = 0.063220$ }  \\ 
 \hline \hline 
& \multicolumn{5}{c|}{\raisebox{0ex}[12pt]{} {\bf \% of all} (\% of tested; \# tested)} \\ \hline {\bf levels }& 0 & 1 & 2 & 3 & 4 \\ \cline{2-6} 
 \hline \hline\raisebox{0ex}[12pt]{{\bf $\chi^2$ (+2 df)}}&  & {\bf   5.5} & {\bf   5.8} & {\bf   3.2} (  3.6;   8964) & {\bf   0.0} (  NaN;      0)\\ \hline 
{\bf Twos Test} &  & {\bf   4.4} & {\bf   4.0} & {\bf   2.8} & {\bf   0.6} (  0.6;   9794)\\ \hline 
{\bf G Test} &  & {\bf   5.1} & {\bf   5.9} & {\bf   5.9} & {\bf   3.1} (  3.2;   9794)\\ \hline 
{\bf KS Discrete} &  & {\bf   4.6} & {\bf   4.9} & {\bf   4.1} & {\bf   3.7}\\ \hline 
{\bf KLP98 Test} &  & {\bf   4.3} & {\bf   3.5} & {\bf   1.8} & {\bf   0.0}\\ \hline 
\hline 
{\bf Joint Dist.}&  & {\bf   5.2} & {\bf   5.2} & {\bf   5.2} (  5.3;   9953) & {\bf   0.0} (  0.0;     16)\\ \hline 
{\bf Autocorr.} &  & {\bf   5.0} & {\bf   5.1} & {\bf   3.8} & {\bf   1.2} (  4.0;   3044)\\ \hline 
{\bf Runs Test} &  & {\bf   5.1} & {\bf   3.6} & {\bf   2.5} & {\bf   0.0}\\ \hline 
{\bf Larsen Test} &  & {\bf   5.2} & {\bf   5.0} & {\bf   4.6} (  4.6;   9992) & {\bf   0.3} (  0.3;   8063)\\ \hline 
{\bf  Dix.-OBri.} &  & {\bf   5.5} & {\bf   5.1} & {\bf   4.2} & {\bf   0.0} (  0.0;   9794)\\ \hline 
{\bf OBri85} &  & {\bf   5.6} & {\bf   5.5} & {\bf   4.2} (  5.5;   7632) & {\bf   0.0} (  0.0;      5)\\ \hline \hline 
{\bf Runs UD} & {\bf   4.7}  & {\bf   3.9} & {\bf   2.8} & {\bf   0.0} & {\bf   0.0}\\ \hline 
{\bf Larsen UD} & {\bf   5.4}  & {\bf   5.0} & {\bf   5.0} (  5.0;   9999) & {\bf   0.9} (  1.0;   9274) & {\bf   0.0} (  0.0;   4678)\\ \hline 
{\bf  Dix.-OBri. UD} & {\bf   5.6} & {\bf   5.5} & {\bf   4.6} & {\bf   0.1} (  0.1;   9984) & {\bf   0.0} (  0.0;   7612)\\ \hline 
{\bf OBri85 UD} & {\bf   5.7}  & {\bf   5.9} & {\bf   4.4} (  5.5;   7962) & {\bf   0.0} (  0.0;     69) & {\bf   0.0} (  NaN;      0)\\ \hline 
\end{tabular} 
 \end{center}
 } 
\caption{Crossing tree results for the Ornstein-Uhlenbeck process with $\al=1$, $\sigma=1$, $n = 1250$, $\de = 0.063220$. Percentage of 10,000 sample paths rejected, with an average of 1250 crossings by time 5. At higher levels, insufficient data length means some datasets are not tested. } \label{OU6.table}
\end{table}

\begin{figure}[htb!]
\resizebox{3in}{!}{\includegraphics{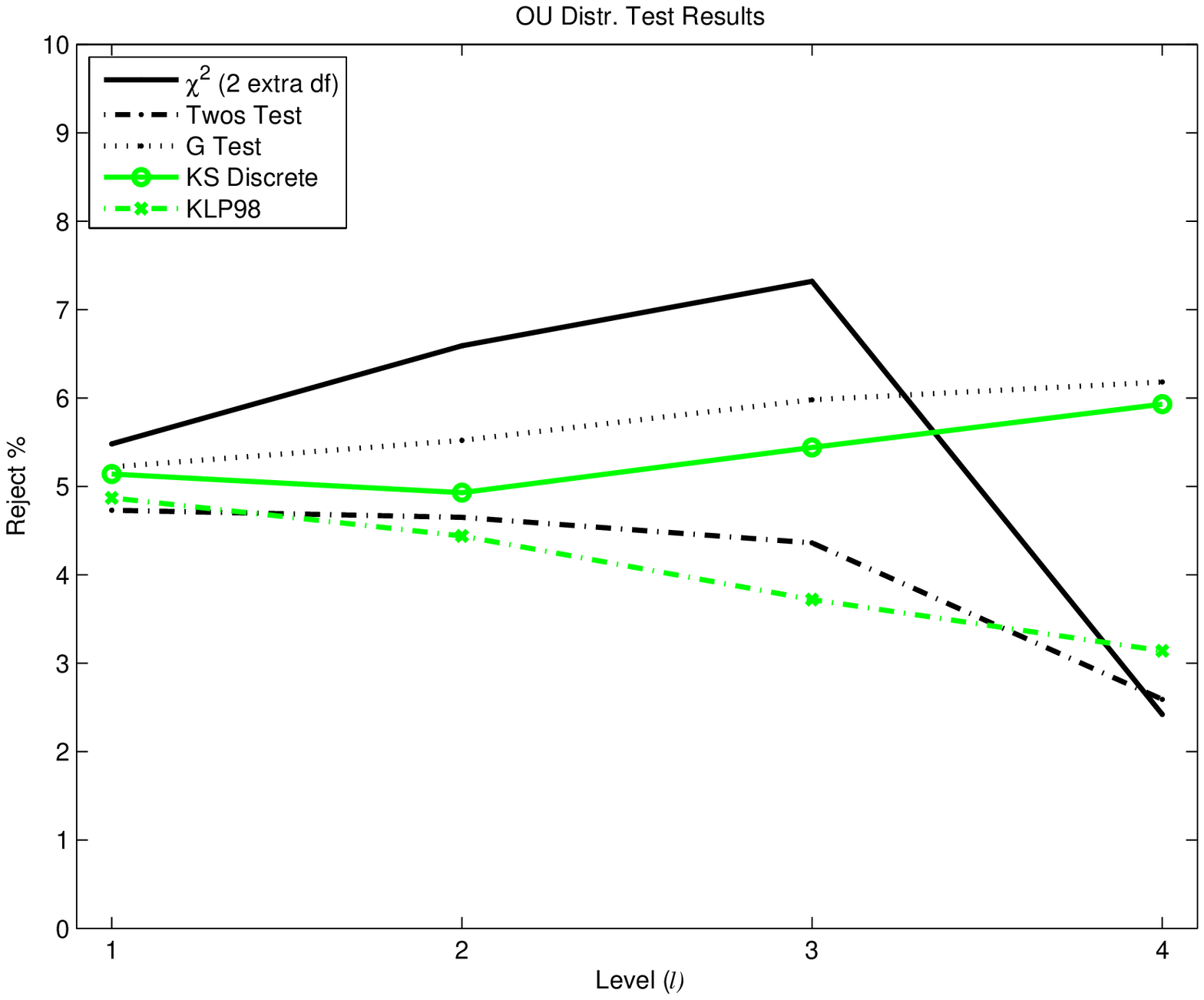}}\resizebox{3in}{!}{\includegraphics{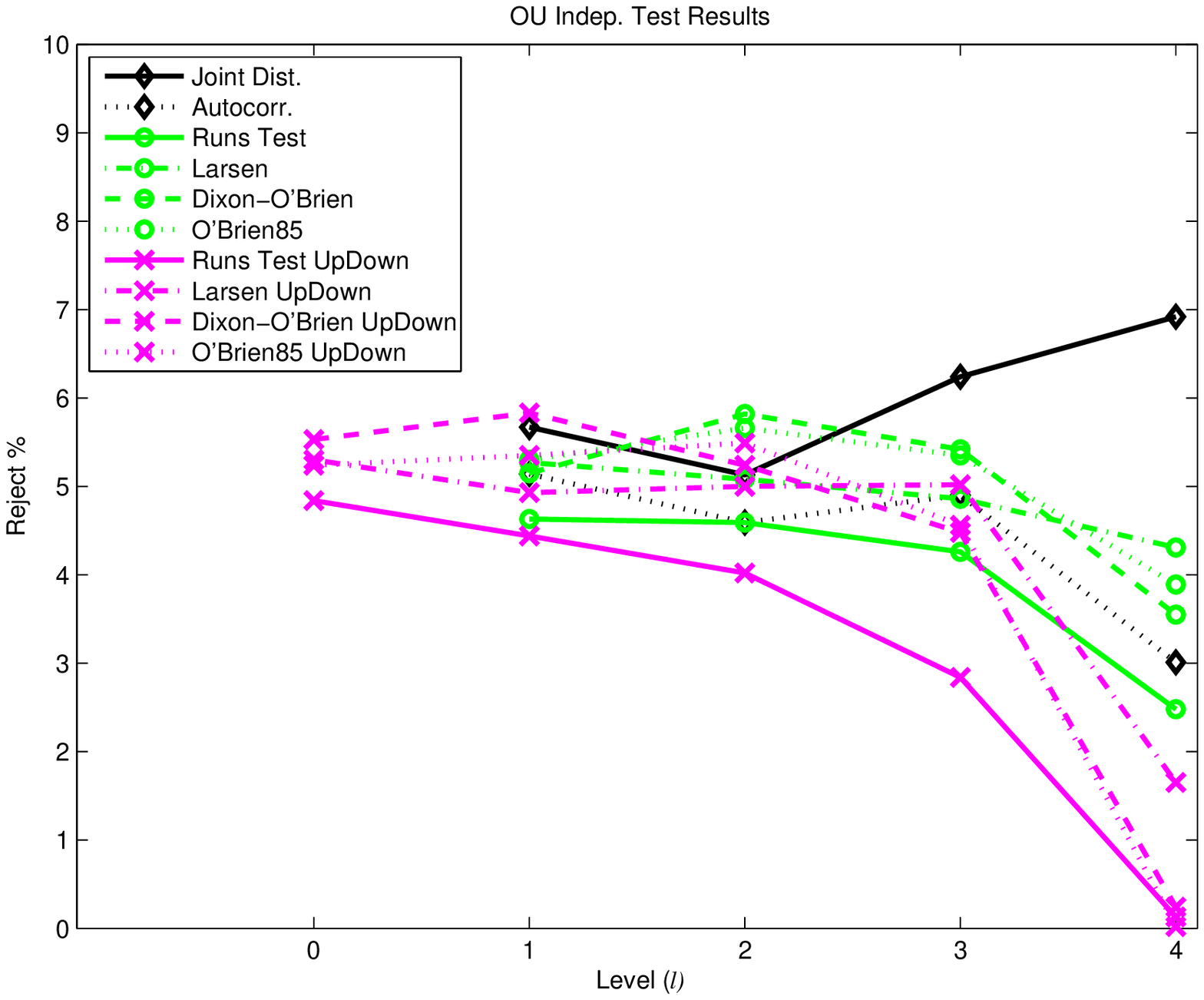}}
\caption{Crossing tree results for the Ornstein-Uhlenbeck process with $\al=1$, $\sigma=1$, $n = 5000$, $\de = 0.063220$. Percentage of 10,000 sample paths rejected using distribution tests (left) and independence tests (right). Here, 95\% confidence intervals are 1\% or smaller. \label{OU5.fig}}
\end{figure}

\begin{table}[thb!]

 {\small 
 \begin{center} 
 \begin{tabular}{ |c || *{5}{c|}} 
\hline \multicolumn{6}{|c|}{$\al=1$, $\sigma=1$, $n = 5000$, $\de = 0.063220$ }  \\ 
 \hline \hline 
& \multicolumn{5}{c|}{\raisebox{0ex}[12pt]{} {\bf \% of all} (\% of tested; \# tested)} \\ \hline {\bf levels }& 0 & 1 & 2 & 3 & 4 \\ \cline{2-6} 
 \hline \hline\raisebox{0ex}[12pt]{{\bf $\chi^2$ (+2 df)}}&  & {\bf   5.5} & {\bf   6.6} & {\bf   7.3} & {\bf   2.4} (  3.2;   7642)\\ \hline 
{\bf Twos Test} &  & {\bf   4.7} & {\bf   4.7} & {\bf   4.4} & {\bf   2.6}\\ \hline 
{\bf G Test} &  & {\bf   5.2} & {\bf   5.5} & {\bf   6.0} & {\bf   6.2}\\ \hline 
{\bf KS Discrete} &  & {\bf   5.1} & {\bf   4.9} & {\bf   5.4} & {\bf   5.9}\\ \hline 
{\bf KLP98 Test} &  & {\bf   4.9} & {\bf   4.4} & {\bf   3.7} & {\bf   3.1}\\ \hline 
\hline 
{\bf Joint Dist.}&  & {\bf   5.7} & {\bf   5.1} & {\bf   6.2} & {\bf   6.9} (  7.1;   9725)\\ \hline 
{\bf Autocorr.} &  & {\bf   5.1} & {\bf   4.6} & {\bf   4.9} & {\bf   3.0} (  3.0;   9999)\\ \hline 
{\bf Runs Test} &  & {\bf   4.6} & {\bf   4.6} & {\bf   4.3} & {\bf   2.5}\\ \hline 
{\bf Larsen Test} &  & {\bf   5.3} & {\bf   5.1} & {\bf   4.9} & {\bf   4.3} (  4.3;   9989)\\ \hline 
{\bf  Dix.-OBri.} &  & {\bf   5.1} & {\bf   5.8} & {\bf   5.4} & {\bf   3.5}\\ \hline 
{\bf OBri85} &  & {\bf   5.3} & {\bf   5.7} & {\bf   5.3} & {\bf   3.9} (  5.9;   6624)\\ \hline \hline 
{\bf Runs UD} & {\bf   4.8}  & {\bf   4.4} & {\bf   4.0} & {\bf   2.8} & {\bf   0.1}\\ \hline 
{\bf Larsen UD} & {\bf   5.3}  & {\bf   4.9} & {\bf   5.0} & {\bf   5.0} (  5.0;   9999) & {\bf   1.7} (  1.8;   9319)\\ \hline 
{\bf  Dix.-OBri. UD} & {\bf   5.5} & {\bf   5.8} & {\bf   5.2} & {\bf   4.5} & {\bf   0.2} (  0.2;   9968)\\ \hline 
{\bf OBri85 UD} & {\bf   5.2}  & {\bf   5.3} & {\bf   5.5} & {\bf   4.6} (  5.4;   8392) & {\bf   0.0} (  0.7;    295)\\ \hline 
\end{tabular} 
 \end{center}
 } 
\caption{Crossing tree results for the Ornstein-Uhlenbeck process with $\al=1$, $\sigma=1$, $n = 5000$, $\de = 0.063220$. Percentage of 10,000 sample paths rejected, with an average of 5000 crossings by time 20. At higher levels, insufficient data length means some datasets are not tested. } \label{OU5.table}
\end{table}

\begin{figure}[hb!]
\resizebox{3in}{!}{\includegraphics{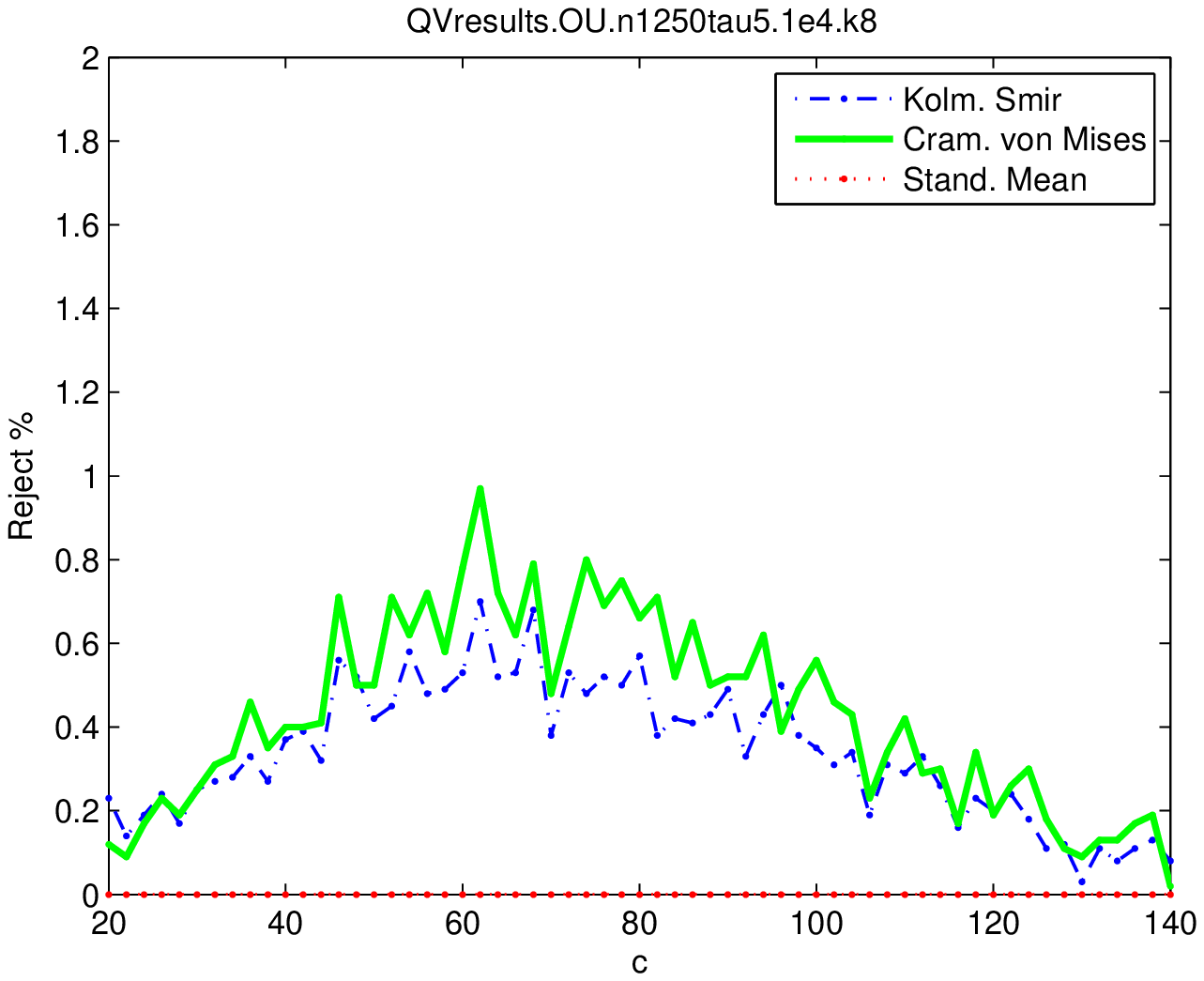}}\resizebox{3in}{!}{\includegraphics{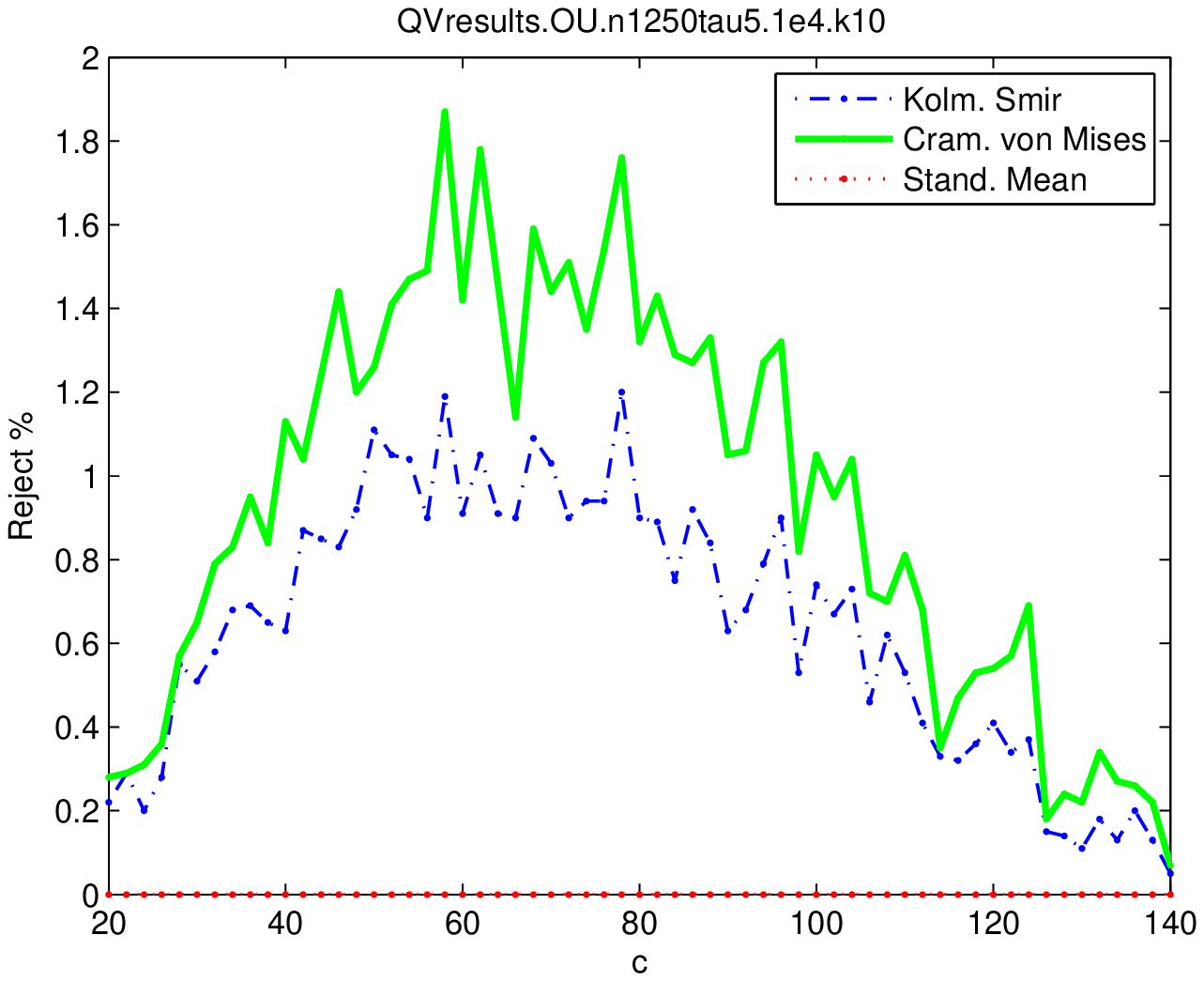}}
\caption{Rejection rates using sample quadratic variation for Ornstein-Uhlenbeck processes. Short ($n=1250$) datasets. Left has drift coefficient $\al=8$, right has drift coefficient $\al=10$. Here, 95\% confidence intervals are $\leq 1\%$. \label{OUshort.fig}}
\end{figure}

\begin{figure}[hb!]
\resizebox{3in}{!}{\includegraphics{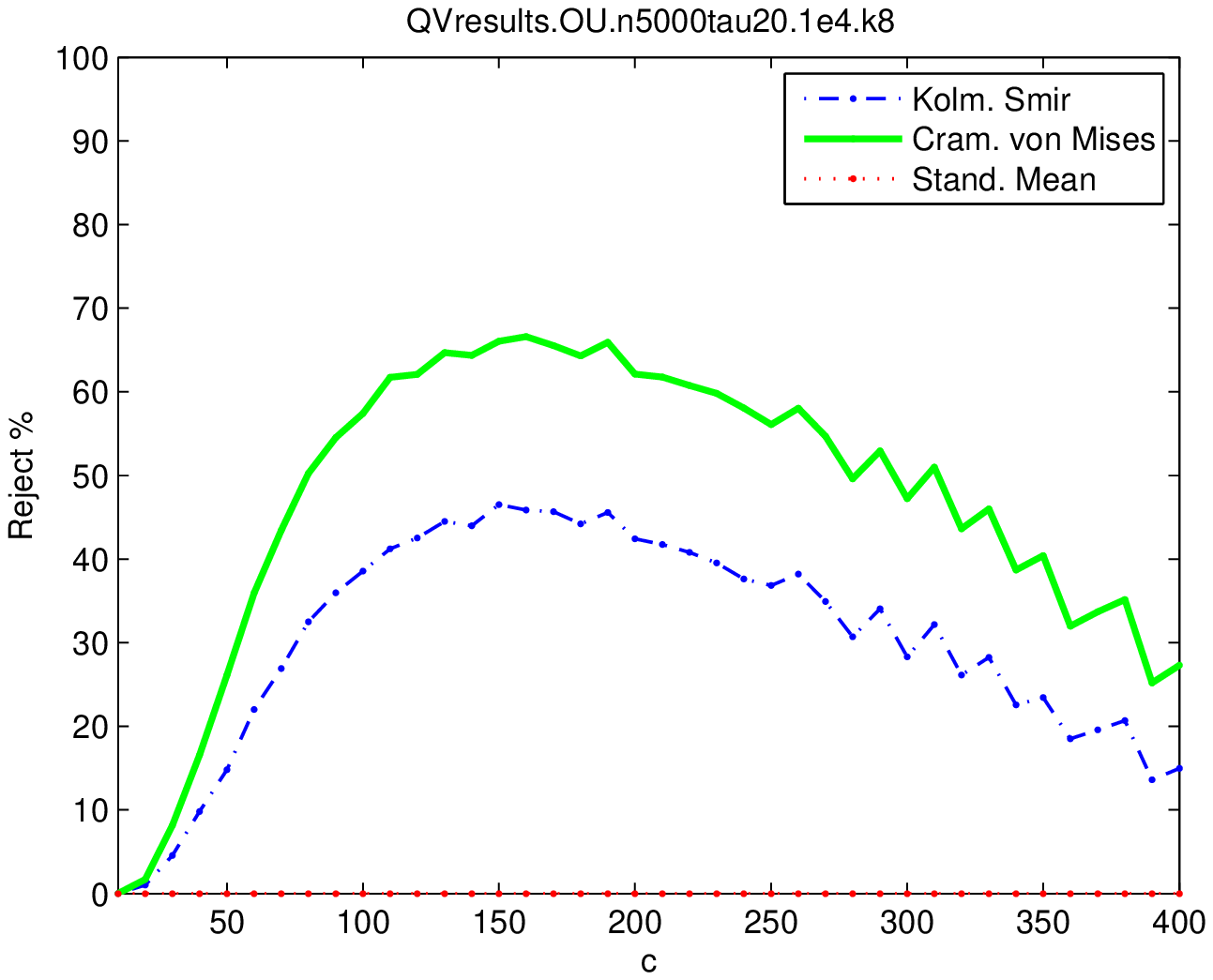}}\resizebox{3in}{!}{\includegraphics{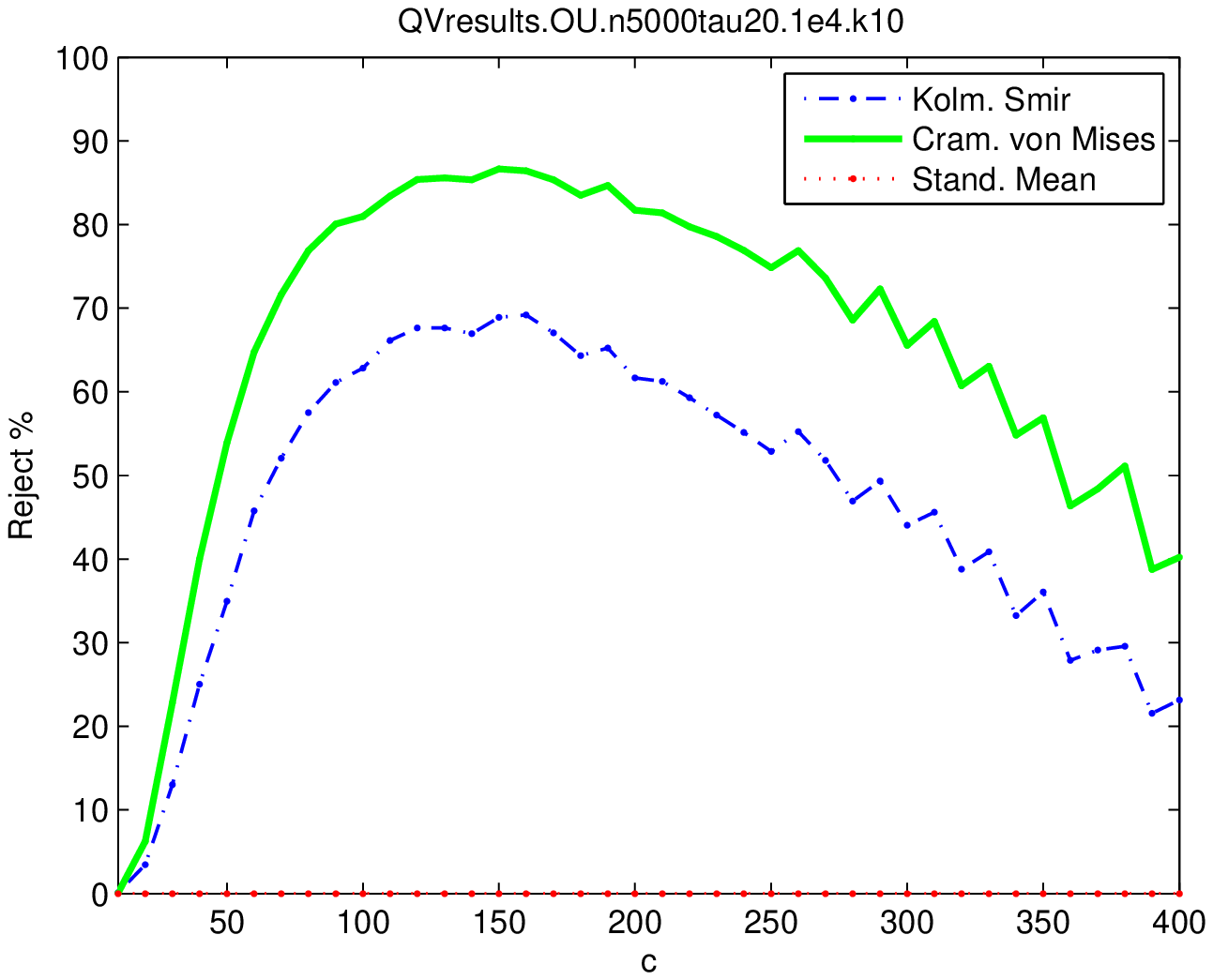}}
\caption{Rejection rates using sample quadratic variation for Ornstein-Uhlenbeck processes. Long ($n=5000$) datasets. Left has drift coefficient $\al=8$, right has drift coefficient $\al=10$. Here, 95\% confidence intervals are $\leq 1\%$. \label{OUlong.fig}}
\end{figure}

\begin{figure}[hb!]
\resizebox{3in}{!}{\includegraphics{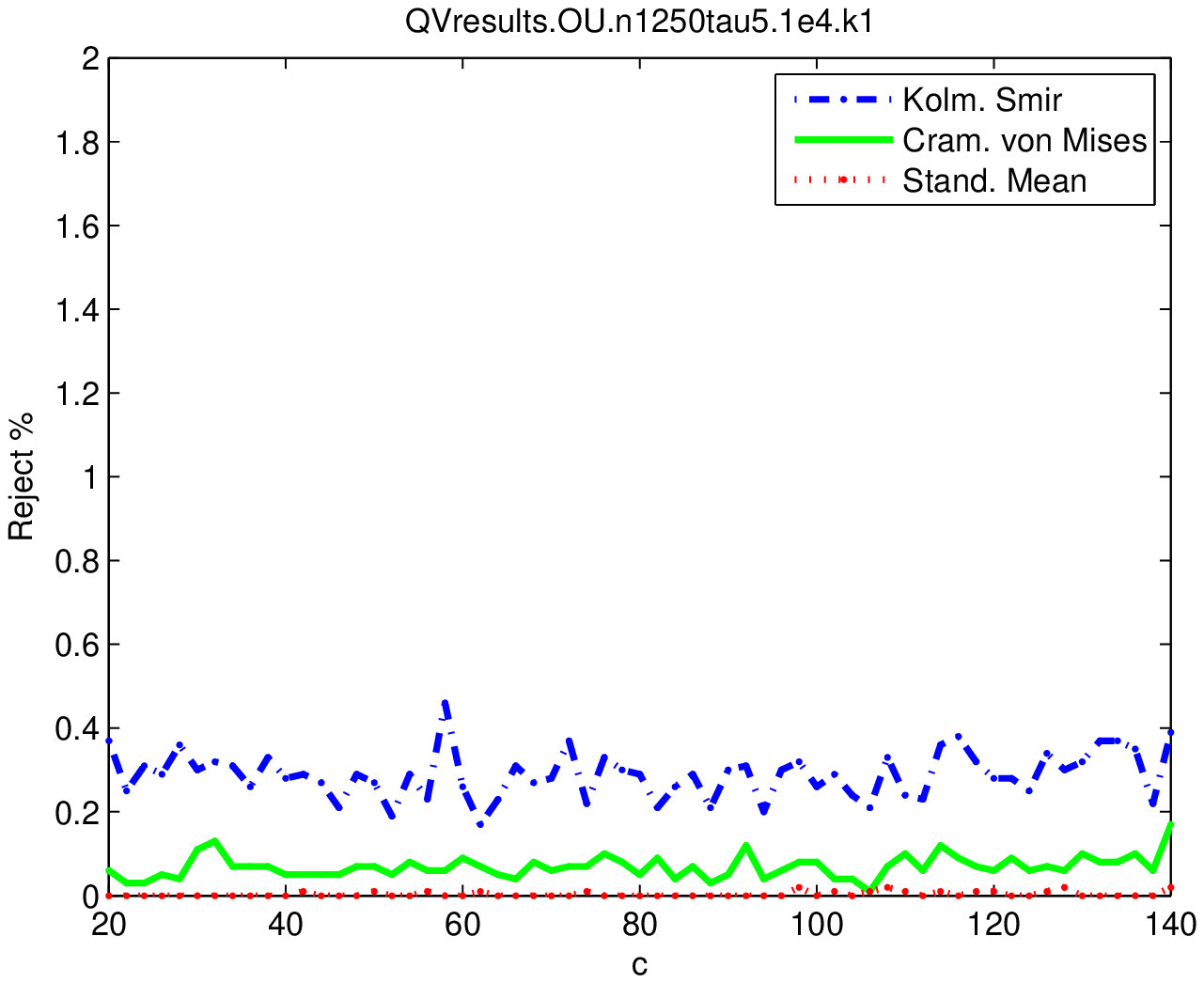}}
\resizebox{3in}{!}{\includegraphics{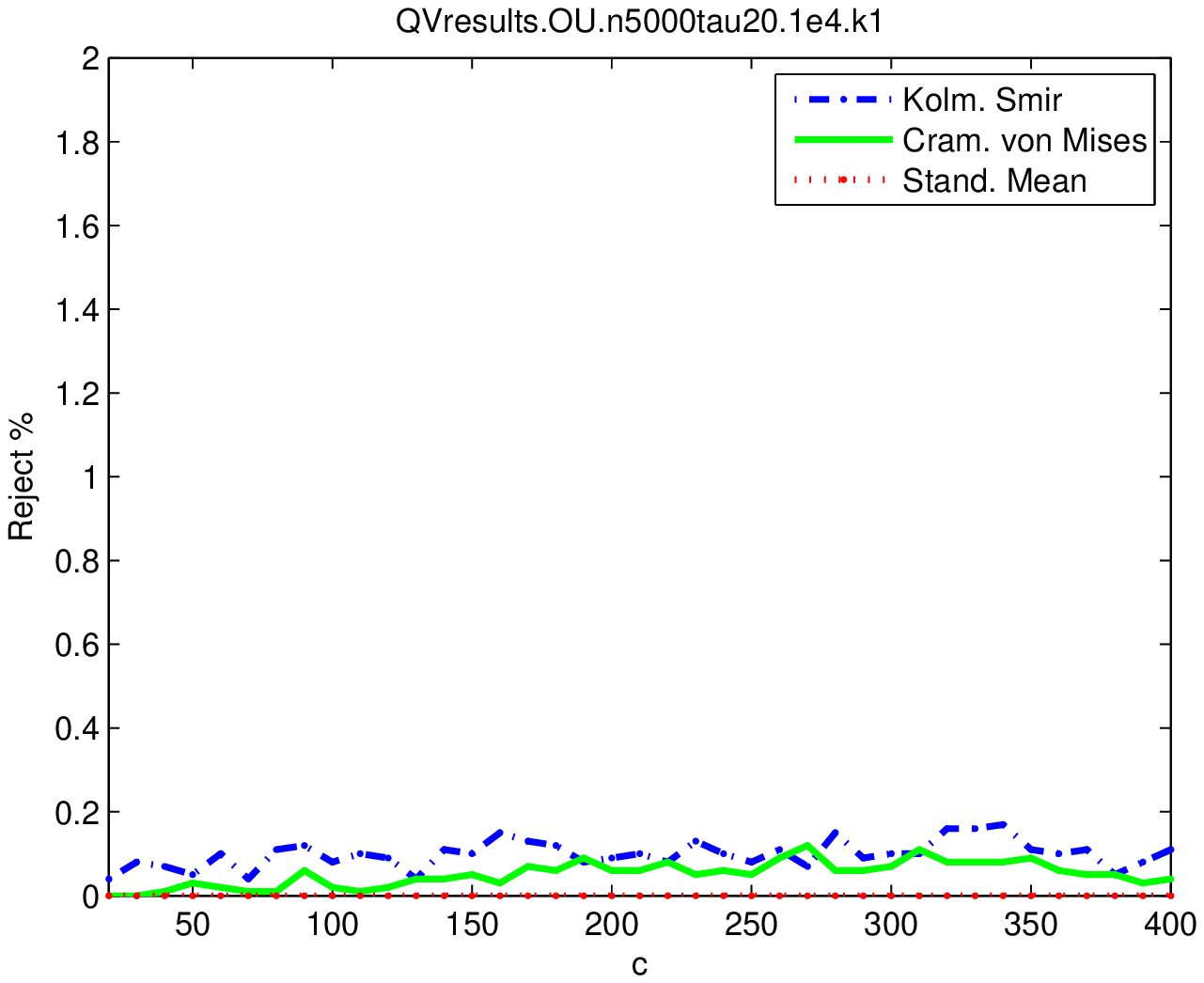}}
\caption{Rejection rates using sample quadratic variation for Ornstein-Uhlenbeck processes with drift coefficient $\al=1$. Left panel uses short ($n=1250$) datasets, right panel uses long ($n=5000$) datasets.  Here, 95\% confidence intervals are 1\% or smaller. \label{OUk1.fig}}
\end{figure}

\clearpage 
\subsection{Feller's Square-root Diffusion} 
Another example of a mean reverting process in use is Feller's square root diffusion \citep{Feller51}, which underlies the Cox-Ingersoll-Ross model \citep{CIR85a}. It is described by  \[dX(t) = \ka (\mu - X(t)) dt + \si \sqrt{X(t)} dW(t)\]
for $\ka, \mu, \si > 0$, $2\ka\mu/\si^2 \geq 1$. The stationary distribution is Gamma$(a,b)$ with shape parameter $a=2\ka\mu/\si^2$ and scale parameter $b=\sigma^2/(2\ka)$. We assume the stationary process throughout. We again considered two lengths (1250 and 5000) and two parameter values ($\ka=6$ and $\ka=8$), with $\si=1$ and $\mu=0.2$. Note that the mean is non-zero for this process.

To simulate 1250 crossings with $\ka=6$, the value $\de=0.028163$ was used for the size of the level 0 crossings. Figure \ref{Feller1.fig} (left) shows the results from our distribution tests. As with the Ornstein-Uhlenbeck process, our $\chi^2$ test seems to have the most power, rejecting almost 19\% at level 2. At level 3 it again rejects less than other distribution-based tests due to insufficient data length. The KS discrete and G test are comparable at level 2, with the former showing more power at level 3, although almost all the paths were tested using both tests. For this process, the KLP98 test seems to have little power.

Figure \ref{Feller1.fig} (right) shows the results from our independence tests. As with the Ornstein-Uhlenbeck process, the joint distribution test appears to have the most power, although its power does not dominate in these tests.  In fact, its power appears to exceed only the Twos test and the KLP98 test amongst our distribution tests. The other independence tests don't really reject beyond their significance level.

These results suggest 11-19\% of the sample paths are rejected. As with the Ornstein-Uhlenbeck process, this is much better than with the quadratic variation method. Figure \ref{Fellershort.fig} shows based on our implementation we would reject about 0.3\% with both the KS and CVM tests. Park and Vasudev report no results for length 1250 due to low power.

Figure \ref{Feller2.fig} shows results from the same process, but with 5000 crossings simulated using $\de=0.028474$. With the additional length, more paths are rejected. The left panel shows, again, our $\chi^2$ test rejects the most, 72\% at level 3. The KS discrete test is not far behind, rejecting 65\% at level3. From our independence tests, again the joint distribution test dominates, rejecting about 67\% at level 3. At level 2 its power is less than all but the KLP98 test.

The Dixon-O'Brien symmetry test on the updown pairs (Dix.-OBri. UD) is noteworthy here, in that it rejects about 15\% of the paths at level 2.  This test rejects a path if the more numerous symbol in the binary sequence (either 0 or 1, representing up-down and down-up pairs), shows clustering. This is consistent with a mean reverting process in that the first crossing of an up-down pair is more likely to be in the direction of the mean. And so, amongst only down-up and up-down pairs, clustering is consistent with a path that favors return to the mean.

In summary, for this process with these parameters our results show 62--67\% of the paths are rejected. Our implementation of the quadratic variation method (Figure \ref{Fellerlong.fig}) suggests that method rejects from 14\% (KS) to 24\% (CVM) of paths. So the crossing tree method appears more powerful, rejecting an additional 48\% of the paths. Park and Vasudev again reported smaller rejection rates: 4\% (KS) and 9\% (CVM).

For comparison we also used drift coefficient $\ka=8$. Figure \ref{Feller3.fig} shows our results for 1250 simulated crossings using $\de=0.028276$. The results are qualitatively similar to those for $\ka=6$ and the same length. This was also seen with the Ornstein-Uhlenbeck process when the drift coefficient was increased. A difference worth noting is that at level 3 the KS discrete test rejects about 10\% more paths than the $\chi^2$ tests. This is a reversal of usual results in which the $\chi^2$ test shows the highest power among the tests of distribution.  In summary, for this process the crossing tree would reject 15-18\% of the paths. Figure \ref{Fellershort.fig} (right) shows, with our quadratic variation implemenation, only 0.6-0.7\% of paths are rejected, continuing the results for $\ka=6$ showing increased power from the crossing tree.

\begin{figure}[t!]
\resizebox{3in}{!}{\includegraphics{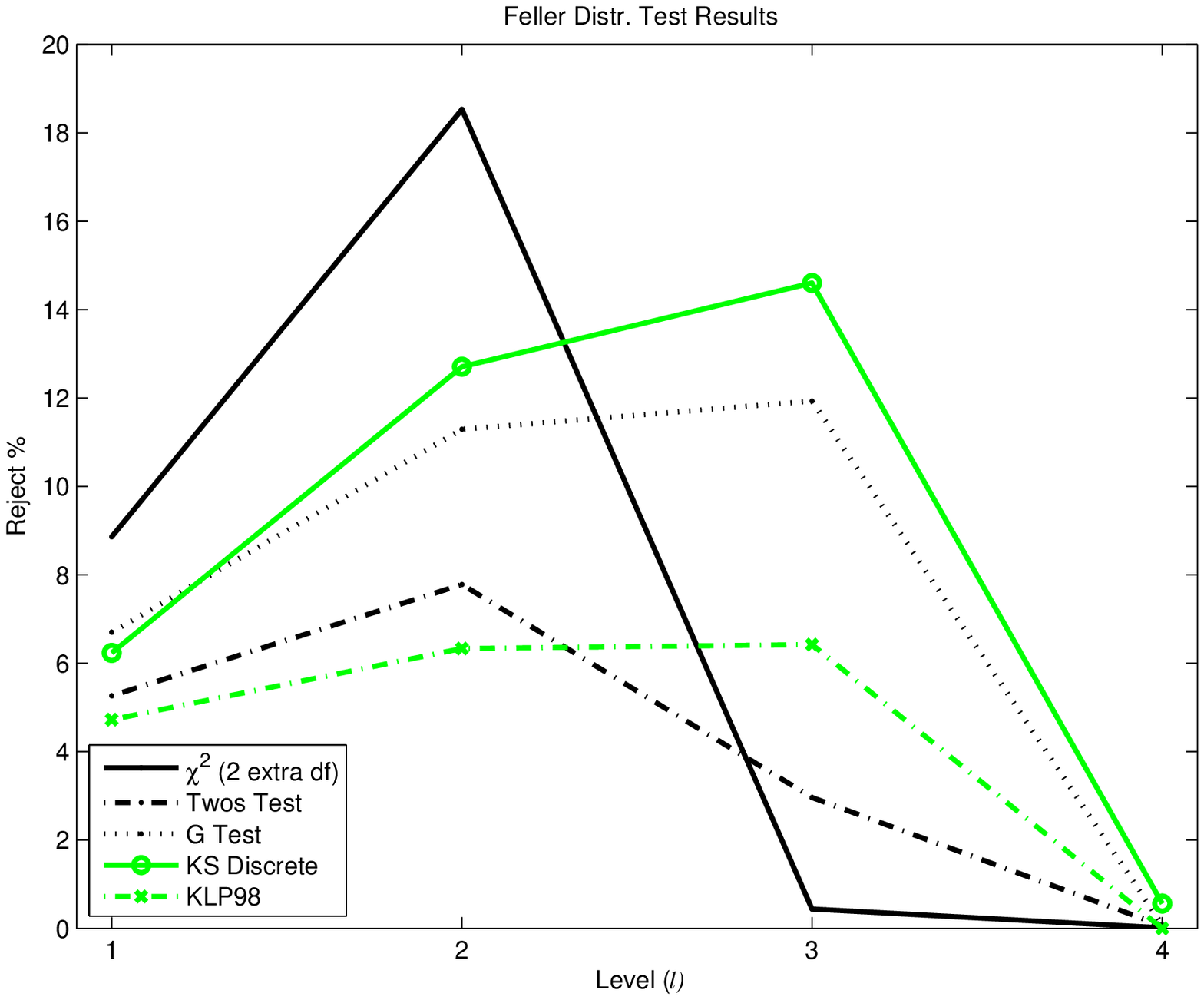}}\resizebox{3in}{!}{\includegraphics{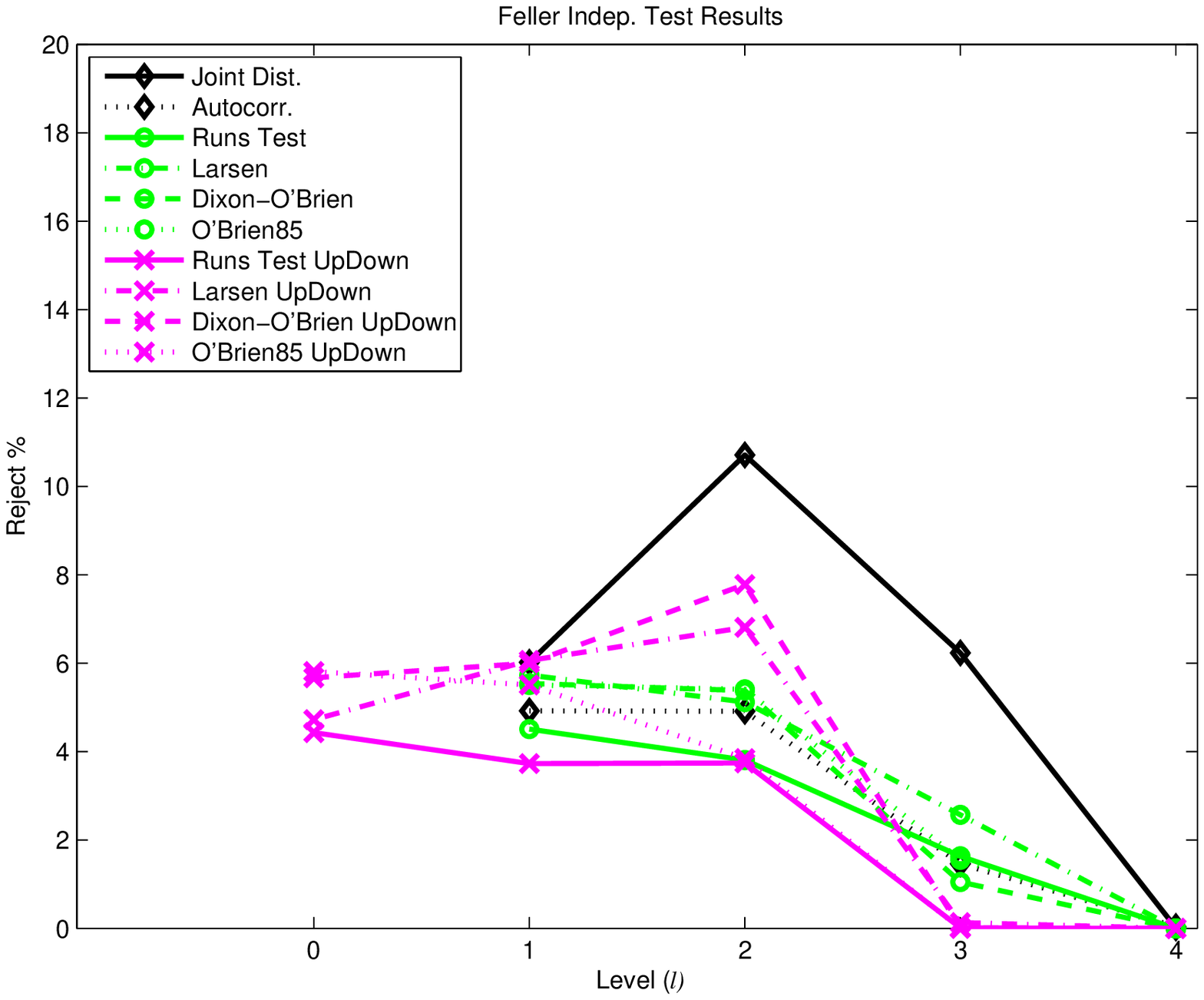}}
\caption{Crossing tree results for Feller's square root diffusion process with $\ka=6$, $\sigma=1$, $\mu=0.2$, $n = 1250$, $\de = 0.028163$. Percentage of 10,000 sample paths rejected using distribution tests (left) and independence tests (right). Here, 95\% confidence intervals are 1\% or smaller. \label{Feller1.fig}}
\end{figure}

\begin{table}[thb!]

 {\small 
 \begin{center} 
 \begin{tabular}{ |c || *{5}{c|}} 
\hline \multicolumn{6}{|c|}{$\ka=6$, $\mu=0.2$, $\sigma=1$, $n = 1250$, $\de = 0.028163$ }  \\ 
 \hline \hline 
& \multicolumn{5}{c|}{\raisebox{0ex}[12pt]{} {\bf \% of all} (\% of tested; \# tested)} \\ \hline {\bf levels }& 0 & 1 & 2 & 3 & 4 \\ \cline{2-6} 
 \hline \hline\raisebox{0ex}[12pt]{{\bf $\chi^2$ (+2 df)}}&  & {\bf   8.9} & {\bf  18.5} & {\bf   0.4} (  2.5;   1731) & {\bf   0.0} (100.0;      1)\\ \hline 
{\bf Twos Test} &  & {\bf   5.3} & {\bf   7.8} & {\bf   3.0} (  3.0;   9967) & {\bf   0.1} (  0.5;   1276)\\ \hline 
{\bf G Test} &  & {\bf   6.7} & {\bf  11.3} & {\bf  11.9} ( 12.0;   9967) & {\bf   0.1} (  0.6;   1276)\\ \hline 
{\bf KS Discrete} &  & {\bf   6.2} & {\bf  12.7} & {\bf  14.6} & {\bf   0.6}\\ \hline 
{\bf KLP98 Test} &  & {\bf   4.7} & {\bf   6.3} & {\bf   6.4} & {\bf   0.0}\\ \hline 
\hline 
{\bf Joint Dist.}&  & {\bf   6.0} & {\bf  10.7} & {\bf   6.2} ( 11.9;   5243) & {\bf   0.0} (100.0;      1)\\ \hline 
{\bf Autocorr.} &  & {\bf   4.9} & {\bf   4.9} & {\bf   1.5} (  1.6;   9134) & {\bf   0.0} (  7.7;     52)\\ \hline 
{\bf Runs Test} &  & {\bf   4.5} & {\bf   3.8} & {\bf   1.6} & {\bf   0.0}\\ \hline 
{\bf Larsen Test} &  & {\bf   5.7} & {\bf   5.1} & {\bf   2.6} (  2.7;   9509) & {\bf   0.0} (  0.0;   1003)\\ \hline 
{\bf  Dix.-OBri.} &  & {\bf   5.5} & {\bf   5.4} & {\bf   1.1} (  1.1;   9967) & {\bf   0.0} (  0.0;   1276)\\ \hline 
{\bf OBri85} &  & {\bf   5.5} & {\bf   5.4} (  5.4;   9994) & {\bf   1.6} (  6.3;   2467) & {\bf   0.0} (  NaN;      0)\\ \hline \hline 
{\bf Runs UD} & {\bf   4.4}  & {\bf   3.7} & {\bf   3.7} & {\bf   0.0} & {\bf   0.0}\\ \hline 
{\bf Larsen UD} & {\bf   4.7}  & {\bf   6.1} & {\bf   6.8} (  6.8;   9999) & {\bf   0.1} (  0.1;   9056) & {\bf   0.0} (  0.0;    874)\\ \hline 
{\bf  Dix.-OBri. UD} & {\bf   5.7} & {\bf   6.0} & {\bf   7.8} & {\bf   0.0} (  0.0;   9840) & {\bf   0.0} (  0.0;   1082)\\ \hline 
{\bf OBri85 UD} & {\bf   5.8}  & {\bf   5.5} & {\bf   3.8} (  5.6;   6826) & {\bf   0.0} (  0.0;      5) & {\bf   0.0} (  NaN;      0)\\ \hline 
\end{tabular} 
 \end{center}
 } 
\caption{Crossing tree results for Feller's square root diffusion process with $\ka=6$, $\sigma=1$, $\mu=0.2$, $n = 1250$, $\de = 0.028163$. Percentage of 10,000 sample paths rejected, with an average of 1250 crossings in time 5. At higher levels, insufficient data length means some datasets are not tested.\label{Feller1.table}}
\end{table}

Finally, with drift coefficient $\ka=8$ we also simulated 5000 crossings using $\de=0.028330$.  Figure \ref{Feller4.fig} shows our results, which  are similar to those for $\ka=6$ with length 5000. The power of the joint distribution tests again dominates. The Dixon-O'Brien symmetry test (Dix.-OBri. UD) is again noteworthy at level 2 for rejecting about 16\% of the paths, showing evidence of clustering in the excursions, consistent with reversion to the mean. In summary, these results suggest the crossing tree rejects about 77--81\%  of sample paths. This is almost 30\% better than the quadratic variation method (Figure \ref{Fellerlong.fig}) which rejects about 30\% (KS) and 50\% (CVM). Again, Park and Vasudev report lower rates: 14\% (KS) and 26\% (CVM).

\begin{figure}[t!]
\resizebox{3in}{!}{\includegraphics{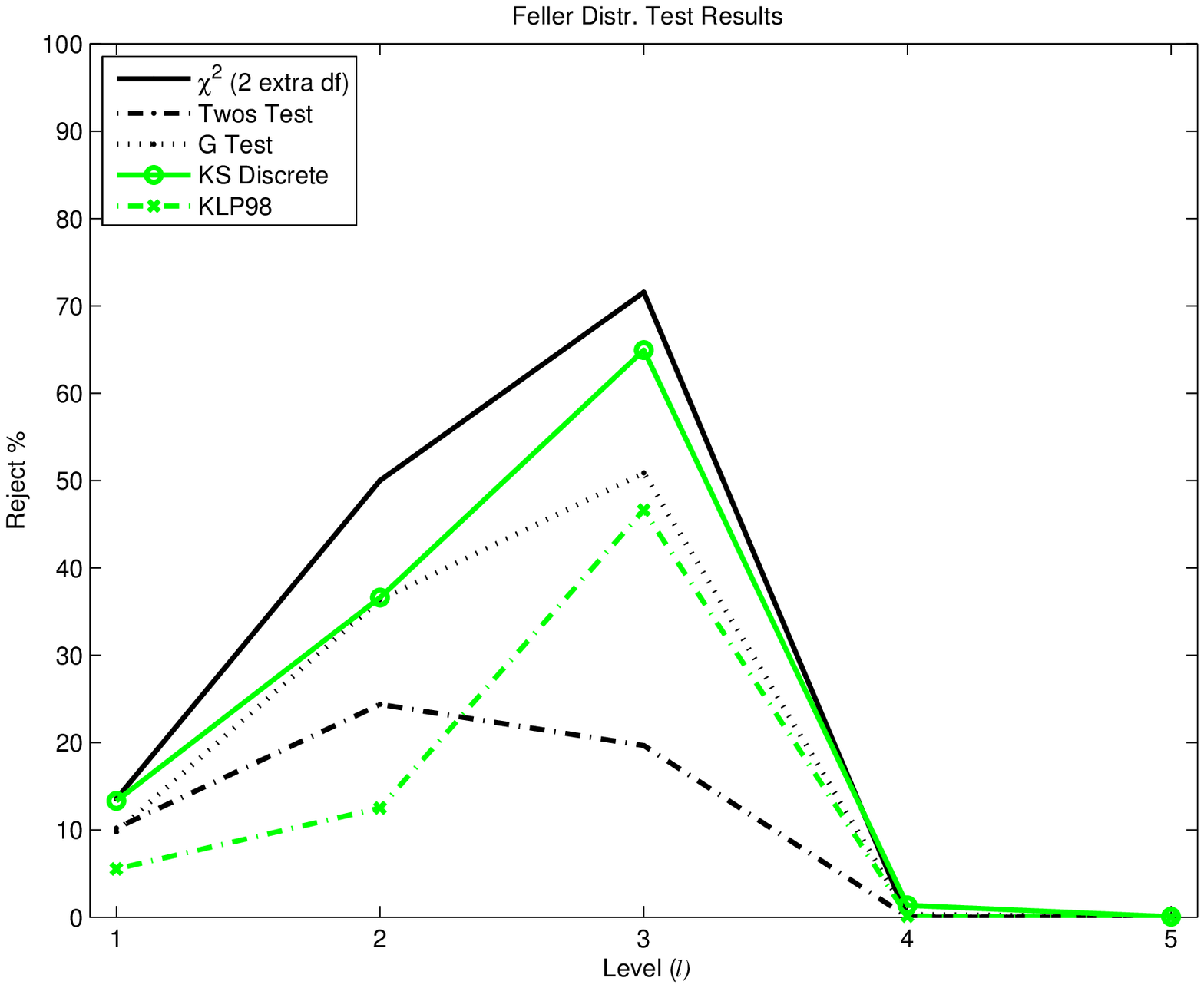}}\resizebox{3in}{!}{\includegraphics{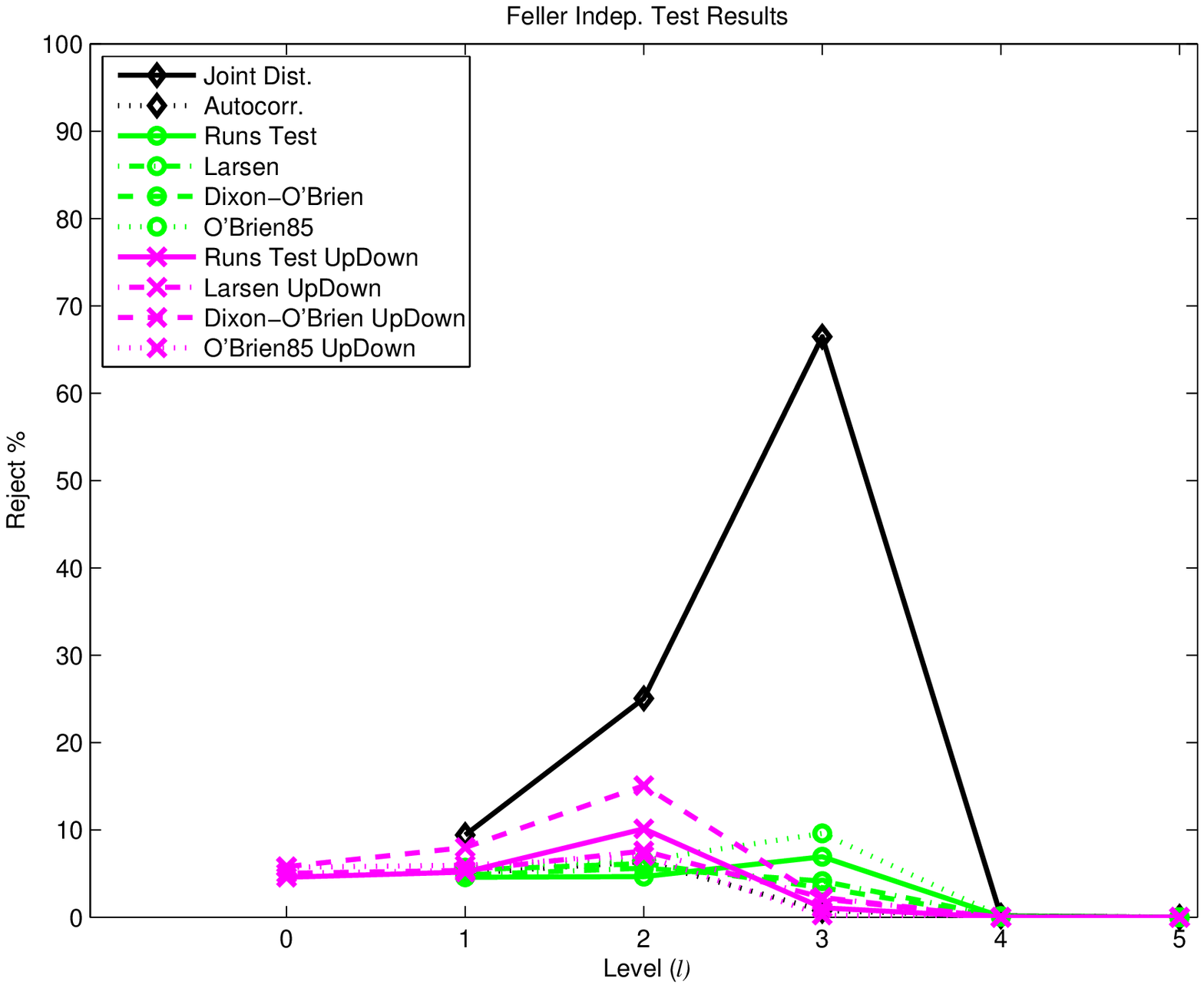}}
\caption{Crossing tree results for Feller's square root diffusion process with $\ka=6$, $\sigma=1$, $\mu=0.2$, $n = 5000$, $\de = 0.028474$. Percentage of 10,000 sample paths rejected using distribution tests (left) and independence tests (right). Here, 95\% confidence intervals are 1\% or smaller.\label{Feller2.fig}}
\end{figure}

\begin{table}[thb!]

 {\small 
 \begin{center} 
 \begin{tabular}{ |c || *{6}{c|}} 
\hline \multicolumn{7}{|c|}{$\ka=6$, $\mu=0.2$, $\sigma=1$, $n = 5000$, $\de = 0.028474$ }  \\ 
 \hline \hline 
& \multicolumn{6}{c|}{\raisebox{0ex}[12pt]{} {\bf \% of all} (\% of tested; \# tested)} \\ \hline {\bf levels }& 0 & 1 & 2 & 3 & 4 & 5 \\ \cline{2-7} 
 \hline \hline\raisebox{0ex}[12pt]{{\bf $\chi^2$ (+2 df)}}&  & {\bf  13.5} & {\bf  50.0} & {\bf  71.6} & {\bf   0.0} (  1.5;    135) & {\bf   0.0} (  NaN;      0)\\ \hline 
{\bf Twos Test} &  & {\bf  10.2} & {\bf  24.4} & {\bf  19.7} & {\bf   0.1} (  0.2;   3569) & {\bf   0.0} (  0.0;     98)\\ \hline 
{\bf G Test} &  & {\bf   9.8} & {\bf  36.4} & {\bf  50.9} & {\bf   0.3} (  0.9;   3569) & {\bf   0.0} (  1.0;     98)\\ \hline 
{\bf KS Discrete} &  & {\bf  13.3} & {\bf  36.6} & {\bf  64.9} & {\bf   1.4} & {\bf   0.1}\\ \hline 
{\bf KLP98 Test} &  & {\bf   5.5} & {\bf  12.5} & {\bf  46.6} & {\bf   0.1} & {\bf   0.0}\\ \hline 
\hline 
{\bf Joint Dist.}&  & {\bf   9.4} & {\bf  25.1} & {\bf  66.5} & {\bf   0.1} (  5.8;    258) & {\bf   0.0} (  NaN;      0)\\ \hline 
{\bf Autocorr.} &  & {\bf   5.0} & {\bf   6.4} & {\bf   0.8} & {\bf   0.1} (  3.7;    377) & {\bf   0.0} ( 33.3;      3)\\ \hline 
{\bf Runs Test} &  & {\bf   4.6} & {\bf   4.7} & {\bf   6.9} & {\bf   0.1} & {\bf   0.0}\\ \hline 
{\bf Larsen Test} &  & {\bf   4.8} & {\bf   5.6} & {\bf   4.1} & {\bf   0.1} (  0.3;   3015) & {\bf   0.0} (  0.0;     88)\\ \hline 
{\bf  Dix.-OBri.} &  & {\bf   5.4} & {\bf   6.1} & {\bf   3.4} & {\bf   0.1} (  0.2;   3569) & {\bf   0.0} (  0.0;     98)\\ \hline 
{\bf OBri85} &  & {\bf   5.7} & {\bf   6.5} & {\bf   9.6} ( 10.7;   8982) & {\bf   0.1} (  7.4;    162) & {\bf   0.0} (  NaN;      0)\\ \hline \hline 
{\bf Runs UD} & {\bf   4.6}  & {\bf   5.2} & {\bf  10.1} & {\bf   1.1} & {\bf   0.0} & {\bf   0.0}\\ \hline 
{\bf Larsen UD} & {\bf   5.0}  & {\bf   5.4} & {\bf   7.6} & {\bf   2.3} (  2.3;   9940) & {\bf   0.0} (  0.0;   3280) & {\bf   0.0} (  0.0;     58)\\ \hline 
{\bf  Dix.-OBri. UD} & {\bf   5.8} & {\bf   8.0} & {\bf  15.1} & {\bf   2.1} & {\bf   0.0} (  0.0;   3456) & {\bf   0.0} (  0.0;     65)\\ \hline 
{\bf OBri85 UD} & {\bf   5.7}  & {\bf   5.9} & {\bf   7.0} (  7.0;   9977) & {\bf   0.2} (  5.8;    412) & {\bf   0.0} (  NaN;      0) & {\bf   0.0} (  NaN;      0)\\ \hline 
\end{tabular} 
 \end{center}
 } 
\caption{Crossing tree results for Feller's square root diffusion process with $\ka=6$, $\sigma=1$, $\mu=0.2$, $n = 5000$, $\de = 0.028474$. Percentage of 10,000 sample paths rejected, with an average of 5000 crossings in time 20. At higher levels, insufficient data length means some datasets are not tested.} \label{Feller2.table}
\end{table}

It is important to note here that for all these processes we actually did the tests three times, building the crossing tree relative to a value $\de_0$, where $\de_0$ is one of 0, $X(1)$ (the first data point), or the mean of the first 30 crossings built about a temporary crossing tree based at $0$. It is results for the last choice we report since that has shown the highest power. For example, with the Feller processes with $\ka=8$, $n=5000$ the other two choices were comparable, rejecting about 44\% ($\chi^2$), 31\% (KS discrete) and 44\% (joint distribution) which is much less than 66\%, 77\% and 81\%, respectively. We attribute this substantial difference to the non-zero mean of the process. Again, these processes don't satisfy the null hypothesis, so making the choice that gives the highest power seems most appropriate. The effect on the type 1 error appears negligible between these three choices. For the other processes we simulated, this choice always had superior power to the other two, although the difference was not always as large.

\begin{figure}[htb!]
\resizebox{3in}{!}{\includegraphics{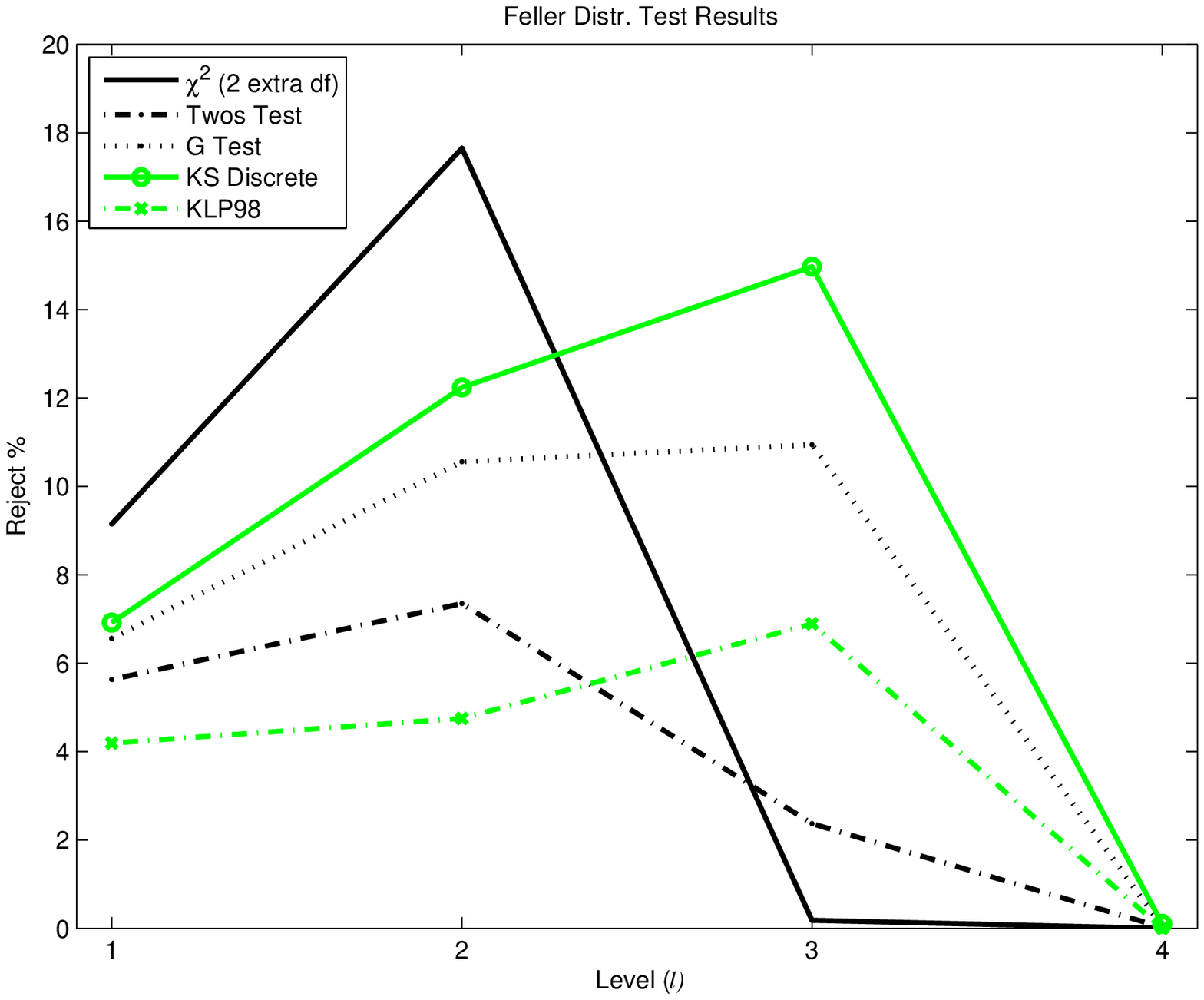}}\resizebox{3in}{!}{\includegraphics{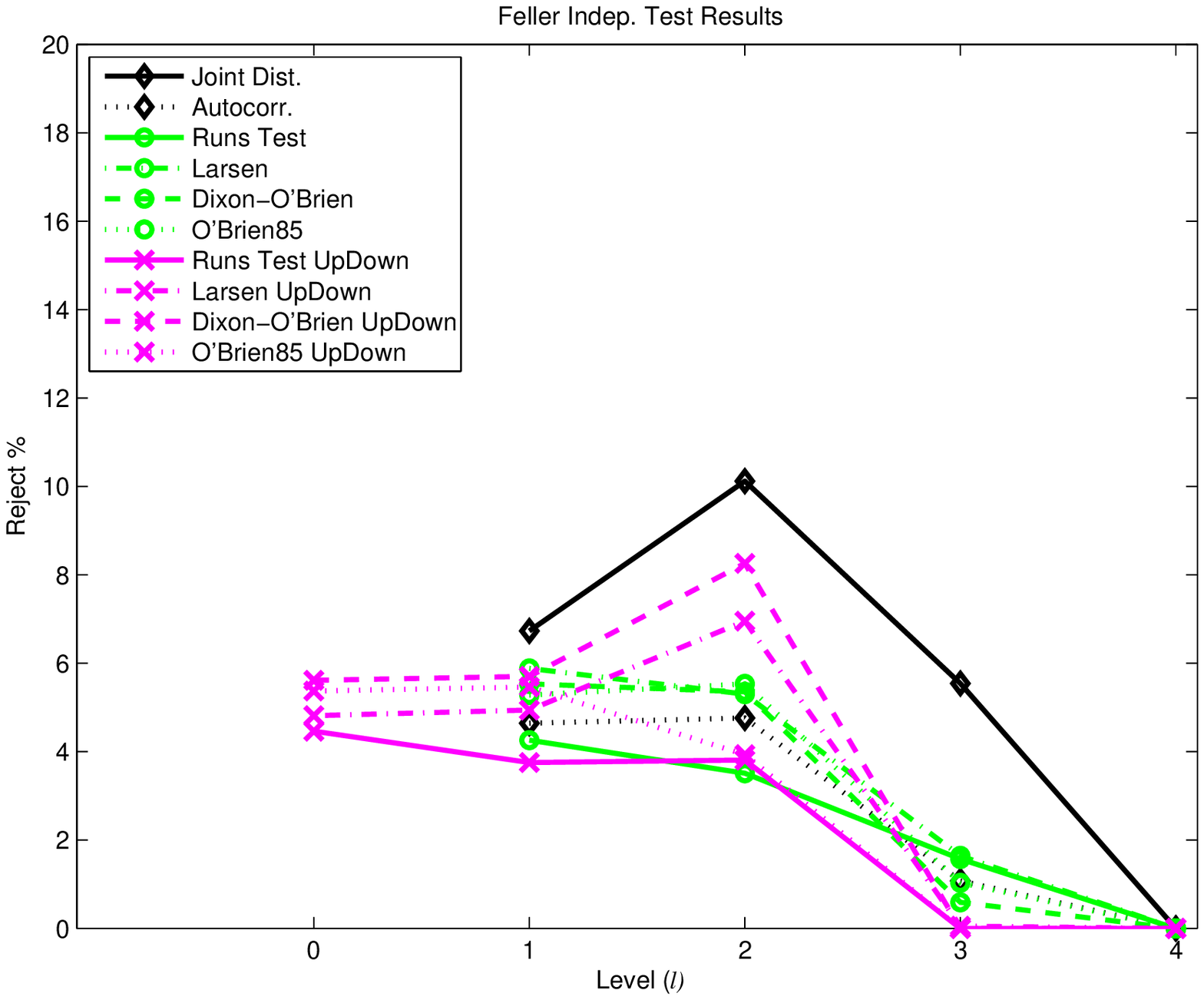}}
\caption{Crossing tree results for Feller's square root diffusion process with $\ka=8$, $\sigma=1$, $\mu=0.2$, $n = 1250$, $\de = 0.028276$. Percentage of 10,000 sample paths rejected using distribution tests (left) and independence tests (right). Here, 95\% confidence intervals are 1\% or smaller.\label{Feller3.fig}}
\end{figure}

\begin{table}[thb!]

 {\small 
 \begin{center} 
 \begin{tabular}{ |c || *{5}{c|}} 
\hline \multicolumn{6}{|c|}{$\ka=8$, $\mu=0.2$, $\sigma=1$, $n = 1250$, $\de = 0.028276$ }  \\ 
 \hline \hline 
& \multicolumn{5}{c|}{\raisebox{0ex}[12pt]{} {\bf \% of all} (\% of tested; \# tested)} \\ \hline {\bf levels }& 0 & 1 & 2 & 3 & 4 \\ \cline{2-6} 
 \hline \hline\raisebox{0ex}[12pt]{{\bf $\chi^2$ (+2 df)}}&  & {\bf   9.2} & {\bf  17.6} & {\bf   0.2} (  2.1;    849) & {\bf   0.0} (  NaN;      0)\\ \hline 
{\bf Twos Test} &  & {\bf   5.6} & {\bf   7.3} & {\bf   2.4} (  2.4;   9935) & {\bf   0.0} (  0.5;    429)\\ \hline 
{\bf G Test} &  & {\bf   6.6} & {\bf  10.6} & {\bf  10.9} ( 11.0;   9935) & {\bf   0.0} (  0.9;    429)\\ \hline 
{\bf KS Discrete} &  & {\bf   6.9} & {\bf  12.2} & {\bf  15.0} & {\bf   0.1}\\ \hline 
{\bf KLP98 Test} &  & {\bf   4.2} & {\bf   4.8} & {\bf   6.9} & {\bf   0.0}\\ \hline 
\hline 
{\bf Joint Dist.}&  & {\bf   6.7} & {\bf  10.1} & {\bf   5.5} ( 14.3;   3871) & {\bf   0.0} (  NaN;      0)\\ \hline 
{\bf Autocorr.} &  & {\bf   4.6} & {\bf   4.8} & {\bf   1.1} (  1.3;   8523) & {\bf   0.0} (  0.0;      9)\\ \hline 
{\bf Runs Test} &  & {\bf   4.3} & {\bf   3.5} & {\bf   1.6} & {\bf   0.0}\\ \hline 
{\bf Larsen Test} &  & {\bf   5.9} & {\bf   5.3} & {\bf   1.6} (  1.8;   9365) & {\bf   0.0} (  0.0;    350)\\ \hline 
{\bf  Dix.-OBri.} &  & {\bf   5.5} & {\bf   5.3} & {\bf   0.6} (  0.6;   9935) & {\bf   0.0} (  0.0;    429)\\ \hline 
{\bf OBri85} &  & {\bf   5.3} & {\bf   5.5} (  5.5;   9993) & {\bf   1.0} (  6.4;   1600) & {\bf   0.0} (  NaN;      0)\\ \hline \hline 
{\bf Runs UD} & {\bf   4.5}  & {\bf   3.8} & {\bf   3.8} & {\bf   0.0} & {\bf   0.0}\\ \hline 
{\bf Larsen UD} & {\bf   4.8}  & {\bf   4.9} & {\bf   7.0} & {\bf   0.1} (  0.1;   8905) & {\bf   0.0} (  0.0;    261)\\ \hline 
{\bf  Dix.-OBri. UD} & {\bf   5.6} & {\bf   5.7} & {\bf   8.3} & {\bf   0.0} (  0.0;   9807) & {\bf   0.0} (  0.0;    363)\\ \hline 
{\bf OBri85 UD} & {\bf   5.4}  & {\bf   5.5} & {\bf   3.9} (  5.7;   6927) & {\bf   0.0} (  NaN;      0) & {\bf   0.0} (  NaN;      0)\\ \hline 
\end{tabular} 
 \end{center}
 } 
\caption{Crossing tree results for Feller's square root diffusion process with $\ka=8$, $\sigma=1$, $\mu=0.2$, $n = 1250$, $\de = 0.028276$. Percentage of 10,000 sample paths rejected, with an average of 1250 crossings in time 5. At higher levels, insufficient data length means some datasets are not tested. } \label{Feller3.table}
\end{table}

It is also worth noting again that where we see significant rejections by the joint distribution test, they are due to primarily to dependence, not distribution. For the longer ($n=5000$) datasets and  $\ka=6$, at level 3 about 32\% of all paths are rejected. After permuting the data, about 14\% of paths are rejected. So, about 18\% of all paths are rejected due to bivariate dependence. With $\ka=8$ the difference is even more dramatic. At level 3 about 43\% of paths are rejected, which falls to about 16\% after permuting the data. Thus, 27\% of the paths are rejected due bivariate dependence. That's about two-thirds of all the paths rejected by that test. With short ($n=1250$) datasets, for both parameter values the differences between using permuted and non-permuted data are small, with both tests rejecting around the level of the test. For example, with $\kappa=6$ the number of rejections was comparable and in the range 4--8\% of all paths at levels 1--3.

\begin{figure}[htb!]
\resizebox{3in}{!}{\includegraphics{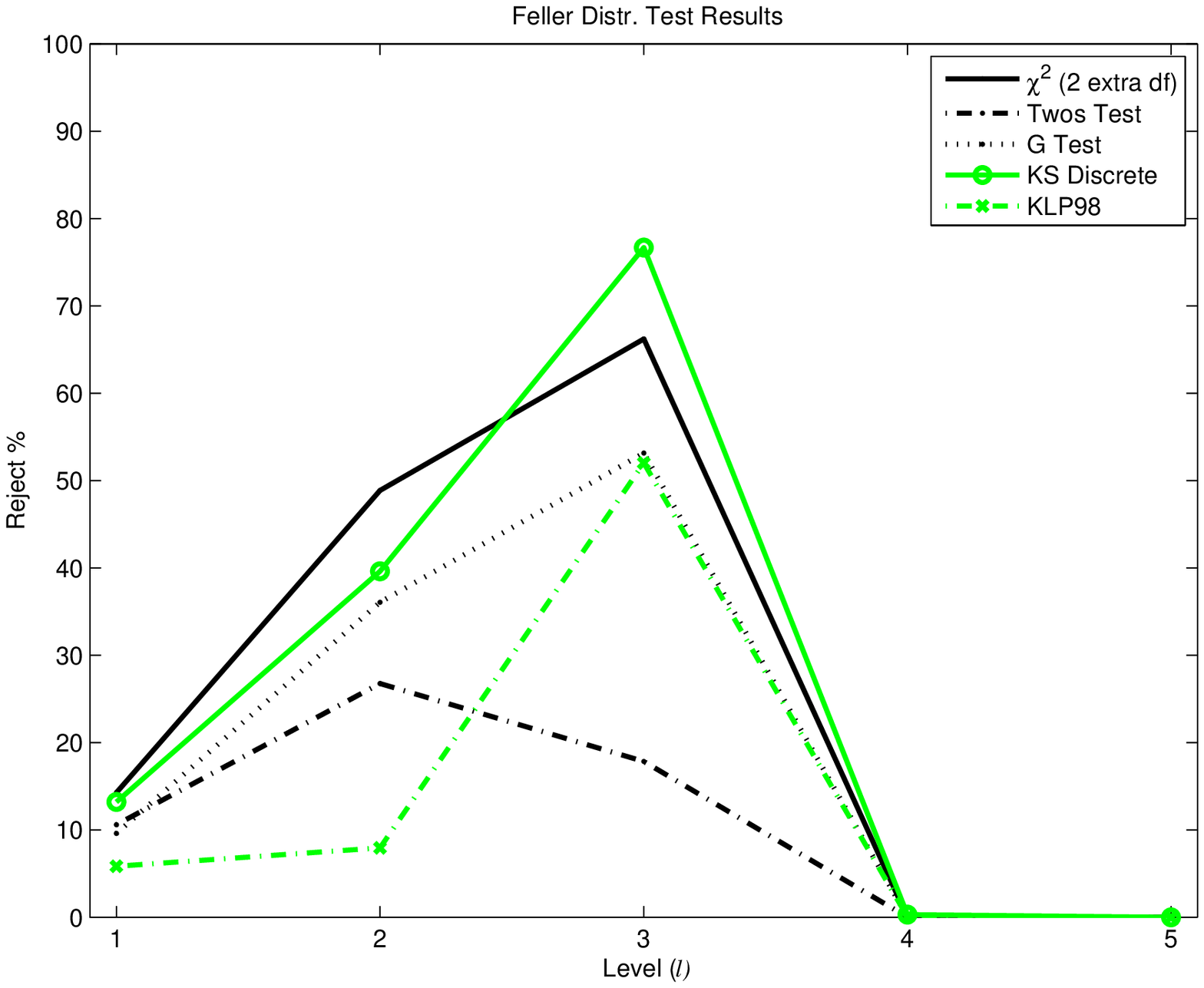}}\resizebox{3in}{!}{\includegraphics{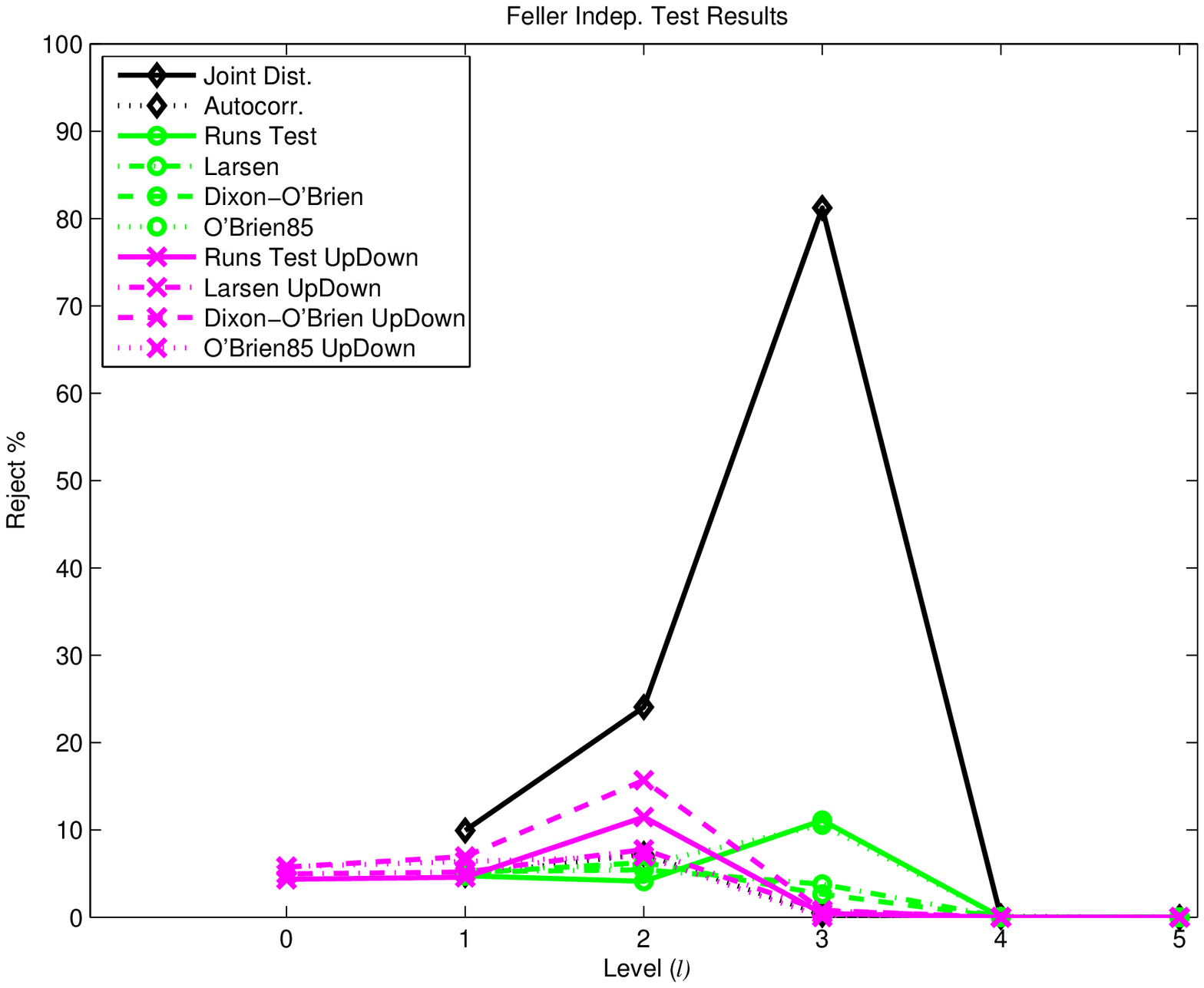}}
\caption{Crossing tree results for Feller's square root diffusion process with $\ka=8$, $\sigma=1$, $\mu=0.2$, $n = 5000$, $\de = 0.028330$. Percentage of 10,000 sample paths rejected using distribution tests (left) and independence tests (right). Here, 95\% confidence intervals are 1\% or smaller.\label{Feller4.fig}}
\end{figure}

\begin{table}[thb!]

 {\small 
 \begin{center} 
 \begin{tabular}{ |c || *{6}{c|}} 
\hline \multicolumn{7}{|c|}{$\ka=8$, $\mu=0.2$, $\sigma=1$, $n = 5000$, $\de = 0.028330$ }  \\ 
 \hline \hline 
& \multicolumn{6}{c|}{\raisebox{0ex}[12pt]{} {\bf \% of all} (\% of tested; \# tested)} \\ \hline {\bf levels }& 0 & 1 & 2 & 3 & 4 & 5 \\ \cline{2-7} 
 \hline \hline\raisebox{0ex}[12pt]{{\bf $\chi^2$ (+2 df)}}&  & {\bf  14.3} & {\bf  48.9} & {\bf  66.2} ( 66.2;   9994) & {\bf   0.0} (  0.0;     33) & {\bf   0.0} (  NaN;      0)\\ \hline 
{\bf Twos Test} &  & {\bf  10.6} & {\bf  26.8} & {\bf  17.8} & {\bf   0.0} (  0.1;   1319) & {\bf   0.0} (  0.0;     18)\\ \hline 
{\bf G Test} &  & {\bf   9.6} & {\bf  36.0} & {\bf  53.1} & {\bf   0.0} (  0.3;   1319) & {\bf   0.0} (  0.0;     18)\\ \hline 
{\bf KS Discrete} &  & {\bf  13.2} & {\bf  39.6} & {\bf  76.7} & {\bf   0.3} & {\bf   0.0}\\ \hline 
{\bf KLP98 Test} &  & {\bf   5.8} & {\bf   8.0} & {\bf  52.0} & {\bf   0.0} & {\bf   0.0}\\ \hline 
\hline 
{\bf Joint Dist.}&  & {\bf   9.9} & {\bf  24.1} & {\bf  81.2} & {\bf   0.0} (  2.8;    108) & {\bf   0.0} (  NaN;      0)\\ \hline 
{\bf Autocorr.} &  & {\bf   4.8} & {\bf   7.2} & {\bf   0.3} & {\bf   0.1} (  5.0;    160) & {\bf   0.0} (  NaN;      0)\\ \hline 
{\bf Runs Test} &  & {\bf   4.7} & {\bf   4.1} & {\bf  11.1} & {\bf   0.0} & {\bf   0.0}\\ \hline 
{\bf Larsen Test} &  & {\bf   5.2} & {\bf   5.4} & {\bf   3.8} & {\bf   0.0} (  0.3;   1193) & {\bf   0.0} (  0.0;     18)\\ \hline 
{\bf  Dix.-OBri.} &  & {\bf   5.1} & {\bf   6.3} & {\bf   2.6} & {\bf   0.0} (  0.2;   1319) & {\bf   0.0} (  0.0;     18)\\ \hline 
{\bf OBri85} &  & {\bf   5.3} & {\bf   6.0} & {\bf  10.6} ( 13.0;   8162) & {\bf   0.0} (  7.7;     52) & {\bf   0.0} (  NaN;      0)\\ \hline \hline 
{\bf Runs UD} & {\bf   4.3}  & {\bf   4.6} & {\bf  11.5} & {\bf   0.4} & {\bf   0.0} & {\bf   0.0}\\ \hline 
{\bf Larsen UD} & {\bf   5.0}  & {\bf   5.2} & {\bf   7.8} & {\bf   0.8} (  0.8;   9766) & {\bf   0.0} (  0.0;   1156) & {\bf   0.0} (  0.0;     12)\\ \hline 
{\bf  Dix.-OBri. UD} & {\bf   5.7} & {\bf   7.0} & {\bf  15.7} & {\bf   0.8} & {\bf   0.0} (  0.0;   1286) & {\bf   0.0} (  0.0;     14)\\ \hline 
{\bf OBri85 UD} & {\bf   5.8}  & {\bf   6.4} & {\bf   7.0} (  7.0;   9981) & {\bf   0.0} (  2.8;    109) & {\bf   0.0} (  NaN;      0) & {\bf   0.0} (  NaN;      0)\\ \hline 
\end{tabular} 
 \end{center}
 } 
\caption{Crossing tree results for Feller's square root diffusion process with $\ka=8$, $\sigma=1$, $\mu=0.2$, $n = 5000$, $\de = 0.028330$. Percentage of 10,000 sample paths rejected, with an average of 5000 crossings in time 20. At higher levels, insufficient data length means some datasets are not tested. Here, 95\% confidence intervals are 1\% or smaller.} 
\label{Feller4.table}
\end{table}

\clearpage

\begin{figure}[hb!]
\resizebox{3in}{!}{\includegraphics{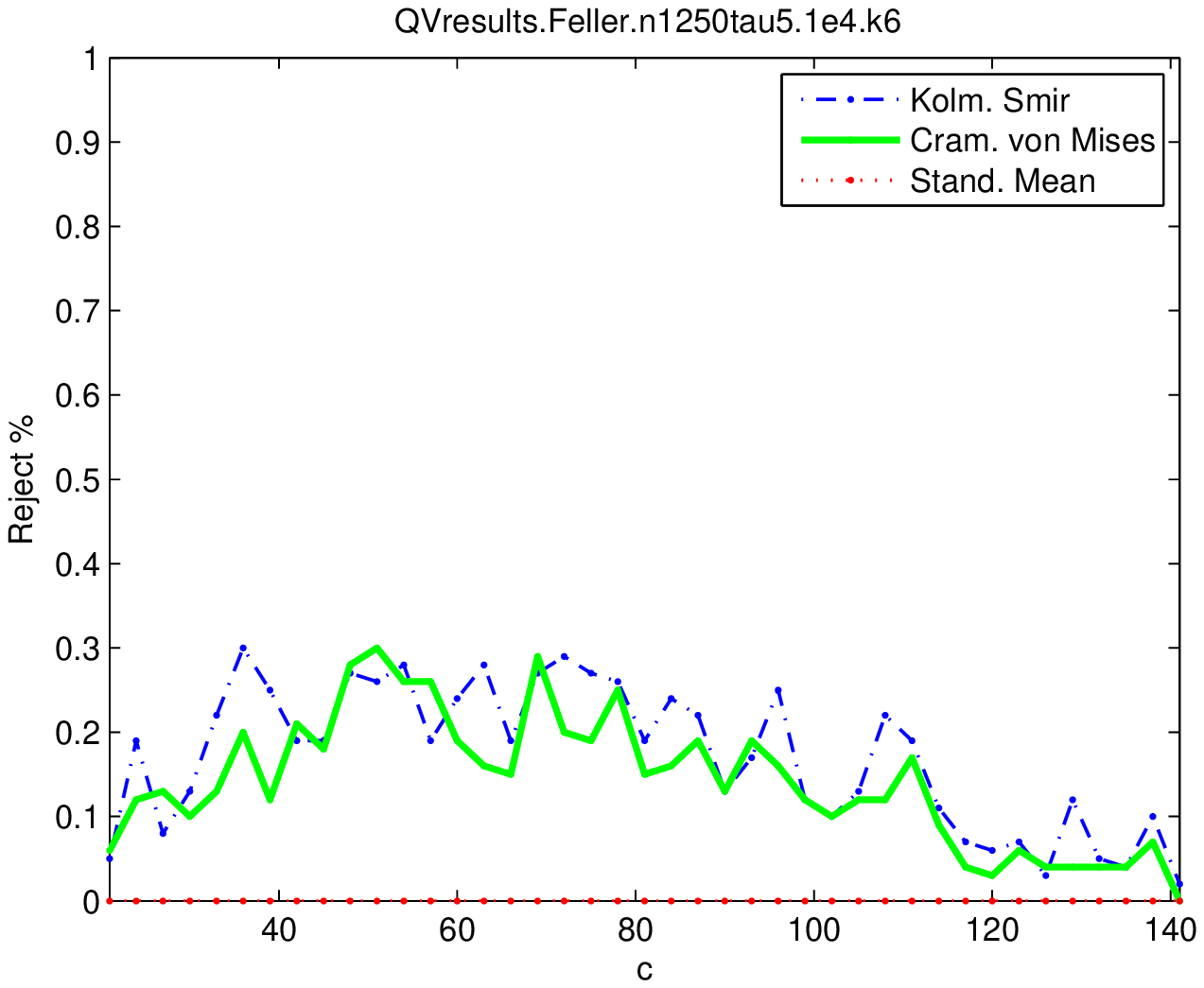}}\resizebox{3in}{!}{\includegraphics{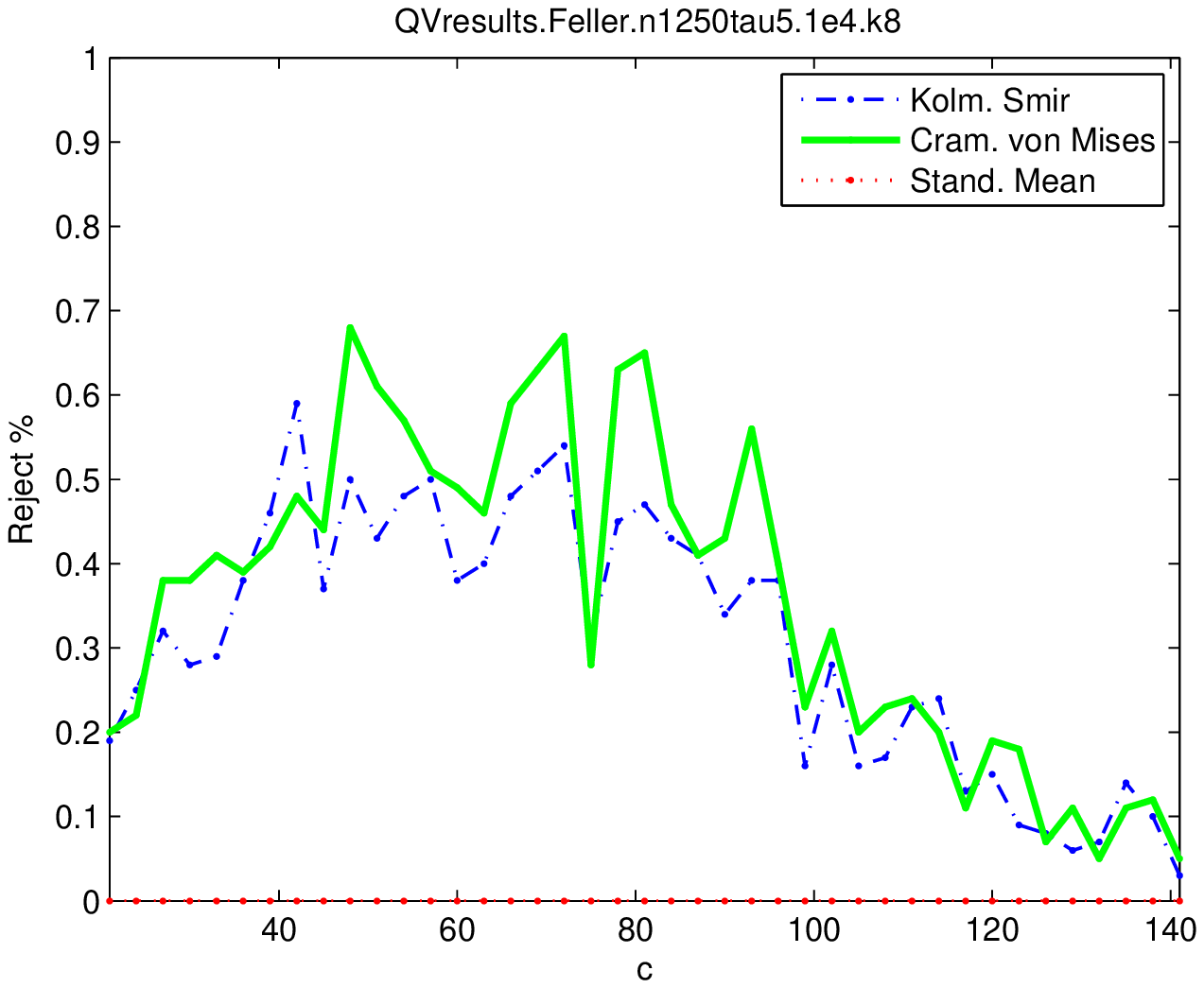}}
\caption{Rejection rates using sample quadratic variation for Feller's square-root diffusion. Short ($n=1250$) datasets. Left has drift coefficient $\ka=6$, right has drift coefficient $\ka=8$. Here, 95\% confidence intervals are 1\% or smaller.}
\label{Fellershort.fig}
\end{figure}

\begin{figure}[hb!]
\resizebox{3in}{!}{\includegraphics{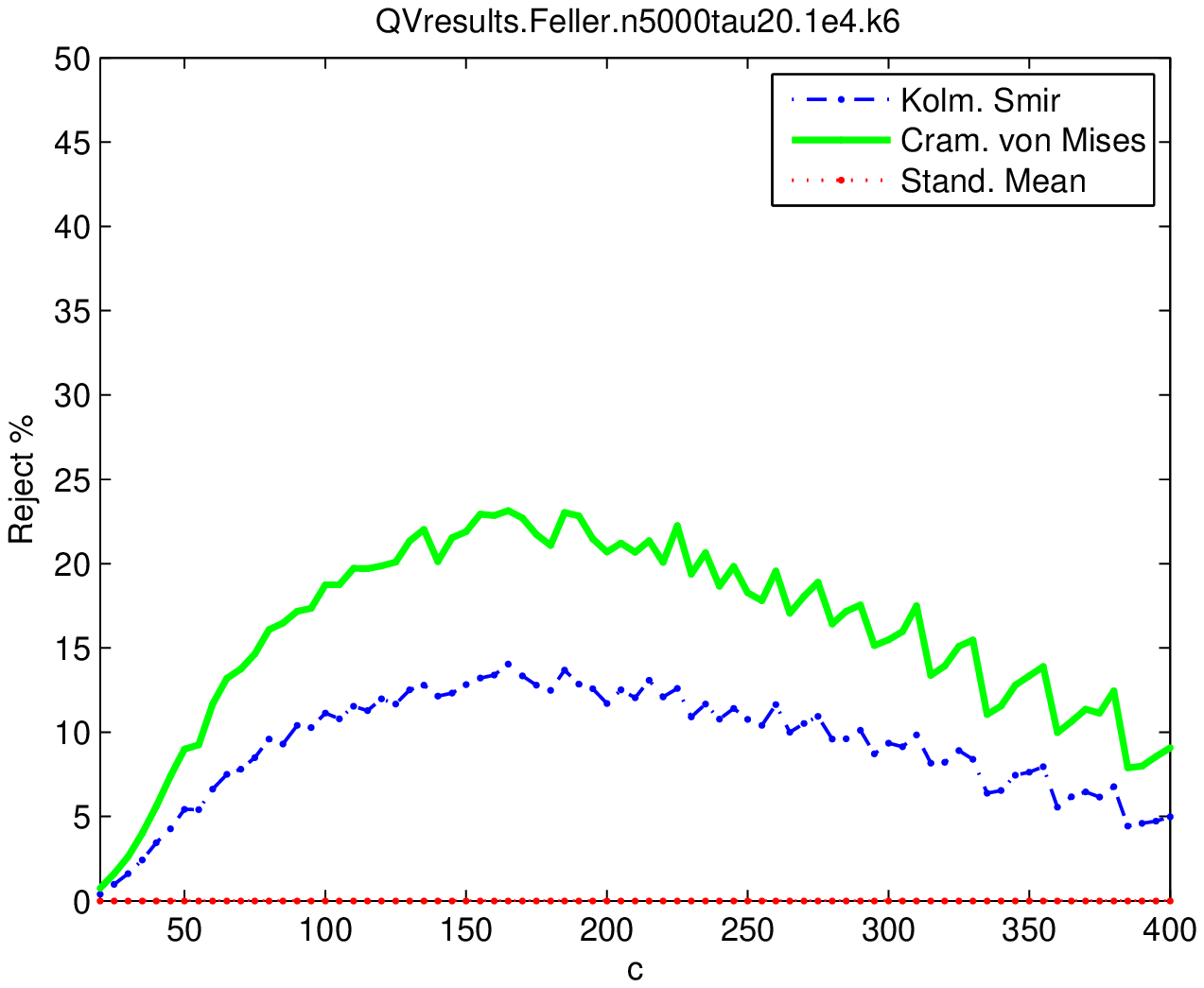}}\resizebox{3in}{!}{\includegraphics{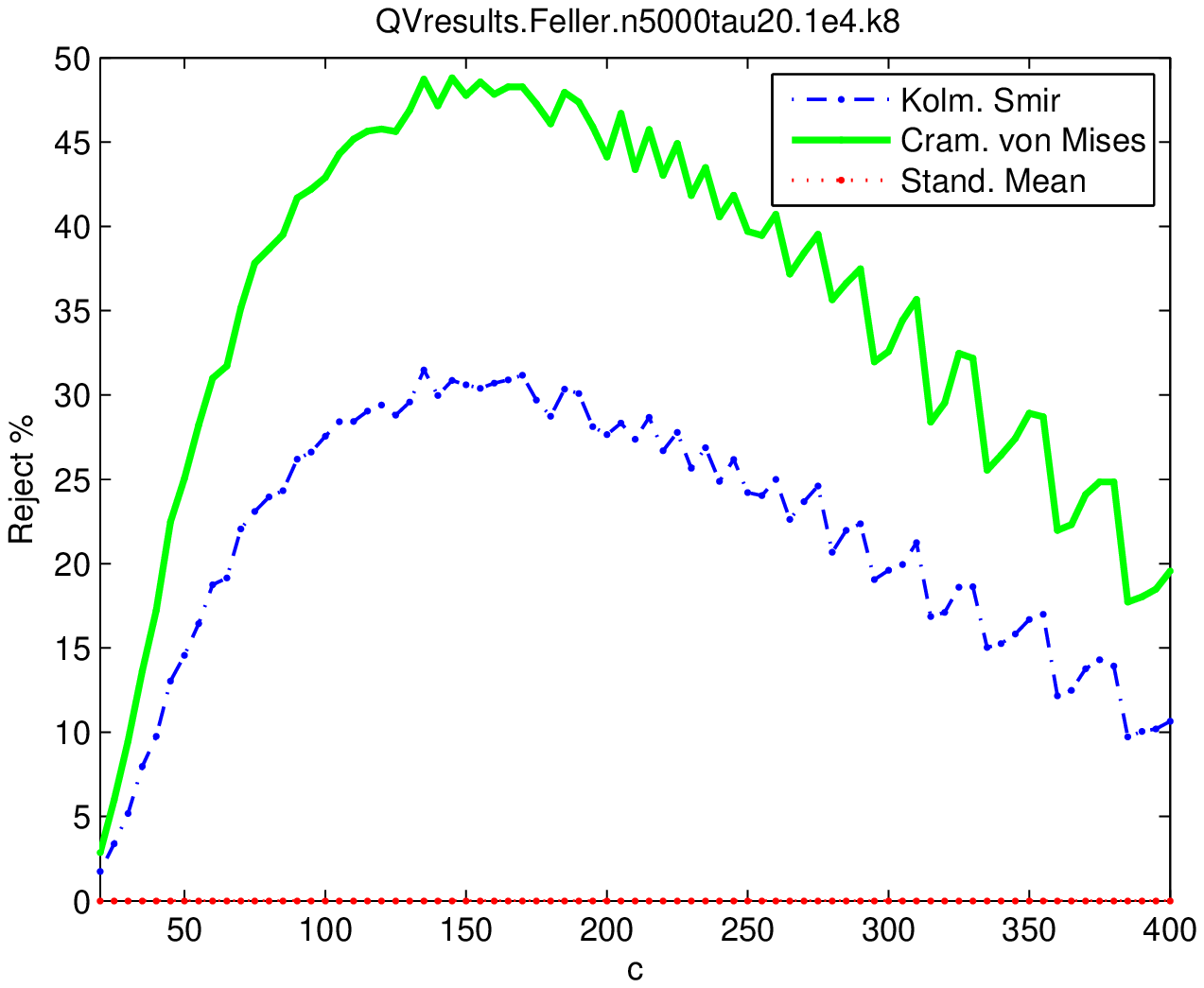}}
\caption{Rejection rates using sample quadratic variation for Feller's square-root diffusion. Long ($n=5000$) datasets. Left has drift coefficient $\ka=6$, right has drift coefficient $\ka=8$. Here, 95\% confidence intervals are 1\% or smaller.}
 \label{Fellerlong.fig}
\end{figure}

\subsection{Fractional Brownian Motion}
We simulated Fractional Brownian Motion (FBM), mainly as an example of a process with stationary increments that is not strong Markov. We simulated 1250 crossings by simulating long paths of Fractional Brownian Motion using the Wood and Chan algorithm \citep{woodchan94}. The method was used to produce a sequence $\{0,X_1,\ldots,X_m\}$, $m$ large, where $\Var(X(n))=\si^2n^{2H}$ with $\si^2=1/250$. With the transformation $\tilde{X}_i=\gamma^H\,X_i$ we use the self-similarity of FBM to produce a sequence, not on the positive integers, but at times separated by $\gamma=10^{-5}$ in our case. The choice of $\si^2=1/250$ allows direct comparison with our results for Brownian motion too, in that the Hurst parameter is different, but the variance is the same. Simulating long paths in this way, with a fine time resolution, we then find level 0 crossings with a time accuracy of $10^{-5}$. We then did a search for the crossing size $\de$ that gave an average of 1250 crossings by time 5. The crossings for this $\de$ were used as the data for the crossing tree method.

\begin{figure}[h!]
\resizebox{3in}{!}{\includegraphics{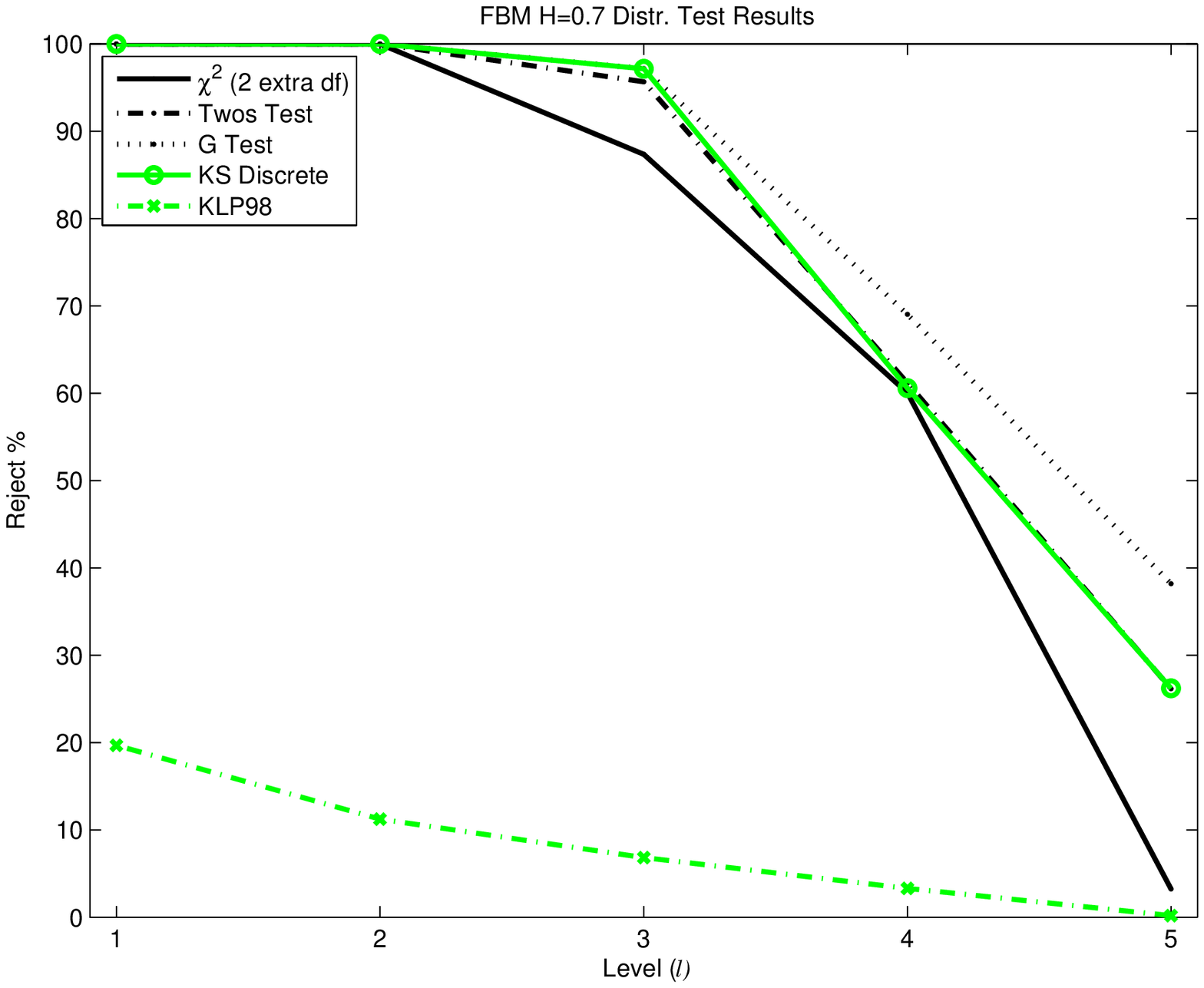}}\resizebox{3in}{!}{\includegraphics{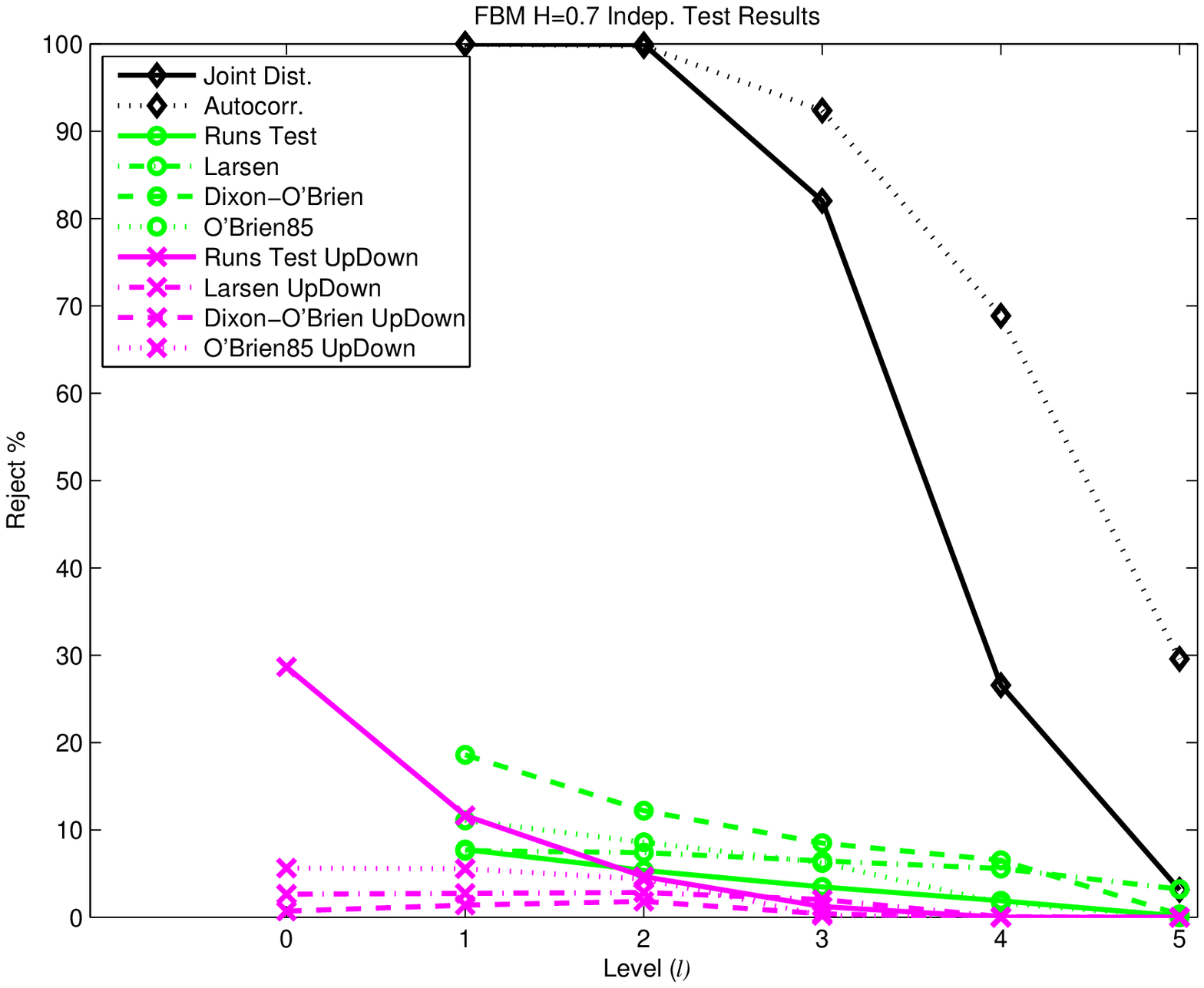}}
\caption{Crossing tree results for Fractional Brownian Motion  with $H=0.7$, $n = 1250$, $\de = 0.0010176$. Percentage of 10,000 sample paths rejected using distribution tests (left) and independence tests (right). Here, 95\% confidence intervals are 1\% or smaller.\label{FBM7short.fig}}
\end{figure}

\begin{table}[h!]
 {\scriptsize 
 \begin{center} 
 \begin{tabular}{ |c || *{6}{c|}} 
\hline \multicolumn{7}{|c|}{$H=0.7$, $\sigma=1$, $n = 1250$, $\de = 0.00101760$ }  \\ 
 \hline \hline 
& \multicolumn{6}{c|}{\raisebox{0ex}[12pt]{} {\bf \% of all} (\% of tested; \# tested)} \\ \hline {\bf levels }& 0 & 1 & 2 & 3 & 4 & 5 \\ \cline{2-7} 
 \hline \hline\raisebox{0ex}[12pt]{{\bf $\chi^2$ (+2 df)}}&  & {\bf 100.0} & {\bf 100.0} & {\bf  87.4} & {\bf  60.0} ( 60.6;   9903) & {\bf   3.2} ( 99.7;    325)\\ \hline 
{\bf Twos Test} &  & {\bf 100.0} & {\bf 100.0} & {\bf  95.7} & {\bf  61.2} & {\bf  26.2} ( 26.2;   9999)\\ \hline 
{\bf G Test} &  & {\bf 100.0} & {\bf 100.0} & {\bf  97.2} & {\bf  69.0} & {\bf  38.2} ( 38.2;   9999)\\ \hline 
{\bf KS Discrete} &  & {\bf 100.0} & {\bf 100.0} & {\bf  97.2} & {\bf  60.6} & {\bf  26.2}\\ \hline 
{\bf KLP98 Test} &  & {\bf  19.7} & {\bf  11.2} & {\bf   6.8} & {\bf   3.3} & {\bf   0.2}\\ \hline 
\hline 
{\bf Joint Dist.}&  & {\bf 100.0} & {\bf  99.9} & {\bf  82.0} & {\bf  26.6} ( 26.6;   9998) & {\bf   3.1} ( 10.6;   2953)\\ \hline 
{\bf Autocorr.} &  & {\bf 100.0} & {\bf  99.8} & {\bf  92.4} & {\bf  68.9} ( 68.9;   9997) & {\bf  29.6} ( 34.4;   8598)\\ \hline 
{\bf Runs Test} &  & {\bf   7.8} & {\bf   5.4} & {\bf   3.5} & {\bf   1.9} & {\bf   0.1}\\ \hline 
{\bf Larsen Test} &  & {\bf   7.6} & {\bf   7.4} & {\bf   6.5} & {\bf   5.6} & {\bf   3.2} (  3.2;   9915)\\ \hline 
{\bf  Dix.-OBri.} &  & {\bf  18.6} & {\bf  12.2} & {\bf   8.5} & {\bf   6.5} & {\bf   0.3} (  0.3;   9999)\\ \hline 
{\bf OBri85} &  & {\bf  11.1} & {\bf   8.6} (  8.6;   9995) & {\bf   6.2} (  7.1;   8779) & {\bf   1.6} (  4.5;   3607) & {\bf   0.0} (  0.0;     49)\\ \hline \hline 
{\bf Runs UD} & {\bf  28.6}  & {\bf  11.6} & {\bf   4.6} & {\bf   1.2} & {\bf   0.0} & {\bf   0.0}\\ \hline 
{\bf Larsen UD} & {\bf   2.6}  & {\bf   2.7} & {\bf   2.8} & {\bf   2.0} (  2.1;   9880) & {\bf   0.1} (  0.1;   7913) & {\bf   0.0} (  0.0;   4073)\\ \hline 
{\bf  Dix.-OBri. UD} & {\bf   0.7} & {\bf   1.4} & {\bf   1.8} & {\bf   0.4} & {\bf   0.0} (  0.0;   9591) & {\bf   0.0} (  0.0;   6577)\\ \hline 
{\bf OBri85 UD} & {\bf   5.6}  & {\bf   5.5} & {\bf   4.4} (  5.4;   8166) & {\bf   0.2} (  2.1;    999) & {\bf   0.0} (  0.0;      1) & {\bf   0.0} (  NaN;      0)\\ \hline 
\end{tabular} 
 \end{center}
 } 
\caption{Crossing tree results for Fractional Brownian Motion  with $H=0.7$, $n = 1250$, $\de = 0.0010176$. Percentage of 10,000 sample paths rejected, with an average of 1250 crossings in time 5. At higher levels, insufficient data length means some datasets are not tested.} \label{FBM7short.table}
\end{table}

\begin{figure}[htb!]
\resizebox{3in}{!}{\includegraphics{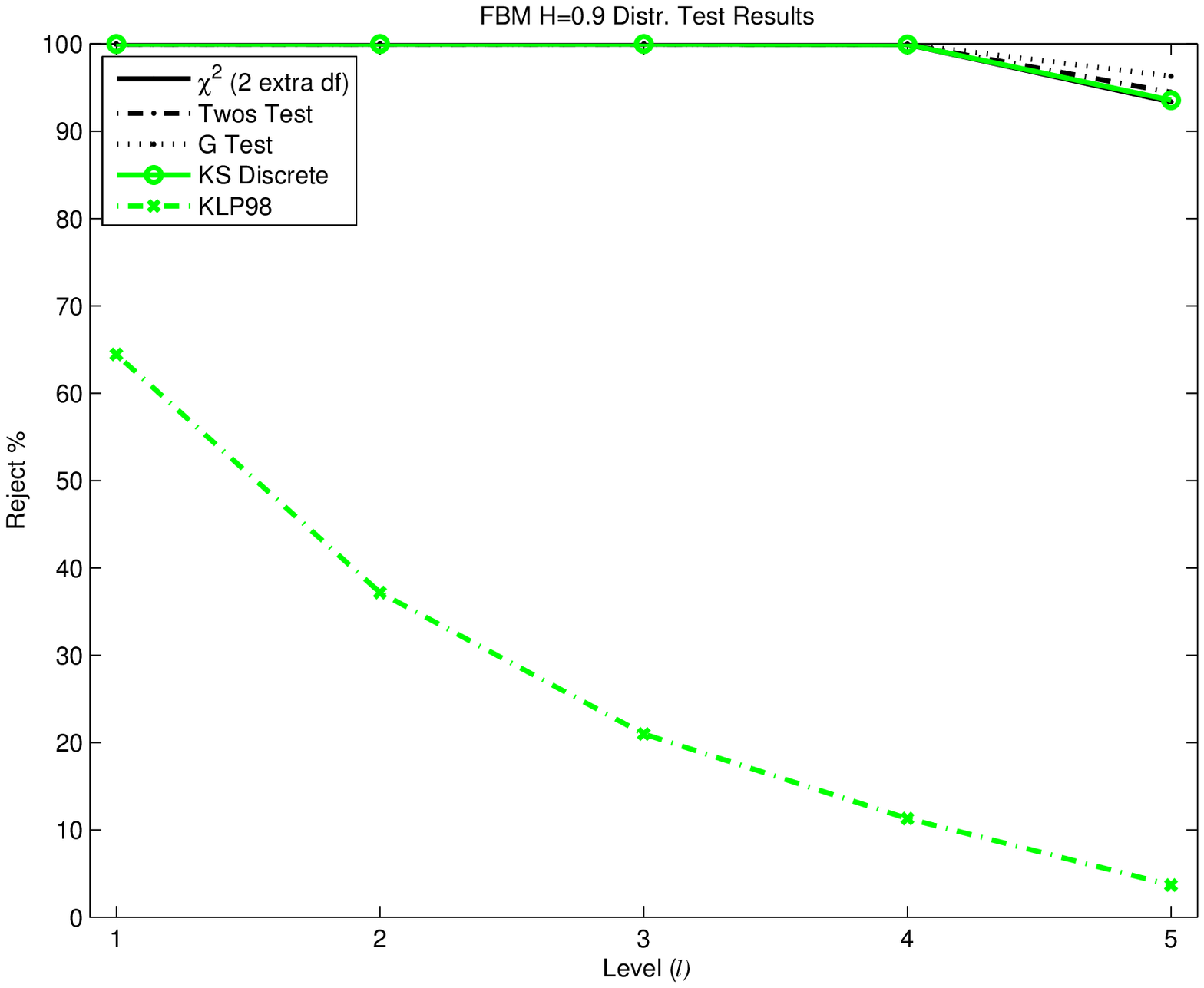}}\resizebox{3in}{!}{\includegraphics{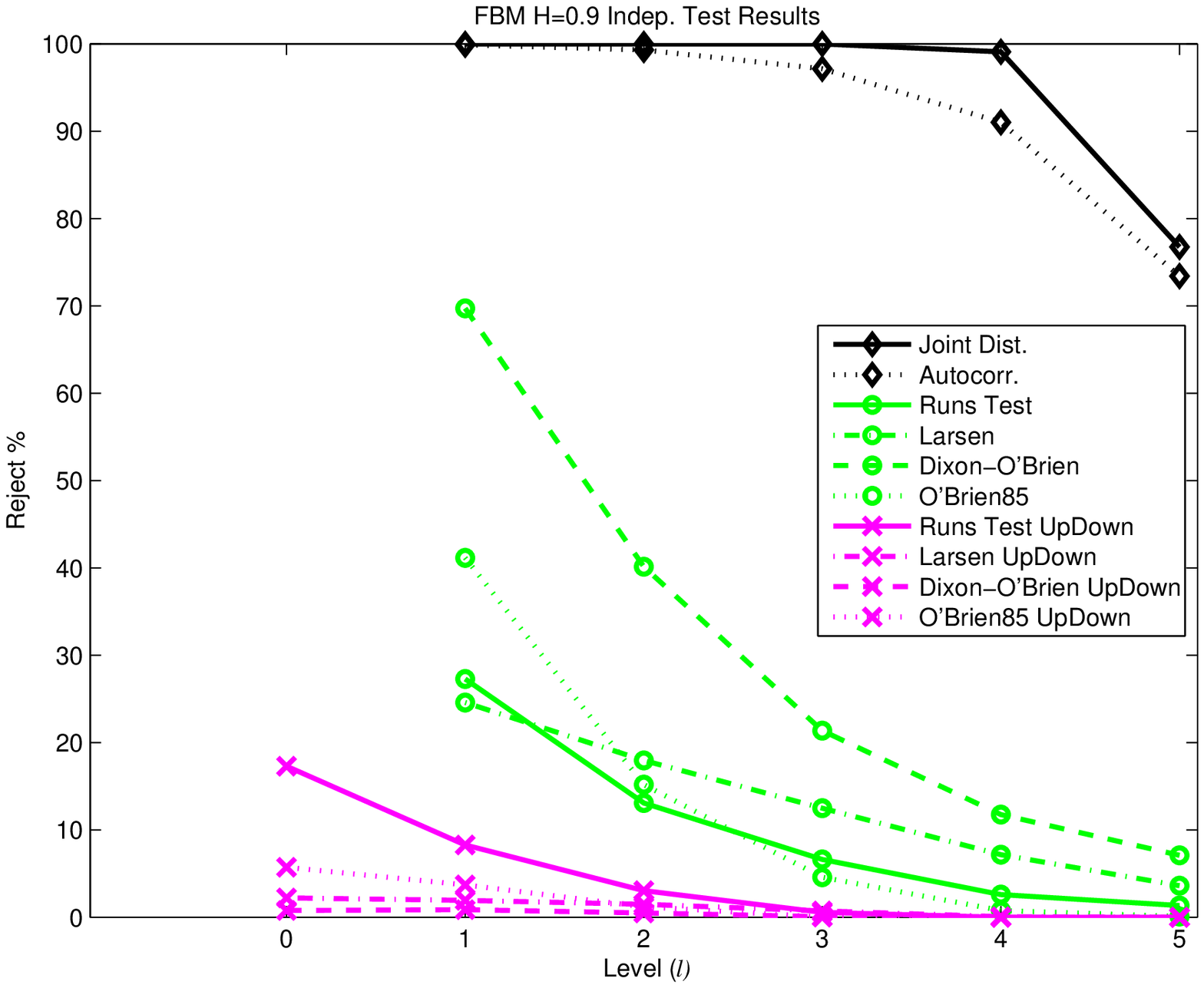}}
\caption{Crossing tree results for Fractional Brownian Motion  with $H=0.9$, $n = 1250$, $\de = 0.000312$. Percentage of 10,000 sample paths rejected using distribution tests (left) and independence tests (right). \label{FBM9short.fig}}
\end{figure}

\begin{table}[thb!]

 {\tiny
 \begin{center} 
 \begin{tabular}{ |c || *{6}{c|}} 
\hline \multicolumn{7}{|c|}{$H=0.9$, $\sigma=1$, $n = 1250$, $\de = 0.000312$ }  \\ 
 \hline \hline 
& \multicolumn{6}{c|}{\raisebox{0ex}[12pt]{} {\bf \% of all} (\% of tested; \# tested)} \\ \hline {\bf levels }& 0 & 1 & 2 & 3 & 4 & 5 \\ \cline{2-7} 
 \hline \hline\raisebox{0ex}[12pt]{{\bf $\chi^2$ (+2 df)}}&  & {\bf 100.0} & {\bf 100.0} & {\bf 100.0} & {\bf  99.9} & {\bf  93.4} ( 95.0;   9825)\\ \hline 
{\bf Twos Test} &  & {\bf 100.0} & {\bf 100.0} & {\bf 100.0} & {\bf  99.9} & {\bf  94.5}\\ \hline 
{\bf G Test} &  & {\bf 100.0} & {\bf 100.0} & {\bf 100.0} & {\bf 100.0} & {\bf  96.3}\\ \hline 
{\bf KS Discrete} &  & {\bf 100.0} & {\bf 100.0} & {\bf 100.0} & {\bf 100.0} & {\bf  93.5}\\ \hline 
{\bf KLP98 Test} &  & {\bf  64.4} & {\bf  37.2} & {\bf  21.0} & {\bf  11.3} & {\bf   3.7}\\ \hline 
\hline 
{\bf Joint Dist.}&  & {\bf 100.0} & {\bf 100.0} & {\bf 100.0} & {\bf  99.1} & {\bf  76.7} ( 76.8;   9993)\\ \hline 
{\bf Autocorr.} &  & {\bf  99.9} (100.0;   9989) & {\bf  99.3} (100.0;   9929) & {\bf  97.1} (100.0;   9714) & {\bf  91.0} ( 99.8;   9116) & {\bf  73.4} ( 95.0;   7725)\\ \hline 
{\bf Runs Test} &  & {\bf  27.3} & {\bf  13.1} & {\bf   6.6} & {\bf   2.6} & {\bf   1.3}\\ \hline 
{\bf Larsen Test} &  & {\bf  24.6} & {\bf  18.0} & {\bf  12.5} & {\bf   7.1} & {\bf   3.6}\\ \hline 
{\bf  Dix.-OBri.} &  & {\bf  69.7} & {\bf  40.1} & {\bf  21.4} & {\bf  11.8} & {\bf   7.1}\\ \hline 
{\bf OBri85} &  & {\bf  41.1} ( 46.7;   8810) & {\bf  15.2} ( 24.1;   6305) & {\bf   4.6} ( 14.4;   3182) & {\bf   0.8} (  7.3;   1064) & {\bf   0.1} (  4.0;    224)\\ \hline \hline 
{\bf Runs UD} & {\bf  17.3}  & {\bf   8.3} & {\bf   3.0} & {\bf   0.5} & {\bf   0.0} & {\bf   0.0}\\ \hline 
{\bf Larsen UD} & {\bf   2.2}  & {\bf   1.9} (  1.9;   9848) & {\bf   1.5} (  1.6;   9458) & {\bf   0.7} (  0.9;   8396) & {\bf   0.1} (  0.1;   6283) & {\bf   0.0} (  0.0;   3885)\\ \hline 
{\bf  Dx.OBr UD} & {\bf   0.8} & {\bf   0.9} (  0.9;   9935) & {\bf   0.5} (  0.5;   9719) & {\bf   0.1} (  0.1;   9165) & {\bf   0.0} (  0.0;   7904) & {\bf   0.0} (  0.0;   5707)\\ \hline 
{\bf OBri85 UD} & {\bf   5.7}  & {\bf   3.7} (  5.2;   7144) & {\bf   1.2} (  4.5;   2659) & {\bf   0.0} (  0.5;    211) & {\bf   0.0} (  0.0;      1) & {\bf   0.0} (  NaN;      0)\\ \hline 
\end{tabular} 
 \end{center}
 } 
\caption{Crossing tree results for Fractional Brownian Motion  with $H=0.9$, $n = 1250$, $\de = 0.000312$. Percentage of 10,000 sample paths rejected, with an average of 1250 crossings in time 5. At higher levels, insufficient data length means some datasets are not tested. } 
\label{FBM9short.table}
\end{table}

\begin{figure}[htb!]
\resizebox{3in}{!}{\includegraphics{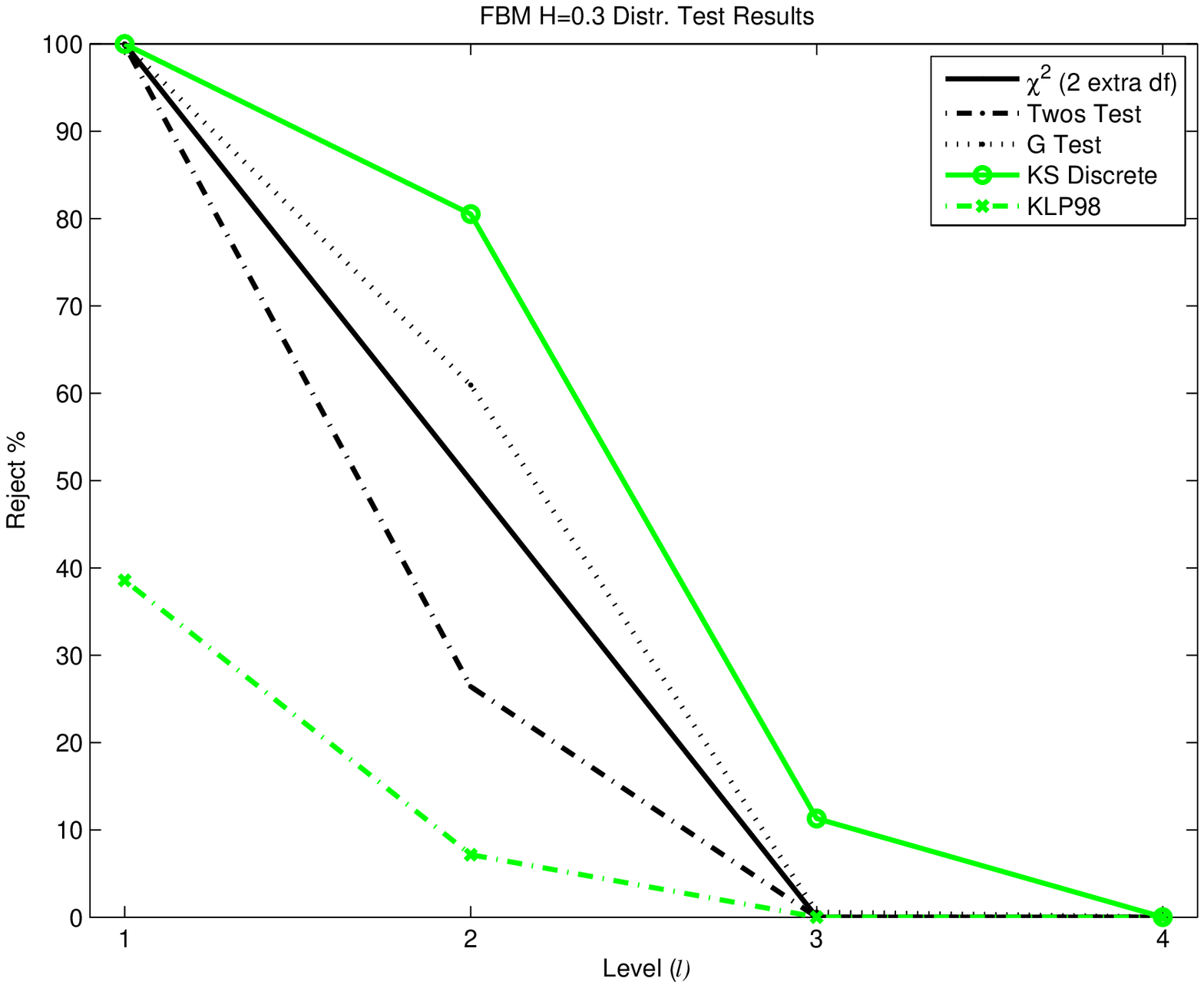}}\resizebox{3in}{!}{\includegraphics{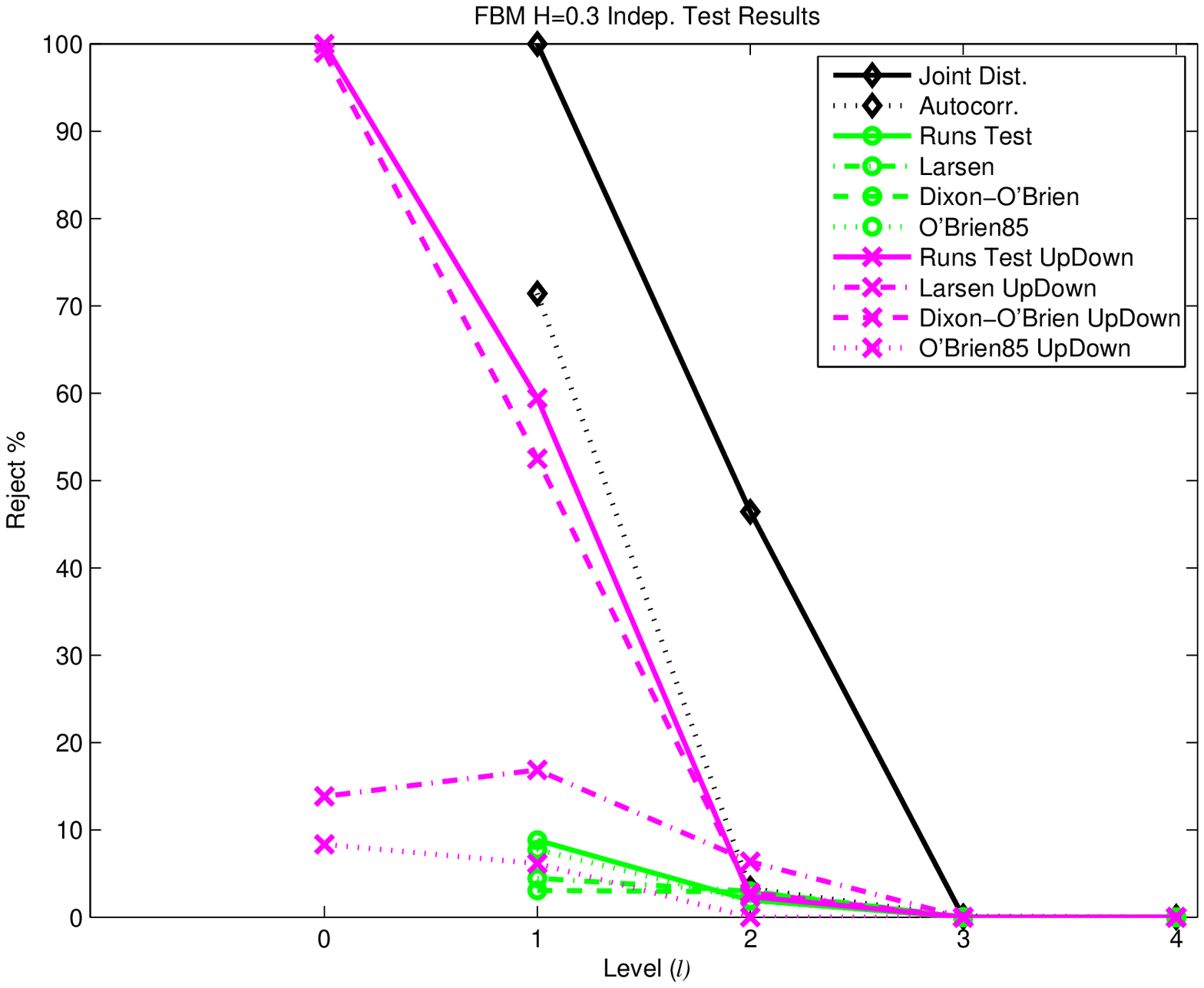}}
\caption{Crossing tree results for Fractional Brownian Motion  with $H=0.3$, $n = 1250$, $\de = 0.0174180$. Percentage of 10,000 sample paths rejected using distribution tests (left) and independence tests (right). Here, 95\% confidence intervals are 1\% or smaller.}
\label{FBM3short.fig}
\end{figure}

\begin{table}[thb!]

 {\small 
 \begin{center} 
 \begin{tabular}{ |c || *{5}{c|}} 
\hline \multicolumn{6}{|c|}{$H=0.3$, $\sigma=1$, $n = 1250$, $\de =  0.0174180$ }  \\ 
 \hline \hline 
& \multicolumn{5}{c|}{\raisebox{0ex}[12pt]{} {\bf \% of all} (\% of tested; \# tested)} \\ \hline {\bf levels }& 0 & 1 & 2 & 3 & 4 \\ \cline{2-6} 
 \hline \hline\raisebox{0ex}[12pt]{{\bf $\chi^2$ (+2 df)}}&  & {\bf 100.0} & {\bf  50.0} ( 61.9;   8084) & {\bf   0.0} (100.0;      1) & {\bf   0.0} (100.0;      1)\\ \hline 
{\bf Twos Test} &  & {\bf  99.9} & {\bf  26.4} & {\bf   0.0} (  0.0;   7708) & {\bf   0.0} (  0.2;    460)\\ \hline 
{\bf G Test} &  & {\bf 100.0} & {\bf  60.9} & {\bf   0.6} (  0.8;   7708) & {\bf   0.0} (  0.2;    460)\\ \hline 
{\bf KS Discrete} &  & {\bf 100.0} & {\bf  80.5} & {\bf  11.3} & {\bf   0.0}\\ \hline 
{\bf KLP98 Test} &  & {\bf  38.6} & {\bf   7.1} & {\bf   0.0} & {\bf   0.0}\\ \hline 
\hline 
{\bf Joint Dist.}&  & {\bf 100.0} & {\bf  46.4} ( 47.7;   9731) & {\bf   0.0} (100.0;      1) & {\bf   0.0} (100.0;      1)\\ \hline 
{\bf Autocorr.} &  & {\bf  71.4} & {\bf   3.3} (  3.3;   9997) & {\bf   0.1} (  4.6;    216) & {\bf   0.0} (100.0;      1)\\ \hline 
{\bf Runs Test} &  & {\bf   8.8} & {\bf   2.0} & {\bf   0.0} & {\bf   0.0}\\ \hline 
{\bf Larsen Test} &  & {\bf   4.5} & {\bf   3.0} (  3.0;   9899) & {\bf   0.0} (  0.1;   3957) & {\bf   0.0} (  0.0;    417)\\ \hline 
{\bf  Dix.-OBri.} &  & {\bf   3.1} & {\bf   2.9} & {\bf   0.0} (  0.0;   7708) & {\bf   0.0} (  0.0;    460)\\ \hline 
{\bf OBri85} &  & {\bf   7.7} (  7.7;   9997) & {\bf   1.9} (  5.3;   3571) & {\bf   0.0} (  NaN;      0) & {\bf   0.0} (  NaN;      0)\\ \hline \hline 
{\bf Runs UD} & {\bf 100.0}  & {\bf  59.4} & {\bf   2.4} & {\bf   0.0} & {\bf   0.0}\\ \hline 
{\bf Larsen UD} & {\bf  13.8}  & {\bf  16.9} & {\bf   6.3} (  6.8;   9310) & {\bf   0.0} (  0.0;   2361) & {\bf   0.0} (  0.0;      6)\\ \hline 
{\bf  Dix.-OBri. UD} & {\bf  99.0} & {\bf  52.5} & {\bf   2.8} (  2.8;   9995) & {\bf   0.0} (  0.0;   4594) & {\bf   0.0} (  0.0;      7)\\ \hline 
{\bf OBri85 UD} & {\bf   8.3}  & {\bf   6.2} & {\bf   0.0} (  0.4;    457) & {\bf   0.0} (  NaN;      0) & {\bf   0.0} (  NaN;      0)\\ \hline 
\end{tabular} 
 \end{center}
 } 
\caption{Crossing tree results for Fractional Brownian Motion  with $H=0.3$, $n = 1250$, $\de = 0.0174180$. Percentage of 10,000 sample paths rejected, with an average of 1250 crossings in time 5. At higher levels, insufficient data length means some datasets are not tested.} \label{FBM3short.table}
\end{table}

Three values for the Hurst parameter ($H$) were chosen: 0.7, 0.9, and 0.3. Values of $H$ in (1/2,\,1) provide long-range dependent increments, while values in $(0,\,1/2)$ provide ``anti-persistent'' increments. The results for 0.7 and 0.9 are largely similar. Amongst the distribution tests (Figures \ref{FBM7short.fig} (left) and \ref{FBM9short.fig} (left)), all but KLP98 are effective in rejecting nearly all the paths across a range of levels. The range is larger for higher $H$, suggesting that fewer subcrossings numbers are needed to reject when the dependence is stronger. Amongst the independence tests (Figures \ref{FBM7short.fig} (right) and \ref{FBM9short.fig} (right)), there is a slight difference between the comparison of the joint distribution and autocorrelation tests. For $H=0.7$ the power of the autocorrelation test is similar to that of the distribution tests, while power of the joint distribution test decays much faster with increasing level. With $H=0.9$, the power of both remains high- $H=0.9$ is just more easily rejected at higher levels. In comparison, Figure \ref{FBM3PVshort.fig} shows that with our implementation of the quadratic method, the power is essentially the same, in that both the KS and CVM tests have near 100\% rejection in the best case, althought for $H=0.7$ the power would  depend more critically on the value of $c$ used. For $H=0.9$ rejection is clearer, with near 100\% rejection for all values of $c$.

The anti-persistent case $H=0.3$ shows a striking difference from the long-range dependent case. For all the tests, the power falls off dramatically with higher power. Amongst the distribution tests, the KS discrete test decays slowest, while amongst the independence tests, the joint distribution test decays slowest. Based on level 0, 100\% of the paths would be rejected which is higher than from our implementation of the quadratic  variation method: 46\%(KS) and 65\% (CVM).

\begin{figure}[hb!]
\resizebox{3in}{!}{\includegraphics{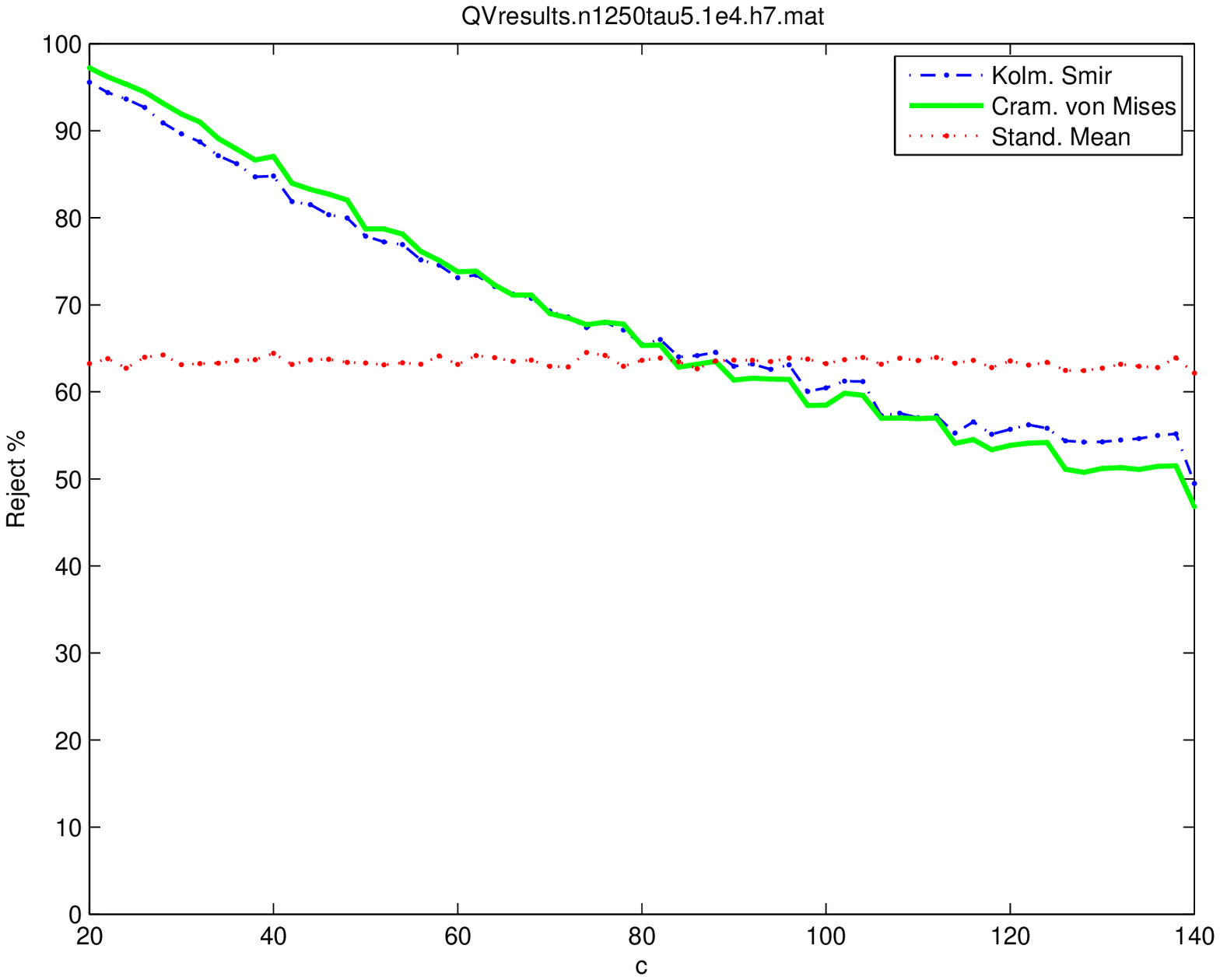}}\resizebox{3in}{!}{\includegraphics{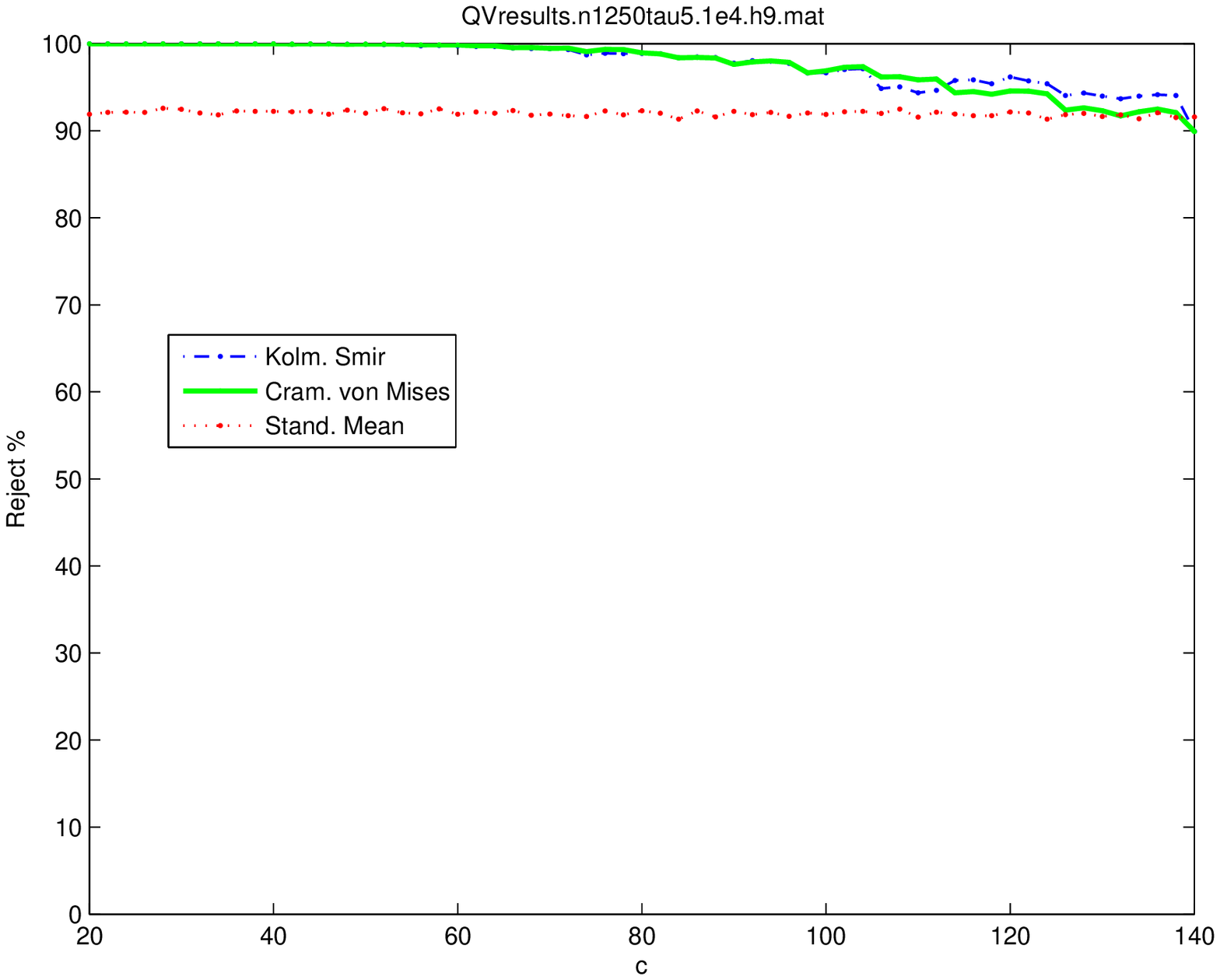}}
\caption{Quadratic variation results for Fractional Brownian Motion with length 1250 and $H=0.7$ (left) and $H=0.9$ (right). Here, 95\% confidence intervals are 1\% or smaller.}
\label{FBMshort.fig}
\end{figure}

\begin{figure}[hb!]
\resizebox{3in}{!}{\includegraphics{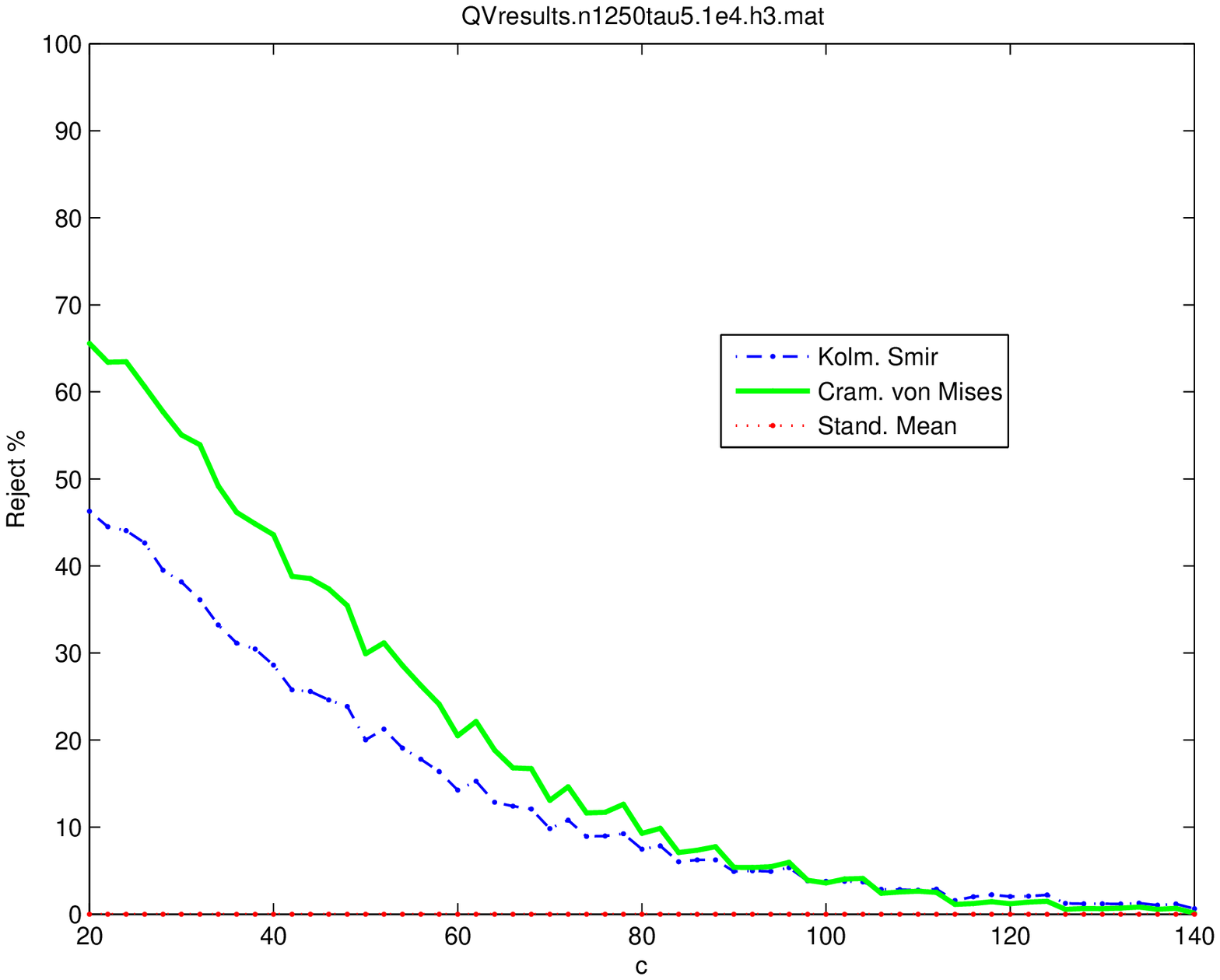}}
\caption{Quadratic variation results for Fractional Brownian Motion with length 1250 and $H=0.3$. Here, 95\% confidence intervals are 1\% or smaller.}
\label{FBM3PVshort.fig}
\end{figure}
 
\section{Analysis of foreign exchange rates}\label{FX.sec}

In this section we show the results of applying the crossing tree-based test to foreign exchange rate tick data obtained from the Securities Industry Research Centre of Asia-Pacific (SIRCA). We have five series each representing a different exchange rate, for the period January to December 2003:
\begin{enumerate}
\item Australian Dollar to US Dollar (AUD-USD), 
\item Euro to UK Pound(EUR-GBP),
\item Euro to US Dollar (EUR-USD), 
\item UK Pound to US Dollar (GBP-USD), 
\item Japanese Yen to US Dollar (JPY-USD).
\end{enumerate}
 As is common for considering finance data, we work with log-transformed data. Table \ref{FXdata_summary.table} lists some of the statistical properties of the increments of the log-transformed data. The sample mean is small in relation to the standard deviation, suggesting a negligible effect from drift can be expected.

\begin{table}[hb]
\begin{center}
\begin{tabular}{ |l c c c c c|}
\hline 
 Series & Length & Sample & Standard  & Sample& Sample \\ 
  & & Mean & Deviation & Skewness & Kurtosis \\ \hline
AUD-USD & 450,641 & \raisebox{0ex}[12pt] 6.45$\times 10^{-7} $ & 2.47$\times 10^{-4}$  & 0.2242 & 33.3344 \\
EUR-GBP & 166,024 & 4.81$\times 10^{-7}$  & 2.69$\times 10^{-4}$  & -1.0331 & 63.2093 \\
EUR-USD & 6,086,353 & 3.00$\times 10^{-8}$  & 1.15$\times 10^{-4}$  & -0.4466 & 52.9049 \\
GBP-USD & 3,608,072 & 2.86$\times 10^{-8}$  & 9.48$\times 10^{-5}$  & -0.1496 & 22.4587 \\
JPY-USD & 4,184,020 & -2.41$\times 10^{-8}$  & 1.05$\times 10^{-4}$  & -0.2611 & 56.8353 \\ \hline
\end{tabular}
\end{center}
\caption{Statistical properties of the increments of the log-transformed 2003 financial exchange rate tick data.} \label{FXdata_summary.table}
\end{table}

\subsection{AUD-USD}
Table \ref{AUDUSD.table} shows the results from applying the crossing tree test to the AUD-USD data at levels 0 to 7. Because these are results for one dataset, the numbers in the body of this table are quite different from those in the tables for the simulation data. Here we show $p$-values from applying tests at particular levels. Where our tests simply reject or not, without $p$-values, we show ``$<0.05$'' to denote rejecting the null hypothesis and ``$>0.05$'' where the null hypothesis is not rejected. Where a test rejects at 5\% significance we also show the result in boldface. The number of subcrossing numbers available to the tests at each level is shown in the second row.

A clear observation is that the tests of distribution and independence universally reject the null hypothesis at levels 1 and 2. Further investigation reveals the reason is the assumption of continuity in the null hypothesis. When the data are not exactly crossings already, crossings are found by linear interpolation between known data points. A large number of crossings between any two adjacent data points suggests $\de$ is small in relation to the vertical distance between the points. In these cases there won't be excursions (i.e., up-down or down-up pairs.) When a high percentage of the crossings arise in this way the tests will reject, essentially as a consequence of small $\de$, an indication that the assumption of continuity of paths cannot be supported by the data. Recall, for example, that the subcrossings of level 1 are the crossings of level 0. As shown in Table \ref{xing.table}, at level 0, 34\% of the crossings result from two or more crossings between data points, and 7.8\% of the crossings result from 4 or more crossings between data points. 

\begin{table}[htb!]

 {\small 
 \begin{center} 
 \begin{tabular}{ |c || *{8}{c|}} 
 \hline  \multicolumn{9}{|c|}{{\bf AUD-USD}} \\ \hline 
 \hline {\bf levels ($l$)}& 0 & 1 & 2 & 3 & 4 & 5 & 6 & 7 \\ \hline 
{\bf \# SubX} & & 99157 & 26341 & 6734 & 1702 & 415 & 100 & 25 \\ \hline 
{\bf mean xing length} & &  &  &  &  &  &  &  \\  
{\bf level ($l-1$)} & & \raisebox{1.5ex}[0pt]{1.2 min} & \raisebox{1.5ex}[0pt]{3.9 min} & \raisebox{1.5ex}[0pt]{14.6 min} & \raisebox{1.5ex}[0pt]{57 min} & \raisebox{1.5ex}[0pt]{3.8 h} & \raisebox{1.5ex}[0pt]{15.4 h} & \raisebox{1.5ex}[0pt]{63 h} \\ \hline \hline 
\raisebox{0ex}[12pt]{ $\mathbf{ \chi^2}$ {\bf (+2 df)} }  &  & {\bf 0.000} & {\bf 0.000} & 0.107 & 0.808 & 0.441 & 0.628 & $>0.05$ \\ \hline 
{\bf Twos Test} &  & {\bf 0.000} & {\bf 0.000} & {\bf 0.007} & 0.645 & 0.492 & 1.000 & 0.690 \\ \hline 
{\bf G Test} &  & {\bf 0.000} & {\bf 0.000} & 0.067 & 0.734 & 0.341 & 0.648 & 0.468 \\ \hline 
{\bf KS} &  & $\mathbf{<0.05}$ & $\mathbf{<0.05}$ & $\mathbf{<0.05}$ & $>0.05$ & $>0.05$ & $>0.05$ & $>0.05$ \\ \hline 
{\bf KLP98} &  & {\bf 0.000} & {\bf 0.000} & 0.084 & 0.468 & 0.091 & 0.603 & 0.374 \\ \hline \hline 
{\bf Joint Dist.} &  & {\bf 0.000} & {\bf 0.000} & 0.322 & 0.865 & 0.239 & 0.576 & 0.573 \\ \hline 
{\bf Autocorr} &  & $\mathbf{<0.05}$ & $\mathbf{<0.05}$ & $>0.05$ & $>0.05$ & $>0.05$ & $\mathbf{<0.05}$ & $>0.05$ \\ \hline 
{\bf Runs} &  & {\bf 0.000} & {\bf 0.000} & 0.867 & 0.676 & 0.901 & 0.055 & 0.447 \\ \hline 
{\bf Larsen} &  & {\bf 0.000} & {\bf 0.003} & 0.223 & 0.267 & 0.463 & 0.807 & $>0.05$ \\ \hline 
{\bf DixOB} &  & {\bf 0.000} & {\bf 0.000} & {\bf 0.027} & 0.712 & 0.647 & 0.941 & 0.321 \\ \hline 
{\bf OBri85} &  & {\bf 0.000} & {\bf 0.000} & {\bf 0.004} & 0.378 & 0.545 & 0.279 & {\bf 0.034} \\ \hline \hline 
{\bf \# UD pairs}  & 60946 & 23237 & 6421 & 1652 & 427 & 102 & 22 & 4 \\ \hline 
{\bf Runs UD} & {\bf 0.000} & 0.782 & 0.080 & 0.104 & 0.893 & 0.920 & 1.000 & 1.000 \\ \hline 
{\bf Larsen UD} & 0.969 & {\bf 0.002} & 0.394 & 0.863 & 0.519 & 0.678 & $>0.05$ & $>0.05$ \\ \hline 
{\bf DixOB UD} & 1.000 & {\bf 0.003} & 0.109 & 0.983 & 0.422 & 0.626 & 0.510 & $>0.05$ \\ \hline 
{\bf OBri85 UD} & {\bf 0.000} & {\bf 0.000} & 0.556 & 0.724 & 0.233 & 0.785 & 0.635 & NA \\ \hline 
\end{tabular} 
 \end{center}
 } 
\caption{Crossing tree test results for 2003 AUD-USD exchange rate tick data. Distribution and independence tests clearly reject the null hypothesis at levels 1 and 2, most likely because the data does not support the continuous path assumption at those scales.} \label{AUDUSD.table}
\end{table}

At higher levels, apart from the a couple of anomalous results, the tests do not reject the null hypothesis at levels 4 and above. Level 3 is clearly the transition scale. A decision to reject level 3 depends on how one treats the tests. Our $\chi^2$, joint distribution autocorrelation and G tests do not reject. Our simulation results show that  our $\chi^2$ and joint distribution tests are the  most powerful in the absence of drift.  With drift, the G test and autocorrelation tests were the most powerful. Moreover, only with Feller's square root diffusion process was the power of the discrete KS test noteworthy, and even then it was second (or third) most powerful after our $\chi^2$ and joint distribution test. Perhaps the key here is that the Twos test rejects at level 3 suggesting the number of twos in the subcrossings is unusually high or low. A high percentage of twos is consistent with an effect of multiple crossings from interpolation, suggesting that rejecting level 3 is appropriate on account of small $\de$. 

Row 3 of Table \ref{AUDUSD.table} shows how the levels correspond to mean crossing times. For example, the subcrossings of level 1 are determined using the crossings of level 0. Since there are are 320,810 level 0 crossings, and approximately 6,396 hours for trading currency in a year, the mean length of a level 0 crossing is about 1.2 minutes. So we say the results of level 1 correspond to an average crossing length of 1.2 minutes. Since our tests do not reject above level 3, we would say the crossing tree technique does not reject the null hypothesis at a time scale between 15 and 57 minutes and above.

One might ask if the consistent rejection of the tests at lower levels is simply because of the size of the datasets tested. For example, at level 1, 99,157 numbers are tested in the distribution and independence tests. As a simple check for this, we split the data into non-overlapping blocks of length 1024 and applied a subset of our tests to the separate blocks. Partial blocks were discarded. So, for the AUD-USD data this yields 96 blocks of length 1024 at level 1. If, for a test, the results from Table \ref{AUDUSD.table} are obtained consistently from the blocks it would be strong evidence that the results for lower levels in that table are not an artifact of data length. For our $\chi^2$ test, 95 of 96 blocks are rejected. The numbers for some other tests are similar: Twos test (96), G test (95), KS discrete (96) and joint distribution test (94). Of the tests we tried, the autocorrelation test rejected the fewest of the blocks, rejecting 81 out of 96. These results are consistent across all five of our exchange rate datasets, including the outlier behaviour of the autocorrelation test rejecting from 31\% (EUR-USD) to 100\% (EUR-GBP) of the blocks. Beyond being a curiosity, the behaviour of the autocorrelation test here doesn't affect our results since the other tests consistently reject almost all blocks. As such, we won't comment more on these block results. 

\small
\begin{table}
\begin{tabular}{|c||*7{c|c||}}
\hline levels & \multicolumn{2}{c||}{0} & \multicolumn{2}{c||}{1} & \multicolumn{2}{c||}{2} & \multicolumn{2}{c||}{3} & \multicolumn{2}{c||}{4} & \multicolumn{2}{c||}{5} & \multicolumn{2}{c||}{6}  \\ \hline
	& $\ge 2$ & $\ge 4$ & $\ge 2$ & $\ge 4$ & $\ge 2$ & $\ge 4$ & $\ge 2$ & $\ge 4$ & $\ge 2$ & $\ge 4$ & $\ge 2$ & $\ge 4$ & $\ge 2$ & $\ge 4$ \\ \hline
	AUD-USD & 34.2& 7.8 &8.8  &0.8 &1.1 &0.4 & 0.6 &0.3 &0.5 &0.3 &0   &0  &0  &0\\
	EUR-GBP &48.6 &14.8 &15.3 &2.4 &3   &1   &1.5  &0.7 &1.2 &0.9 &0.7 &0  &0  &0\\
	EUR-USD &44   &11   &13.6 &0.7 &1.2 &0.2 &0.6  &0.4 &0.8 &0.5 &1   &0.3&0.6&0\\
	GBP-USD &50.7 &18.5 &22   &2.3 &3.2 &0.3 &0.8  &0.4 &0.7 &0.3 &0.5 &0  &0  &0\\ 
	JPY-USD &44.6 &13   &16.6 &2   &3   &0.3 &0.7  &0.4 &0.8 &0.5 &0.6 &0.3 &0 &0 \\ \hline
\end{tabular}
	\caption{Percentage of crossings arising as either $\ge 2$ crossings (left column for each level) or $\ge 4$ crossings (right colunn for each level) between pairs of data points. High percentages suggest the data doesn't support the continuity assumption.}
	\label{xing.table}
\end{table}
\normalsize

\subsection{EUR-GBP}
Table \ref{EURGBP.table} shows results for the crossing tests applied to the EUR-GBP exchange rate data. This dataset is about one third of the length of the AUD-USD dataset, but the number of subcrossings at level one is about half. This difference is easily explained by the higher percentage of multiple crossings arising from the interpolation between data points in the EUR-GBP data, as shown in Table \ref{xing.table}. It appears levels 3 and below reject because continuity isn't supported. 

Level 4 is a transition level. Rejection by our $\chi^2$ test is noteworthy, and one might reject the null at level 4 on the basis of this test. However, the result from the Twos test doesn't suggest the number of twos in the subcrossings is unusual, suggesting the rejection by the $\chi^2$ test is due to more than just small $\de$. This is further supported by the observation that the percentage of multiple crossings from interpolation is not very different between levels 4 and 5, yet several tests reject at level 4 and none reject at level 5. Again it seems there are additional features in the data causing the tests to reject at level 4. Finally, no tests reject at level 5, making the results at level 6 more meaningful- $\de$ is clearly not too small. From the results of levels 4 and 6, one cannot clearly say continuous time-changed Brownian motion is not rejected {\it above some fixed level}. Thus, the use of a continuous time-changed Brownian motion above some threshold level seems unsupported for this dataset.

\begin{table}[h]

 {\small 
 \begin{center} 
 \begin{tabular}{ |c || *{8}{c|}} 
 \hline  \multicolumn{9}{|c|}{{\bf EUR-GBP}} \\ \hline 
 \hline {\bf levels ($l$)}& 0 & 1 & 2 & 3 & 4 & 5 & 6 & 7 \\ \hline 
{\bf \# SubX} & & 52775 & 15728 & 4245 & 1030 & 253 & 80 & 20 \\ \hline 
{\bf mean xing length} & &  &  &  &  &  &  &  \\  
{\bf level ($l-1$)} & & \raisebox{1.5ex}[0pt]{2.6 min} & \raisebox{1.5ex}[0pt]{7.3 min} & \raisebox{1.5ex}[0pt]{24.4 min} & \raisebox{1.5ex}[0pt]{1.5 h} & \raisebox{1.5ex}[0pt]{6.2 h} & \raisebox{1.5ex}[0pt]{25 h} & \raisebox{1.5ex}[0pt]{79 h} \\ \hline \hline 
\raisebox{0ex}[12pt]{ $\mathbf{ \chi^2}$ {\bf (+2 df)} }  &  & {\bf 0.000} & {\bf 0.000} & {\bf 0.000} & {\bf 0.012} & 0.930 & 0.181 & $>0.05$ \\ \hline 
{\bf Twos Test} &  & {\bf 0.000} & {\bf 0.000} & {\bf 0.000} & 0.334 & 0.530 & {\bf 0.018} & 0.824 \\ \hline 
{\bf G Test} &  & {\bf 0.000} & {\bf 0.000} & {\bf 0.000} & {\bf 0.026} & 0.891 & {\bf 0.034} & 0.598 \\ \hline 
{\bf KS} &  & $\mathbf{<0.05}$ & $\mathbf{<0.05}$ & $\mathbf{<0.05}$ & $>0.05$ & $>0.05$ & $\mathbf{<0.05}$ & $>0.05$ \\ \hline 
{\bf KLP98} &  & {\bf 0.000} & {\bf 0.000} & {\bf 0.000} & {\bf 0.031} & 0.542 & 0.675 & 0.359 \\ \hline \hline 
{\bf Joint Dist.} &  & {\bf 0.000} & {\bf 0.000} & {\bf 0.000} & 0.172 & 0.120 & 0.251 & 0.093 \\ \hline 
{\bf Autocorr} &  & $\mathbf{<0.05}$ & $\mathbf{<0.05}$ & $>0.05$ & $>0.05$ & $>0.05$ & $>0.05$ & $>0.05$ \\ \hline 
{\bf Runs} &  & {\bf 0.000} & {\bf 0.013} & 0.382 & 0.212 & 0.924 & 0.909 & 0.110 \\ \hline 
{\bf Larsen} &  & {\bf 0.000} & {\bf 0.000} & {\bf 0.003} & {\bf 0.003} & 0.201 & $\mathbf{<0.05}$ & $>0.05$ \\ \hline 
{\bf DixOB} &  & {\bf 0.000} & {\bf 0.000} & 0.605 & 0.190 & 0.700 & 0.373 & 0.187 \\ \hline 
{\bf OBri85} &  & {\bf 0.000} & {\bf 0.000} & 0.091 & {\bf 0.024} & 0.573 & 0.521 & 0.845 \\ \hline \hline 
{\bf \# UD pairs}  & 22175 & 10533 & 3708 & 1092 & 242 & 46 & 20 & 8 \\ \hline 
{\bf Runs UD} & {\bf 0.000} & {\bf 0.000} & {\bf 0.001} & 0.665 & 0.655 & 0.891 & 0.484 & 0.057 \\ \hline 
{\bf Larsen UD} & 0.674 & 0.999 & 0.173 & 0.392 & 0.256 & $>0.05$ & $\mathbf{<0.05}$ & $>0.05$ \\ \hline 
{\bf DixOB UD} & 1.000 & 1.000 & 0.985 & 0.254 & 0.641 & 0.135 & $>0.05$ & $>0.05$ \\ \hline 
{\bf OBri85 UD} & {\bf 0.001} & 0.496 & {\bf 0.036} & 0.230 & 0.814 & 0.203 & 0.201 & NA \\ \hline 
\end{tabular} 
 \end{center}
 } 
\caption{Crossing tree test results for 2003 EUR-GBP exchange rate tick data. Distribution and independence tests clearly reject the null hypothesis at levels 1 to 3: the data does not support the continuous path assumption at those scales. Results at levels 4 and 6 seem to reject a continuous chronometer.}  \label{EURGBP.table}
\end{table}

\subsection{EUR-USD}
Table \ref{EURUSD.table} shows results from applying the crossing tree tests to the EUR-USD exchange rate data. This dataset is the largest of our five. Levels 1 to 4 show the same consistent rejection explained above for small $\de$. Level 5 appears to be the transition scale. The null hypothesis cannot be rejected for levels 6 and above, corresponding to a timescale of longer than 48 minutes.
\begin{table}[h]

 {\footnotesize
 \begin{center} 
 \begin{tabular}{ |c || *{9}{c|}} 
 \hline  \multicolumn{10}{|c|}{{\bf EUR-USD}} \\ \hline 
 \hline {\bf levels ($l$)}& 0 & 1 & 2 & 3 & 4 & 5 & 6 & 7 & 8 \\ \hline 
{\bf \# SubX} & & 1092577 & 198585 & 34804 & 7981 & 1916 & 492 & 120 & 32 \\ \hline 
{\bf mean xing length} & &  &  &  &  &  &  &  &  \\  
{\bf level ($l-1$)} & & \raisebox{1.5ex}[0pt]{0.1 min} & \raisebox{1.5ex}[0pt]{0.4 min} & \raisebox{1.5ex}[0pt]{1.9 min} & \raisebox{1.5ex}[0pt]{11 min} & \raisebox{1.5ex}[0pt]{48 min} & \raisebox{1.5ex}[0pt]{3.3 h} & \raisebox{1.5ex}[0pt]{13 h} & \raisebox{1.5ex}[0pt]{53 h} \\ \hline \hline 
\raisebox{0ex}[12pt]{ $\mathbf{ \chi^2}$ {\bf (+2 df)} }  &  & {\bf 0.000} & {\bf 0.000} & {\bf 0.000} & {\bf 0.000} & {\bf 0.020} & 0.669 & 0.915 & $>0.05$ \\ \hline 
{\bf Twos Test} &  & {\bf 0.000} & {\bf 0.000} & {\bf 0.000} & {\bf 0.013} & {\bf 0.002} & 0.822 & 0.927 & 1.000 \\ \hline 
{\bf G Test} &  & {\bf 0.000} & {\bf 0.000} & {\bf 0.000} & {\bf 0.000} & {\bf 0.008} & 0.513 & 0.830 & 0.316 \\ \hline 
{\bf KS} &  & $\mathbf{<0.05}$ & $\mathbf{<0.05}$ & $\mathbf{<0.05}$ & $\mathbf{<0.05}$ & $\mathbf{<0.05}$ & $>0.05$ & $>0.05$ & $>0.05$ \\ \hline 
{\bf KLP98} &  & {\bf 0.000} & {\bf 0.000} & {\bf 0.000} & {\bf 0.000} & {\bf 0.015} & 0.306 & 0.725 & 0.890 \\ \hline \hline 
{\bf Joint Dist.} &  & {\bf 0.000} & {\bf 0.000} & {\bf 0.000} & {\bf 0.000} & 0.119 & 0.891 & 0.765 & 0.934 \\ \hline 
{\bf Autocorr} &  & $\mathbf{<0.05}$ & $\mathbf{<0.05}$ & $\mathbf{<0.05}$ & $\mathbf{<0.05}$ & $>0.05$ & $>0.05$ & $>0.05$ & $>0.05$ \\ \hline 
{\bf Runs} &  & {\bf 0.000} & {\bf 0.000} & {\bf 0.000} & 0.060 & 0.511 & 0.622 & 0.645 & 0.862 \\ \hline 
{\bf Larsen} &  & {\bf 0.000} & {\bf 0.000} & 0.434 & 0.089 & 0.377 & 0.602 & 0.284 & $>0.05$ \\ \hline 
{\bf DixOB} &  & {\bf 0.000} & {\bf 0.000} & {\bf 0.000} & 0.155 & 0.899 & 0.340 & 0.761 & 0.752 \\ \hline 
{\bf OBri85} &  & {\bf 0.000} & {\bf 0.000} & {\bf 0.000} & 0.205 & 0.251 & 0.353 & 0.314 & 0.820 \\ \hline \hline 
{\bf \# UD pairs}  & 944811 & 347117 & 64832 & 9308 & 2038 & 480 & 126 & 28 & 9 \\ \hline 
{\bf Runs UD} & {\bf 0.000} & {\bf 0.000} & {\bf 0.000} & {\bf 0.000} & 0.735 & 0.699 & 0.054 & 0.879 & 1.000 \\ \hline 
{\bf Larsen UD} & {\bf 0.000} & 0.052 & {\bf 0.046} & 0.376 & 0.437 & 0.250 & 0.266 & $\mathbf{<0.05}$ & $>0.05$ \\ \hline 
{\bf DixOB UD} & {\bf 0.009} & {\bf 0.000} & {\bf 0.000} & {\bf 0.000} & 0.748 & 0.270 & 0.863 & 0.088 & $>0.05$ \\ \hline 
{\bf OBri85 UD} & {\bf 0.000} & {\bf 0.000} & {\bf 0.000} & {\bf 0.001} & 0.619 & 0.144 & 0.578 & {\bf 0.023} & NA \\ \hline 
\end{tabular} 
 \end{center}
 } 
\caption{Crossing tree test results for 2003 EUR-USD exchange rate tick data. Distribution and independence tests clearly reject the null hypothesis at levels 1 to 5, most likely because the data does not support the continuous path assumption at those scales. The null hypothesis cannot be rejected at levels 6 and above.} \label{EURUSD.table}
\end{table}

\subsection{GBP-USD}
Table \ref{GBPUSD.table} shows results from applying the crossing tree tests to the GBP-USD exchange rate data. Levels 1 to 3 show the same consistent rejection explained above for small $\de$. Many tests reject at level 4, but notably the Twos test does not. This suggests the number of twos is not significant, and multiple crossings from interpolations between datapoints is not a dominant feature. Thus, rejection by the tests appears indicative of some other features in the data such that the null hypothesis is not satisfied at that scale. At higher levels, the autocorrelation test result at level 6 appears anomalous. Ignoring it, the null hypothesis cannot be rejected at levels 5 and above, corresponding to timescales larger than a cutoff between 11 and 46 minutes.
\begin{table}[h]

 {\footnotesize
 \begin{center} 
 \begin{tabular}{ |c || *{9}{c|}} 
 \hline  \multicolumn{10}{|c|}{{\bf GBP-USD}} \\ \hline 
 \hline {\bf levels ($l$) }& 0 & 1 & 2 & 3 & 4 & 5 & 6 & 7 & 8 \\ \hline 
{\bf \# SubX} & & 735365 & 167751 & 34371 & 8354 & 2053 & 504 & 130 & 31 \\ \hline 
{\bf mean xing length} & &  &  &  &  &  &  &  &  \\  
{\bf level ($l-1$)} & & \raisebox{1.5ex}[0pt]{0.2 min} & \raisebox{1.5ex}[0pt]{0.5 min} & \raisebox{1.5ex}[0pt]{2.3 min} & \raisebox{1.5ex}[0pt]{11.2 min} & \raisebox{1.5ex}[0pt]{46 min} & \raisebox{1.5ex}[0pt]{3.1 h} & \raisebox{1.5ex}[0pt]{12.7 h} & \raisebox{1.5ex}[0pt]{49 h} \\ \hline \hline 
\raisebox{0ex}[12pt]{ $\mathbf{ \chi^2}$ {\bf (+2 df)} }  &  & {\bf 0.000} & {\bf 0.000} & {\bf 0.000} & {\bf 0.000} & 0.848 & 0.140 & 0.712 & $>0.05$ \\ \hline 
{\bf Twos Test} &  & {\bf 0.000} & {\bf 0.000} & {\bf 0.000} & 0.784 & 0.377 & 0.624 & 0.539 & 1.000 \\ \hline 
{\bf G Test} &  & {\bf 0.000} & {\bf 0.000} & {\bf 0.000} & {\bf 0.009} & 0.859 & 0.112 & 0.386 & 0.727 \\ \hline 
{\bf KS} &  & $\mathbf{<0.05}$ & $\mathbf{<0.05}$ & $\mathbf{<0.05}$ & $>0.05$ & $>0.05$ & $>0.05$ & $>0.05$ & $>0.05$ \\ \hline 
{\bf KLP98} &  & {\bf 0.000} & {\bf 0.000} & {\bf 0.000} & {\bf 0.000} & 0.460 & 0.678 & 0.238 & 0.675 \\ \hline \hline 
{\bf Joint Dist.} &  & {\bf 0.000} & {\bf 0.000} & {\bf 0.000} & 0.265 & 0.670 & 0.184 & 0.967 & 0.522 \\ \hline 
{\bf Autocorr} &  & $\mathbf{<0.05}$ & $\mathbf{<0.05}$ & $\mathbf{<0.05}$ & $\mathbf{<0.05}$ & $>0.05$ & $\mathbf{<0.05}$ & $>0.05$ & $>0.05$ \\ \hline 
{\bf Runs} &  & {\bf 0.000} & {\bf 0.000} & {\bf 0.000} & 0.140 & 1.000 & 0.053 & 0.690 & 1.000 \\ \hline 
{\bf Larsen} &  & {\bf 0.000} & {\bf 0.000} & {\bf 0.049} & {\bf 0.000} & 0.834 & 0.856 & 0.185 & $>0.05$ \\ \hline 
{\bf DixOB} &  & {\bf 0.000} & {\bf 0.000} & {\bf 0.000} & {\bf 0.007} & 0.190 & 0.656 & 0.289 & 0.584 \\ \hline 
{\bf OBri85} &  & {\bf 0.000} & {\bf 0.000} & {\bf 0.000} & {\bf 0.024} & 0.388 & 0.112 & 0.817 & 0.495 \\ \hline \hline 
{\bf \# UD pairs}  & 486032 & 199187 & 49413 & 8831 & 2166 & 522 & 122 & 34 & 4 \\ \hline 
{\bf Runs UD} & {\bf 0.000} & {\bf 0.009} & {\bf 0.000} & {\bf 0.000} & 0.452 & 0.149 & {\bf 0.001} & 0.093 & 1.000 \\ \hline 
{\bf Larsen UD} & {\bf 0.000} & {\bf 0.000} & {\bf 0.005} & 0.297 & 0.459 & 0.721 & 0.063 & $\mathbf{<0.05}$ & $>0.05$ \\ \hline 
{\bf DixOB UD} & 1.000 & {\bf 0.000} & {\bf 0.000} & {\bf 0.000} & 0.861 & 0.344 & 0.989 & 0.182 & $>0.05$ \\ \hline 
{\bf OBri85 UD} & {\bf 0.000} & {\bf 0.000} & {\bf 0.000} & {\bf 0.004} & 0.820 & 0.767 & 0.126 & 0.082 & NA \\ \hline 
\end{tabular} 
 \end{center}
 } 
\caption{Crossing tree test results for 2003 GBP-USD exchange rate tick data. Distribution and independence tests clearly reject the null hypothesis at levels 1 to 3, most likely because the data does not support the continuous path assumption at those scales. On the other hand, the null hypothesis cannot be rejected at levels 5 and above.} \label{GBPUSD.table}
\end{table}

\subsection{JPY-USD}
Table \ref{JPYUSD.table} shows results from applying the crossing tree tests to the JPY-USD exchange rate data. Levels 1 to 4 show the same consistent rejection explained above for small $\de$. The null hypothesis cannot be rejected for levels 5 and above, so timescales larger than a cutoff between 12 and 52 minutes. At level 7 the Runs test shows a significant result that we dismiss, since it is an isolated result and the $p$-value is very close to 0.05 anyways.

\begin{table}[h]

 {\footnotesize 
 \begin{center} 
 \begin{tabular}{ |c || *{9}{c|}} 
 \hline  \multicolumn{10}{|c|}{{\bf JPY-USD}} \\ \hline 
 \hline {\bf levels ($l$)}& 0 & 1 & 2 & 3 & 4 & 5 & 6 & 7 & 8 \\ \hline 
{\bf \# SubX} & & 829941 & 171042 & 33165 & 7404 & 1810 & 435 & 107 & 24 \\ \hline 
{\bf mean xing length} & &  &  &  &  &  &  &  &  \\  
{\bf level ($l-1$)} & & \raisebox{1.5ex}[0pt]{0.1 min} & \raisebox{1.5ex}[0pt]{0.5 min} & \raisebox{1.5ex}[0pt]{2.2 min} & \raisebox{1.5ex}[0pt]{12 min} & \raisebox{1.5ex}[0pt]{52 min} & \raisebox{1.5ex}[0pt]{3.5 h} & \raisebox{1.5ex}[0pt]{14.7 h} & \raisebox{1.5ex}[0pt]{59 h} \\ \hline \hline 
\raisebox{0ex}[12pt]{ $\mathbf{ \chi^2}$ {\bf (+2 df)} }  &  & {\bf 0.000} & {\bf 0.000} & {\bf 0.000} & {\bf 0.000} & 0.122 & 0.689 & 0.951 & $>0.05$ \\ \hline 
{\bf Twos Test} &  & {\bf 0.000} & {\bf 0.000} & {\bf 0.000} & {\bf 0.000} & 0.359 & 0.701 & 0.699 & 0.839 \\ \hline 
{\bf G Test} &  & {\bf 0.000} & {\bf 0.000} & {\bf 0.000} & {\bf 0.000} & 0.188 & 0.672 & 0.857 & 0.885 \\ \hline 
{\bf KS} &  & $\mathbf{<0.05}$ & $\mathbf{<0.05}$ & $\mathbf{<0.05}$ & $\mathbf{<0.05}$ & $>0.05$ & $>0.05$ & $>0.05$ & $>0.05$ \\ \hline 
{\bf KLP98} &  & {\bf 0.000} & {\bf 0.000} & {\bf 0.000} & {\bf 0.000} & 0.115 & 0.825 & 0.962 & 0.770 \\ \hline \hline 
{\bf Joint Dist.} &  & {\bf 0.000} & {\bf 0.000} & {\bf 0.000} & {\bf 0.000} & 0.402 & 0.499 & 0.077 & 0.610 \\ \hline 
{\bf Autocorr} &  & $\mathbf{<0.05}$ & $\mathbf{<0.05}$ & $\mathbf{<0.05}$ & $\mathbf{<0.05}$ & $>0.05$ & $>0.05$ & $>0.05$ & $>0.05$ \\ \hline 
{\bf Runs} &  & {\bf 0.000} & {\bf 0.000} & {\bf 0.000} & 0.329 & 0.640 & 0.182 & {\bf 0.048} & 0.502 \\ \hline 
{\bf Larsen} &  & {\bf 0.000} & 0.437 & 0.229 & 0.379 & 0.826 & 0.117 & 0.675 & $>0.05$ \\ \hline 
{\bf DixOB} &  & {\bf 0.000} & {\bf 0.000} & {\bf 0.000} & 0.062 & 0.570 & 0.121 & 0.885 & 0.742 \\ \hline 
{\bf OBri85} &  & {\bf 0.000} & {\bf 0.000} & {\bf 0.000} & {\bf 0.000} & 0.621 & 0.694 & 0.299 & 0.653 \\ \hline \hline 
{\bf \# UD pairs}  & 651261 & 244039 & 52356 & 9173 & 1971 & 452 & 104 & 29 & 7 \\ \hline 
{\bf Runs UD} & {\bf 0.000} & {\bf 0.000} & {\bf 0.000} & {\bf 0.000} & 0.065 & 0.173 & 0.186 & 0.449 & 0.914 \\ \hline 
{\bf Larsen UD} & {\bf 0.000} & 0.052 & 0.216 & 0.059 & 0.592 & 0.328 & {\bf 0.038} & $>0.05$ & $>0.05$ \\ \hline 
{\bf DixOB UD} & {\bf 0.000} & {\bf 0.000} & {\bf 0.000} & {\bf 0.000} & 0.106 & 0.692 & 0.872 & 0.664 & $>0.05$ \\ \hline 
{\bf OBri85 UD} & {\bf 0.000} & {\bf 0.000} & {\bf 0.000} & {\bf 0.000} & 0.523 & 0.998 & 0.617 & {\bf 0.032} & NA \\ \hline 
\end{tabular} 
 \end{center}
 } 
\caption{Crossing tree test results for 2003 JPY-USD exchange rate tick data. Distribution and independence tests clearly reject the null hypothesis at levels 1 to 4, most likely because the data does not support the continuous path assumption at those scales. The null hypothesis cannot be rejected at levels 5 and above.} \label{JPYUSD.table}
\end{table}

\section{Conclusion}
\label{conc.sec}

Using a recently proposed characterisation of Brownian motion using the crossing tree, we have proposed a new test for continuous local martingales, equivalently a Brownian motion time-changed by a continuous chronometer. It is particularly well-suited to tick-by-tick data, which is not at regularly spaced time intervals. An advantage is the ability to test for continuous local martingales at a range of timescales, including an ability to test whether the data supports (in a statistical sense) the hypothesis of continuous paths.

An alternative test uses the sample quadratic variation (realised volatility), but suffers from issues (e.g., market microstructure noise) that give the sample quadratic variation poor statistical properties, especially at high frequencies. Further, formal statistical tests require the choice of an arbitrary parameter. As we show, this choice could have a large impact on the test results. Use of the crossing tree characterisation avoids these problems.

From simulation results for a range of diffusions, use of the crossing tree generally shows equal or higher discriminatory power than the quadratic variation approach for a similar number of observations. Although, it notably lacks power to detect drift. 

From testing high frequency foreign exchange rate data for five rates, we show at small timescales (typically below about 15 minutes) the data doesn't support the hypothesis of a continuous local martingale. At larger timescales, with the exception of the EURGBP exchange rate, the continuous local martingale assumption cannot be rejected. In the case of EURGBP, some larger timescales are rejected while others are not. This extends the earlier conclusions of \citet{Andersen2000a} who normalised daily foreign exchange rate returns by realised volatilities (formed by summing 30 minute return volatilities) and observed standard Normal values.

\appendix
\section{Simulation techniques} \label{simtech.sec}

We consider here the problem of simulating a stochastic process at the times $T^0_k$, $k = 0, 1, \ldots$.
That is, simulating the crossing points $X(T^0_k)$ and crossing lengths $W^0_k$.

\subsection{General Results for Diffusion processes}
\label{simtechgen.sec}

For diffusion processes we can use the {\em scale function} \cite[p. 278--290]{rny} to simulate the sequence $\{ X(T^0_k) \}_{k \geq 0}$. Let $W$ denote Brownian motion. Suppose $X$ is a process described by $dX(t) = A(X(t)) dt + B(X(t)) dW(t)$  satisfying the following conditions.
\begin{enumerate} 
\item The paths of X are continuous.
\item The state space $E$ has form $[l,r]$, $(l,r]$, $[l,r)$ or $(l,r)$ where $-\infty \leq l<r \leq \infty$.
\item The process is \emph{regular}. That is, for any $y$ in the interior of $E$, $l<x$, and $y<r$, $\Pb[T(y) < \infty | X(0)=x)>0$ where $T(y)$ is the hitting time for $y$.
\item Both $A$ and $B$ are locally bounded Borel functions and $B(\cdot)>0$.
\end{enumerate}
Fix $x_0$ in the interior of the range of $X$, then let the scale function $s$ be a solution of
\[
\frac d{dx} s(x) = \exp \left\{ -2 \int_{x_0}^x (A(u)/B^2(u)) du  \right\}.
\]
Take $x \in \de \Z$. Then, provided $[x - \de, x + \de]$ is in the interior of the range of $X$, we have 

\begin{align}
\label{hitprob.eq}
p_\delta(x)=\Pb( X(T^0_{k+1}) = x + \de \,|\, X(T^0_k) = x) = \frac{s(x)-s(x-\de)}{s(x+\de)-s(x-\de)}.
\end{align}

Simulating the crossing lengths $W^0_k$ is harder, though can be done in some cases.
See \citet{BJ08} for the case of Brownian motion.
However, using the {\em speed measure} we can get an expression for the {\em expected} crossing length.
Put
\[
m(dx) = 2 B^{-2}(x) \exp \left\{ 2 \int_{x_0}^x (A(u)/B^2(u)) du \right\} dx
\]
then, provided $[x - \de, x + \de]$ is in the interior of the range of $X$, we have \citep[p. 304]{rny},
\begin{eqnarray*}
\Ex( W^0_{k+1} \,|\, X(T^0_k) = x)
&=& \frac{s(x)-s(x-\de)}{s(x+\de)-s(x-\de)} \int_x^{x+\de} (s(x+\de) - s(y)) m(dy) \\
&&\quad + \frac{s(x+\de) - s(x)}{s(x+\de)-s(x-\de)} \int_{x-\de}^x (s(y)-s(x-\de)) m(dy).
\end{eqnarray*}

\subsection{Brownian motion with drift}
Suppose that $dX(t) = \al dt + dW(t)$ for $\al \neq 0$.
Put $x_0 = 0$ then we get $s(x) = -(2\al)^{-1} e^{-2\al x}$ and $m(dx) = 2 e^{2\al x} dx$, whence
\begin{eqnarray}
\Pb( X(T^0_{k+1}) = x + \de \,|\, X(T^0_k) = x) &=& \frac {e^{2\al\de} - 1} {e^{2\al\de} - e^{-2\al\de}}, \notag \\
\Ex( W^0_{k+1} \,|\, X(T^0_k) = x) &=& \frac {\de ( e^{2\al\de} - 1)} {\al (e^{2 \al\de} + 1)}. \label{bmdriftEx.eqn}
\end{eqnarray}

\subsection{Ornstein-Uhlenbeck process}\label{OUsim.sec}
Suppose that $dX(t) = -\al X(t) dt + \si dW(t)$ with $\al, \si > 0.$  Put $x_0 = 0$ then we get $s'(x) = e^{\al x^2/ \si^2}$ and $m(dx) = (2/\si^2) e^{-\al x^2/ \si^2} dx$, whence
\[
\Pb( X(T^0_{k+1}) = x + \de \,|\, X(T^0_k) = x)
= \frac {\int_{x-\de}^x e^{\al u^2/ \si^2} du} {\int_{x-\de}^{x+\de} e^{\al u^2/ \si^2} du}
= p(x) \mbox{ say}
\]
and
\begin{eqnarray*}
\Ex( W^0_{k+1} \,|\, X(T^0_k) = x)
&=& (2/\si^2) p(x) \int_x^{x+\de} \int_y^{x+\de} e^{\al(z^2-y^2)/\si^2} dz\,dy \\
&&\quad + (2/\si^2) (1 - p(x)) \int_{x-\de}^{x} \int_{x-\de}^y e^{\al(z^2-y^2)/\si^2} dz\,dy \\
&=& w(x) \mbox{ say}.
\end{eqnarray*}

With $X(0)\sim N(0,\si^2/(2\al))$, $\{X(t),\, t\ge0 \}$  is stationary. Thus, for $Y_k=X(T^0_k)$, $\{Y_k,\, k=0,1,\ldots\}$ is a positive recurrent random walk on $\de\Z$, defined by $\Pb( Y(t+1) = y + \de \,|\, Y(t) = y) = p(y)$ and $\Pb( Y(t+1) = y - \de \,|\, Y(t) = y) = 1 - p(y)$. Let $\pi$ be the stationary distribution of $Y$. Then, to simulate the crossings of the Ornstein-Uhlenbeck process, we take $Y_0$ as a value from the stationary distribution $\pi$ and then simulate according to the Markov chain described by $p(x)$.

\subsection{Feller's Square Root Diffusion}\label{Fellersim.sec}
Suppose that $dX(t) = \ka (\mu - X(t)) dt + \si \sqrt{X(t)} dW(t)$ for $\ka, \mu, \si > 0$, $2\ka\mu/\si^2 \geq 1$ and $X(0)$ has the stationary Gamma$(a,b)$ distribution with shape parameter $a=2\ka\mu/\si^2$ and scale parameter $b=\sigma^2/(2\ka)$.
For this range of parameter values the process is stationary with sample space $(0, \infty)$.

Put $x_0 = \de$ then we get $s'(x) = (x/\de)^{-2\ka\mu/\si^2} e^{(2\ka/\si^2)(x-\de)}$ and thus for $x = n\de$, $x \geq 2\de$, we have
\[
\Pb( X(T^0_{k+1}) = x + \de \,|\, X(T^0_k) = x)
= \frac {\int_{x-\de}^x u^{-2\ka\mu/\si^2} e^{(2\ka/\si^2)u} du}
{\int_{x-\de}^{x+\de} u^{-2\ka\mu/\si^2} e^{(2\ka/\si^2)u} du}
= p(x) \mbox{ say}.
\]
For $x = \de$ we have, since the process never visits 0, $\Pb( X(T^0_{k+1}) = 2\de \,|\, X(T^0_k) = \de) = 1$.

In practice, to simulate $\{X(T^0_k),\, k=0,1,\ldots\}$ we first obtain the \textit{first} hitting time $T^0_0$ and crossing point $Y_0=X(T_0^0)$. We do this by simulating the discrete approximate path $\{X_k,\, k=0,1,\ldots \}$, using the order 1.0 Milstein scheme \cite[p. 335]{kloeden92}, at regularly spaced points with interval $\Delta$  using
\begin{equation} \label{fellerpath.eqn}
X_{k+1}=X_k+\ka(\mu-X_k)\Delta+\si\sqrt{X_k}Z_k+\frac{\si^2(Z_n^2-\Delta)}{4}. 
\end{equation}
Here $X_0$ is a random Gamma$(a,b)$ value from the stationary distribution of $\{X(t) \}$, and $\{Z_1,Z_2,\ldots\}$ is an i.i.d. sequence of $N(0,\Delta)$ values. With $Y_0$ and $p(x)$, the needed crossings can be simulated directly using the Markov chain. Although $X(T^0_0)$ is the first hit of the lattice, we do not use it as a crossing.

\subsection{Other Processes}
For other processes, where a method to simulate the crossings directly may be unknown, it is still possible to simulate $X(T^0_k)$ and $T^0_k$, $k = 0, 1, \ldots$ if sufficiently long sample paths can be simulated. In effect, one simulates a sample path at regularly spaced times, sufficiently close to minimize interpolation errors, and then determines where the crossings occur directly. The Matlab program `gethits.m' does this. cite{jonesxx}

\section{Determination of $\de$} \label{detdelta.sec}

In this section we discuss how to determine the scale size $\de$ to simulate a process with, on average, a specified number $N$ of level 0 crossings in a specified time interval, typically $(T_0^0,T_0^0+t_0)$, where $t_0$ is a specified constant. In many cases $T_0^0=0$ and so the interval is just $(0,t_0)$.  We make this assumption here unless otherwise stated.

\subsection{Brownian motion}
A consequence of the strong Markov, self-similarity and stationary increment properties of Brownian motion is that $\Ex[W^0_k]=\de^2 $ for all $k$ \citep{BJ08}. On average we want intervals of length $t_0/N$ and so $\de=\sqrt{t_0/N} $.

\subsection{Brownian motion with drift}
As shown in (\ref{bmdriftEx.eqn}), the crossing lengths are constant. That is, they do not depend on \textit{where} the process is. It is sufficient to solve 
\[ \frac {\de ( e^{2\al\de} - 1)} {\al (e^{2 \al\de} + 1)}= \frac{t_0}{N}\] 
for $\delta$.

\subsection{Ornstein-Uhlenbeck process}
For a given $\de>0$, let $\pi$ be the equilibrium distribution of the Markov chain $\{Y_k,k=1,\ldots\}$ in Section \ref{OUsim.sec} for the Ornstein-Uhlenbeck process with
\[dX(t) = -\al X(t) dt + \si dW(t),\, \al,\, \si > 0\] 
and
\[p(x)=\Pb( X(T^0_{k+1}) = x + \de \,|\, X(T^0_k) = x)
= \frac {\int_{x-\de}^x e^{\al u^2/ \si^2} du} {\int_{x-\de}^{x+\de} e^{\al u^2/ \si^2} du}.\]
Then
\begin{equation}\label{OUEW.eqn}
\Ex_\pi (W^0_{k}) = \sum_{n=-\infty}^\infty \pi(n\de) w(n\de) 
\end{equation}
and we need to find $\de$ such that $\Ex_\pi (W^0_{k})=t_0/N$, essentially by enlightened trial-and-error to obtain sufficient accuracy and precision.
Here, by $\Ex_\pi$ we indicate expectation conditioned on $Y_1$ having initial distribution $\pi$ on $\de\Z$, so that $\{Y_k,\, k=0,1,\ldots\}$ is stationary.  For the particular goal of  $\Ex_\pi (W^0_{k})=0.004$, $\de$ was found for two combinations of parameters. For $\al=8$, $\si=1$, $\de=0.063015$ was found to give an absolute relative error in  $\Ex_\pi (W^0_{k})$ of less than 1.05 in 10,000, or about 1 point in 10,000.  Figure \ref{OUpassage.fig} illustrates the empirical distributions of $Y_1$, $Y_2$, $Y_{20}$, $Y_{100}$, and  $Y_{1000}$ for these parameters, using 100,000 sample paths are virtually indistinguishable from the stationary distribution, justifying use of the stationary distribution.  For $\al=10$, $\si=1$, $\de=0.062945$ was found to give an absolute relative error in  $\Ex_\pi (W^0_{k})$ of less than 0.2 in 10,000.

\begin{figure}[ht]
\resizebox{3in}{!}{\includegraphics{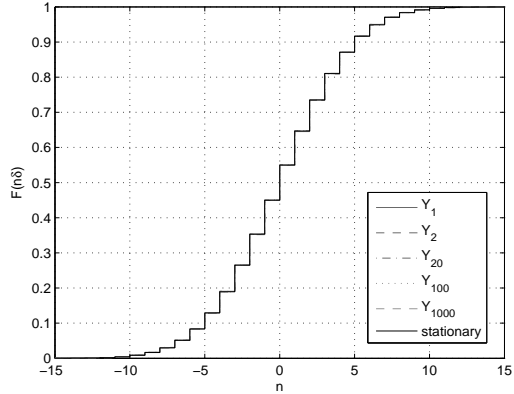}}
\caption{Distribution of empirical passage times for the Ornstein-Uhlenbeck process for $\de=0.063015$, $\al=8$, $\si=1$ using 100,000 Markov chains. They appear indistinguishable from the stationary distribution. \label{OUpassage.fig}}
\end{figure}

\subsection{Feller's  Square Root Diffusion}
Suppose that $dX(t) = \ka (\mu - X(t)) dt + \si \sqrt{X(t)} dW(t)$ for $\ka, \mu, \si > 0$, $2\ka\mu/\si^2 \geq 1$ and $X(0)$ has the stationary Gamma$(a,b)$ distribution with shape parameter $a=2\ka\mu/\si^2$ and scale parameter $b=\sigma^2/(2\ka)$. Although this is also a stationary diffusion, determination of $\de$ must proceed quite differently from the use of (\ref{OUEW.eqn}) above. We can again think in terms of the Markov chain $Y_k=X(T^0_k)$, $\{Y_k,\, k=0,1,\ldots\}$. We can obtain an integral formula for $\Ex( W^0_{k+1} \,|\, X(T^0_k) = x)$ for $x = k\de$, $k \ge 2$ but not for $x = \de$. It is possible to estimate this latter value by simulation to approximate $w$. Since we have all the transition probabilities for the Markov chain, calculation of the stationary distribution $\pi$ is straightforward and we can find
\[ \Ex_\pi (W^0_{\infty}) = \sum_{n=-\infty}^\infty \pi(n\de) w(n\de).  \]
The key problem is the Markov chain is not stationary, even though the underlying process is stationary, and $\Ex_\pi (W^0_{\infty})$ isn't terribly helpful. A reason for this somewhat surprising result is easy to see when parameter values are used. For example, consider the case of $\de=0.0220335$, $\ka=6$, and $\mu=0.2$. Since we know the stationary Gamma distribution we have $\Pr(Y_0=\de) \leq \Pr(X(0)\le \de) = 0.01144$. On the other hand, for the stationary distribution $\Pr(\pi=\de)= 0.0029$ to four digits. Then the following result applies.
\begin{lem}
Assume $Y_0$ does not have the stationary distribution. Then  $Y_k$ does not have the stationary distribution for all $k\ge0$.
\end{lem}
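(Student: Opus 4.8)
\section*{Proof proposal}

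The plan is to show that a single step of the chain cannot carry a non-stationary law onto the stationary one, and then induct backwards. Realise $Y_k = X(T^0_k)$ as a birth--death chain on the states $\{\de, 2\de, 3\de,\dots\}$; identifying the state $n\de$ with the index $n$, let $p_n := p_\de(n\de)$ and $q_n := 1-p_n$ be the up/down probabilities from (\ref{hitprob.eq}), with the reflecting convention $p_1 = 1$ since the process never reaches $0$. Let $P$ be the transition matrix and $\mu_k$ the law of $Y_k$, a probability row vector, so $\mu_{k+1} = \mu_k P$. It suffices to prove the injectivity statement that if $\nu$ is a probability vector with $\nu P = \pi$ then $\nu = \pi$: granting this, if $\mu_m = \pi$ for some $m \ge 1$ then $\mu_{m-1}P = \pi$ forces $\mu_{m-1}=\pi$, and descending to $m=0$ contradicts the hypothesis $\mu_0 \ne \pi$, so $\mu_k \ne \pi$ for every $k \ge 0$.

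First I would solve $\nu P = \pi$ coordinatewise. Because each step changes the parity of the state, the equations decouple by parity. The equations at odd target states, starting from the boundary relation $\nu(2)q_2 = \pi(1)$ and using detailed balance $\pi(n)p_n = \pi(n+1)q_{n+1}$ (with $p_1=1$), pin down every even coordinate to $\nu(2j) = \pi(2j)$. The equations at even target states leave $\nu(1)$ free and fix the remaining odd coordinates recursively; writing $\nu(2j-1) = \pi(2j-1) + c\,a_j$ one finds $a_1 = 1$ and $a_{j+1} = -a_j\,p_{2j-1}/q_{2j+1}$, so $a_j$ has sign $(-1)^{j-1}$ and magnitude $|a_j| = \prod_{i=1}^{j-1} p_{2i-1}/q_{2i+1} > 0$. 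Thus any admissible $\nu$ lies in the one-parameter family indexed by $c = \nu(1) - \pi(1)$.

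It remains to use $\nu \ge 0$ to force $c = 0$, which is the crux. Set $r_j := \pi(2j-1)/|a_j|$; comparing the detailed-balance recursion for $\pi$ with the recursion for $a_j$ gives the clean identity $r_{j+1}/r_j = p_{2j}/q_{2j}$. Here the mean reversion of Feller's diffusion enters: from the scale density $s'(u) = (u/\de)^{-2\ka\mu/\si^2} e^{(2\ka/\si^2)(u-\de)}$ and (\ref{hitprob.eq}) one gets $p_\de(x) \to (1+e^{2\ka\de/\si^2})^{-1} < 1/2$ as $x \to \infty$, so $p_{2j}/q_{2j}$ is eventually bounded away from $1$ and hence $r_j \to 0$. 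Consequently, for any $c > 0$ one can choose an even index $j$ with $\pi(2j-1) < c|a_j|$, giving $\nu(2j-1) = \pi(2j-1) - c|a_j| < 0$; for $c < 0$ the same failure occurs at a large odd index. Either contradicts $\nu \ge 0$, so $c = 0$, $\nu = \pi$, and the injectivity claim follows.

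The hard part is the tail estimate $r_j \to 0$: it is exactly the statement that the stationary law decays faster than the unique kernel direction of $P$, so that nonnegativity alone excludes every nonzero perturbation, and it is what separates this mean-reverting chain from the neutral symmetric case. Deriving the limit of $p_\de(x)$ from the scale function, and verifying that the parity split leaves precisely this single free parameter, are the steps needing care; the backward induction and the coordinate recursions are routine.
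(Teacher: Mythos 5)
Your proof is correct, but it takes a genuinely different route from the paper's. The paper argues by time reversal: assuming $Y_1 \equaldist \pi$, it invokes positive recurrence of the birth--death chain to get reversibility and ergodicity, writes $Y_0 \equaldist QY_1 \equaldist Q\pi \equaldist \pi$ for the reversed kernel $Q$, and concludes by contradiction in a few lines. You instead prove the underlying injectivity statement directly: you solve $\nu P = \pi$ coordinatewise using the tridiagonal structure and the parity decoupling, exhibit the one-parameter family $\nu(2j-1) = \pi(2j-1) + c\,a_j$ of formal solutions, and kill the free parameter by combining nonnegativity of $\nu$ with the tail estimate $p_\de(x) \to (1+e^{2\ka\de/\si^2})^{-1} < 1/2$ read off from the scale density (I checked this limit and the identity $r_{j+1}/r_j = p_{2j}/q_{2j}$; both are right). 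What your approach buys is rigour at exactly the point the paper glosses over: the step $Y_0 \equaldist QY_1$ rests on the backward-kernel formula $\Pb(Y_0 = i \mid Y_1 = j) = \pi(i)\,p_{ij}/\pi(j)$, which is valid only when the joint law of $(Y_0,Y_1)$ is already the stationary one, so the paper's argument presupposes something close to what is being proved. Moreover, for a general irreducible positive recurrent chain the conclusion can fail ($\nu P = \pi$ can admit non-stationary probability solutions when $P$ is not injective on signed measures), so some quantitative input from this particular chain is genuinely required; your tail argument supplies it and identifies mean reversion ($p_{2j}/q_{2j}$ eventually bounded away from $1$, hence $r_j \to 0$) as the decisive feature. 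The cost is length: the paper's reversal argument, if one accepts it, is much shorter.
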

\begin{proof}
We'll show that $Y_1$ cannot have the stationary distribution, but the same argument easily extends to $Y_{k-1}$ and $Y_k$. Assume $Y_1 \equaldist \pi$. Let P be the transition matrix for the Markov chain. Then $\{Y_k/\delta,\,k=0,1,\ldots\}$,with state space $\{1,2,\ldots\}$, is the embedded Markov chain for some continuous-time process $\{V(t),\,t\geq0\}$ with exponential holding times, different from $\{X(t),\,t\geq0\}$, which need not have exponential holding times. Since the transitions of the Markov chain are always $\pm 1$, $\{V(t),\,t\geq0\}$ is a birth and death process. Mean reversion of $\{X(t),\,t\geq0\}$ implies $\{Y_k/\delta,\,k=0,1,\ldots\}$ is positive recurrent, and so ergodic. This means $\{Y_k/\delta,\,k=0,1,\ldots\}$, and so $\{Y_k,\,k=0,1,\ldots\}$, is time reversible. Let $Q$ be the transition matrix for the time reversed Markov chain. Since $\pi$ is the stationary distribution of the time reversed chain too, 
\[Y_0 \equaldist Q Y_1 \equaldist Q \pi \equaldist \pi.\]
But this violates the assumption that $Y_0$ does not have the stationary distribution. Hence, $Y_1$ cannot have the stationary distribution.
\end{proof}

(Note that a possibly simpler argument would say $Y_1 \equaldist \pi$ and so $\pi \equaldist Y_1 \equaldist P Y_0$ while $P \pi \equaldist \pi$. Combining these two equations gives $P Y_0=P \pi$ and so $Y_1=\pi$. However this requires that $P$ be invertible, which is difficult to establish.)

Figure \ref{passagedist.feller.1e5.fig} illustrates the empirical distributions of $Y_1$, $Y_2$, $Y_{20}$, $Y_{100}$, and  $Y_{1000}$ using 100,000 simulated Markov chains. The distributions are clearly different from each other, and from the stationary distribution, even for the 1000th passage time. This suggests calculations based on the stationary distribution of the crossing times are of limited use for Feller's square root process.
 
\begin{figure}[ht]
\resizebox{3in}{!}{\includegraphics{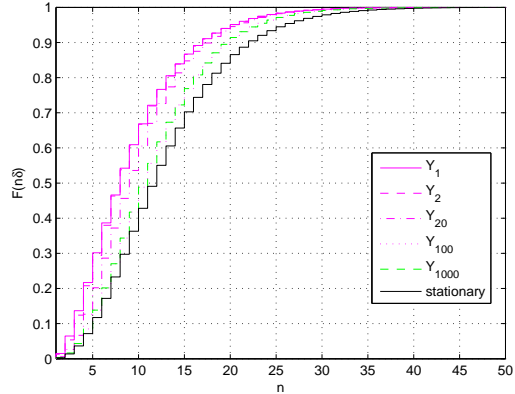}}
\caption{Distribution of empirical passage times for Feller's square root process for $\delta=0.0220335$, $\mu=0.2$, $\ka=6$, and $\si=1$ using 100,000 simulated Markov chains. The distributions appear quite different, indicating the Markov chain is not stationary. \label{passagedist.feller.1e5.fig}}
\end{figure}

Since the Markov chain at passage times is not stationary it was decided to simulate approximate paths of sufficient length using (\ref{fellerpath.eqn}) and trial-and-error to determine $\de$ as needed. For example, with $\ka=6$, $\si=1$, to put 1250 crossings in the interval $(T_0^0,T_0^0+5)$ (dropping the first passage time), $\de=0.027100$ was obtained using a time step $\Delta=10^{-5}$ and 5,000 sample paths. However, other values of $\Delta$ yielded different values for $\de$, which were significant considering the precision obtained. In effect, the goal is to estimate $\delta$ as $\Delta \to 0$, although simulation time quickly becomes impractical as $\Delta$ gets smaller. Regression and extrapolation were used to improve the estimate. Let $m=-\log_{10}(\Delta)$. Then the simulations provides estimates $\hat{\delta}_m$ using $\Delta=10^{-m}$. Since $\log_{10}(\hat{\delta}_{m+1}-\hat{\delta}_m)$ appears quite linear, linear least-squares regression was used to estimate $a$ and $b$ in the recursive form $\hat{\delta}_{m+1}=\hat{\delta}_{m}+10^{am+b}, \, m>3$ where $\hat{\delta}_{3}$ is known. Figure \ref{Feller_regression.fig} shows simulation results with $\hat{\delta}_{m}$  for $\ka=6$, $\si=1$ to obtain either 1250 crossings in an interval of length 5 or 5000 crossings in an interval of length 20. The results are quite similar. This is not surprising.  As the number of crossings increases, the distributions of the crossings $Y_k$ get closer to the stationary distribution and the non-stationary behaviour of $Y_k$ for $k$ small is less dominant. The fitted line provides an improved estimate $\hat{\delta}_\infty=0.028163$ for 1250 crossings.

\begin{figure}[ht]
\resizebox{3in}{!}{\includegraphics{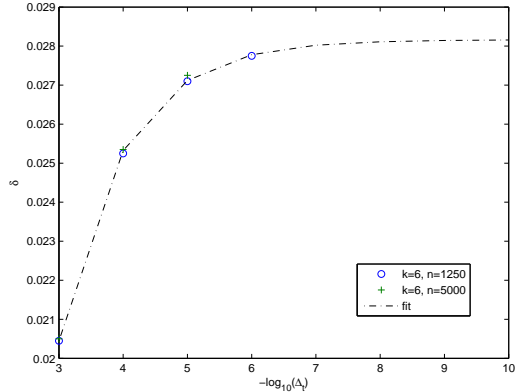}}
\caption{Determination of $\de$ for the Feller square root process with $\ka=6$, $\si=1$. The results for 1250 and 5000 crossings are similar. The fitted curve is for 1250 crossings, and provides an improved estimate for $\de$. \label{Feller_regression.fig}}
\end{figure}

\subsection{Other Processes}
For other processes, determining $\de$ to achieve $N$ crossings on average in the interval $(0,t_0)$ can be accomplished with a trial and error approach. Simulate a large number of independent sample paths (say $M$) with a time resolution sufficiently fine to minimize interpolation errors. Choose a value $\de^*$ as a possible value for $\de$. For sample path $i=1,\ldots,M$ determine the crossings and find the time $T^0_{N,i}$ of the $N$-th crossing time. If the sample mean $\hat{T}^0_N=\sum_{i=1}^M T^0_{N,i}/M$ is sufficiently close to $t_0$ (e.g., $t_0$ is within the confidence interval for $\hat{T}^0_N$) , then $\de^*$ can serve as the desired $\de$. If not, adjust the candidate value $\de^*$ larger or smaller and try again. This can be combined with other root-finding or regression techniques to find $\de$ with fewer iterations.

\bibliographystyle{agsm}
\bibliography{BMstat4}

\end{document}